\documentclass[11pt]{amsart}

\usepackage{amsmath, amssymb, amsthm}
\usepackage{mathtools}
\usepackage{csquotes}
\usepackage[english]{babel}
\usepackage{xcolor}
\usepackage{tikz}
\usepackage{tikz-cd}
\usepackage{comment}
\usepackage{hyperref}
\subjclass[2020]{Primary 14L30; Secondary 14F42, 14F20.}

\keywords{Gabber's presentation lemma, equivariant algebraic geometry, finite group actions, equivariant motivic homotopy theory, Nisnevich topology.}

\newcounter{genum}
\newenvironment{Glist}{
\begin{enumerate}
\setcounter{enumi}{\value{genum}}

}{
\setcounter{genum}{\value{enumi}}
\end{enumerate}
}

\newtheorem{theorem}{Theorem}[section]
\newtheorem*{theorem*}{Theorem}
\newtheorem{lemma}[theorem]{Lemma}
\newtheorem{corollary}[theorem]{Corollary}
\newtheorem{claim}[theorem]{Claim}
\newtheorem{proposition}[theorem]{Proposition}

\theoremstyle{definition}
\newtheorem{definition}[theorem]{Definition}
\newtheorem{example}[theorem]{Example}

\theoremstyle{remark}
\newtheorem{remark}[theorem]{Remark}

\numberwithin{equation}{section}

\DeclareMathOperator{\Spec}{Spec}

\DeclareMathOperator{\Hom}{Hom}

\DeclareMathOperator{\Aut}{Aut}

\DeclareMathOperator{\Sym}{Sym}

\DeclareMathOperator{\comp}{\widehat{\cO}}

\newcommand{\Gnum}[1]{\textbf{(G#1)}}
\newcommand{\Pnum}[1]{\textbf{(P#1)}}

\newcommand\cE{\mathcal{E}}
\newcommand\cF{\mathcal{F}}

\newcommand{\cJ}{\mathcal{J}}

\newcommand\cO{\mathcal{O}}
\newcommand\cP{\mathcal{P}}

\newcommand\cS{\mathcal{S}}

\newcommand\cW{\mathcal{W}}

\renewcommand\AA{\mathbb{A}}

\newcommand\CC{\mathbb{C}}

\newcommand\ZZ{\mathbb{Z}}

\newcommand{\frakm}{\mathfrak{m}}
\newcommand{\frakn}{\mathfrak{n}}

\newcommand{\frakp}{\mathfrak{p}}

\begin{document}

\title{An equivariant version of Gabber's lemma}

%    author one information
\author{Filippo Belfiori}
\address{Institut Galil\'ee\\
Universit\'e Sorbonne Paris Nord\\    
93430, Villetaneuse, France}
\curraddr{}
\email{belfiori@math.univ-paris13.fr} 
\thanks{}

%%% \subjclass[2010]{Primary }

\keywords{}

\date{}

\dedicatory{}

\begin{abstract}
Let $G$ be a finite abelian group, and let $k$ be an infinite perfect field containing a primitive root of unity of order the exponent of $G$. We prove that the $G$-equivariant version of Gabber's presentation lemma proposed by Bachmann holds over $k$ if and only if $G$ is cyclic of $p$-power order, for some prime $p$.
\end{abstract}

\maketitle

\tableofcontents

\section{Introduction} \label{section: introduction}

This paper is devoted to establishing the equivariant version of Gabber's presentation lemma proposed by Bachmann \cite{Bachmann} for every finite abelian group $G$ that is a cyclic group of order the power of a prime. On the other hand, if $G$ is not of that form, we show that such an equivariant Gabber's presentation lemma cannot hold for $G$. 

\subsection{Gabber's presentation lemma}
Let $k$ be a field. Gabber's geometric presentation lemma \cite[Lemma 3.1]{Gabber} is the following fundamental result in algebraic geometry.

\begin{theorem}[Gabber] \label{theorem: gabber original}
Let $X$ be an affine, irreducible smooth $k$-scheme of dimension $d>0$; let $Z \subset X$ be a closed subscheme of positive codimension and let $z \in Z$. Then there exist:
\begin{itemize}
	\item an open neighborhood $U \subset X$ of $z$;
	\item an open subset $W \subset \AA^{d-1}_k$;
	\item an étale morphism $\varphi  = (\psi, \nu) \colon U \to \AA^1_W = W \times_k \AA^1_k$ such that $\varphi_{|Z \cap U}$ is a closed immersion, the composite
	\[
	\psi_{|Z \cap U} \colon Z \cap U \longrightarrow \AA^1_W \to W
	\]
	is finite and $\varphi$ restricts to an isomorphism
	\[
	\varphi^{-1}(\varphi(Z \cap U)) \xlongrightarrow{\sim} \varphi(Z \cap U).
	\]
\end{itemize}
\end{theorem}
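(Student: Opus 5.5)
The plan is the classical argument via generic linear projections, in the form given by Colliot-Thélène–Hoobler–Kahn. I treat $k$ infinite; for finite $k$ one replaces linear projections by projections built from polynomials of high degree, in the style of Poonen's Bertini theorem, or passes to a finite extension of degree prime to a given prime and uses a norm argument. Since the statement is local around $z$, I may shrink $X$ and enlarge $Z$ to a hypersurface, and assume $Z$ has pure codimension one. I fix a closed embedding $X \subset \AA^N_k$ and let $\bar X$ be the closure of $X$ in $\mathbb{P}^N_k$. I also write $\bar Z$ for the closure of $Z$, and $X_\infty = \bar X \setminus X$.

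\textbf{Step 1: choosing the projection.} I look at linear projections $\pi\colon \mathbb{P}^N \dashrightarrow \mathbb{P}^{d}$ from a general linear center $L$ of dimension $N-d-1$ contained in the hyperplane at infinity. For $L$ general:
\begin{itemize}
\item $L \cap \bar X = \emptyset$, so $\pi\colon X \to \AA^d$ is finite;
\item the restriction of $\pi$ to $X$ is étale at $z$, which uses smoothness of $X$ at $z$ and a dimension count on the tangent space;
\item $\pi|_Z$ is birational onto its image near $z$, and $\pi^{-1}(\pi(z)) \cap Z = \{z\}$ scheme-theoretically.
\end{itemize}
Next I choose a further general projection $\AA^d \to \AA^{d-1}$ from a point at infinity, whose composite $\psi$ with $\pi$ satisfies two things. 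First, $\psi|_Z$ is finite, because $\dim Z = d-1$ and the center avoids $\bar Z \cap X_\infty$ in the appropriate sense. Second, the fibre $\psi^{-1}(\psi(z)) \cap Z$ is finite and reduced at $z$. Each of these conditions is open and nonempty in the relevant Grassmannian or projective space; this is where $k$ infinite is used to find rational points. I then set $\varphi = (\psi, \nu)$ with $\nu$ the remaining coordinate.

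\textbf{Step 2: the closed immersion and isomorphism properties.} Let $W$ be a small open neighborhood of $w = \psi(z)$ in $\AA^{d-1}$, and put $U_0 = \psi^{-1}(W)$. Then $Z_W = Z \cap U_0 \to \AA^1_W$ is finite. It is a closed immersion over $w$, since the fibre injects and is unramified at points of $Z$ over $w$. By Nakayama's lemma it is therefore a closed immersion after shrinking $W$. Now $\varphi^{-1}(\varphi(Z_W)) \to \varphi(Z_W)$ is étale, and its fibre over $\varphi(z)$ has only the point $z$ once I remove the finitely many other preimages, using the choice that $\pi^{-1}(\pi(z)) \cap Z = \{z\}$. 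So it is an isomorphism near $\varphi(z)$.

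\textbf{Step 3: the main obstacle.} The hardest point is making this local statement hold along all of $Z \cap U$. I need an open $U$ containing $z$ and of the form "$\psi^{-1}(W)$ minus a closed set disjoint from $Z$", so that $\psi|_{Z\cap U}$ stays finite. To get it I use finiteness of $Z_W \to W$:
\begin{itemize}
\item the bad locus (points of $\varphi^{-1}\varphi(Z_W)$ not in $Z_W$, together with the non-étale points of $\varphi$) is closed;
\item its image in $W$ under the finite map $\pi$ followed by projection is closed and misses $w$, so I shrink $W$ to avoid it where it meets $Z$'s fibres;
\item I then remove from $U_0$ the remaining part, which is disjoint from $Z_W$.
\end{itemize}
Finiteness of $\pi$ on $X$, and not merely of $\psi|_Z$, is what makes these images closed. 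That is why I want the first projection to be finite on all of $X$. I expect most of the careful work to be in verifying that the generic choices in Step 1 really give the scheme-theoretic fibre conditions at $z$, especially when $z$ is a non-closed or non-rational point. There I would first reduce to $z$ closed by specialization, and then handle a possibly inseparable residue field at $z$ by a choice that makes $\nu$ generate $k(z)$ over $k(w)$.
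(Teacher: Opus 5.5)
The paper does not prove this statement; it quotes it from Gabber (infinite $k$) and Hogadi--Kulkarni (finite $k$), so your sketch has to stand on its own. Your architecture is a reasonable variant of the classical argument: a general finite linear projection $\pi\colon X\to\AA^d$ étale at $z$, followed by a general projection $p_D\colon\AA^d\to\AA^{d-1}$ along a direction $D$.

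There is, however, a genuine gap. Every condition you impose on the second projection concerns $z$ alone, but a Zariski-local presentation needs conditions along the whole fibre $Z_w=Z\cap\psi^{-1}(w)$, where $w=\psi(z)$. Shrink $X$ so that every component of $Z$ passes through $z$. Then $Z\cap\psi^{-1}(W)$ is connected for every open $W\ni w$. If $Z\cap U\to W$ is finite, $Z\cap U$ is open and closed in it, hence equal to it. So $U$ must contain every point of $Z_w$, $\varphi$ must be étale at each of them, and all of $Z_w$ must embed into $\AA^1_{\kappa(w)}$.

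Nothing in Step 1 ensures this. The line $p_D^{-1}(w)$ may pass through $\pi(x)$ for some $x\in Z$, $x\neq z$, at which $\pi$ ramifies or $\pi|_Z$ is not an immersion. So your Step 2 claim that the fibre ``injects and is unramified at points of $Z$ over $w$'' is unsupported. Step 3 fails where you assert that the image of the bad locus in $W$ is closed and misses $w$, for three reasons:
\begin{enumerate}
\item $p_D$ is not finite, so finiteness of $\pi$ does not make images in $W$ closed.
\item The other sheets of $\pi$ over $\pi(z)$ lie in $\varphi^{-1}\varphi(Z_W)\setminus Z_W$ and do lie over $w$.
\item Non-étale points of $\varphi$ on $Z_w$ were never excluded.
\end{enumerate}
Separately, ``reduced at $z$'' is not the right condition and is unattainable when $Z$ is singular at $z$, since a smooth curve through a singular point of a hypersurface meets it with multiplicity $\geq 2$. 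What you need is that $Z_w\to\AA^1_{\kappa(w)}$ be a closed immersion, and that allows nonreduced fibres.

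The repair is one more genericity condition on $D$. Let $R\subset X$ be the non-étale locus of $\pi$, and let $B\subset\pi(Z)$ be the closed locus over which $\pi|_Z$ is not a closed immersion. For general $\pi$, both $\pi(R\cap Z)$ and $B$ have dimension $\leq d-2$: $\pi$ is étale at a point of each component of $Z$, and $\pi|_Z$ is birational onto its image. Your Step 1 conditions say both sets avoid $\pi(z)$. The directions joining the geometric points of $\pi(z)$ to $\pi(R\cap Z)\cup B$ therefore form a subset of $\mathbb{P}^{d-1}$ of dimension $\leq d-2$, and a general rational $D$ avoids it.

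With that choice, $Z_w\to\AA^1_{\kappa(w)}$ is a closed immersion, and Nakayama spreads this to a neighbourhood of $w$. Moreover $\varphi$ is étale along $Z_w$. So you can shrink $W$ away from $\psi(Z_W\cap R)$, which is closed because $\psi|_{Z_W}$ is finite. Then, without shrinking $W$ further, remove from the étale locus the complement of $Z_W$ in $\varphi^{-1}\varphi(Z_W)$. That complement is closed there because $Z_W$ is open and closed, by the section argument of Lemma~\ref{lemma: exist open with isomorphism in the image}.

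This whole-fibre requirement is what the classical proof builds in: smoothness of $\psi$ at all points of $Z_w$, and a function on the fibre curve that is étale and radicial at all of them. The paper's equivariant proof sidesteps it by working over the henselization and discarding the non-local part of $Z_w$ via Lemma~\ref{lemma: pass from quasi-finite to finite}, which is why its conclusion is only Nisnevich-local.

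Finally, for finite $k$ the norm-argument alternative you mention does not give a presentation over $k$ itself. Only the Poonen-style route of Hogadi--Kulkarni does.
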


In other words, étale locally we may give a presentation of the pair $(X,Z)$ as $(\AA^1_W, Z \cap U)$, and one could say that $\varphi$ induces an analytic isomorphism along $Z \cap U$. Theorem \ref{theorem: gabber original} has been proved by Gabber over an infinite field, and it has subsequently been extended to finite fields \cite{Hogadi}, and also over more general base schemes \cite{Hogadi2}. Gabber used this preparation lemma in order to establish the exactness of a class of Gersten complexes; see, for example, \cite{Colliot-Thelene} for a complete treatment on the topic.

With the work of Morel \cite{Morel}, Gabber's lemma has nowadays become a cornerstone in motivic homotopy theory over fields. For example, Morel used Gabber's lemma to prove the exactness of the Rost--Schmid complex \cite[Chapter 5]{Morel}, and to prove the strongly (resp.\ strictly) $\AA^1$-invariance of the homotopy sheaf $\underline{\pi}_1X$ (resp.\ $\underline{\pi}_iX$ for $i \geq 2$) of a motivic space $X$, see \cite[Theorem 1.9]{Morel}. Indeed, as a corollary of Gabber's lemma, one obtains a distinguished square for the Nisnevich topology, and such a square becomes a pushout when viewed in the category $\cS pc(k)$ of motivic spaces over a field, so that there exists a motivic equivalence $U / U \setminus U \cap Z \simeq \AA^1_W / \AA^1_W \setminus Z \cap U$. 

\subsection{The equivariant setting}
Equivariant motivic homotopy theory has been studied in \cite{MotivicHomotopyTheoryOfGroupSchemesActions} and in \cite{Hoyois}, based on the introduction of an equivariant version of the Nisnevich topology. 
Moreover, a $C_2$-equivariant version of Gabber's lemma has been established by Bachmann in \cite{Bachmann}, leading to applications in motivic homotopy theory for schemes equipped with a $C_2$-action. In particular, the author proves a $C_2$-equivariant version of Gersten’s injectivity and suggests that extending Gabber’s presentation lemma to more general group actions would provide a foundation for analogous applications in equivariant motivic homotopy theory in a broader setting. The main difference between the $C_2$-equivariant version and the original version of Gabber's lemma is that the equivariant result is Nisnevich-local on $X$ instead of being Zariski-local. More precisely, we refer to \cite{MotivicHomotopyTheoryOfGroupSchemesActions} for the following notion of equivariant Nisnevich neighborhood.

\begin{definition}
Let $G$ be a finite abelian group. Let $X$ be a $G$-scheme over $k$ and let $x \in X$ be a point. A \emph{$G$-equivariant Nisnevich neighborhood of $Gx$} is the datum of a $G$-scheme $Y$ over $k$, a $G$-equivariant étale morphism $\varphi \colon Y \to X$ and a $G$-equivariant morphism $s \colon G \times^{S_x} \Spec(\kappa(x)) \to Y$ such that the triangle
% https://q.uiver.app/#q=WzAsMyxbMCwxLCJHIFxcdGltZXNee1NfeH0gXFxTcGVjKFxca2FwcGEoeCkpIl0sWzEsMCwiWSJdLFsxLDEsIlgiXSxbMSwyXSxbMCwyXSxbMCwxXV0=
\[\begin{tikzcd}
	& Y \\
	{G \times^{S_x} \Spec(\kappa(x))} & X
	\arrow["\varphi", from=1-2, to=2-2]
	\arrow["s", from=2-1, to=1-2]
	\arrow[from=2-1, to=2-2]
\end{tikzcd}\]
commutes, where the horizontal map is given by passing the action morphism on $x$ to the quotient. We will say that $\varphi \colon Y \to X$ is a $G$-equivariant Nisnevich neighborhood of $Gx$, tacitly assuming that a section $s$ exists when it is clear from the context, and the points of $Y$ that are in the image of the section $s$ will be called \emph{distinguished points}. Note that if $x$ is set-theoretically fixed, then there is a unique distinguished point in $Y$ lying over $x$. A $G$-equivariant Nisnevich neighborhood of $Gx$ is said to be \emph{affine} (resp.\ \emph{irreducible}) whenever $Y$ is affine (resp.\ irreducible).
\end{definition}

In order to state the equivariant version of Gabber's presentation lemma, we also introduce the following terminology.

\begin{definition} \label{definition: gabber presentation}
Let $G$ be a finite abelian group. A \emph{$G$-pair over $k$} (or simply a $G$-pair if $k$ is understood) is an ordered pair $(X,Z)$ where $X$ is a separated smooth $k$-scheme of positive dimension endowed with a $G$-action and $Z \subseteq X$ is a closed $G$-invariant subscheme of positive codimension. We say that a $G$-pair $(X,Z)$ admits a \emph{$G$-equivariant Gabber's presentation over $k$} if for any point $z \in Z$ the following data exist:

\begin{Glist}
	\item a $G$-equivariant Nisnevich neighborhood $X' \to X$ of $Gz$;
	\item a smooth $k$-scheme $W$ endowed with a $G$-action;
	\item a one-dimensional $G$-representation $V$ over $k$;
	\item a $G$-equivariant morphism $\varphi = (\psi, \nu) \colon X' \to  W \times_k \AA(V)$ such that 
	\begin{enumerate}
		\renewcommand{\labelenumii}{\textbf{(P\arabic{enumii})}}
		\item $\varphi$ is étale;
		\item $\varphi_{|Z_{X'}}$ is a closed immersion;
		\item the composite $\psi_{|Z_{X'}} \colon Z_{X'} \to W \times_k \AA(V) \to W$ is finite;
		\item $\varphi$ restricts to an isomorphism $\varphi^{-1}(\varphi(Z_{X'})) \xrightarrow{\sim} \varphi(Z_{X'})$.
	\end{enumerate}
\end{Glist}
In the case $G= \left\{e\right\}$ we simply say $(X,Z)$ is a \emph{pair} and that the above data provide a \emph{Gabber's presentation}. If a $G$-pair $(X,Z)$ admits a $G$-equivariant Gabber's presentation, for a point $z \in Z$ we will say that the associated data $(X', W, V, \varphi)$ form a \emph{$G$-equivariant Gabber's presentation with respect to the point $z$}.
\end{definition}

The property of a $G$-pair $(X,Z)$ of having a $G$-equivariant Gabber's presentation depends also on the base field $k$. We will simply say that the $G$-pair admits a $G$-equivariant Gabber's presentation when the base field $k$ is clear from the context.

Suppose that $G= \left\{e\right\}$. Then Theorem \ref{theorem: gabber original}, i.e.\ non-equivariant Gabber's lemma, provides a Gabber's presentation for any pair $(X,Z)$ with $X$ affine and irreducible. Indeed, given a point $z \in Z$, any open neighborhood $U \subset X$ of $z$ is a $G$-equivariant Nisnevich neighborhood of $z$, giving $\Gnum{1}$. The smooth $k$-scheme $W$ of $\Gnum{2}$ is given by the open $W \subset \AA^{d-1}_k$. The trivial one-dimensional $k$-representation gives $\Gnum{3}$. Finally, the existence of a morphism $\varphi$ as in $\Gnum{4}$ having properties $\Pnum{1}, \Pnum{2}, \Pnum{3}$ and $\Pnum{4}$ is precisely the conclusion of Theorem \ref{theorem: gabber original}.

\subsection{Main theorem}

We can rephrase the $C_2$-equivariant version of Gabber's lemma proposed by Bachmann \cite{Bachmann} by saying that every $C_2$-pair $(X,Z)$ over an infinite field of characteristic different from $2$, admits a $C_2$-equivariant Gabber's presentation.
In this paper, we show that the equivariant version of Gabber's lemma proposed by Bachmann holds for a finite abelian group $G$ \emph{if and only if} $G$ is a cyclic $p$-group for some prime $p$.
More precisely, our first result is a $C_{p^r}$-equivariant version of Gabber's lemma, generalizing the $C_2$-equivariant version proposed by Bachmann, over infinite perfect fields $k$ containing a primitive $p^r$th root of unity.

\begin{theorem}[Theorem \ref{theorem: gabber}] \label{theorem: equivariant gabber intro}
Let $k$ be an infinite perfect field. Let $p$ be a prime and let $r \geq 1$ be an integer such that $k$ contains a primitive $p^r$th root of unity. Let $(X,Z)$ be a $C_{p^r}$-pair and let $z \in Z$ be a point. Then there exists a $C_{p^r}$-equivariant Gabber's presentation $(X', W, V, \varphi = (\psi, \nu))$ of $(X,Z)$ with respect to the point $z$ (see Definition \ref{definition: gabber presentation}). Moreover, if $V$ is a nontrivial $C_{p^r}$-representation, then $\psi \colon X' \to W$ admits a $C_{p^r}$-equivariant section.
\end{theorem}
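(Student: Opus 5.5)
We follow Bachmann's strategy for $C_2$, replacing the sign representation by the characters of $G=C_{p^r}$. The first step is to reduce to a situation where $z$ is set-theoretically fixed by a large enough subgroup. Let $S_z\subseteq G$ be the stabilizer of $z$; it is one of the subgroups $C_{p^s}$, $0\le s\le r$. Passing to the Nisnevich neighborhood $G\times^{S_z}X_0\to X$, where $X_0$ is an $S_z$-invariant affine open around $z$ on which the translates $gX_0$ for $g\notin S_z$ miss $z$, reduces the problem to the group $S_z$ acting with $z$ fixed. Because the subgroups of a cyclic $p$-group form a chain, $S_z$ is again cyclic of $p$-power order, and $k$ still contains the needed roots of unity. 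An equivariant presentation for $(S_z,X_0,Z\cap X_0)$ then induces one for $G$, with $W$ replaced by $G\times^{S_z}W$ and $V$ replaced by any extension of the character. So we may assume $z$ is $G$-fixed and $X$ is affine with a linearizable action. Since $|G|$ is invertible in $k$, as follows from the root of unity hypothesis, $G$ is linearly reductive. Hence $X\hookrightarrow\AA(E)$ equivariantly, and $E=\bigoplus_\chi E_\chi$ decomposes into isotypic pieces indexed by characters $\chi\in\widehat G\cong\mu_{p^r}$.

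Next we analyze the tangent space. Since $z$ is fixed, $G$ acts on $T_zX\otimes\kappa(z)$ and on $T_zZ$. We choose a character $\chi$ and take $V$ to be the corresponding one-dimensional representation. We want $\chi$ to occur in $T_zX$ with multiplicity large enough that a generic equivariant linear projection $X\to\AA(E')\times\AA(V)$ is étale at $z$ and finite on $Z$. The point where cyclicity of prime power order enters is the following. The characters that appear and are faithful modulo the kernel of the action are totally ordered by the chain of kernels. Pick $\chi$ with the smallest kernel among those occurring in the normal directions to $Z$, or trivial if none is needed. Then $W$ is built as the quotient-type target $\AA(E')$ containing all other isotypic parts, restricted to a suitable invariant open subset. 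The noether-normalization step of Gabber's proof is now carried out equivariantly. We choose $\psi=(\psi_\chi)$ by generic equivariant linear maps $E\to E'$. These form an affine space (a product of $\Hom(E_\chi,E'_\chi)$) over the infinite field $k$, so Gabber's open conditions (finiteness on $Z$, étaleness at $z$, injectivity of $\varphi|_Z$ near $z$, and unramified separation of the fibre through $z$) can be met by generic choices. The key check is that each of these conditions cuts out a nonempty open in the equivariant parameter space, not merely in the full space of linear maps. This is where the root-of-unity and perfectness hypotheses matter, perfectness guaranteeing that the residue field extensions are separable.

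After that, properties \Pnum{1}–\Pnum{4} follow from Gabber's argument localized equivariantly. Shrinking to a $G$-invariant open neighborhood is possible because $z$ is fixed: intersect the $G$-translates. For \Pnum{4} we take an equivariant Nisnevich refinement as in Bachmann. The conditions fail only on a closed set not containing $z$, and its $G$-saturation is still closed, so it can be removed. For the final claim, suppose $V$ is nontrivial. Then the fixed locus of $\AA(V)$ is the origin, and the zero section $W\to W\times\AA(V)$ is equivariant. We arrange $\nu$ so that $\varphi^{-1}(W\times 0)\to W$ has a component mapping isomorphically onto $W$ near $\psi(z)$, by étaleness at the fixed point $z$ lying over $0$. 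Shrinking $W$ invariantly then gives an equivariant section.

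The main obstacle is the genericity step. Equivariant linear projections form a much smaller family than all projections, and we must show that finiteness of $\psi|_Z$ together with injectivity/étaleness of $\varphi$ can still be achieved with the last coordinate restricted to a single character $\chi$. We first try to prove that for a cyclic $p$-group the multiplicity of some character in the relevant tangent/normal spaces is forced to be large enough, using the total ordering of subgroups. That same argument should explain why the statement fails for non-cyclic or non-$p$-groups.
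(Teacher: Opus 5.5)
\textbf{There is a genuine gap.} The step that carries all the content is showing that the equivariant conditions are \emph{nonempty}, and you leave it unproved (you call it ``the main obstacle''). Openness is routine: it suffices to check it on all linear maps, as in the paper's openness lemma. Nonemptiness, however, must come from explicit constructions, not from a character having large multiplicity in $T_zX$.

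\textbf{Étaleness.} At a closed specialization $z'$ of $z$, $T_{z'}X$ is only a \emph{semilinear} $S_{z'}$-representation over $\kappa(z')$. One needs that it has the form $\textnormal{Res}(V'')\otimes_k\kappa(z')$, which the paper proves via semisimplicity of the skew group ring and a Wedderburn count. Then one needs a Galois-descent construction, as in \cite[Lemma 8.10]{Equivariantcycles}, of an equivariant $X\to\AA(V'')$ étale at $z'$.

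\textbf{Finiteness.} You give no argument that a generic equivariant projection is finite on $Z$ over a Zariski open of $\AA(E')$. For non-cyclic $G$ even quasi-finiteness fails (Example~\ref{example: failure quasi-finiteness}). The paper proves only quasi-finiteness of $Z\to\AA(V')$ at $z'$ (Lemma~\ref{lemma: decrease dimension by one}, Proposition~\ref{proposition: quasi-finite morphisms construction}):
\begin{itemize}
\item It adds one character at a time, each an equivariant function separating $z'$ from auxiliary points $s$.
\item The points $s$ lie on the positive-dimensional components of $Z\cap X^K$, off the larger fixed loci.
\item It descends the chain $G_z=H_1\supsetneq H_2\supsetneq\cdots$.
\end{itemize}
Cyclicity enters here: it forces $G_s\subseteq H$, so the character in use is trivial on $G_s$ and symmetrization does not kill the value at $s$. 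The output is a splitting $V''=V'\oplus V$ with $\textnormal{Res}^G_{G_z}V$ nontrivial.

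\textbf{The remaining properties.} Finiteness, the closed immersion and \Pnum{4} are obtained only after henselizing the base along the orbit and shrinking (Lemmas~\ref{lemma: pass from quasi-finite to finite}, \ref{lemma: nakayama's lemma for closed immersion}, \ref{lemma: exist open with isomorphism in the image}), followed by a limit argument. ``Gabber's argument localized equivariantly'' does not supply any of this.

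\textbf{Your reduction conflates set-theoretic and geometric fixedness.} Inducing from $S_z$ makes $z$ only set-theoretically fixed. Its geometric stabilizer $G_z$ may be smaller, in which case $S_z/G_z$ acts faithfully on $\kappa(z)$. Then $T_zX$ is merely semilinear, and ``the characters occurring in $T_zX$'' are not defined; only their restrictions to $G_z$ are. Your choice of $\chi$ therefore makes no sense in this case.

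The paper treats $G_z=\{e\}$ by a genuinely different argument with trivial $V$:
\begin{itemize}
\item Map to a faithful $n$-dimensional $\AA(V')$ with $w=\varphi_0(z)\neq 0$, so that $G$ acts freely on $\Spec\kappa(w)$, and henselize at $w$.
\item Build the last coordinate, étale and radicial on the closed fibre, by passing to the quotient by $G$ and applying \cite[Theorem 3.2.2]{Colliot-Thelene}; then lift it by symmetrization.
\item The one-dimensional case goes through $X_{\kappa(z)}$ and the non-equivariant lemma on the quotient.
\end{itemize}
When $G_z\neq\{e\}$ acts trivially on $T_{z'}X$, you need \cite{Boratynski} to make $G_z$ act trivially near $z$, and then pass to $G/G_z$.

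\textbf{The section.} The map $\varphi^{-1}(W\times 0)\to W$ is merely étale, with a point of trivial residue extension over $\psi(z)$. It therefore splits only after a Nisnevich base change, not after Zariski shrinking of $W$. Moreover, $\nu(z)=0$ requires $V$ to be nontrivial on $G_z$, not just on $G$. The paper builds this into the choice of base:
\begin{itemize}
\item $\tilde W=\nu^{-1}(0)\subseteq X_0$ is étale over $\AA(V')$ and contains $X_0^{G_z}\ni z$.
\item $X_1=X_0\times_{\AA(V')}\tilde W$ has the diagonal as an equivariant section.
\item This section also makes the closed fibre radicial for free, which gives \Pnum{2}.
\end{itemize}
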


It follows that, under hypotheses of Theorem \ref{theorem: equivariant gabber intro}, any $C_{p^r}$-pair $(X,Z)$ admits a $C_{p^r}$-equivariant Gabber's presentation. Note that Theorem \ref{theorem: equivariant gabber intro} also provides a $C_{p^r}$-equivariant section of the morphism $\psi$ every time that $V$ is a nontrivial one-dimensional $C_{p^r}$-representation, generalizing the result of Bachmann. We believe that the existence of such a $C_{p^r}$-equivariant section will be important for the applications of Theorem \ref{theorem: equivariant gabber intro}, as pointed out in \cite{Bachmann}.

In contrast, if a finite abelian group $G$ decomposes into more than one factor under the structure theorem for finite abelian groups, then we show that the analogous conclusions of Theorem \ref{theorem: equivariant gabber intro} do not hold.

\begin{theorem}[Theorem \ref{theorem: counterexample}] \label{theorem: counterexample intro}
Let $G$ be a finite abelian group. Suppose given primes $p_1$ and $p_2$ such that $G \simeq C_{p_1^{r_1}} \times C_{p_2^{r_2}} \times H$, for some abelian group $H$. Suppose that $k$ contains a primitive $p_i$th root of unity for $i=1,2$. Then there exists a $G$-pair $(X,Z)$ that does not admit a $G$-equivariant Gabber's presentation over $k$.
\end{theorem}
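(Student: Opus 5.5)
The strategy is to build an explicit counterexample from a fixed point whose tangent representation is ``too big'' for the shape $W \times \AA(V)$. Fix a primitive $p_1$th root of unity $\zeta_1$ and a primitive $p_2$th root of unity $\zeta_2$ in $k$. Let $\chi_1, \chi_2 \colon G \to k^\times$ be the characters obtained by projecting $G \to C_{p_1^{r_1}} \to C_{p_1}$, sending a generator to $\zeta_1$, and similarly for $\chi_2$. Then $\chi_1$ and $\chi_2$ are nontrivial, and they are distinct. If $p_1 \neq p_2$ this is clear from their orders. If $p_1 = p_2$ it holds because they factor through different cyclic factors.

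Take $X = \AA(\chi_1 \oplus \chi_2) = \AA^2_k$ with the diagonal $G$-action, and take $Z = \{0\}$, the origin, a $G$-fixed closed point of codimension $2$. Suppose, for contradiction, that $(X', W, V, \varphi)$ is a $G$-equivariant Gabber's presentation with respect to $z = 0$. Since $z$ is fixed, there is a unique distinguished point $z' \in X'$, and it is $G$-fixed with residue field $k$. Because $X' \to X$ is étale and equivariant, $T_{z'}X' \cong \chi_1 \oplus \chi_2$ as $G$-representations. Set $w = \psi(z')$ and $v = \nu(z')$. Both are $G$-fixed $k$-points, since $\varphi(z') = (w,v)$ is the image of a fixed rational point under an equivariant map. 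By \Pnum{1}, $d\varphi_{z'}$ is a $G$-equivariant isomorphism
\[
\chi_1 \oplus \chi_2 \;\cong\; T_wW \oplus T_v\AA(V) = T_wW \oplus V .
\]
In particular $V$ is isomorphic to $\chi_1$ or $\chi_2$. Hence $V$ is nontrivial, and $v$, being fixed, is the origin of $\AA(V)$. Moreover $T_wW$ is the other character; say $V \cong \chi_1$ and $T_wW \cong \chi_2$, the other case being symmetric.

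The contradiction comes from \Pnum{3}/\Pnum{4} together with the geometry of fixed loci. By \Pnum{2} and \Pnum{4}, $\varphi$ identifies $Z_{X'}$ with its image and is an isomorphism over it. So the fibre $\varphi^{-1}(w,0)$ is contained in $Z_{X'}$ and consists exactly of $z'$. The decisive structure is the fixed locus. Since $W$ is smooth at the fixed point $w$, the fixed subscheme of $W$ near $w$ is smooth with tangent space $(T_wW)^G = 0$, so $w$ is isolated in $W^G$. Consider now the subgroup $K = \ker \chi_2$. On $T_{z'}X'$ its fixed subspace is the $\chi_2$-line, and it acts on $T_wW$ trivially and on $V$ by $\chi_1|_K$, which is nontrivial because $\chi_1 \neq \chi_2$ and both have prime order. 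The $K$-fixed locus $X'^K$ is a smooth curve through $z'$ mapping étale onto $W^K \times \{0\}$ near $(w,0)$. Here $W^K$ is a smooth curve through $w$ with tangent $\chi_2$. The preimage of $Z$ in $X'^K$ is just $z'$, because $Z \cap X^K$ is the origin.

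The issue is whether this data is genuinely inconsistent, and I expect this to be the main obstacle. Applying the same reasoning to $\ker \chi_1$ gives a $\chi_1$-curve through $z'$ whose image must lie in the $\chi_1$-line $\{w\}\times \AA(V)$, which is fine. So both fixed curves map consistently, and a naive tangent-space count alone does not seem to contradict the hypotheses. The real obstruction must come from \Pnum{3} combined with \Pnum{4} under the action. The condition $\psi|_{Z}$ finite is automatic here, so one needs a subtler invariant. My first attempt would instead enlarge $Z$ to $Z = V(xy)$, the union of the two coordinate axes, which is $G$-invariant of codimension $1$. Then $Z_{X'}$ near $z'$ is two smooth branches with tangents $\chi_1$ and $\chi_2$. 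Their images under $\varphi$ are respectively $\{w\} \times \AA(V)$ locally and a curve in the $\chi_2$-direction. By \Pnum{3}, $\psi$ restricted to the $\chi_1$-branch must be finite onto its image, yet it is constant equal to $w$ on a curve. A curve mapping to a point is finite only if it is a finite scheme, and it is not, so \Pnum{3} fails. This cleaner choice of $Z$ should give the contradiction. The remaining care is to justify that the $\chi_1$-branch is exactly the $\ker\chi_1$-fixed curve, hence lands in $\psi^{-1}(W^{\ker \chi_1}) = \psi^{-1}(\text{isolated } w)$ locally. One must also handle the case of the other branch by symmetry.
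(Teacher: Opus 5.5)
Your final argument, with $Z=V(x_1x_2)$, is correct; this makes your pair exactly the paper's, with $H$ acting trivially. It is organized differently from the paper's proof.

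The paper reduces to $\dim W=1$, identifies $\widehat{\cO}_{W,w}\simeq k[[t]]$, and proves two power-series lemmas. Because $\sigma_i$ acts trivially on the other branch $k[[x_j]]$, and the branch map $k[[t]]\to k[[x_j]]$ is finite (so $t$ has nonzero image), $\sigma_i$ must act trivially on $k[[t]]$. Hence $G$ acts trivially on $T_wW$. Only then is the étale tangent isomorphism $T_zX\simeq T_\lambda\AA^1_k\oplus T_wW$ used to contradict $(T_zX)^G=0$.

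You run the argument in the opposite order:
\begin{enumerate}
\item The tangent isomorphism first forces $T_wW\simeq\chi_j$ for some $j$; say $j=2$.
\item Then $T_w(W^{\ker\chi_1})=(T_wW)^{\ker\chi_1}=0$, so $w$ is an isolated closed point of $W^{\ker\chi_1}$.
\item Let $T$ be the connected component through $z'$ of $X'\times_X V(x_2)$. Each $g\in\ker\chi_1$ is an automorphism of this separated étale $V(x_2)$-scheme fixing the rational point $z'$, so by étale rigidity $T$ is fixed pointwise by $\ker\chi_1$.
\item Hence $\psi$ collapses the curve $T\subseteq Z_{X'}$ to $w$, contradicting \Pnum{3}.
\end{enumerate}
This is the contrapositive of the paper's key step, with the formal power-series computations replaced by tangent spaces of fixed-point schemes plus rigidity. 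Your route skips the irreducibility and dimension reductions and shows that only \Pnum{1} and \Pnum{3} matter. The paper's route is more computational but fully self-contained.

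A few corrections for the write-up:
\begin{itemize}
\item Drop the $Z=\{0\}$ attempt entirely. That pair does admit a presentation, namely the identity $\AA(\chi_1\oplus\chi_2)\to\AA(\chi_2)\times_k\AA(\chi_1)$.
\item Do not assert that fixed loci are smooth: $|\ker\chi_1|$ may be divisible by $\mathrm{char}(k)$ through $H$. Either restrict to $C_{p_1^{r_1}}\times C_{p_2^{r_2}}$ as the paper does, or note that $T_w(W^K)=(T_wW)^K=0$ already gives isolation by Nakayama.
\item Justify $\chi_2|_{\ker\chi_1}\neq 1$ by $\sigma_2\in\ker\chi_1$, not by ``distinct characters of prime order''. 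The latter reasoning fails for $p_1=p_2\geq 3$, e.g.\ for $\chi$ and $\chi^2$, which have the same kernel.
\end{itemize}
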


By combining Theorem \ref{theorem: equivariant gabber intro} and Theorem \ref{theorem: counterexample intro} we get the following:

\begin{corollary}
Let $k$ be an infinite perfect field. Let $G$ be a finite abelian group such that $k$ contains a primitive $p^r$th root of unity for each cyclic summand $C_{p^r}$
appearing in the decomposition of $G$. Then every $G$-pair  admits a $G$-equivariant Gabber's presentation over $k$ if and only if $G$ is a cyclic $p$-group.
\end{corollary}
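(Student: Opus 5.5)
The corollary follows by combining the two theorems; the only extra work is reducing an arbitrary finite abelian group to one of the two cases and checking the root-of-unity hypotheses. By the structure theorem, write $G \simeq \prod_{i=1}^m C_{p_i^{r_i}}$ with $m \ge 0$ and $r_i \ge 1$. A cyclic $p$-group is exactly the case $m \le 1$.

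For the \emph{if} direction, suppose $G$ is a cyclic $p$-group.
\begin{itemize}
\item If $G=\{e\}$, the classical Theorem \ref{theorem: gabber original} applies. It is stated for affine irreducible $X$, while a $G$-pair only requires $X$ separated and smooth. We reduce as follows: replace $X$ by an affine open neighborhood of $z$ inside the unique irreducible (connected) component containing $z$. This open immersion is a Nisnevich neighborhood of $z$, and the restricted pair still has positive dimension and $Z$ of positive codimension. The discussion after Definition \ref{definition: gabber presentation} then produces the presentation.
\item If $G = C_{p^r}$ with $r\ge1$, the hypothesis gives a primitive $p^r$th root of unity in $k$. Theorem \ref{theorem: equivariant gabber intro} then applies directly to every $C_{p^r}$-pair and every $z\in Z$.
\end{itemize}

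For the \emph{only if} direction, argue by contraposition and assume $G$ is not a cyclic $p$-group, i.e.\ $m\ge 2$. Reorder so that $G \simeq C_{p_1^{r_1}}\times C_{p_2^{r_2}}\times H$ with $H=\prod_{i\ge3}C_{p_i^{r_i}}$. Note that $p_1$ and $p_2$ may coincide, which the hypotheses of Theorem \ref{theorem: counterexample intro} allow.

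By hypothesis $k$ contains a primitive $p_i^{r_i}$th root of unity $\zeta_i$. Then $\zeta_i^{p_i^{r_i-1}}$ is a primitive $p_i$th root of unity, so the hypothesis of Theorem \ref{theorem: counterexample intro} is satisfied. That theorem gives a $G$-pair with no $G$-equivariant Gabber's presentation over $k$, so not every $G$-pair admits one.

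There is no genuine obstacle here. The only points needing care are the trivial-group case, where we must pass from a general separated smooth $X$ to an affine irreducible one, and checking that the root-of-unity hypotheses transfer correctly to each of the two theorems.
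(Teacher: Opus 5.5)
Your proof is correct and is the paper's argument: the corollary follows by applying Theorem \ref{theorem: equivariant gabber intro} when $G=C_{p^r}$, and Theorem \ref{theorem: counterexample intro} when the primary decomposition has at least two summands, using that $\zeta^{p^{r-1}}$ is a primitive $p$th root of unity. Your separate treatment of the trivial group fills in a case the paper leaves implicit. There you apply Theorem \ref{theorem: gabber original} on an affine open inside the component of $z$; that component is positive-dimensional because $Z$ has positive codimension and so cannot contain a generic point of $X$.
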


The reason why Theorem \ref{theorem: equivariant gabber intro} does not hold for more general groups is that it is not possible in general to exhibit a finite equivariant morphism from $Z$ to the base $W$, see also Remark \ref{remark: why not possible to produce quasifinite equivariant morphisms} and Example \ref{example: failure quasi-finiteness}.

\subsection{Proof-sketch of the main theorem}

The proof of Theorem \ref{theorem: equivariant gabber intro} goes by ``shrinking" $X$ in various steps to equivariant Nisnevich neighborhoods of $Gz$, each step providing one of the required properties of Definition \ref{definition: gabber presentation}. In particular, we will first provide conclusions over the henselization $W^{h}_{Gz}$ of a smooth $k$-scheme $W$ and then pass to an actual model using a limit-descending argument. We just mention the fact that if $r=1$, then Theorem \ref{theorem: equivariant gabber intro} still holds over an infinite imperfect field $k$, the proof being analogous to Bachmann's $C_2$-equivariant case \cite{Bachmann}; however we will not work in this generality. For coefficients in which the characteristic exponent of $k$ is invertible, passage to the perfect closure is harmless for Voevodsky motives: pullback along $k \subseteq k^{\textnormal{perf}}$ induces an equivalence $\textnormal{DM}(k; R) \simeq \textnormal{DM}(k^{\text{perf}}; R)$; see \cite[Proposition 8.1]{CisinskiDeglise}.

The main difficulty in generalizing Bachmann's result to more general finite abelian groups is to find equivariant morphisms that are étale and whose restriction to the closed subvariety $Z$ gives a finite morphism over the base $W$. In particular, for a morphism $\varphi \colon X \to \AA^n_k$, the property of being étale at some point $x \in X$ requires the study of the tangent space $T_{x}X$. If a group $G$ acts on $X$, and if $x \in X^G$ is fixed by the $G$-action, then $T_{x}X$ is a $G$-representation over the residue field $\kappa(x)$, and it is possible to find a $G$-equivariant morphism $\varphi \colon X \to \AA(T_{x}X)$ which is étale at $x$. However, if the geometric stabilizer $G_{x}$ of $x$ is strictly smaller than $G$, a priori we can only find a $G_{x}$-equivariant morphism $\varphi \colon X \to \AA(T_{x}X)$ which is étale at $x$. In order to actually have a $G$-equivariant morphism, we will require an additional property on the $G_x$-representation $T_{x}X$, and find an étale morphism as in Proposition \ref{proposition: étale morphism as in 8.10 of equivariant cycles}, using \cite[Lemma 8.10]{Equivariantcycles}.

On the other hand, in order to produce equivariant finite morphisms $Z \to W$ we mimic the same strategy as \cite{Bachmann}. Namely, we first produce equivariant quasi-finite morphisms and then produce a finite morphism by shrinking to an open neighborhood, using Lemma \ref{lemma: pass from quasi-finite to finite}. However, note that in order to produce equivariant quasi-finite morphisms it will be essential that the group $G$ is of the form $C_{p^r}$. The result is Proposition \ref{proposition: quasi-finite morphisms construction}, and this is the reason why Theorem \ref{theorem: equivariant gabber intro} fails for finite abelian groups which are not of the form $C_{p^r}$.

We believe that our result can be used to explore applications in equivariant motivic homotopy theory.
We will study applications of Theorem \ref{theorem: equivariant gabber intro} in future work.

\subsection{Outline of the paper}
The paper is organized as follows. In $\S$\ref{section: counterexamples} we provide a counterexample to the equivariant version of Gabber's lemma for finite abelian groups which are not cyclic $p$-groups for some prime $p$. In $\S$\ref{section: representations} we recall some basics on semilinear representations, which naturally arise when working with a group scheme action over a scheme. In $\S$\ref{section: constructing equivariant étale morphisms} and $\S$\ref{section: constructing equivariant quasifinite morphisms} we prove the key fact that under suitable hypotheses, equivariant étale and quasi-finite morphisms exist from a fixed variety to some representation. We proceed in $\S$\ref{section: improving morphisms} by showing that we may improve some properties of an equivariant morphism of schemes by passing to some smaller equivariant Nisnevich neighborhood; these results will be applied throughout the proof of the main result (Theorem \ref{theorem: gabber}), which is an equivariant version of Gabber's lemma for cyclic $p$-groups; indeed we will achieve all the required properties of the morphism by passing step by step to a smaller equivariant Nisnevich neighborhood of the orbit of a fixed point. Finally in $\S$\ref{section: proof of theorem}, we prove the main result Theorem \ref{theorem: gabber}. The reader may first read $\S$\ref{section: counterexamples}, $\S$\ref{section: proof of theorem} and then consult the proof of Theorem \ref{theorem: gabber} to gain an overview of the argument, and refer to the preceding sections for the technical details. 

\subsection{Notation and conventions} Throughout the paper, $k$ denotes a fixed field, which will be assumed to be infinite or perfect when necessary. Every $k$-scheme is assumed to be separated over $k$ and morphisms of schemes will be over $k$ unless otherwise stated. Moreover, $G$ will always denote a finite abelian abstract group, which is viewed as a group scheme over $k$ as $G \coloneq \coprod_{g \in G} \Spec(k)$. The \emph{exponent} of $G$ is the least common multiple of the orders of its elements. If $X$ is a scheme over $k$, a $G$-action on $X$ over $k$ is equivalent to a group homomorphism $G \to \Aut_k (X)$. In other words, every element $g \in G$ defines an automorphism $g \colon X \to X$ of $X$ over $k$. If $x \in X$ is a point, we denote the \emph{set-theoretic stabilizer of $x$} by $S_x \coloneq \left\{ g \in G \mid gx = x \right\}$, the \emph{geometric stabilizer of $x$} by $G_x \coloneq \ker (S_x \to \Aut_k(\kappa(x)))$ and the \emph{set-theoretic orbit of $x$} by $Gx \coloneq \left\{gx \mid g \in G \right\}$. Note that $S_x$ and $G_x$ are subgroups of $G$, and $G_x \subseteq S_x$. Since $G$ is a finite abstract group, $Gx$ is a finite set of points of the underlying topological space of $X$. We view $Gx$ as the set of points of the underlying topological space of the scheme $G \times^{S_x} \Spec(\kappa(x)) = (G \times_k \Spec(\kappa(x)))/S_x$. We denote by $X^G$ the closed subscheme of geometrically $G$-fixed points of $X$.

If $(V, \rho)$ is a $G$-representation over $k$, we will, by abuse of notation, simply refer to $V$ as a $G$-representation whenever $\rho$ is understood or irrelevant. If $(V,\rho)$ is a $G$-representation over $k$, then $\AA(V)$ denotes the affine scheme $\Spec(\Sym(V^{\vee}))$ associated to $V$ with linear $G$-action determined by $\rho$. Note that if $\dim_k V=n$, up to fixing a basis of $V$, there is an isomorphism $\AA(V) \xrightarrow{\sim} \AA^n_k$. We also say that $\AA(V)$ (or $\AA^n_k$) is a $G$-representation over $k$, meaning that it is the affine space associated to a $G$-representation $(V, \rho)$ over $k$ and endowed with the linear $G$-action determined by $\rho$.

Let $X$ be a scheme, $Z \subseteq X$ a subscheme and $X' \to X$ a morphism. We write $Z_{X'} \coloneq X' \times_X Z$ for the base change of $Z$ along $X' \to X$.

\section{Failure outside the cyclic prime-power case} \label{section: counterexamples}

In this section we prove the failure of the equivariant version of Gabber's lemma proposed by Bachmann for finite abelian groups which decompose into more than one factor. 

\begin{theorem} \label{theorem: counterexample}
Let $G$ be a finite abelian group. Suppose given primes $p_1$ and $p_2$ such that $G \simeq C_{p_1^{r_1}} \times C_{p_2^{r_2}} \times H$, for some abelian group $H$. Suppose that $k$ contains a primitive $p_i$th root of unity for $i=1,2$. Then there exists a $G$-pair $(X,Z)$ that does not admit a $G$-equivariant Gabber's presentation over $k$.
\end{theorem}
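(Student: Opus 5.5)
The plan is to take a linear representation with two independent nontrivial characters as $X$, and the two coordinate axes as $Z$. At the origin, equivariance will force one axis to be collapsed to a point by $\psi$, which contradicts $\Pnum{3}$.

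\emph{Setup.} Let $\zeta_i\in k$ be a primitive $p_i$th root of unity. In particular $\operatorname{char} k\neq p_i$, and $p_1 \neq p_2$. Choose generators $\sigma_1,\sigma_2$ of the factors $C_{p_1^{r_1}}$ and $C_{p_2^{r_2}}$. Let $V_i$ be the one-dimensional $G$-representation on which $\sigma_i$ acts by $\zeta_i$, and on which the other cyclic factor and $H$ act trivially. Put $X=\AA(V_1\oplus V_2)\cong\AA^2_k$ and $Z=\AA(V_1)\cup\AA(V_2)$, the union of the two axes. This is a $G$-invariant closed subscheme of codimension $1$, so $(X,Z)$ is a $G$-pair. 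Let $z$ be the origin. It is a $k$-rational point fixed by $G$, and $T_zX\cong V_1\oplus V_2$ as $G$-representations. Note that $\sigma_1$ acts trivially on $\AA(V_2)$, and $\sigma_2$ acts trivially on $\AA(V_1)$.

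\emph{Tangent analysis.} Suppose, for a contradiction, that $(X',W,V,\varphi=(\psi,\nu))$ is a $G$-equivariant Gabber's presentation with respect to $z$. Let $z'\in X'$ be the distinguished point. Since $z$ is rational and fixed, $z'$ is a $k$-rational point fixed by $G$. Then $w=\psi(z')$ is a $k$-rational fixed point of $W$. Because $\varphi$ is étale, $\Pnum{1}$ gives an isomorphism of $G$-representations
\[
V_1\oplus V_2\cong T_{z'}X'\cong T_wW\oplus V .
\]
In particular $\dim T_wW=1$. The characters of $V_1$ and $V_2$ are distinct, since their orders $p_1\neq p_2$ differ. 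Since representations of the finite abelian group $G$ over $k$ with $|G|$-torsion characters as here decompose into characters uniquely, we get $\{V,T_wW\}=\{V_1,V_2\}$. By symmetry, assume $V=V_1$ and $T_wW\cong V_2$.

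\emph{Fixed-locus argument.} The automorphism $\sigma_2$ has order prime to $\operatorname{char} k$, so the fixed locus $W^{\sigma_2}$ is smooth near $w$. Its tangent space at $w$ is $(V_2)^{\sigma_2}=0$, because $\sigma_2$ acts by $\zeta_2\neq 1$. Hence $w$ is an isolated point of $W^{\sigma_2}$.

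Now consider $Z_{X'}\to Z$, which is étale. Some irreducible component $C$ of $Z_{X'}$ through $z'$ maps dominantly onto the axis $\AA(V_1)$ locally, so $C$ is a curve. Since $\sigma_2$ fixes $\AA(V_1)$ pointwise and $Z_{X'}\to Z$ is étale and equivariant, $\sigma_2$ fixes $C$ pointwise. The morphism $\sigma_2$ acts on $X'$ and fixes $z'$; it maps $C$ to a component through $z'$ lying over $\AA(V_1)$. Since $z'$ is a point where $Z_{X'}\to Z$ is étale, there is a unique branch over $\AA(V_1)$ at $z'$, so $\sigma_2$ preserves $C$. An automorphism of an étale cover that agrees with the identity downstairs and fixes a point of a connected component is the identity on that component. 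Hence $\sigma_2$ fixes $C$ pointwise.

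By equivariance, $\psi(C)\subseteq W^{\sigma_2}$. The curve $C$ is irreducible, contains $z'$, and $\psi(z')=w$ is isolated in $W^{\sigma_2}$. Therefore $\psi(C)=\{w\}$. Thus the fiber of $\psi_{|Z_{X'}}$ over $w$ contains the curve $C$, so $\psi_{|Z_{X'}}$ is not quasi-finite, contradicting $\Pnum{3}$. The case $V=V_2$ is identical, with $\sigma_1$ and the axis $\AA(V_2)$ in place of $\sigma_2$ and $\AA(V_1)$.

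\emph{Main obstacle.} The delicate step is the pointwise fixedness of the branch $C$ by $\sigma_2$, that is, lifting a fixed curve along the equivariant étale map $X'\to X$. I would handle it as above, via uniqueness of the étale branch through the fixed point $z'$ together with the rigidity of étale morphisms. Alternatively, $(X')^{\sigma_2}\to X^{\sigma_2}=\AA(V_1)$ is étale, being equivariant étale with tame group order. So $(X')^{\sigma_2}$ is a smooth curve through $z'$ contained in $Z_{X'}$, and one can take $C$ to be its component through $z'$.
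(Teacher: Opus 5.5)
Your proof is correct, and at the key step it takes a different route from the paper's. The paper first reduces to the case $\dim W=1$. It then passes to completions $\widehat{\cO}_{W,w}\simeq k[[t]]$ and uses finiteness of $\psi$ on \emph{both} axes to get $G$-equivariant finite maps $\alpha_i\colon k[[t]]\to k[[x_i]]$. Since $\sigma_i$ acts trivially on $k[[x_j]]$ for $j\neq i$, Lemmas~\ref{proposition: formal power serie computation} and~\ref{lemma: first coefficient map local rings} force each $\sigma_i$ to act trivially on $k[[t]]$. Hence $T_wW$ is trivial, which contradicts $(T_zX)^G=0$ via the tangent isomorphism.

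You reverse the order. The tangent isomorphism comes first and identifies $T_wW$ as one of the two nontrivial characters, say $V_2$. Then $w$ is isolated in $W^{\sigma_2}$. For this you do not need smoothness of $W^{\sigma_2}$: the Zariski tangent space of the equalizer $W^{\sigma_2}$ at $w$ is $(T_wW)^{\sigma_2}=0$, which suffices. After that you use only one axis, $X^{\sigma_2}=\AA(V_1)$. Its preimage through $z'$ is pointwise $\sigma_2$-fixed, either by rigidity of unramified separated morphisms or because $(X')^{\sigma_2}$ is open in $X'\times_X X^{\sigma_2}$ (the diagonal of an étale morphism is an open immersion). Equivariance then makes $\psi$ contract this curve to $w$, violating $\Pnum{3}$.

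What each route buys: yours avoids the Cohen structure theorem and the power-series computations, and makes the geometric obstruction visible. The paper's route never has to identify $T_wW$, and it yields the slightly stronger intermediate fact that $G$ acts trivially on $\widehat{\cO}_{W,w}$.

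One sentence needs correcting. The claim $p_1\neq p_2$ is neither assumed nor implied. Proposition~\ref{proposition: counterexample} explicitly allows equal primes, and non-cyclic $p$-groups such as $C_2\times C_2$ are exactly the cases the corollary needs. Nothing in your argument depends on it, though:
\begin{itemize}
\item $V_1\not\simeq V_2$ holds anyway, because $\sigma_1$ acts by $\zeta_1\neq 1$ on $V_1$ and trivially on $V_2$.
\item All you actually use is $T_wW\in\{V_1,V_2\}$. This follows from Jordan--H\"older applied to the one-dimensional constituents, even when $\textnormal{char}(k)$ divides $|H|$ and $\textnormal{Rep}_k(G)$ is not semisimple.
\end{itemize}
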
	

Theorem \ref{theorem: counterexample} will be a direct consequence of Proposition \ref{proposition: counterexample}. In order to prove the latter, we will need the following computational lemmas.

\begin{lemma} \label{proposition: formal power serie computation}
Let $m \geq 2$ be an integer such that $\textnormal{char}(k) \nmid m$ and let $g \colon k[[t]] \to k[[t]]$ be a $k$-algebra automorphism of the ring $k[[t]]$ of formal power series in the variable $t$ over $k$, satisfying $g^m = \textnormal{id}$. Write $g(t)= \sum_{n \geq 0} a_n t^n$ for some $a_n \in k$.
Then:
\begin{enumerate}
	\item $a_1 \in k$ is an $m$th root of unity;
	\item if $a_1=1$, then $a_n =0 $ for all $n \geq 2$.
\end{enumerate}
\end{lemma}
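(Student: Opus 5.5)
First I will note that $a_0 = 0$. A $k$-algebra automorphism of the local ring $k[[t]]$ is automatically local, so it maps the maximal ideal $(t)$ into itself. Hence $g(t) \in (t)$, and we can write $g(t) = a_1 t + a_2 t^2 + \cdots$. Moreover, $a_1 \neq 0$, since $g$ induces an automorphism of $(t)/(t^2)$. In fact, $g$ acts on the cotangent space $\frakm/\frakm^2 = k\cdot \bar t$ by multiplication by $a_1$.

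For part (1), compose substitutions. For any $k$-algebra endomorphism $h$ with $h(t)\in(t)$, we have $h(f)(t) = f(h(t))$. Consequently, the linear coefficient of $g^j(t)$ is $a_1^j$, since composing power series with no constant term multiplies the linear coefficients. Then $g^m = \mathrm{id}$ gives $g^m(t) = t$, so $a_1^m = 1$ and $a_1$ is an $m$th root of unity.

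For part (2), assume $a_1 = 1$ and suppose, for contradiction, that some $a_n \neq 0$ with $n\ge 2$. Let $n \geq 2$ be minimal with this property, so $g(t) = t + a_n t^n + O(t^{n+1})$. The key step is the induction
\[
g^j(t) = t + j a_n t^n + O(t^{n+1}) \quad \text{for all } j \geq 1.
\]
To prove it, compute $g^{j+1}(t) = g(g^j(t)) = g(t) + j a_n g(t)^n + O(t^{n+1})$. Since $g(t)^n = t^n + O(t^{n+1})$, this equals $t + (j+1)a_n t^n + O(t^{n+1})$. Here one should be careful that $g$ is continuous for the $t$-adic topology, so $g(O(t^{n+1})) \subseteq O(t^{n+1})$ because $g(t)\in(t)$. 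Taking $j = m$ gives $t = g^m(t) = t + m a_n t^n + O(t^{n+1})$, so $m a_n = 0$. Since $\mathrm{char}(k)\nmid m$, $m$ is invertible in $k$, hence $a_n = 0$, a contradiction. Therefore $a_n = 0$ for all $n\ge 2$, i.e.\ $g(t)=t$.

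The only real subtlety is the bookkeeping in the induction: tracking the $t^n$ coefficient under iterated substitution and justifying that $g$ preserves the filtration $(t^{k})$. Both follow from $g(t)\in(t)$, which was established at the start, so I do not expect a genuine obstacle.
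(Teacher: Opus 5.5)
Your proof is correct and follows essentially the same route as the paper: locality gives $a_0=0$, the linear coefficient of $g^m(t)$ gives $a_1^m=1$, and tracking the first nonvanishing higher-order coefficient through the iterates gives $m a_n=0$. The paper phrases (2) as an induction on $n$ with a separately computed base case $n=2$ rather than as a minimal counterexample, but the argument is the same, and your remark that $g$ preserves the $t$-adic filtration justifies a step the paper leaves implicit.
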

\begin{proof}
First note that $a_0=0$ because $g$ is a local morphism. Hence we may write $g(t) = a_1 t + a_2 t^2 + O(t^3)$.
Then we compute
\begin{align*}
	g^2(t) = g(g(t)) &= a_1 g(t) +a_2 g(t)^2 + O(t^3) = \\
	&=a_1(a_1t + a_2t^2+ O(t^3))+ a_2(a_1 t+O(t^2))(a_1t +O(t^2)) = \\
	&= a_1^2 t+a_2(a_1+a_1^2)t^2+ O(t^3).
\end{align*}
More generally we have that
\begin{align*}
	g^l(t) = a_1^lt+ a_2\bigg (\sum_{j=0}^{l-1} a_1^{l-1+j}\bigg )t^2 + O(t^3).
\end{align*}
Since $g^m(t)=t$, we see that $a_1^m = 1$ and thus $a_1 \in k$ is a $m$th root of unity, proving (1).

We prove (2) by induction on the index $n \geq 2$. In order to show that $a_2 = 0$ we observe that the coefficient of $t^2$ in $g^m(t)$ is equal to
\[
a_2\bigg (\sum_{j=0}^{m-1}a_1^{m-1+j} \bigg) = a_2 \bigg(\sum_{j=0}^{m-1}1 \bigg) = ma_2.
\]
On the other hand, $g^m(t)=t$, hence $ma_2=0$. Since $\textnormal{char}(k) \nmid m$, we see that $a_2 = 0$.
Suppose now that $a_1=1$ and $a_i=0$ for all $1<i \leq n$. We need to show that $a_{n+1}=0$. Write $g(t) = t + a_{n+1}t^{n+1} + O(t^{n+2})$.	As before, we compute $g^m(t)= t + ma_{n+1}t^{n+1} + O(t^{n+1})$. By comparing the coefficients again we find $ma_{n+1}=0$ and thus $a_{n+1}=0$.
\end{proof}

\begin{lemma} \label{lemma: first coefficient map local rings}
Let $G$ be an abstract group acting on the local rings $k[[t]]$ and $k[[x]]$ by $k$-algebra automorphisms. Let $\alpha \colon k[[t]] \to k[[x]]$ be a local $G$-equivariant $k$-algebra homomorphism such that $\alpha(t) \neq 0$. Let $g \in G$ be an element which acts trivially on $k[[x]]$ and write $g(t) = \sum_{n \geq 1} a_n t^n$ for some $a_n \in k$. Then $a_1 =1$.
\end{lemma}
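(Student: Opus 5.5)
Write $\alpha(t) = f(x) = \sum_{n \geq 1} b_n x^n \in k[[x]]$. This series is nonzero by hypothesis and has no constant term because $\alpha$ is local. Let $s \geq 1$ be the order of $f$, so that $f = b_s x^s + O(x^{s+1})$ with $b_s \neq 0$. The plan is to compare lowest-order terms in the identity obtained by applying $g$ to $\alpha(t)$ in two ways.

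First, since $g$ acts trivially on $k[[x]]$ and $\alpha$ is $G$-equivariant,
\[
\alpha(g(t)) = g(\alpha(t)) = \alpha(t) = f.
\]
Second, we compute $\alpha(g(t))$ directly. Here $g(t) = \sum_{n\ge1} a_n t^n$ is a power series in $t$. Since $\alpha$ is a local homomorphism of complete local $k$-algebras, it is continuous for the adic topologies: $\alpha$ sends $(t)^n$ into $(x)^n$. Hence
\[
\alpha\Big(\sum_{n\ge1} a_n t^n\Big) = \sum_{n\ge1} a_n f^n.
\]
To justify this, write $g(t) = \sum_{n<N} a_n t^n + t^N h$, apply the $k$-algebra map $\alpha$, and note that $\alpha(t^N h) \in (x)^{N}$. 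Letting $N \to \infty$ gives the formula.

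Therefore $\sum_{n \geq 1} a_n f^n = f$. The term $f^n$ has order $ns$, which is strictly larger than $s$ when $n \geq 2$. So the coefficient of $x^s$ on the left is $a_1 b_s$, while on the right it is $b_s$. Since $b_s \neq 0$, we conclude $a_1 = 1$.

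The only step that needs care is the continuity justification. The identity $\alpha(g(t)) = \sum a_n \alpha(t)^n$ is not a formal consequence of $\alpha$ being a $k$-algebra map, because it involves an infinite sum. The truncation argument handles this using locality of $\alpha$. Everything else is a lowest-degree coefficient comparison.
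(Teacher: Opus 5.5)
Your proposal is correct and follows essentially the same route as the paper: both use equivariance plus the triviality of $g$ on $k[[x]]$ to get $\alpha(g(t)) = \alpha(t)$, expand $\alpha(g(t)) = \sum_{n \geq 1} a_n \alpha(t)^n$, and compare the coefficient of the lowest-order term of $\alpha(t)$ to conclude $a_1 = 1$. The only difference is that you explicitly justify the infinite-sum expansion by a truncation and $x$-adic continuity argument, which the paper uses without comment.
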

\begin{proof}
Let $\alpha(t)  =\sum_{m \geq 0} \lambda_m x^m$ be the image of $t$ in $k[[x]]$ for some $\lambda_m \in k$. Since $\alpha$ is a local morphism we have $\lambda_0 = 0$. Let $m' \geq 1$ be the least integer $m$ such that $\lambda_{m'} \neq 0$. The equivariance of the map $\alpha$ gives $\alpha(g(t)) = g(\alpha(t)) = \alpha(t)$ where the last equality follows from the fact that $g$ acts trivially on $k[[x]]$. On one hand we have $\alpha(t)=\sum_{m \geq m'} \lambda_m x^m$; on the other hand we have
\[
\alpha(g(t)) = \alpha \bigg (\sum_{n \geq 1} a_n t^n \bigg) = a_1 \alpha(t) + \sum_{n \geq 2} a_n \alpha(t)^n = a_1 \bigg (\sum_{m \geq m'} \lambda_m x^m \bigg ) + \sum_{n \geq 2} a_n  \bigg (\sum_{m \geq m'} \lambda_{m} x^m \bigg)^n.
\]
Comparing the coefficient of $x^{m'}$ in $\alpha(g(t))$ and in $\alpha(t)$ we find $\lambda_{m'} = \lambda_{m'} a_1$. Since $\lambda_{m'} \neq 0$, it follows that $a_1 =1$.
\end{proof}

\begin{proposition} \label{proposition: counterexample}
Let $p_1,p_2$ be two not necessarily distinct primes such that $k$ contains a primitive $p_i$th root of unity $\zeta_{p_i}$ for $i=1,2$. Let $r_1,r_2 \geq 1$ be two integers, set $G \coloneq C_{p_1^{r_1}} \times C_{p_2^{r_2}}$ and let $\sigma_i$ be a generator of $C_{p_i^{r_i}}$ for $i=1,2$. Let $X = \AA^2_k = \Spec(k[x_1,x_2])$ with $G$-action given by 
\[
\sigma_i(x_j) = 
\begin{cases}
	\zeta_{p_i}x_i & \text{ if } i = j;  \\
	x_j & \text{ if } i \neq j.
\end{cases}
\]
Let $Z=V(x_1x_2) \subseteq X$ be the closed invariant subscheme given by the union of the coordinate axes and let $z=(0,0)$ be the origin. Then there does not exist a $G$-equivariant Gabber's presentation of $(X,Z)$ with respect to $z$.
\end{proposition}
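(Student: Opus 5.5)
Suppose for contradiction that a $G$-equivariant Gabber's presentation $(X',W,V,\varphi=(\psi,\nu))$ exists with respect to $z=(0,0)$. Since $z$ is a $k$-rational point fixed by $G$, there is a unique distinguished point $z'\in X'$ over $z$. It is $G$-fixed and has residue field $k$. By \Pnum{2} the restriction $\varphi_{|Z_{X'}}$ is a closed immersion, and $X'\to X$ is étale, so $Z_{X'}$ is étale locally at $z'$ the union of two branches, the preimages of the two axes. The plan is to look at the image point $w=\psi(z')\in W$, which is $G$-fixed and $k$-rational, and to compare the $G$-action on the complete local rings of the branches with the action on $\widehat{\cO}_{W,w}$.

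\textbf{Step 1.} By \Pnum{1}, $\varphi$ is étale at $z'$. Hence $\dim W=1$ near $w$, and $W$ is smooth at $w$, so $\widehat{\cO}_{W,w}\simeq k[[t]]$ with $G$ acting by $k$-algebra automorphisms. Likewise $\widehat{\cO}_{X',z'}\simeq \widehat{\cO}_{X,z}=k[[x_1,x_2]]$, equivariantly. The branch $\{x_2=0\}$ has complete local ring $k[[x_1]]$, on which $\sigma_2$ acts trivially and $\sigma_1$ acts by $x_1\mapsto \zeta_{p_1}x_1$. Symmetrically, $\sigma_1$ acts trivially on $k[[x_2]]$.

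\textbf{Step 2.} By \Pnum{3}, $\psi$ restricted to each branch is finite. So for each $j$ the induced local homomorphism $\alpha_j\colon k[[t]]\to k[[x_j]]$ is finite and $G$-equivariant, and in particular $\alpha_j(t)\neq 0$; otherwise the branch would map to the point $w$, contradicting finiteness, since a curve cannot be finite over a point. We now apply Lemma \ref{lemma: first coefficient map local rings} to $\alpha_1$ and $g=\sigma_2$, which acts trivially on $k[[x_1]]$. The linear coefficient of $\sigma_2(t)$ equals $1$. Since $\sigma_2$ has finite order and $\operatorname{char}k\neq p_2$ (because $k$ contains a primitive $p_2$th root of unity), Lemma \ref{proposition: formal power serie computation}(2) gives $\sigma_2(t)=t$. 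Symmetrically, using $\alpha_2$, we get $\sigma_1(t)=t$. Hence $G$ acts trivially on $k[[t]]$.

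\textbf{Step 3.} Now consider the equivariant finite map $\alpha_1\colon k[[t]]\to k[[x_1]]$, and write $\alpha_1(t)=\sum_{m\ge m'}\lambda_m x_1^m$ with $\lambda_{m'}\neq0$. Invariance of $t$ gives $\sigma_1(\alpha_1(t))=\alpha_1(t)$, which forces $\lambda_m=0$ unless $\zeta_{p_1}^m=1$, that is, unless $p_1\mid m$. So the branch maps with ramification, but this is not yet a contradiction. The contradiction should come from the étaleness of $\varphi=(\psi,\nu)$ at $z'$. Since $G$ acts trivially on $\widehat{\cO}_{W,w}$, the cotangent map $T^*_w W\to T^*_{z'}X'$ has image in the $G$-invariant part of $T^*_{z'}X'=k x_1\oplus kx_2$. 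That invariant part is $0$, because $\sigma_1$ and $\sigma_2$ act on $x_1,x_2$ through the nontrivial characters $\zeta_{p_1}$ and $\zeta_{p_2}$. Étaleness requires $T^*_{(w,0)}(W\times\AA(V))=T_w^*W\oplus V^\vee\to T^*_{z'}X'$ to be an isomorphism. The image of $T_w^*W$ is zero, so this map has rank at most $1<2$, a contradiction.

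The main obstacle is Step 2: making rigorous the passage from the global data to complete local rings of the branches. This includes checking that $\psi_{|Z_{X'}}$ induces finite, hence nonzero-on-$t$, local homomorphisms on each branch completion, and that \Pnum{2} lets us identify the branches with the coordinate axes. I would handle this using that $X'\to X$ is étale at the $k$-rational point $z'$ and that finiteness is preserved by completion. An alternative approach is to note directly that $\psi_{|Z_{X'}}$ is quasi-finite at $z'$, so $\alpha_j(t)\neq0$.
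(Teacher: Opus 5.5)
Your proposal is correct and follows essentially the same route as the paper. You pass to completed local rings with $\widehat{\cO}_{W,w}\simeq k[[t]]$, apply Lemma~\ref{lemma: first coefficient map local rings} and Lemma~\ref{proposition: formal power serie computation} crosswise (to $\alpha_1,\sigma_2$ and to $\alpha_2,\sigma_1$) to get a trivial $G$-action on $k[[t]]$, and then contradict étaleness of $\varphi$ at $z'$ using $(T_zX)^G=0$. Your cotangent-space version of the last step is just the dual of the paper's tangent-space argument, and reading the action on $\frakm_w/\frakm_w^2$ directly from the completion lets you skip the Krull intersection step. The only slip is writing $\varphi(z')=(w,0)$, since $\nu(z')$ need not vanish when $V$ is trivial; this does not affect the rank count.
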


\begin{proof}
Suppose by contradiction that there exists a $G$-equivariant Gabber's presentation $(X',W, V, \varphi)$ of $(X,Z)$ with respect to $z$. For $\{i,j\} = \{1,2\}$, let $Z_i \coloneq V(x_j)$; thus $Z_i = Z^{\langle \sigma_j \rangle}$. These are closed $G$-invariant subschemes of $Z$, and the completed local ring of $Z_i$ at $z$ is $k[[x_i]]$.

Let $z' \in X'$ be the distinguished point in $X'$ over $z \in X$.
Since $X$ is an irreducible scheme of dimension two and $z$ is fixed by the $G$-action, after replacing $X'$ with the irreducible component containing $z'$, we may assume that $X'$ is irreducible and that $\dim(X')=2$. After replacing $W$ with its irreducible component containing the image of the composite morphism $\psi= \textnormal{pr}_W \circ \varphi$, we may also assume that $W$ is irreducible, so that $\AA^1_W$ is also irreducible. Since $\varphi \colon X' \to \AA^1_W$ is étale, we see that necessarily $\dim(\AA^1_W)=2$ and $\dim(W)=1$.

Let $Z_i'$ be the base change of $Z_i$ to $X'$. Consider the following diagram for $i=1,2$:

% https://q.uiver.app/#q=WzAsOCxbMSwxLCJaIl0sWzEsMCwiWl97WCd9Il0sWzMsMCwiWCciXSxbMiwxLCJYIl0sWzQsMSwiXFxBQV4xX2sgXFx0aW1lc19rIFciXSxbNCwyLCJXIl0sWzAsMSwiWl9pIl0sWzAsMCwiWl9pJyJdLFsxLDIsIiIsMCx7InN0eWxlIjp7InRhaWwiOnsibmFtZSI6Imhvb2siLCJzaWRlIjoidG9wIn19fV0sWzIsMywiw6l0Il0sWzEsMF0sWzAsMywiIiwwLHsic3R5bGUiOnsidGFpbCI6eyJuYW1lIjoiaG9vayIsInNpZGUiOiJ0b3AifX19XSxbMiw0LCLDqXQiLDJdLFs0LDUsIlxcdGV4dG5vcm1hbHtwcn1fMiJdLFs2LDAsIiIsMCx7InN0eWxlIjp7InRhaWwiOnsibmFtZSI6Imhvb2siLCJzaWRlIjoidG9wIn19fV0sWzcsMSwiIiwwLHsic3R5bGUiOnsidGFpbCI6eyJuYW1lIjoiaG9vayIsInNpZGUiOiJ0b3AifX19XSxbNyw2XV0=
\begin{equation}\label{equation: diagram counterexample}
	\begin{tikzcd} 
		{Z_i'} & {Z_{X'}} && {X'} & \\
		{Z_i} & Z & X && {\AA^1_k \times_k W} \\
		&&&& W.
		\arrow[hook, from=1-1, to=1-2]
		\arrow[from=1-1, to=2-1]
		\arrow[hook, from=1-2, to=1-4]
		\arrow[from=1-2, to=2-2]
		\arrow["\textnormal{ét}", from=1-4, to=2-3]
		\arrow["\textnormal{ét}"', from=1-4, to=2-5]
		\arrow[hook, from=2-1, to=2-2]
		\arrow[hook, from=2-2, to=2-3]
		\arrow["{\textnormal{pr}_2}", from=2-5, to=3-5]
	\end{tikzcd}
\end{equation}

Since $Z_i'$ are closed subschemes of $Z_{X'}$ and $Z_{X'}$ is finite over $W$ by property $\Pnum{3}$, the composite morphisms $Z_i' \to W$ are still finite for $i=1,2$. Let $\varphi(z')=(\lambda, w)$ be the image of $z'$ in $\AA^1_W$. Note that $z' \in (X')^{G}$ is a fixed point, because $X' \to X$ is an equivariant Nisnevich neighborhood of $z \in X^{G}$. Then $w \in W^{G}$ is also a fixed point, being the image of $z'$ under the $G$-equivariant morphism $\textnormal{pr}_2 \circ \varphi$.

The point $z'$ is $k$-rational, and the closed immersion $Z_{X'} \to \AA^1_W$ therefore makes $\varphi(z')$ and its projection $w$ $k$-rational. Since $W$ is a smooth irreducible $k$-scheme of dimension one, the Cohen structure theorem (see, for example, \cite[\href{https://stacks.math.columbia.edu/tag/032A}{Tag 032A}]{stacks-project}) gives $\comp_{W,w} \simeq k[[t]]$. Moreover diagram \ref{equation: diagram counterexample} shows that we have isomorphisms
\[
\comp_{\AA^1_W, \varphi(z')} \xrightarrow{\sim} \comp_{X',z'} \xleftarrow{\sim} \comp_{X,z} \simeq k[[x_1, x_2]]
\]
because the maps $\varphi$ and $X' \to X$ are étale and we use \cite[Corollary 18.66]{GortzWedhorn2}. It follows that diagram \ref{equation: diagram counterexample} induces the following diagram on completed local rings for $i=1,2$:

% https://q.uiver.app/#q=WzAsOCxbMSwxLCJcXGZyYWN7a1tbeF8xLHhfMl1dfXsoeF8xeF8yKX0iXSxbMSwwLCJcXHdpZGVoYXR7XFxjT31fe1pfe1gnfSwgen0iXSxbMywwLCJcXHdpZGVoYXR7XFxjT31fe1gnLCB6J30iXSxbMiwxLCJrW1t4XzEseF8yXV0iXSxbNCwxLCJcXHdpZGVoYXR7XFxjT31fe1xcQUFeMV9XLCBcXHZhcnBoaSh6Jyl9Il0sWzQsMiwia1tbdF1dIl0sWzAsMSwia1tbeF9pXV0iXSxbMCwwLCJcXHdpZGVoYXR7XFxjT31fe1pfaScsIHp9Il0sWzIsMV0sWzMsMiwiXFxzaW0iLDJdLFswLDEsIlxcc2ltIl0sWzMsMF0sWzQsMiwiXFxzaW0iXSxbNSw0XSxbMCw2XSxbNiw3LCJhX2kiXSxbMSw3XV0=
\begin{equation} \label{equation: diagram on completions}
	\begin{tikzcd}
		{\widehat{\cO}_{Z_i', z'}} & {\widehat{\cO}_{Z_{X'}, z'}} && {\widehat{\cO}_{X', z'}} & \\
		{k[[x_i]]} & {\frac{k[[x_1,x_2]]}{(x_1x_2)}} & {k[[x_1,x_2]]} && {\widehat{\cO}_{\AA^1_W, \varphi(z')}} \\
		&&&& {k[[t]].}
		\arrow[from=1-2, to=1-1]
		\arrow[from=1-4, to=1-2]
		\arrow["{a_i}", from=2-1, to=1-1]
		\arrow["\sim", from=2-2, to=1-2]
		\arrow[from=2-2, to=2-1]
		\arrow["\sim"', from=2-3, to=1-4]
		\arrow[from=2-3, to=2-2]
		\arrow["\sim", from=2-5, to=1-4]
		\arrow[from=3-5, to=2-5]
	\end{tikzcd}
\end{equation}

By the functoriality of completion, all of the complete local rings appearing in diagram \ref{equation: diagram on completions} carry a $G$-action, because the points $z,z'$ and $w$ are geometrically fixed points. In particular, $\sigma_i$ acts on $k[[x_1,x_2]]$ as $\sigma_i(x_i)=\zeta_{p_i}x_i$ and $\sigma_i(x_j)=x_j$ for $i,j =1,2$. Moreover, again by the functoriality of the completion, all the morphisms in diagram \ref{equation: diagram on completions} are $G$-equivariant. 

Since the composite morphism $Z_i' \to W$ is finite for $i=1,2$, then it induces a finite morphism of completed local rings $k[[t]] \to \comp_{Z_i',z'}$ for $i=1,2$ by \cite[\href{https://stacks.math.columbia.edu/tag/07N9}{Tag 07N9}]{stacks-project}.

Moreover, the map $a_i \colon k[[x_i]] \to \widehat{\cO}_{Z_i',z'}$ is an isomorphism for $i =1,2$, because $Z_i' \to Z_i$ is étale and induces an isomorphism on residue fields at the distinguished point. Composing the map $k[[t]] \to \widehat{\cO}_{Z_i',z'}$ with the inverse $a_i^{-1}$ of $a_i$, we get finite $G$-equivariant morphisms of rings $\alpha_i \colon k[[t]] \rightarrow  k[[x_i]]$ for $i=1,2$. Note that $\alpha_i(t) \neq 0$ for $i=1,2$, because otherwise $k[[x_i]]$ would be a finite $k$-algebra. For $i=1,2$, let $m_i$ be the order of the automorphism of $k[[t]]$ induced by $\sigma_i$. Then $m_i$ divides $p_i^{r_i}$, for $i=1,2$. If $m_i =1$, the action of $\sigma_i$ on $k[[t]]$ is already trivial. Otherwise, $\textnormal{char}(k) \nmid m_i$, and we may write
\[
\sigma_i(t) = a_{i,1} t + \sum_{q \geq 2} a_{i,q}t^q.
\]
By Lemma \ref{proposition: formal power serie computation}, $a_{i,1}$ is an $m_i$th root of unity. Applying Lemma \ref{lemma: first coefficient map local rings} to $\alpha_j \colon k[[t]] \to k[[x_j]]$ and $\sigma_i$, where $i \neq j$, gives $a_{i,1} = 1$, because $\sigma_i$ acts trivially on $k[[x_j]]$. Lemma \ref{proposition: formal power serie computation} then implies that $a_{i,q} = 0$ for every $q \geq 2$. Thus $\sigma_i(t) = t$, and $\sigma_i$ acts trivially on $k[[t]]$. It follows that the $G$-action on $\comp_{W,w}$ is trivial.

The completion morphism $\cO_{W,w} \to \comp_{W,w}$ is injective by the Krull intersection theorem, because the ring $\cO_{W,w}$ is noetherian. It follows that the $G$-action on $\cO_{W,w}$ is also trivial, and in particular the $G$-action on $T_wW$ is trivial. This yields a contradiction as follows. The diagram 
% https://q.uiver.app/#q=WzAsMyxbMSwwLCJYJyJdLFswLDEsIlgiXSxbMiwxLCJcXEFBXjFfayBcXHRpbWVzX2sgVyJdLFswLDEsIsOpdCJdLFswLDIsIsOpdCIsMl1d
\[\begin{tikzcd}
	& {X'} & \\
	X && {\AA^1_k \times_k W}
	\arrow["\text{ét}", from=1-2, to=2-1]
	\arrow["\text{ét}"', from=1-2, to=2-3]
\end{tikzcd}\]

gives $G$-equivariant isomorphisms on tangent spaces
% https://q.uiver.app/#q=WzAsMyxbMSwwLCJUX3t4J31YIl0sWzAsMSwiVF97eH1YIl0sWzIsMSwiVF97XFx2YXJwaGkodyl9XFxBQV4xX1ciXSxbMCwxLCJcXHNpbSIsMl0sWzAsMiwiXFxzaW0iXV0=
\[\begin{tikzcd}
	& {T_{z'}X'} & \\
	{T_{z}X} && {T_{\varphi(z')}\AA^1_W \simeq T_{\lambda}\AA^1_k \oplus T_{w}W}
	\arrow["\sim"', from=1-2, to=2-1]
	\arrow["\sim", from=1-2, to=2-3]
\end{tikzcd}\]
where the isomorphism in the lower-right corner is \cite[Proposition 6.9]{GortzWedhorn1}. Then we have a contradiction because $(T_zX)^{G} = 0$, while $T_wW$ is trivial.
\end{proof}

We are now ready to prove Theorem \ref{theorem: counterexample}.

\begin{proof}[Proof of Theorem \ref{theorem: counterexample}]
Write $G = C_{p_1^{r_1}} \times C_{p_2^{r_2}} \times H$ for some primes $p_1, p_2$ and some finite abelian group $H$. We let $(X,Z)$ be the $G$-pair defined in Proposition \ref{proposition: counterexample} by letting $H$ act trivially on $X$. Let $G' = C_{p_1^{r_1}} \times C_{p_2^{r_2}} \subseteq G$. If a $G$-equivariant Gabber's presentation of $(X,Z)$ with respect to the origin $z \in X$ existed, then it would also be a $G'$-equivariant Gabber's presentation of $(X,Z)$ with respect to $z$. However, Proposition \ref{proposition: counterexample} shows that such data cannot exist.
\end{proof}

\section{Preliminaries on semilinear representations} \label{section: representations}

In this subsection we recall some results about semilinear $G$-representations. These naturally arise in the proof of Theorem \ref{theorem: gabber}, because if $X$ is a scheme with an action of a finite group $G$ and $x \in X$ is a point, then the tangent space $T_{x}X$ is a semilinear $S_{x}$-representation over $\kappa(x)$.

\vspace{0.2cm}
We begin by recalling some terminology from the theory of linear representations. Let $G$ be a group. We denote by $\textnormal{Rep}_k(G)$ the category of $G$-representations over $k$. An object of $\textnormal{Rep}_k(G)$ is a pair $(V, \rho)$ where $V$ is a finite-dimensional $k$-vector space and $\rho \colon G \to \Aut_k(V)$ is a group homomorphism; morphisms in $\textnormal{Rep}_k(G)$ are morphisms of $G$-representations. We will also say, by abuse of notation, that $V$ is a $G$-representation, tacitly assuming that a group homomorphism $\rho \colon G \to \Aut_k(V)$ exists, when $\rho$ is understood or irrelevant. For example, we say that $V$ is a trivial $G$-representation to mean that $\rho$ is trivial. Let $(V, \rho)$ be a $G$-representation over $k$. A $k$-vector subspace $W \subseteq V$ is a \emph{subrepresentation} if $\rho(g)_{|W}(W) \subseteq W$ for all $g \in G$. A $G$-representation $(V,\rho)$ is said to be \emph{irreducible} if it is nonzero and has no proper nontrivial subrepresentations. Let $H \subseteq G$ be a subgroup. Recall that there is a \emph{restriction functor} $\textnormal{Res}^G_H(-) \colon \textnormal{Rep}_k(G) \to \textnormal{Rep}_k(H)$ and an \emph{induction functor} $\textnormal{Ind}^H_G(-) \colon \textnormal{Rep}_k(H) \to \textnormal{Rep}_k(G)$ going in the opposite direction, which form a pair of adjoint functors $\textnormal{Ind}^H_G \dashv \textnormal{Res}^G_H$. We will need the following result.

\begin{lemma} \label{lemma: lemma on linear representations}
Let $G$ be a finite abelian group such that $k$ contains a primitive $n$th root of unity, where $n$ is the exponent of $G$. Then:
\begin{enumerate}
	\item all irreducible $G$-representations over $k$ are one-dimensional;
	\item the group $G$ is noncanonically isomorphic to its character group $\widehat{G}$ over $k$;
	\item for any subgroup $H \subseteq G$, the restriction functor $\textnormal{Res}^G_H(-) \colon \textnormal{Rep}_k(G) \to \textnormal{Rep}_k(H)$ is essentially surjective.
\end{enumerate}
\end{lemma}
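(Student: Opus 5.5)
The argument rests on the fact that $n$ is invertible in $k$ (because $k$ contains a primitive $n$th root of unity $\zeta_n$), so Maschke's theorem applies and every $G$-representation over $k$ is semisimple. All three parts then follow from diagonalizing the action.

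For (1), let $V$ be an irreducible $G$-representation. For each $g \in G$, the operator $\rho(g)$ satisfies $\rho(g)^n = \mathrm{id}$. The polynomial $T^n - 1 = \prod_{j=0}^{n-1}(T - \zeta_n^j)$ splits with distinct roots in $k$, so $\rho(g)$ is diagonalizable over $k$. Since $G$ is abelian, the operators $\rho(g)$ commute, and a commuting family of diagonalizable operators is simultaneously diagonalizable. Hence $V$ contains a common eigenvector $v$. The line $kv$ is a subrepresentation, and irreducibility forces $V = kv$.

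For (2), let $\widehat{G} = \Hom(G, k^\times)$. Every character takes values in $\mu_n(k) = \langle \zeta_n\rangle \simeq C_n$. Write $G \simeq \prod_i C_{n_i}$ with each $n_i \mid n$. Then
\[
\widehat{G} \simeq \prod_i \Hom(C_{n_i}, \mu_n(k)) \simeq \prod_i \mu_{n_i}(k) \simeq \prod_i C_{n_i} \simeq G.
\]
The step $\mu_{n_i}(k) \simeq C_{n_i}$ holds because $\zeta_n^{n/n_i}$ is a primitive $n_i$th root of unity. The isomorphism depends on the choice of decomposition and of roots of unity, which is why it is noncanonical.

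For (3), it suffices by semisimplicity and (1), applied to $H$, whose exponent divides $n$, to show that every character $\chi \colon H \to k^\times$ extends to a character of $G$. Direct sums of extensions then extend direct sums. Here $\chi$ lands in $\mu_n(k) \simeq \ZZ/n$, so the question becomes whether restriction $\Hom(G, \ZZ/n) \to \Hom(H, \ZZ/n)$ is surjective. I would argue this in one of two ways:
\begin{itemize}
\item By counting: applying (2) to $G$, $H$ and $G/H$ gives $|\widehat{G}| = |G|$, $|\widehat{H}| = |H|$ and $|\widehat{G/H}| = |G/H|$. The kernel of restriction is $\widehat{G/H}$, so the image of restriction has order $|G|/|G/H| = |H| = |\widehat{H}|$, and restriction is surjective.
\item By injectivity: $\ZZ/n$ is an injective $\ZZ/n$-module, and $G$ is a $\ZZ/n$-module since $nG = 0$.
\end{itemize}

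The only delicate point is part (3), the extension of characters, and the counting argument settles it cleanly.
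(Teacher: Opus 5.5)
Your proof is correct; it differs from the paper's in parts (1) and (3).

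\textbf{Part (1).} The paper simply cites \cite[Lemma 5.2]{Equivariantcycles}. You give a self-contained argument: the commuting operators $\rho(g)$ are each annihilated by the split separable polynomial $T^n-1$, so they are simultaneously diagonalizable.

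\textbf{Part (2).} This is the same computation as in the paper.

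\textbf{Part (3).} The paper argues by Frobenius reciprocity and Schur's lemma. Given an irreducible $H$-representation $V$, it takes an irreducible summand $W$ of $\textnormal{Ind}^H_G(V)$, which is one-dimensional by (1). Under $\textnormal{Ind}^H_G \dashv \textnormal{Res}^G_H$, the projection $\textnormal{Ind}^H_G(V)\to W$ corresponds to a nonzero map $V\to\textnormal{Res}^G_H(W)$, and Schur's lemma makes it an isomorphism.

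You instead reduce to surjectivity of $\widehat{G}\to\widehat{H}$. You prove it by counting with the left exact sequence $0\to\widehat{G/H}\to\widehat{G}\to\widehat{H}$ and (2) applied to $G$, $H$ and $G/H$. This is legitimate because their exponents divide $n$. Your alternative via self-injectivity of $\ZZ/n$ also works.

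\textbf{Comparison.} The paper's route uses the roots-of-unity hypothesis only through (1), and is formal once the irreducibles of $G$ are known to be lines. Yours is more elementary and shows that the real content of (3) is extending characters, which $\mu_n(k)$ is large enough to allow. The cost is that you need (2) for the subgroup and the quotient as well.

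\textbf{A small precision.} Maschke's theorem needs $|G|$, not just $n$, to be invertible in $k$. This holds because $|G|$ and $n$ have the same prime divisors.
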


\begin{proof}
(1) is \cite[Lemma 5.2]{Equivariantcycles}. Write $G = C_{n_1} \times \ldots \times C_{n_k}$. Since $k$ contains a primitive $n$th root of unity, we have $\widehat{G} = \Hom(G, k^{\times}) \simeq \prod_{i=1}^k \Hom(C_{n_i}, k^{\times}) \simeq G$, giving (2). By Maschke's theorem, the categories $\textnormal{Rep}_k(G)$ and $\textnormal{Rep}_k(H)$ are semisimple. Then, in order to show (3), it is sufficient to show that any irreducible $H$-representation over $k$ is isomorphic to $\textnormal{Res}^G_H(W)$ for some $G$-representation $W$ over $k$. Let $V$ be an irreducible $H$-representation over $k$. Let $\textnormal{Ind}^H_G(V)$ be the induced $G$-representation and let $W \subseteq \textnormal{Ind}^H_G(V)$ be any irreducible $G$-subrepresentation. By (1), it follows that $\dim_k(W)=1$. Then the projection $\textnormal{Ind}^H_G(V) \to W$ is a morphism of $G$-representations that corresponds to a nonzero morphism of $H$-representations $\varphi \colon V \to \textnormal{Res}^G_H(W)$ under the adjunction $\textnormal{Ind}^H_G \dashv \textnormal{Res}^G_H$. By Schur's lemma, $\varphi$ is necessarily an isomorphism, because both $V$ and $\textnormal{Res}^G_H(W)$ are irreducible $H$-representations.
\end{proof}

We now recall some basics about semilinear representations.

\begin{definition}
Let $G$ be a group, let $K$ be a field and let $\sigma \colon G \to \Aut(K)$ be a group homomorphism. A \emph{$\sigma$-semilinear $G$-representation over $K$} is a pair $(V, \rho)$ where $V$ is a finite-dimensional $K$-vector space and $\rho \colon G \to \Aut_{\ZZ}(V)$ is a group homomorphism such that 
\[
\rho(g) (\lambda v) = \sigma(g)(\lambda) \rho(g)(v) \qquad \forall g \in G, \lambda \in K, v \in V.
\]
A \emph{morphism of semilinear $G$-representations over $K$} from $(V, \rho)$ to $(W, \tau)$ is a $K$-linear morphism $\phi \colon V \to W$ such that $\tau (g)\circ \phi = \phi \circ \rho(g)$ for all $g \in G$. We denote the resulting category of $\sigma$-semilinear $G$-representations over $K$ by $\textnormal{Rep}_K^{\sigma}(G)$.
\end{definition}

To simplify the notation, we will also write $g(\lambda)$ for $\sigma(g)(\lambda)$ and $g(v)$ for $\rho(g)(v)$.

\begin{remark}
Note that if $(V, \rho)$ is a semilinear $G$-representation over $K$, the group homomorphism $\rho$ maps to $\Aut_{\ZZ}(V)$ which is the group of automorphisms of the underlying abelian group of $V$, and not $\Aut_K(V)$, otherwise each morphism $\rho(g)$ would be $K$-linear.
\end{remark}

\begin{example}
If $\sigma \colon G \to \Aut(K)$ is the trivial group homomorphism, then the category $\textnormal{Rep}_K^{\sigma}(G)$ is simply $\textnormal{Rep}_K(G)$.
\end{example}

\begin{definition}
Let $(V, \rho)$ be a $\sigma$-semilinear $G$-representation over $K$. A \emph{$\sigma$-semilinear $G$-subrepresentation over $K$} is a $K$-vector subspace $W \subseteq V$ such that $\rho(g)_{|W}(W) \subseteq W$ for all $g \in G$. We say that $(V, \rho)$ is \emph{irreducible} if it is nonzero and the only semilinear $G$-subrepresentations are $0$ and $V$.
\end{definition}

The category $\textnormal{Rep}^{\sigma}_{K}(G)$ is \emph{semisimple} if every $\sigma$-semilinear $G$-representation is a direct sum of irreducible $\sigma$-semilinear $G$-representations.

\begin{lemma} \label{lemma: semisimple}
Let $G$ be a finite group, let $K$ be a field and let $\sigma \colon G \to \Aut(K)$ be a group homomorphism. Then the category $\textnormal{Rep}^{\sigma}_{K}(G)$ is semisimple if and only if $|\ker(\sigma)|$ is invertible in $K$.
\end{lemma}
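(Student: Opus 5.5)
The plan is to reduce semilinear representations to linear representations of the group $N \coloneq \ker(\sigma)$ by Galois descent, and then apply Maschke's theorem. Set $H \coloneq \sigma(G) \subseteq \Aut(K)$, a finite group, and $K_0 \coloneq K^H$. By Artin's theorem, $K/K_0$ is Galois with group $H \cong G/N$.

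\textbf{The ``if'' direction.} Assume $|N|$ is invertible in $K$, let $(V,\rho)$ be a semilinear $G$-representation, and let $W \subseteq V$ be a subrepresentation. It suffices to find a $G$-stable $K$-linear complement to $W$; semisimplicity then follows by induction on $\dim_K V$. I would average twice.
\begin{enumerate}
\item First produce a $K$-linear projection $\pi \colon V \to W$ that is $N$-equivariant. Start from any $K$-linear projection $\pi_0$ and set $\pi_1 \coloneq |N|^{-1}\sum_{n\in N} n\pi_0 n^{-1}$. This is $K$-linear because $N$ acts $K$-linearly, and it is the identity on $W$.
\item Then make $\pi_1$ equivariant for all of $G$. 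Choose coset representatives $g_1,\dots,g_m$ of $G/N$ and try $\pi \coloneq \frac{1}{m}\sum_i g_i \pi_1 g_i^{-1}$. Each conjugate $g\pi_1 g^{-1}$ is again $K$-linear, since $g(\lambda g^{-1}v)=g(\lambda)v$ and so the semilinear twists cancel. Because $\pi_1$ is $N$-equivariant and $N$ is normal, the conjugate depends only on the coset.
\end{enumerate}

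The obstacle in step 2 is that $m=|H|$ need not be invertible in $K$, so plain averaging may fail. Instead I would use Galois descent (Speiser's lemma): the semilinear $H$-action on $V^N$ gives $V^N \cong K\otimes_{K_0}(V^N)^H$. More usefully, the $H$-action on $\Hom_K(V,W)^N$, namely $\pi\mapsto g\pi g^{-1}$, is semilinear, so
\[
\Hom_K(V,W)^N = K\otimes_{K_0}\Hom_K(V,W)^G .
\]
The restriction map $r\colon \Hom_K(V,W)^N\to \Hom_K(W,W)^N$ is surjective by step 1 and $H$-semilinear. Taking $H$-invariants of a surjection of semilinear $H$-modules is still surjective, because descent is an equivalence with $K_0$-vector spaces and hence exact. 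So $\mathrm{id}_W$, which is $H$-invariant, lifts to a $G$-equivariant projection, and its kernel is the desired complement. I expect this descent step to be the main technical point.

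\textbf{The ``only if'' direction.} Suppose $|N|=0$ in $K$, so $p=\operatorname{char}K$ divides $|N|$. Consider the semilinear representation $V \coloneq K[G/\!\!/]$: the $K$-vector space with basis $G$, with $g(\lambda e_h)=\sigma(g)(\lambda)e_{gh}$. Then $\operatorname{Res}_N V$ is the regular representation $K[N]^{\oplus m}$. The line $L$ spanned by $s\coloneq\sum_{h\in G} e_h$ is $G$-stable, since $g(\lambda s)=g(\lambda)s$. If $L$ had a $G$-stable complement, it would in particular be $N$-stable. Then the $N$-invariant line spanned by $\sum_{n\in N}e_n\cdot(\text{coset sums})$ would split off from $K[N]$-modules, contradicting the classical fact that $K[N]$ is not semisimple when $p\mid |N|$. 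More precisely: the augmentation map $\epsilon\colon V\to K$, $\sum\lambda_h e_h\mapsto \sum \lambda_h$, is semilinear-equivariant with $K$ carrying the $\sigma$-action, and $\epsilon(s)=|G|=0$. A $G$-stable complement $C$ of $L$ would give $V=L\oplus C$. Applying descent and restricting to $N$, this would yield an $N$-splitting of the trivial submodule $K s$ in a free $K[N]$-module. That is impossible, since $s$ lies in $(\text{rad})\cdot V$ when $p\mid|N|$.
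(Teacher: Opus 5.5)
Your proof is correct and self-contained, whereas the paper gives no argument for this lemma and only cites Lemma~1.3 of K\"unzer.

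\textbf{The ``if'' direction.} Maschke averaging over $N=\ker(\sigma)$, followed by Speiser's lemma for the semilinear $G/N$-action on $\Hom_K(V,W)^N$ and exactness of invariants under descent, is sound. The descent step can be collapsed into a single averaging, the standard twisted-Maschke argument. Pick $c\in K$ with $\textnormal{Tr}_{K/K_0}(c)=1$, which is possible because $K/K_0$ is separable, and set $\pi=|N|^{-1}\sum_{g\in G} g\circ(c\,\pi_0)\circ g^{-1}$. This map is $K$-linear and $G$-equivariant. On $W$ it equals $|N|^{-1}\sum_{g\in G}\sigma(g)(c)=\textnormal{Tr}_{K/K_0}(c)=1$ times the identity. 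This version shows at a glance why only $|N|$, and not $|G|$, must be invertible.

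\textbf{The ``only if'' direction.} Your first justification, a contradiction with the non-semisimplicity of $K[N]$, is not an argument by itself. Non-semisimplicity of $K[N]$ does not say that this particular line $Ks\subseteq K^{\sigma}[G]$ fails to split. Also, no descent is needed here. The radical argument you then give is the right one, but you should state its two ingredients.
\begin{enumerate}
\item The element $\nu=\sum_{n\in N}n$ is central with $\nu^2=|N|\nu=0$, so $\nu\in J(K[N])$. Hence $s=\nu\cdot\sum_i e_{t_i}\in JV$ for coset representatives $t_i$ of $G/N$.
\item If $V=Ks\oplus C$ as $K[N]$-modules, then $JV=JC\subseteq C$, because $J$ annihilates the simple module $Ks$. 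This forces $s\in Ks\cap C=0$.
\end{enumerate}

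A shorter route uses the $G$-equivariant projection $p\colon V\to Ks$ that a complement would give. Write $p(e_1)=cs$, where $1\in G$ is the identity. Then $p(e_h)=\sigma(h)(c)\,s$, so
\[
s=p(s)=|N|\,\textnormal{Tr}_{K/K_0}(c)\,s=0,
\]
a contradiction. This is the exact mirror of the averaging formula in the ``if'' direction.
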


\begin{proof}
See \cite[Lemma 1.3]{Kunzer}.
\end{proof}

We can also define semilinear $G$-representations as the full subcategory of the category of left modules over a suitable ring. This is needed in the proof of Proposition \ref{proposition: étale morphism as in 8.10 of equivariant cycles}, because we will work with $K$-vector spaces carrying a semilinear $G$-action but which are not finite-dimensional $K$-vector spaces.

\begin{definition}
Let $G$ be a finite abelian group and let $K$ be a field. The \emph{group algebra} $K[G]$ is the associative $K$-algebra whose underlying $K$-vector space has basis $\left\{e_g\right\}_{g \in G}$; multiplication is defined by $\lambda_1 e_{g_1} \cdot \lambda_2 e_{g_2} = \lambda_1 \lambda_2 e_{g_1g_2}$ for all $\lambda_1, \lambda_2 \in K$, $g_1, g_2 \in G$ and extended $K$-bilinearly. 

Let $\sigma \colon G \to \Aut(K)$ be a group homomorphism. The \emph{skew group ring} $K^{\sigma}[G]$ is the associative ring whose underlying left $K$-vector space is the same as $K[G]$, but multiplication is defined by $\lambda_1 e_{g_1} \cdot \lambda_2 e_{g_2} = \lambda_1 \sigma(g_1)(\lambda_2) e_{g_1g_2}$ for all $\lambda_1, \lambda_2 \in K$, $g_1, g_2 \in G$ and extended additively. Unless $\sigma$ is trivial, the natural copy of $K$ in $K^{\sigma}[G]$ is not central, so $K^{\sigma}[G]$ is generally not a $K$-algebra in the usual sense.
\end{definition}

If $(V, \rho) \in \textnormal{Rep}^{\sigma}_K(G)$, we can associate to it a left $K^{\sigma}[G]$-module whose underlying abelian group is $V$ and the ring homomorphism $K^{\sigma}[G] \to \Aut_{\ZZ}(V)$ defining the $K^{\sigma}[G]$-module structure sends an element $\lambda e_{g} \in K^{\sigma}[G]$ to the endomorphism of the underlying abelian group of $V$ defined by $\lambda e_{g}(v) \coloneq \lambda \rho(g) (v)$ for all $g \in G$ and $v \in V$. This association identifies the category $\textnormal{Rep}^{\sigma}_K(G)$ with the full subcategory of left $K^{\sigma}[G]$-modules whose underlying $K$-vector space has finite dimension (see, for example, \cite[Proposition 2.10]{taylor}).

Suppose now that $G$ is a group and $\sigma \colon G \to \Aut_k(L)$ is a group homomorphism with target the group of field automorphisms of $L$ fixing $k$. Then we can define a functor $- \otimes_k L \colon \textnormal{Rep}_k(G) \to \textnormal{Rep}_L^{\sigma}(G)$ which sends an object $(V, \rho) \in \textnormal{Rep}_k(G)$ to $(V \otimes_k L, \psi)$, where $\psi \colon G \to \Aut_{\ZZ}(V \otimes_k L)$ is defined by $\psi(g)(v \otimes \lambda) = \rho(g)(v) \otimes \sigma(g)(\lambda)$ for all $g \in G$, $v \in V$ and $\lambda \in L$. To simplify notation, we write $g(v \otimes \lambda) = g(v) \otimes g(\lambda)$. The functor $- \otimes_k L$ is neither fully faithful nor essentially surjective in general. However, under suitable hypotheses on the group $G$ and the ground field $k$ we have the following result.

\begin{proposition} \label{lemma: essentially surjectivity}
Let $L/k$ be a finite field extension. Let $G$ be a finite abelian group  such that $k$ contains a primitive $n$th root of unity, where $n$ is the exponent of $G$. Let $\sigma \colon G \to \Aut_k(L)$ be a group homomorphism. Then the functor $ - \otimes_k L \colon \textnormal{Rep}_k(G) \to \textnormal{Rep}^{\sigma}_L(G)$ is essentially surjective and the number of distinct irreducible semilinear representations up to isomorphism in $\textnormal{Rep}^{\sigma}_L(G)$ equals $|\ker{\sigma}|$.
\end{proposition}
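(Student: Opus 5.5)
Write $H \coloneq \ker(\sigma)$ and $\Gamma \coloneq G/H$, so that $\Gamma$ acts faithfully on $L$ by $k$-automorphisms. Then $L/L^{\Gamma}$ is Galois with group $\Gamma$ by Artin's theorem, and $L^{\Gamma} \supseteq k$. Since $|H|$ divides the exponent $n$ and $k$ contains a primitive $n$th root of unity, $\textnormal{char}(k) \nmid |H|$. Lemma \ref{lemma: semisimple} therefore shows that $\textnormal{Rep}^{\sigma}_L(G)$ is semisimple. For essential surjectivity it is enough to show that every irreducible object $V$ of $\textnormal{Rep}^{\sigma}_L(G)$ is isomorphic to $W \otimes_k L$ for some $W \in \textnormal{Rep}_k(G)$, because $-\otimes_k L$ commutes with direct sums.

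\textbf{Step 1: splitting off the $H$-action.} The group $H$ acts $L$-linearly on any $V \in \textnormal{Rep}^{\sigma}_L(G)$. Since $H$ is abelian of exponent dividing $n$ and $L \supseteq k \ni \zeta_n$, $V$ decomposes into isotypic pieces $V = \bigoplus_{\chi \in \widehat{H}} V_\chi$, where $V_\chi$ is the subspace on which $H$ acts through $\chi \colon H \to k^\times$. Because $G$ is abelian and the characters take values in $k$, which $G$ fixes, each $V_\chi$ is $G$-stable: for $h \in H$, $g \in G$ and $v \in V_\chi$ we have $h g v = g h v = g(\chi(h) v) = \chi(h) g v$. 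Hence an irreducible $V$ has $V = V_\chi$ for a single $\chi$. By Lemma \ref{lemma: lemma on linear representations}(3), $\chi$ extends to a character $\tilde\chi \colon G \to k^\times$; write $k_{\tilde\chi}$ for the corresponding one-dimensional representation. Twisting gives $V' \coloneq V \otimes_k k_{\tilde\chi^{-1}}$, on which $H$ acts trivially, so $V'$ is an object of $\textnormal{Rep}^{\sigma}_L(\Gamma)$.

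\textbf{Step 2: Galois descent.} Set $K \coloneq L^{\Gamma}$. Speiser's lemma (Hilbert 90) shows that $V'$ has an $L$-basis of $\Gamma$-invariant vectors, that is, $V' \simeq (V')^{\Gamma} \otimes_K L$. I expect this to be the main obstacle, because the resulting descent is to $K$ and not to $k$. Irreducibility forces $V' \simeq L$ with its natural action. As a $k$-representation of $\Gamma$ (equivalently of $G$), $L$ is a direct sum of one-dimensional characters by Lemma \ref{lemma: lemma on linear representations}(1). Choose any character line $k\cdot \ell \subseteq L$. The nonzero vector $\ell$ spans $L$ over $L$, so the map $k\ell \otimes_k L \to L$ is an isomorphism in $\textnormal{Rep}^{\sigma}_L(G)$. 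Thus $V' \simeq W' \otimes_k L$ with $\dim_k W' = 1$, and untwisting by $\tilde\chi$ gives $V \simeq (W' \otimes_k k_{\tilde\chi}) \otimes_k L$. The same line-choosing argument shows directly that irreducibles are one-dimensional and of the form $\lambda \otimes_k L$ with $\lambda \in \widehat G$, so the descent step really only uses that $L$ contains a character line.

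\textbf{Step 3: counting irreducibles.} Every irreducible is $L_\lambda \coloneq k_\lambda \otimes_k L$ for some $\lambda \in \widehat{G}$. There is an isomorphism $L_\lambda \simeq L_\mu$ if and only if $L_{\lambda\mu^{-1}} \simeq L$, and this holds if and only if there is $u \in L^\times$ with $g(u) = \lambda\mu^{-1}(g)\, u$ for all $g \in G$. Such a $u$ exists only when $\lambda\mu^{-1}$ is trivial on $H$, since $H$ fixes $u$. Conversely, any character of $\Gamma$ with values in $k^\times$ is realized by some $u$: by the normal basis theorem $L \simeq K[\Gamma]$ as a $\Gamma$-module, and every character of $\Gamma$ occurs in $K[\Gamma]$, with its values lying in $k \subseteq K$. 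Therefore the isomorphism classes correspond to $\widehat{G}/\widehat{\Gamma} \simeq \widehat{H}$, and $|\widehat{H}| = |H| = |\ker\sigma|$ by Lemma \ref{lemma: lemma on linear representations}(2) applied to $H$.
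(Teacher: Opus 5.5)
Your argument is correct, but it takes a different route from the paper.

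The paper works as follows:
\begin{itemize}
\item It extends the $d=|\ker\sigma|$ characters of $I=\ker\sigma$ to $G$ via Lemma~\ref{lemma: lemma on linear representations}(3).
\item It separates the resulting lines $V_i\otimes_k L$ by restricting to $I$.
\item It proves that no other irreducibles occur by a Wedderburn count in the semisimple $K$-algebra $L^{\sigma}[G]$, where $K=L^G$. Each $V_i\otimes_k L$ has endomorphism ring $K$, so it contributes $[L:K]^2$, and $d\,[L:K]^2=|G|\,[L:K]=\dim_K L^{\sigma}[G]$.
\item Essential surjectivity then follows from this exhaustiveness.
\end{itemize}

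You instead prove essential surjectivity first. You then determine the isomorphism relation explicitly: $L_\lambda\simeq L_\mu$ if and only if $\lambda\mu^{-1}$ is trivial on $\ker\sigma$, with the normal basis theorem giving the ``if'' direction.

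Your version is more elementary, since it needs neither Wedderburn--Artin nor the endomorphism-ring computation. It also gives a canonical bijection between isomorphism classes and $\widehat{\ker\sigma}$ via restriction. The paper's version never has to exhibit an isomorphism $L_\lambda\simeq L_\mu$, because the dimension count forces it. As a by-product it records the multiplicity $[L:K]$ of each irreducible in $L^{\sigma}[G]$.

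As you observe at the end of Step~2, the eigenline argument alone already gives essential surjectivity. $V$ is finite-dimensional over $k$, so by Maschke and Lemma~\ref{lemma: lemma on linear representations}(1) it contains a $G$-eigenvector $v$. The line $Lv$ is $G$-stable, so an irreducible $V$ equals $Lv\simeq k_\lambda\otimes_k L$. The isotypic decomposition, the twist and Speiser's lemma can therefore be dropped.

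One small slip: $|H|$ need not divide $n$. For example, take $\sigma$ trivial and $G=C_2\times C_2$, so $|H|=4$ and $n=2$. What is true, and sufficient, is that every prime dividing $|H|$ divides $n$, so $\textnormal{char}(k)\nmid|H|$.
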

\begin{proof}
By Lemma \ref{lemma: semisimple}, the category $\textnormal{Rep}^{\sigma}_L(G)$ is semisimple. Thus, in order to show that the functor $- \otimes_k L$ is essentially surjective,  it is sufficient to show that every irreducible $V \in \textnormal{Rep}^{\sigma}_L(G)$ is isomorphic to $W \otimes_k L$ for some $W \in \textnormal{Rep}_k(G)$. We will show that there exists a complete set of irreducible semilinear $G$-representations over $L$ which are of the form $W \otimes_k L$ for some $W \in \textnormal{Rep}_k(G)$. Set $K \coloneq L^G$, so that $L/K$ is Galois and $G \to \textnormal{Gal}(L/K)$ is surjective. Let $I \coloneq \ker ( G \to \Aut_k(L)) = \ker ( G \to \textnormal{Gal}(L/K))$ and set $d \coloneq |I|$. Since $G \to \textnormal{Gal}(L/K)$ is surjective, we have $G/I \simeq \textnormal{Gal}(L/K)$. By Lemma \ref{lemma: lemma on linear representations}.(1),(2), there exists a set $\{\tilde{V}_1, \ldots, \tilde{V}_d\}$, with $\tilde{V}_i \in \textnormal{Rep}_k(I)$, which form a complete set of irreducible $I$-representations over $k$. By Lemma \ref{lemma: lemma on linear representations}.(3), for all $i=1, \ldots, d$ there exists some $V_i \in \textnormal{Rep}_k(G)$ such that $\textnormal{Res}^G_I(V_i) \simeq \tilde{V}_i$. Note that $\dim_k(V_i) = 1$ necessarily. Set $V_i' \coloneq V_i \otimes_k L$ for all $i=1, \ldots, d$. We claim that $\left\{V'_1, \ldots, V'_d\right\}$ form a complete set of irreducible semilinear representations in $\textnormal{Rep}^{\sigma}_L(G)$. First, note that each $V_i' \in \textnormal{Rep}^{\sigma}_L(G)$ is irreducible because $\dim_L(V_i')= 1$. Suppose that $V_i' \simeq V_j'$ are isomorphic in $\textnormal{Rep}^{\sigma}_L(G)$. Then by \cite[Theorem A]{taylor} we have that $\textnormal{Res}^G_I(V_i') \simeq \textnormal{Res}^G_I(V_j')$ in $\textnormal{Rep}_L(I)$. In particular, $\textnormal{Res}^G_I(V_i) \otimes_k L \simeq \textnormal{Res}^G_I(V_j) \otimes_k L$ and thus, since $\dim_k(V_i) = \dim_k(V_j) = 1$, also $\textnormal{Res}^G_I(V_i) \simeq \textnormal{Res}^G_I(V_j)$ in $\textnormal{Rep}_k(I)$ which gives a contradiction, unless $i=j$. It follows that the set $\left\{V'_1, \ldots, V'_d\right\}$ is made of $d$ distinct irreducible semilinear representations in $\text{Rep}^{\sigma}_L(G)$. To show that they are the only possible irreducible semilinear representations up to isomorphism, we use the following dimensional argument. Set $D_i \coloneq \textnormal{End}_{L^{\sigma}[G]}(V_i')$. Then each $D_i$ is a division ring and we claim that $D_i \simeq K$ for all $1 \leq i \leq d$. Indeed, suppose that $\psi\colon V_i' \to V_i'$ is a $L^{\sigma}[G]$-module endomorphism of $V_i'$. In particular, $\psi \in \textnormal{End}_{L}(V_i')$. Since $\dim_L(V_i')=1$, we may regard $\psi$ as multiplication by some scalar $\lambda \in L$. Moreover, since $\psi$ is a $L^{\sigma}[G]$-module endomorphism, then $\psi(e_g(v))= e_g(\psi(v))$ for all $g \in G$ and $v \in V_i'$. The left hand side of the latter is $\psi(e_g(v)) = \lambda e_g(v)$; while the right hand side is $e_g(\psi(v))= e_g(\lambda v)= e_g(\lambda) e_g(v)$. It follows that $\lambda e_g(v)= e_g(\lambda) e_g(v)$ for all $v \in V'_i$, $g \in G$. Since $e_g$ is an automorphism of $V_i'$, for any $v \neq 0$ we have $e_g(v) \neq 0$. Then $\lambda = e_g(\lambda)$ necessarily. Since this holds for all $g \in G$, it follows that $\lambda \in L^G = K$, and this determines an isomorphism $D_i \simeq K$ as desired. Since the $K$-algebra $L^{\sigma}[G]$ is semisimple, the Wedderburn isomorphism gives $L^{\sigma}[G] \simeq \bigoplus M_{n_i}(\textnormal{End}_{L^{\sigma}[G]}(S_i))$ where the $S_i$'s are all the distinct irreducible left $L^{\sigma}[G]$-modules appearing with multiplicity $n_i$ in the decomposition of $L^{\sigma}[G]$ as a left module over itself. In particular, for $S_i = V_i'$ we have $n_i = \dim_{D_i}(V_i') = \dim_K (V_i') = [L : K]$. Thus, $M_{n_i}(\textnormal{End}_{L^{\sigma}[G]}(V_i')) \simeq M_{n_i}(K)$ has dimension $n_i^2 = [L : K]^2$ as a $K$-vector space. Since the $V_i'$'s are $d$ distinct irreducible $L^{\sigma}[G]$-modules, they contribute to a total of $d \cdot [L: K]^2$ to the dimension of $L^{\sigma}[G]$ as a $K$-vector space. On the other hand, $\dim_K(L^{\sigma}[G]) = \dim_L(L^{\sigma}[G]) \cdot [L : K] = |G| \cdot [L :K] = d \cdot [L : K]^2$. Then we see that there is no space for other irreducible semilinear representations in $\textnormal{Rep}^{\sigma}_L(G)$.
\end{proof}

To conclude this section, we recall the definition of \emph{induced module} in the semilinear setting and then we prove that it is isomorphic to a direct sum of modules indexed by a set of left coset representatives. This description will be used in the proof of Proposition \ref{proposition: étale morphism as in 8.10 of equivariant cycles}.

\begin{definition} \label{definition: induced semilinear}
Let $K$ be a field. Let $G$ be a group and $H \subseteq G$ be a subgroup. Let $\sigma \colon G \to \Aut(K)$ be a group homomorphism. Then $\sigma_{|H} \colon H \to \Aut(K)$ is again a group homomorphism. We again denote $\sigma_{|H}$ by $\sigma$. Note that $K^{\sigma}[G]$ is a right $K^{\sigma}[H]$-module, whose right-module structure is simply given by the inclusion $K^{\sigma}[H] \to K^{\sigma}[G]$ and the multiplication in $K^{\sigma}[G]$. Let $M$ be a left $K^{\sigma}[H]$-module. Then we define $\textnormal{Ind}^H_G(M)\coloneq K^{\sigma}[G] \otimes_{K^{\sigma}[H]} M$ to be the \emph{induced} left $K^{\sigma}[G]$-module.
\end{definition}

\begin{proposition} \label{proposition: induced semilinear module}
Let $K$ be a field. Let $G$ be a group and $H \subseteq G$ be a subgroup. Let $\sigma \colon G \to \Aut(K)$ be a group homomorphism. Let $\alpha_1, \ldots, \alpha_n$ be left coset representatives for the quotient $G/H$. Let $M$ be a left $K^{\sigma}[H]$-module and set $M' \coloneq \bigoplus_{i=1}^n M_{\alpha_i}$ as the direct sum of $n$ copies of $M$ indexed by the $\alpha_i$'s. Then:

\begin{enumerate}
	\item $M'$ has the structure of left $K^{\sigma}[G]$-module;
	\item there is an isomorphism of left $K^{\sigma}[G]$-modules $\varphi \colon K^{\sigma}[G] \otimes_{K^{\sigma}[H]} M \rightarrow \bigoplus_{i=1}^n M_{\alpha_i}$ mapping $\lambda e_g \otimes m$ to $(\alpha_i^{-1}(\lambda)(\alpha_i^{-1}g)( m))_i$
	where $\alpha_i$ is such that $\alpha_i^{-1}g \in H$ (i.e.\ $g \in \alpha_i H$) and $(\alpha_i^{-1}(\lambda)(\alpha_i^{-1}g)( m))_i$ denotes the copy of $\alpha_i^{-1}(\lambda)(\alpha_i^{-1}g)( m)$ inside $M_{\alpha_i}$.
\end{enumerate}
\end{proposition}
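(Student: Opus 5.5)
We will define the $K^{\sigma}[G]$-structure on $M'$ explicitly, and then check that the proposed $\varphi$ is a well-defined, $K^{\sigma}[G]$-linear bijection.

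\textbf{The module structure on $M'$.} For $g \in G$ and each index $i$, there is a unique $j$ and a unique $h_{g,i} \in H$ with $g\alpha_i = \alpha_j h_{g,i}$. We write $j = g\cdot i$; this is the permutation of $G/H$ induced by left multiplication. For $\lambda \in K$ we define
\[
\lambda e_g \cdot (m_i)_i \coloneq \bigl( \alpha_{j}^{-1}(\lambda)\, h_{g,i}(m_i) \bigr)_{j = g\cdot i},
\]
so the $i$th component is sent into the $(g\cdot i)$th copy. This is exactly the structure transported from the description in (2), since $\alpha_j^{-1}g\alpha_i = h_{g,i}$. To prove (1) we check the following.
\begin{enumerate}
\item Additivity in $\lambda$ and in $m$ is clear.
\item Associativity reduces to the cocycle identity $h_{g'g,i} = h_{g',g\cdot i}\, h_{g,i}$, which holds because $g'g\alpha_i = g'\alpha_j h_{g,i} = \alpha_l h_{g',j}h_{g,i}$.
\item The twisting rule $e_g\lambda = g(\lambda)e_g$ must be compatible. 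This holds because $\alpha_j^{-1}(g(\lambda))\,h_{g,i}(m) = h_{g,i}(\alpha_i^{-1}(\lambda) m)$, using the semilinearity of $M$ over $K^{\sigma}[H]$ and the relation $\alpha_j^{-1}g = h_{g,i}\alpha_i^{-1}$ in $\Aut(K)$ via $\sigma$.
\end{enumerate}

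\textbf{Well-definedness of $\varphi$.} We first define $\varphi$ on $K^{\sigma}[G]\times M$ by the formula of (2) and extend additively. The map is biadditive. It remains to check $H$-balancedness: $\varphi(\lambda e_g \cdot \mu e_h, m) = \varphi(\lambda e_g, \mu e_h m)$ for $h\in H$, $\mu\in K$. The left side is $\varphi(\lambda g(\mu) e_{gh}, m)$. Since $gh$ and $g$ lie in the same coset $\alpha_iH$, both sides land in $M_{\alpha_i}$. The two expressions become $\alpha_i^{-1}(\lambda)\alpha_i^{-1}g(\mu)\,(\alpha_i^{-1}gh)(m)$ and $\alpha_i^{-1}(\lambda)\,(\alpha_i^{-1}g)(\mu\, h m)$, and these agree by the semilinearity of the $H$-action. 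Hence $\varphi$ descends to the tensor product. $K^{\sigma}[G]$-linearity is then a direct comparison with the structure just defined in (1), again using $\alpha_j^{-1}g'g = h_{g',i}\alpha_i^{-1}g$.

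\textbf{Bijectivity.} Since $K^{\sigma}[G] = \bigoplus_i e_{\alpha_i}K^{\sigma}[H]$ as a right $K^{\sigma}[H]$-module, because $\lambda e_{\alpha_i h} = e_{\alpha_i}\,\alpha_i^{-1}(\lambda)e_h$ and the factor $K^{\sigma}[H]$ is free of rank one, we get $K^{\sigma}[G]\otimes_{K^{\sigma}[H]}M = \bigoplus_i e_{\alpha_i}\otimes M$, with $m \mapsto e_{\alpha_i}\otimes m$ an isomorphism onto each summand. On the summand $e_{\alpha_i}\otimes M$, $\varphi$ sends $e_{\alpha_i}\otimes m$ to $m$ in $M_{\alpha_i}$, so $\varphi$ is an isomorphism.

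The main obstacle is purely bookkeeping: keeping the twisted scalar conventions ($\alpha_i^{-1}(\lambda)$ versus $g(\lambda)$) consistent in the balancedness and compatibility checks. The decomposition of $K^{\sigma}[G]$ as a free right $K^{\sigma}[H]$-module is what makes bijectivity immediate.
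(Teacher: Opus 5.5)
Your proposal is correct and follows essentially the same route as the paper: it uses the same module structure on $M'$ (your $h_{g,i}$ is the paper's $h$ with $g\alpha_i=\alpha_{j(i)}h$) and the same linearity computation for $\varphi$, and your decomposition $K^{\sigma}[G]=\bigoplus_i e_{\alpha_i}K^{\sigma}[H]$ repackages the paper's explicit inverse $(m)_i\mapsto e_{\alpha_i}\otimes m$. The one genuine addition is your check that $\varphi$ is $K^{\sigma}[H]$-balanced, which the paper takes for granted but which is needed for $\varphi$ to be well defined on the tensor product.
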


\begin{proof}
(1) We define a multiplication on the left on $M'$ by elements of $K^{\sigma}[G]$ as follows:
\begin{align*}
	&g ( m_i ) = (h (m))_{j(i)} \qquad \textnormal{for } g \in G \textnormal{ with } g \alpha_i = \alpha_{j(i)} h, h \in H,\\
	&\lambda m_i =  (\alpha_i^{-1}(\lambda) m)_i \qquad \lambda \in K,
\end{align*}
and extended linearly to all $K^{\sigma}[G]$. We show that these operations define a left $K^{\sigma}[G]$-module structure on $M'$. In order to do that, we verify that the equality 
\begin{equation} \label{equation: left module}
	(\lambda_1g_1 \cdot \lambda_2 g_2)(m)_i = \lambda_1g_1 (\lambda_2g_2(m)_i)
\end{equation}
holds for all $\lambda_1,\lambda_2 \in K, g_1,g_2 \in G$ and $m \in M$. First we write $g_2 \alpha_i = \alpha_{j_2(i)} h_2$ for some $h_2 \in H$ and $g_1 \alpha_{j_2(i)}= \alpha_{j_1(i)}h_1$ for some $h_1 \in H$. Then $g_1 g_2 \alpha_i = g_1\alpha_{j_2(i)}h_2 = \alpha_{j_1(i)}h_1h_2$. We compute the left hand side of Equation \ref{equation: left module}:
\begin{align*}
	(\lambda_1g_1 \cdot \lambda_2 g_2)(m)_i &=  (\lambda_1g_1(\lambda_2)g_1g_2)(m)_i =  \\ &= \lambda_1g_1(\lambda_2)(h_1h_2(m))_{{j_1(i)}} =  \\
	&= (\alpha_{j_1(i)}^{-1}(\lambda_1g_1(\lambda_2)) h_1h_2(m))_{j_{1}(i)}.
\end{align*}
While the right hand side of Equation \ref{equation: left module} is:
\begin{align*}
	\lambda_1g_1 (\lambda_2 g_2)(m)_i &=  \lambda_1g_1 (\lambda_2(h_2(m))_{j_2(i)}) = \\ &= \lambda_1g_1 (\alpha_{j_2(i)}^{-1}(\lambda_2)h_2(m))_{j_2(i)} = \\
	 &= \lambda_1(h_1\alpha_{j_2(i)}^{-1}(\lambda_2) h_1h_2(m))_{j_1(i)} = \\ &= (\alpha_{j_1(i)}^{-1}(\lambda_1) h_1\alpha_{j_2(i)}^{-1}(\lambda_2)h_1h_2(m))_{j_1(i)} =  \\
	 &= (\alpha_{j_1(i)}^{-1}(\lambda_1g_1(\lambda_2)) h_1h_2(m))_{j_{1}(i)},
\end{align*}
where in the last equality we used that $h_1\alpha_{j_2(i)}^{-1} = \alpha_{j_1(i)}^{-1}g_1$.

(2) Let $\lambda_1, \lambda_2 \in K$, $g_1,g_2 \in G$ and $m \in M$. We first show that 
\begin{equation} \label{equation: morphism of modules}
	\varphi((\lambda_1 e_{g_1})(\lambda_2 e_{g_2} \otimes m)) = (\lambda_1e_{g_1}) \varphi(\lambda_2e_{g_2} \otimes m),
\end{equation}
so that $\varphi$ is a map of left $K^{\sigma}[G]$-modules. Write $g_1 = \alpha_{i(1)} h_1, g_2= \alpha_{i(2)}h_2$ and $g_1g_2= \alpha_{i(12)}h_{12}$ for some $h_1,h_2, h_{12} \in H$. In the following computations we will use that $g_1 \alpha_{i(2)}= g_1 g_2 h_2^{-1} = \alpha_{i(12)}h_{12}h_2^{-1}$. The left hand side of Equation \ref{equation: morphism of modules} is
\begin{align*}
	\varphi(\lambda_1e_{g_1}\lambda_2e_{g_2} \otimes m) &= 
	\varphi(\lambda_1g_1(\lambda_2)g_1g_2 \otimes m) = \\
	&= (\alpha_{i(12)}^{-1}(\lambda_1g_1(\lambda_2))h_{12}(m))_{i(12)} = \\
	&= (\alpha_{i(12)}^{-1}(\lambda_1)\alpha_{i(12)}^{-1}g_1(\lambda_2)h_{12}(m))_{i(12)}.
\end{align*}
While the right hand side of Equation \ref{equation: morphism of modules} is:
\begin{align*}
	(\lambda_1e_{g_1}) \varphi(\lambda_2e_{g_2} \otimes m) &= (\lambda_1 e_{g_1})(\alpha_{i(2)}^{-1}(\lambda_2)h_2(m))_{i(2)} = \\
	&= \lambda_1 (h_{12}h_2^{-1}(\alpha_{i(2)}^{-1}(\lambda_2)h_2(m)))_{i(12)} = \\
	&= (\alpha_{i(12)}^{-1}(\lambda_1)(h_{12}h_2^{-1}\alpha_{i(2)}^{-1}(\lambda_2) h_{12}(m)))_{i(12)} = \\
	&= (\alpha_{i(12)}^{-1}(\lambda_1)(h_{12}g_2^{-1}(\lambda_2) h_{12}(m)))_{i(12)} = \\
	&= (\alpha_{i(12)}^{-1}(\lambda_1)(\alpha_{i(12)}^{-1}g_1(\lambda_2) h_{12}(m)))_{i(12)}.
\end{align*}

Finally, the morphism $\psi \colon M' \to K^{\sigma}[G] \otimes_{K^{\sigma}[H]} M$ mapping $(m)_i \mapsto e_{\alpha_i} \otimes m$ is a morphism of left $K^{\sigma}[G]$-modules and is an explicit inverse of $\varphi$.
\end{proof}

\section{Constructing equivariant étale morphisms} \label{section: constructing equivariant étale morphisms}

Let $G$ be a finite abelian group acting on a smooth $k$-scheme $X$.
In order to prove Theorem \ref{theorem: gabber}, we will need to exhibit an equivariant morphism from $X$ to an affine space with suitable linear $G$-action that is étale at some points of $X$. The existence of such morphisms is nontrivial, as we now explain. Suppose for simplicity that $k=\overline{k}$ is an algebraically closed field. Let $H \subset G$ be a subgroup. Suppose that $x \in X^H$ is a closed point fixed by the subgroup $H$. Then $T_x X$ is a $H$-representation over $k$. If $V$ is a $G$-representation over $k$ such that $X \subseteq \AA(V)$ and $T_xX \subseteq \textnormal{Res}^G_H(V)$ is an inclusion of $H$-representations over $k$, we may choose a $H$-equivariant splitting $V \to T_{x}X$, providing a $H$-equivariant map $X \to \AA(V) \to \AA(T_{x}X)$ which is, by construction, étale at $x$. However, if $H \subsetneq G$, we cannot lift in general this map to a $G$-equivariant morphism. This is not needed in \cite{Bachmann} (and neither in the case of a cyclic group of order a prime), as the only fixed-point case is for $H=C_2$ being the whole group.
We will instead assume that $k$ is a perfect field. In this setting, under suitable hypotheses on the group $G$, we prove in Proposition \ref{proposition: étale morphism as in 8.10 of equivariant cycles} that there always exists a $G$-equivariant morphism from $X$ to some $G$-representation which is étale at the point of interest. Moreover, the existence of such an étale map is based on Lemma \ref{lemma: exists representation 1}, thus it is important that the group $G$ is a finite abelian group.

\begin{remark}
Let $G$ be a group and let $X$ be a scheme with a $G$-action. Let $x \in X$. Then the tangent space $T_x X \in \textnormal{Rep}^{\sigma}_{\kappa(x)}(S_x)$ is a $\sigma$-semilinear $S_x$-representation, where $\sigma \colon S_x \to \Aut_k(\kappa(x))$ is the group homomorphism determined by the $G$-action on $X$. We will denote $\textnormal{Rep}^{\sigma}_{\kappa(x)}(S_x)$ by $\textnormal{Rep}^{\#}_{\kappa(x)}(S_x)$ and the twisted group ring $\kappa(x)^{\sigma}[S_x]$ by $\kappa(x)^{\#}[S_x]$.
\end{remark}

\begin{lemma} \label{lemma: exists representation 1}
Let $G$ be a finite abelian group such that $k$ contains a primitive $n$th root of unity, where $n$ is the exponent of $G$. Let $X$ be a scheme with a $G$-action and let $x \in X$ be a closed point. Then:
\begin{enumerate}
	\item there exists $V \in \textnormal{Rep}_k(G)$ such that $\textnormal{Res}^G_{S_{x}}(V) \otimes_k \kappa(x) \simeq T_{x}X$ in $\textnormal{Rep}^{\#}_{\kappa(x)}(S_x)$;
	\item if $G_{x}= \left\{e\right\}$, then for any $V \in \textnormal{Rep}_k(G)$ such that $\dim_{k}(V)= \dim_{\kappa(x)}T_{x}X$, there is an isomorphism $\textnormal{Res}^G_{S_{x}}(V) \otimes_k \kappa(x) \simeq T_{x}X$ in $\textnormal{Rep}^{\#}_{\kappa(x)}(S_x)$.
\end{enumerate} 
\end{lemma}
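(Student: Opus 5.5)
The plan is to reduce both parts to Proposition \ref{lemma: essentially surjectivity}, applied to the group $S_x$, the finite extension $L=\kappa(x)$ of $k$, and the homomorphism $\sigma \colon S_x \to \Aut_k(\kappa(x))$ given by the action. Two preliminary checks are needed. First, $\kappa(x)/k$ is finite. For this I will tacitly use that $X$ is locally of finite type over $k$, as holds for all schemes considered in the paper, so that a closed point has residue field finite over $k$ by the Nullstellensatz. Second, the exponent of $S_x$ divides $n$, so $k$ contains a primitive root of unity of order the exponent of $S_x$. Note that $\ker\sigma = G_x$ by definition of the geometric stabilizer. Throughout we assume $T_xX$ is finite-dimensional, which holds since $X$ is locally noetherian.

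For (1), Proposition \ref{lemma: essentially surjectivity} gives some $W \in \textnormal{Rep}_k(S_x)$ with $W \otimes_k \kappa(x) \simeq T_xX$ in $\textnormal{Rep}^{\#}_{\kappa(x)}(S_x)$. By Lemma \ref{lemma: lemma on linear representations}.(3), applied to $G$ and its subgroup $S_x$, there is $V \in \textnormal{Rep}_k(G)$ with $\textnormal{Res}^G_{S_x}(V) \simeq W$. Tensoring with $\kappa(x)$ gives $\textnormal{Res}^G_{S_x}(V)\otimes_k\kappa(x) \simeq T_xX$, which is (1).

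For (2), suppose $G_x=\{e\}$, so $\ker\sigma$ is trivial. By the counting statement in Proposition \ref{lemma: essentially surjectivity}, $\textnormal{Rep}^{\#}_{\kappa(x)}(S_x)$ then has exactly one irreducible object up to isomorphism. It has $\kappa(x)$-dimension one: the functor $-\otimes_k\kappa(x)$ is essentially surjective, and by Lemma \ref{lemma: lemma on linear representations}.(1) every irreducible $k$-representation of $S_x$ is one-dimensional. Concretely, the unique irreducible is $\kappa(x)$ itself with the Galois action. The category is semisimple by Lemma \ref{lemma: semisimple}, since $|\ker\sigma|=1$ is invertible. Hence any two objects of the same $\kappa(x)$-dimension are isomorphic, each being a direct sum of that many copies of the unique irreducible. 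Since $\dim_{\kappa(x)}(\textnormal{Res}^G_{S_x}(V)\otimes_k\kappa(x))=\dim_k V=\dim_{\kappa(x)}T_xX$, the claimed isomorphism follows.

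The main point needing care is the setup: verifying the hypotheses of Proposition \ref{lemma: essentially surjectivity}, namely finiteness of $\kappa(x)/k$ and the root-of-unity condition for $S_x$, and identifying $\ker\sigma$ with $G_x$. Once these are in place, the rest is formal.
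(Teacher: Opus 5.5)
Your proposal is correct and follows the paper's proof. Part (1) composes the essential surjectivity of $-\otimes_k\kappa(x)$ from Proposition \ref{lemma: essentially surjectivity} with that of $\textnormal{Res}^G_{S_x}$ from Lemma \ref{lemma: lemma on linear representations}.(3); part (2) uses that when $\ker\sigma = G_x$ is trivial there is a unique irreducible semilinear representation, together with semisimplicity from Lemma \ref{lemma: semisimple}. Your extra checks (finiteness of $\kappa(x)/k$, the root-of-unity condition for $S_x$, and $\ker\sigma=G_x$) are hypotheses the paper uses without stating them.
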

\begin{proof}
Conclusion (1) follows by combining the essential surjectivity of the functor $- \otimes_{k} \kappa(x) \colon \textnormal{Rep}_{k}(S_x) \to \textnormal{Rep}^{\#}_{\kappa(x)}(S_x)$ of Lemma \ref{lemma: essentially surjectivity} and the essential surjectivity of the functor $ \textnormal{Res}^{G}_{S_x}(-) \colon \textnormal{Rep}_{k}(G) \to \textnormal{Rep}_{k}(S_x)$ of Lemma \ref{lemma: lemma on linear representations}. To prove (2) we see that if $G_x= \left\{e\right\}$, by Lemma \ref{lemma: essentially surjectivity} there exists just one irreducible semilinear representation in $\textnormal{Rep}^{\#}_{\kappa(x)}(S_x)$ up to isomorphism. Since by Lemma \ref{lemma: semisimple} the category $\textnormal{Rep}^{\#}_{\kappa(x)}(S_x)$ is semisimple and it has one irreducible object up to isomorphism, any two semilinear representations in $\textnormal{Rep}^{\#}_{\kappa(x)}(S_x)$ of the same dimension are isomorphic.
\end{proof}

The following proposition is a simplified version of \cite[Lemma 8.10]{Equivariantcycles}, and the proof is identical.	Unlike \cite[Lemma 8.10]{Equivariantcycles}, we state the result in a form that yields an étale map at a given point, without restricting to an equivariant open subset on which the map is étale. We will then pass to a Nisnevich neighborhood where the map is étale in the proof of Theorem \ref{theorem: gabber}, using Lemma \ref{lemma: assume is étale and quasi-finite}.  We need the hypothesis that $k$ is perfect in order to apply Galois descent.

\begin{proposition} \label{proposition: étale morphism as in 8.10 of equivariant cycles}
Let $k$ be perfect and let $G$ be a finite abelian group such that $k$ contains a primitive root of unity of order the exponent of $G$. Let $X$ be an affine smooth $k$-scheme with a $G$-action and let $x \in X$ be a closed point. Let $V \in \textnormal{Rep}_k(G)$ such that $\textnormal{Res}^G_{S_x}(V) \otimes_k \kappa(x) \simeq T_x X$ in $\textnormal{Rep}^{\#}_{\kappa(x)}(S_x)$. Then there exists a $G$-equivariant morphism $\varphi \colon X \rightarrow \AA(V)$ which is étale at $x$.
\end{proposition}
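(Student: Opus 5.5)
The plan is to produce a $G$-equivariant $k$-linear map $V^{\vee} \to A$, where $X = \Spec(A)$, whose induced map on cotangent spaces at $x$ is an isomorphism. Such a map gives a $G$-equivariant $k$-algebra homomorphism $\Sym(V^{\vee}) \to A$, hence a $G$-equivariant morphism $\varphi \colon X \to \AA(V)$. Since $X$ is smooth and $\AA(V)$ is smooth of the same dimension at the relevant points (we have $\dim_k V = \dim_{\kappa(x)} T_xX$), the morphism $\varphi$ is étale at $x$ if and only if the induced map
\[
V^{\vee} \otimes_k \kappa(x) \longrightarrow \frakm_x/\frakm_x^2
\]
is surjective. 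Here we use that $k$ is perfect: $\kappa(x)/k$ is then separable, so the conormal sequence identifies $\frakm_x/\frakm_x^2$ with $\Omega_{X/k} \otimes \kappa(x)$, and surjectivity of $\varphi^*\Omega_{\AA(V)} \otimes \kappa(x) \to \Omega_X \otimes \kappa(x)$ gives unramifiedness, hence étaleness between smooth schemes of equal dimension. The construction therefore reduces to a lifting problem for modules over skew group rings.

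\textbf{Step 1: reduce to the orbit.} Let $Gx = \{x = x_1, \ldots, x_n\}$, with $x_i = \alpha_i x$ for left coset representatives $\alpha_i$ of $G/S_x$. The ideal $I = \bigcap_i \frakm_{x_i}$ is $G$-stable, and by the Chinese remainder theorem
\[
A/I^2 \simeq \prod_i A/\frakm_{x_i}^2 .
\]
Hence $I/I^2 \simeq \bigoplus_i \frakm_{x_i}/\frakm_{x_i}^2$ as $k[G]$-modules. Using Proposition \ref{proposition: induced semilinear module}, I identify this with $\textnormal{Ind}^{S_x}_G(\frakm_x/\frakm_x^2)$, regarded as a $k[G]$-module. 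This is precisely the kind of module that is not finite-dimensional over $\kappa(x)$ in the relevant sense, which is why the induced-module formalism was set up.

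\textbf{Step 2: build a $G$-map $V^{\vee} \to I/I^2$.} By hypothesis, and after dualizing, $\textnormal{Res}^G_{S_x}(V^{\vee}) \otimes_k \kappa(x) \simeq \frakm_x/\frakm_x^2$ in $\textnormal{Rep}^{\#}_{\kappa(x)}(S_x)$. Dualizing is compatible with semilinear structures, and since all irreducibles are tensored-up characters, the dual of $V$ still works. The inclusion $v \mapsto v \otimes 1$ composed with this isomorphism gives an $S_x$-equivariant $k$-linear map $\beta \colon V^{\vee} \to \frakm_x/\frakm_x^2$ whose $\kappa(x)$-linear extension is an isomorphism. By Frobenius reciprocity, that is, the adjunction $\textnormal{Ind} \dashv \textnormal{Res}$ for $k[S_x] \subseteq k[G]$, $\beta$ corresponds to a $G$-equivariant map
\[
\tilde\beta \colon V^{\vee} \to \textnormal{Ind}^{S_x}_G(\frakm_x/\frakm_x^2) \simeq I/I^2 ,
\]
whose component at $x$ is $\beta$. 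I will take this map explicitly as $v \mapsto \sum_i \alpha_i \beta(\alpha_i^{-1} v)$.

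\textbf{Step 3: lift and average.} I lift $\tilde\beta$ to a $k$-linear map $V^{\vee} \to I \subseteq A$, which is possible since $V^{\vee}$ is free over $k$. Then I make the lift $G$-equivariant by averaging with $\frac{1}{|G|}\sum_{g} g \circ (-) \circ g^{-1}$. This is legitimate because $|G|$ is invertible in $k$: $k$ contains a primitive root of unity of order the exponent of $G$, so $\textnormal{char}(k)$ does not divide $|G|$. The averaged map still reduces to $\tilde\beta$ modulo $I^2$, because $\tilde\beta$ is already equivariant. It is therefore equivariant, lands in $I$, and induces $\beta$ on $\frakm_x/\frakm_x^2$. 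This gives the desired $\varphi$, and by construction $\varphi(x) = 0$.

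The main obstacle I expect is Step 2. The difficulty is checking carefully that the semilinear $S_x$-structure on $\frakm_x/\frakm_x^2$, where $S_x$ acts on $\kappa(x)$ through $\sigma$, matches the $k$-linear $G$-structure on $\bigoplus_i \frakm_{x_i}/\frakm_{x_i}^2$ under the identification of Proposition \ref{proposition: induced semilinear module}. One also has to confirm that the surjectivity of $V^{\vee} \otimes_k \kappa(x) \to \frakm_x/\frakm_x^2$ genuinely uses the given isomorphism. I would handle this by working with $K = k$-coefficients: I regard the modules as $k^{\sigma}[G]$-modules with $\sigma$ trivial on $k$, and apply the adjunction there.
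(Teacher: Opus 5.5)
Your argument is correct, and it takes a genuinely different and shorter route than the paper.

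\textbf{Your route.} You stay over $k$ throughout.
\begin{enumerate}
\item The Chinese remainder theorem gives $I/I^2 \simeq \bigoplus_i \frakm_{x_i}/\frakm_{x_i}^2$ as ordinary $k[G]$-modules.
\item You build the $G$-map $\tilde\beta$ from the $S_x$-equivariant $k$-linear map $\beta = \Theta(-\otimes 1)$, where $\Theta$ is the given semilinear isomorphism.
\item You replace the paper's module section $s$, obtained from semisimplicity of $L^{\sigma}[G\times\Gamma]$, by lifting and averaging.
\end{enumerate}
The semilinear structure enters only once: the $\kappa(x)$-linear extension of $\beta$ is $\Theta$. This gives the cotangent isomorphism $V^\vee\otimes_k\kappa(x)\to\frakm_x/\frakm_x^2$ at the possibly non-rational point $x$. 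Perfectness is used only to make $\kappa(x)/k$ separable, so that this isomorphism yields étaleness between smooth schemes of equal dimension.

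\textbf{The paper's route.} It base changes to a finite Galois extension $L$ receiving $\kappa(x)$. The points over $x$ then become $L$-rational, and the Jacobian criterion applies directly. The price is substantial: working with $L^{\sigma}[G\times\Gamma]$-modules, the semilinear induced-module computation, the explicit map $c$, and Galois descent of both the morphism and of étaleness.

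\textbf{Comparison.} The paper's approach copies Lemma 8.10 of the cited equivariant-cycles paper essentially verbatim; that lemma also handles a closed subscheme. Yours avoids semilinear induction and descent entirely. It also makes visible that the only inputs are $|G|\in k^\times$ and separability of $\kappa(x)/k$.

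\textbf{Two small corrections in Step 2.}
\begin{enumerate}
\item Maps \emph{into} an induced module are governed by the adjunction $\textnormal{Res}\dashv\textnormal{CoInd}$, not $\textnormal{Ind}\dashv\textnormal{Res}$. This is harmless, since $\textnormal{CoInd}\simeq\textnormal{Ind}$ for finite index. Moreover, your explicit formula $v\mapsto\sum_i\alpha_i\beta(\alpha_i^{-1}v)$ is checked equivariant directly: write $g\alpha_i=\alpha_j s$ with $s\in S_x$ and use the $S_x$-equivariance of $\beta$.
\item The dualization step holds because of the natural isomorphism $V^\vee\otimes_k\kappa(x)\simeq(V\otimes_k\kappa(x))^\vee$. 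This isomorphism is compatible with the semilinear actions because $G$ acts trivially on $k$. It does not depend on the description of the irreducible semilinear representations, which is the justification you gave.
\end{enumerate}
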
 
\begin{proof}
The proof is the same as that of \cite[Lemma 8.10]{Equivariantcycles}, but is easier because in our case we put $Z =X$.
Choose a finite Galois extension $L/k$ together with a $k$-embedding $\iota \colon \kappa(x) \hookrightarrow L$. Such an extension exists because $k$ is a perfect field, and thus $\kappa(x)/k$ is a finite separable field extension. Set $\Gamma \coloneq \textnormal{Gal}(L/k)$. The group $G \times \Gamma$ acts on the field $L$ via the group homomorphism $G \times \Gamma \xrightarrow{\textnormal{pr}} \Gamma  = \Aut_k(L)$.
If $A,B$ are $k$-algebras with a $G$-action, we can endow $A \otimes_k L$ and $B \otimes_k L$ with the diagonal $G \times \Gamma$-action. If that is the case, we have $(A \otimes_k L )^{\Gamma} = A$ and $(B \otimes_k L)^{\Gamma} =B$, where $\Gamma$ is identified with the subgroup $\left\{e\right\} \times \Gamma \subseteq G \times \Gamma$. If $\varphi_L \colon A \otimes_k L \to B \otimes_k L$ is a $G \times \Gamma$-equivariant morphism, then it restricts to the $\Gamma$-invariant elements and gives a $G$-equivariant morphism $\varphi \colon A \to B$.

If $S$ is an affine $k$-scheme with a $G$ action, we endow $S_L = S \times_k \Spec(L)$ with the diagonal $G \times \Gamma$-action.
Suppose that we have constructed a $G \times \Gamma$-equivariant morphism $\varphi_L\colon X_L \to \AA(V)_L$ which is étale at each point of the $G \times \Gamma$-orbit of $y$, where $y \in X_L$ is a point lying over $x$. Then the $G \times \Gamma$-equivariant morphism $\varphi_L \colon X_L \to \AA(V)_L$ descends to a $G$-equivariant morphism on the quotients $X \simeq X_L/\Gamma$ and $\AA(V) \simeq \AA(V)_L /\Gamma$, so that we have a $G$-equivariant morphism $\varphi \colon X \to \AA(V)$. We claim that $\varphi$ is étale at $x$. Consider the following cartesian diagram, where each horizontal arrow is a $\Gamma$-torsor:
% https://q.uiver.app/#q=WzAsNixbMCwwLCJYX3tMfSJdLFswLDEsIlxcQUEoVykgX0wiXSxbMSwxLCJcXEFBKFcpIl0sWzEsMCwiWCJdLFswLDIsIlxcU3BlYyhMKSJdLFsxLDIsIlxcU3BlYyhrKSJdLFszLDIsIlxcdmFycGhpIl0sWzEsMl0sWzAsMSwiXFx2YXJwaGlfTCIsMl0sWzAsM10sWzIsNV0sWzEsNF0sWzQsNV1d
\[\begin{tikzcd}
	{X_{L}} & X \\
	{\AA(V) _L} & {\AA(V)} \\
	{\Spec(L)} & {\Spec(k).}
	\arrow[from=1-1, to=1-2]
	\arrow["{\varphi_L}"', from=1-1, to=2-1]
	\arrow["\varphi", from=1-2, to=2-2]
	\arrow[from=2-1, to=2-2]
	\arrow[from=2-1, to=3-1]
	\arrow[from=2-2, to=3-2]
	\arrow[from=3-1, to=3-2]
\end{tikzcd}\]

Since $\varphi_L$ is étale at $y$, it follows that $\varphi$ is étale at $x$, because étaleness is an fpqc (even étale) local property on the base (see, for example, \cite[\href{https://stacks.math.columbia.edu/tag/0476}{Tag 0476}]{stacks-project}).
We thus need to prove the existence of a $G\times \Gamma$-equivariant morphism $\varphi_L$ which is étale at a point $y \in X_L$ over $x$.

Write $X \coloneq \Spec(R)$ for some $k$-algebra $R$. Let $\sigma \colon G \times \Gamma \to \Aut_k(L)= \Gamma$ be the group homomorphism defined by the projection to $\Gamma$. The $L$-algebra $R \otimes_k L$ has the structure of a left $L^{\sigma}[G \times \Gamma]$-module as follows:
\[
(g,\gamma) (l (r \otimes l')) = \gamma(l)(g(r) \otimes \gamma(l')) \qquad \text{for all } (g, \gamma) \in G \times \Gamma, r \in R, l,l' \in L.
\]
Let $y_1 \in X_L$ be a point over $x$. Then $y_1 \in \Spec(\kappa(x) \otimes_k L)$ corresponds to a $k$-linear homomorphism $\kappa(x) \to L$. Without loss of generality we may assume that $y_1$ corresponds to the embedding $\iota \colon \kappa(x) \to L$. Let $S_{y_1} \subseteq G \times \Gamma$ be the set-theoretic stabilizer of $y_1$. Let $\{\alpha_1, \ldots, \alpha_n\}$ be a complete set of left coset representatives for $(G \times \Gamma)/S_{y_1}$. Set $y_i \coloneq \alpha_i(y_1)$ for $i=1, \ldots, n$. Then the set-theoretic orbit of $y_1$ is $\left\{y_1, \ldots, y_n \right\}$. Let $\frakm_i$ be the maximal ideal in $R_L \coloneq R \otimes_k L$ corresponding to the point $y_i$, for $i=1, \ldots, n$ respectively. By construction, $\alpha_i^{-1}(\frakm_i) = \frakm_1$ for all $i=1, \ldots, n$. Note that $I \coloneq \bigcap \frakm_i$ is a $G \times \Gamma$-invariant ideal of $R_L$. In particular, it is a left $L^{\sigma}[G \times \Gamma]$-sub-module of $R_L$. For all $i=1, \ldots, n$ we have the quotient morphism $\frakm_i \to \frakm_i /\frakm_i^2$. All together, these yield a surjection of left $L^{\sigma}[G \times \Gamma]$-modules
\begin{align*}
	F \colon \bigcap_{i=1}^n \frakm_i &\longrightarrow \frakm_1 / \frakm_1 ^2 \times \ldots \times \frakm_n /\frakm_n^2 \\
	a &\longmapsto ([a]_1, \ldots, [a]_n)
\end{align*}
by the Chinese remainder theorem.

Let $\tau = (g, \gamma) \in G \times \Gamma$. Then for all $i=1, \ldots n$ we can write $\tau \alpha_i = \alpha_{\tau(i)} s$ for some $s \in S_{y_1}$ and some $\tau(i) \in \left\{1, \ldots, n\right\}$. 

We define an isomorphism of left $L^{\sigma}[G \times \Gamma]$-modules as follows:
\begin{align*}
	\mu \colon \frakm_1 /\frakm_1^2 \times \ldots \times \frakm_n/\frakm_n^2 &\longrightarrow \textnormal{Ind}^{S_{y_1}}_{G \times \Gamma}(\frakm_1/\frakm_1^2) \\
	(0, \ldots, [a]_i, \ldots, 0) &\longmapsto  [{\alpha_i^{-1}(a)}]_i,      
\end{align*}
where we use notation of Proposition \ref{proposition: induced semilinear module} for the description of the induced semilinear module.
Indeed, we can also give an explicit inverse of $\mu$ as
\begin{align*}
	\mu^{-1} \colon  \textnormal{Ind}^{S_{y_1}}_{G \times \Gamma}(\frakm_1/\frakm_1^2) &\longrightarrow \frakm_1 /\frakm_1^2 \times \ldots \times \frakm_n/\frakm_n^2 \\
	[a]_i &\longmapsto 
	(0, \ldots, [\alpha_i(a)]_i, \ldots, 0).
\end{align*}
Set $F' \coloneq \mu \circ F$. By construction, $F' \colon \bigcap_{i=1}^n \frakm_i \to \textnormal{Ind}^{S_{y_1}}_{G \times \Gamma}(\frakm_1/\frakm_1^2)$ is a surjection of left $L^{\sigma}[G \times \Gamma]$-modules and maps $a \mapsto \oplus_{i=1}^n [\alpha_i^{-1}(a)]_i$.

Note that $S_{y_1} \subseteq S_x \times \Gamma$ consists of pairs $(g, \gamma)$ such that the two maps $\kappa(x) \to L$ given by $\iota \circ g$ and $\gamma \circ \iota$ are equal.
Note further that $(\textnormal{Res}^G_{S_x}(V) \otimes_k \kappa(x)) \otimes_{\kappa(x)} L \simeq \textnormal{Res}^G_{S_x}(V) \otimes_k L$ as $L^{\sigma}[S_{y_1}]$-modules, because the canonical isomorphism $\kappa(x) \otimes_{\kappa(x)} L \xrightarrow{\sim} L$ is $L^{\sigma}[S_{y_1}]$-linear.  By hypothesis there exists an isomorphism $T_{x}X \simeq \textnormal{Res}^G_{S_x}(V) \otimes_k \kappa(x)$ in $\textnormal{Rep}^{\#}_{\kappa(x)}(S_x)$. In particular, they are isomorphic as left $\kappa(x)^{\#}[S_x]$-modules. Then we can tensor with $L$ to obtain an isomorphism of left $L^{\sigma}[S_{y_1}]$-modules:
\[
T_{y_1}X_L \simeq T_{x}(X) \otimes_{\kappa(x)} L \simeq \textnormal{Res}^G_{S_x}(V) \otimes_k L.
\]
On the other hand, $T_{y_1} X_L \simeq (\frakm_1/\frakm_1^2)^{\vee}$ as left $L^{\sigma}[S_{y_1}]$-modules. Thus we get an isomorphism $\frakm_1/\frakm_1^2 \xlongrightarrow{\sim} (\textnormal{Res}^G_{S_x}(V) \otimes_k L)^{\vee}$ of left $L^{\sigma}[S_{y_1}]$-modules.
Composing with $F'$ we get a surjection of left $L^{\sigma}[G \times \Gamma]$-modules 
\[
F'' \colon \bigcap_{i=1}^n \frakm_i \xlongrightarrow{F'} \textnormal{Ind}^{S_{y_1}}_{G \times \Gamma}(\frakm_1/\frakm_1^2) \xlongrightarrow{\sim} \textnormal{Ind}^{S_{y_1}}_{G \times \Gamma}((\textnormal{Res}^G_{S_x}(V) \otimes_k L)^{\vee}).
\]
The ring $L^{\sigma}[G \times \Gamma]$ is semisimple by Lemma \ref{lemma: semisimple}, because $\ker(\sigma) = G \times \left\{e\right\}$ so that $|\ker(\sigma)| = |G|$ and $|G|$ is invertible in $k$ because $k$ contains a primitive root of unity of order the exponent of $G$ by hypothesis. It follows that there exists an $L^{\sigma}[G \times \Gamma]$-module section \[
s \colon \textnormal{Ind}^{S_{y_1}}_{G \times \Gamma}((\textnormal{Res}^G_{S_x}(V) \otimes_k L)^{\vee}) \longrightarrow \bigcap_{i=1}^n \frakm_i
\]
of $F''$, so that $F'' \circ s = \textnormal{id}$. We claim that there exists also a morphism of left $L^{\sigma}[G \times \Gamma]$-modules
\begin{align*}
	c \colon (V \otimes_k L)^{\vee} &\longrightarrow \textnormal{Ind}^{S_{y_1}}_{G \times \Gamma}((\textnormal{Res}^G_{S_x}(V) \otimes_k L)^{\vee}) \\
	u &\longmapsto  \oplus_{i=1}^n (\alpha_i^{-1}(u))_i,
\end{align*}
where each $\alpha_i^{-1}(u)$ is an element of $(V \otimes_k L)^{\vee}$ and we view it as an element of $(\textnormal{Res}^G_{S_x}(V) \otimes_k L)^{\vee}$. Indeed, let $\lambda \in L$ and $\omega \in G \times \Gamma$. Write $\omega \alpha_i = \alpha_{\omega(i)}s_i$ for some $s_i \in S_{y_1}$. We have $c(lw(u)) = \oplus_{i=1}^n (\alpha_i^{-1}(l\omega(u)))_i$; while 
\begin{align*}
l\omega(c(u)) &= l\omega(\oplus_{i=1}^n (\alpha_i^{-1}(u))_i) =  \\
&= l(\oplus_{i=1}^n (s_i \alpha_i^{-1}(u))_{\omega(i)}) = \\
&= \oplus_{i=1}^n (\alpha_{\omega(i)}^{-1}(l) s_i \alpha_i^{-1}(u))_{\omega(i)} = \\
&= \oplus_{i=1}^n (\alpha_{\omega(i)}^{-1}(l)\alpha_{\omega(i)}^{-1}\omega(u))_{\omega(i)} = \\
&= \oplus_{i=1}^n (\alpha_{\omega(i)}^{-1}(l \omega(u))_{\omega(i)} = \\
&= \oplus_{i=1}^n (\alpha_i^{-1}(lw(u)))_i,
\end{align*}
where in the last equality we used that $\alpha_i \mapsto \alpha_{\omega(i)}$ is a permutation of the $\alpha_i$'s, and this proves that $c$ is a morphism of left $L^{\sigma}[G \times \Gamma]$-modules.

We put together all the maps to obtain the following diagram of left $L^{\sigma}[G \times \Gamma]$-modules:
% https://q.uiver.app/#q=WzAsNixbMCwxLCIoV18xIFxcb3RpbWVzIEwpXntcXHZlZX0iXSxbMSwxLCJcXHRleHRub3JtYWx7SW5kfSgoXFx0ZXh0bm9ybWFse1Jlc30oV18xKSBcXG90aW1lc19rIEwpXntcXHZlZX0pIl0sWzIsMSwiXFxiaWdjYXBfe2l9IFxcZnJha21faSJdLFsyLDAsIlxcdGV4dG5vcm1hbHtJbmR9KChcXHRleHRub3JtYWx7UmVzfShXXzEpIFxcb3RpbWVzX2sgTClee1xcdmVlfSkiXSxbMywxLCJcXHRleHRub3JtYWx7SW5kfShcXGZyYWttXzEvXFxmcmFrbV8xXjIpIl0sWzIsMiwiUl9MIl0sWzAsMSwiYyJdLFsxLDIsIkciXSxbMiwzLCJGJyIsMl0sWzIsNCwiRiJdLFsxLDMsIlxcdGV4dG5vcm1hbHtpZH0iXSxbMyw0LCJcXHNpbSJdLFsyLDUsIlxcaW90YSIsMCx7InN0eWxlIjp7InRhaWwiOnsibmFtZSI6Imhvb2siLCJzaWRlIjoidG9wIn19fV1d
\[\begin{tikzcd}
	&& {\textnormal{Ind}^{S_{y_1}}_{G \times \Gamma}((\textnormal{Res}^G_{S_x}(V) \otimes_k L)^{\vee})} & \\
	{(V \otimes_k L)^{\vee}} & {\textnormal{Ind}^{S_{y_1}}_{G \times \Gamma}((\textnormal{Res}^G_{S_x}(V) \otimes_k L)^{\vee})} & {\bigcap_{i} \frakm_i} & {\textnormal{Ind}^{S_{y_1}}_{G \times \Gamma}(\frakm_1/\frakm_1^2)} \\
	&& {R_L.}
	\arrow["\sim", from=1-3, to=2-4]
	\arrow["c", from=2-1, to=2-2]
	\arrow["{\textnormal{id}}", from=2-2, to=1-3]
	\arrow["s", from=2-2, to=2-3]
	\arrow["{F''}"', from=2-3, to=1-3]
	\arrow["F'", from=2-3, to=2-4]
	\arrow["\nu", hook, from=2-3, to=3-3]
\end{tikzcd}\]

Let $\{f_1,\ldots,f_t\}$ be a basis of $(V\otimes_k L)^\vee$ as an $L$-vector space. Note that $c(f_j) = \bigoplus_{i=1}^n (\alpha_i^{-1}(f_j))_i$. For each fixed index $i = 1, \ldots, n$, the set $\{\alpha_i^{-1}(f_j)\}_j$
is again a basis of $(V\otimes_k L)^\vee$, because $\alpha_i^{-1}$ acts as an $L$-semilinear automorphism. This is also a basis of $(\textnormal{Res}^G_{S_x}(V)\otimes_k L)^\vee$ as an $L$-vector space, and the isomorphism $(\textnormal{Res}^G_{S_x}(V)\otimes_k L)^\vee
\xrightarrow{\sim}
\frakm_1/\frakm_1^2$ of left $L^{\sigma}[S_{y_1}]$-modules
sends the basis $\{\alpha_i^{-1}(f_j)\}_j$ to some basis $\{[u_j^i]\}_j$ of $\frakm_1/\frakm_1^2$ as $L$-vector space.

The induced isomorphism $\textnormal{Ind}^{S_{y_1}}_{G \times \Gamma}((\textnormal{Res}^G_{S_x}(V)\otimes_k L)^\vee)
\xrightarrow{\sim}
\textnormal{Ind}^{S_{y_1}}_{G \times \Gamma}(\frakm_1/\frakm_1^2)$ therefore sends the element $(\alpha_i^{-1}(f_j))_i$ to $[u_j^i]_i$ for each $i=1, \ldots ,n$ and for all $j$.
Under the isomorphism $\mu^{-1} \colon
\textnormal{Ind}^{S_{y_1}}_{G \times \Gamma}(\frakm_1/\frakm_1^2)
\xrightarrow{\sim}
\prod_{i=1}^n \frakm_i/\frakm_i^2$,
the element $[u_j^i]_i$ maps to $(0,\ldots,[\alpha_i (u_j^i)]_i,\ldots,0)$. Hence, after composing with the projection $\textnormal{pr}_i$ onto the $i$-th factor, we obtain the element $[\alpha_i(u_j^i)]\in \frakm_i/\frakm_i^2$.

It follows that for every fixed index $i$, the composite map $\textnormal{pr}_i \circ \mu^{-1}\circ F' \circ s \circ c \colon (V\otimes_k L)^\vee
\rightarrow
\frakm_i/\frakm_i^2$ sends the basis $\{f_j\}_j$ to the set of elements $\{[\alpha_i(u_j^i)]\}_j$.
Since $\{[u_j^i]\}_j$ is a basis of $\frakm_1/\frakm_1^2$ as $L$-vector space, then $\{[\alpha_i (u_j^i)]\}_j$ is a basis of $\frakm_i/\frakm_i^2$ as $L$-vector space. Therefore, for every $i$, the composite map $(V\otimes_k L)^\vee
\rightarrow
\frakm_i/\frakm_i^2$ is an isomorphism of $L$-vector spaces.

Observe now that the morphism $\nu \circ s \circ c \colon (V \otimes_k L)^{\vee} \to R_L$ extends to $\psi\colon \textnormal{Sym}_L((V \otimes_k L)^{\vee}) \to R_L$. By construction, $\psi$ is $G \times \Gamma$-equivariant. Let $I = \bigoplus_{d \geq 1}  \textnormal{Sym}^d((V \otimes_k L)^{\vee})$ be the irrelevant ideal. By construction, $\psi((V \otimes_k L)^{\vee}) \subseteq \bigcap \frakm_i \subset \frakm_i$ for all $i$. Then $(V \otimes_k L)^{\vee} \subset \psi^{-1}(\frakm_i)$ for all $i=1, \cdots,n$. On the other hand, the ideal generated by $(V \otimes_k L)^{\vee}$ is exactly the irrelevant ideal in $A \coloneq \textnormal{Sym}((V \otimes_k L)^{\vee})$. It follows that $\psi^{-1}(\frakm_i) = I$ for all $i$, i.e.\ all of the points $y_i$ go to the origin of $\Spec(A)$ (because the origin corresponds to the irrelevant ideal). The induced map on cotangent spaces is given by the isomorphisms
\[
I/I^2 = \bigoplus_{d \geq 1}  \textnormal{Sym}^d((V \otimes_k L)^{\vee})/ \bigoplus_{d \geq 2}  \textnormal{Sym}^d((V \otimes_k L)^{\vee}) \simeq (V \otimes_k L)^{\vee} \xlongrightarrow{\sim} \frakm_i/\frakm_i^2.
\]

Finally note that 
\[
A =\textnormal{Sym}_L((V \otimes_k L)^{\vee}) \simeq \textnormal{Sym}_L(V^{\vee} \otimes_k L) \simeq \textnormal{Sym}_k(V^{\vee}) \otimes_k L
\]
where the last isomorphism is \cite[(11.1.2)]{GortzWedhorn1}. It follows that the induced morphism $\varphi_L\colon \Spec(R_L)= X_L  \rightarrow  	\Spec(A) \simeq \AA(V)_L$ is $G\times \Gamma$-equivariant and, by the Jacobian criterion, is étale at the points $y_1, \ldots, y_n$.
\end{proof}

\begin{example}
Let $X = V(x_1x_2-1) \subseteq \Spec (\CC[x_1,x_2])$ with $C_2$-action given by the sign action on each coordinate. Let $x = (1,1) \in X$ and note that $S_x = G_x = \left\{e \right\}$. Then by Lemma \ref{lemma: exists representation 1}.(2) we can choose any one dimensional $C_2$-representation $V$ in order to have an isomorphism $T_xX \simeq \textnormal{Res}^{C_2}_{\left\{e\right\}}(V)$ of $\CC$-vector spaces, satisfying hypotheses of Proposition \ref{proposition: étale morphism as in 8.10 of equivariant cycles}. Then, depending on the chosen $V$, we can follow the proof of Proposition \ref{proposition: étale morphism as in 8.10 of equivariant cycles} and see that it gives the following morphisms:
\begin{itemize}
	\item if $V = \CC$ with trivial $C_2$-action, then we get $\varphi \colon X \to \AA^1_{\CC}$ determined by $\varphi \coloneq \frac{x_1^2-1}{2}$;
	\item if $V = \CC$ with sign $C_2$-action, then we get $\varphi \colon X \to \AA^1_{\CC}$ determined by $\varphi \coloneq \frac{x_1 (x_1^2-1)}{2}$.
\end{itemize}
\end{example}

\section{Constructing equivariant quasi-finite morphisms} \label{section: constructing equivariant quasifinite morphisms}

Our next goal is to generalize \cite[Lemma 2.5]{Bachmann} to finite abelian groups $G$, and then apply the result in order to produce equivariant morphisms that are quasi-finite at a fixed closed point. We will use \cite[Lemma 2.4]{Bachmann} which we recall here as Lemma \ref{lemma: Bachmann 2.4} for the convenience of the reader.

Recall that a morphism of schemes $f \colon X \to Y$ is said to be \emph{radicial} if the underlying map of topological spaces is injective and if for every $x \in X$ the field extension $\kappa(x)/\kappa(f(x))$ is purely inseparable. A morphism is radicial if and only if is universally injective by \cite[\href{https://stacks.math.columbia.edu/tag/01S4}{Tag 01S4}]{stacks-project}.

\begin{definition}
Let $f \colon X \to Y$ be a morphism of schemes. Let $x \in X$ be a closed point. We say that $f$ is \emph{radicial at $x$} if the composite $\Spec(\kappa(x)) \xrightarrow{x} X \xrightarrow{f} Y$ is a radicial morphism.
\end{definition}

\begin{remark}
It is clear that $f$ is radicial at the closed point $x \in X$ if and only if $\kappa(x)/\kappa(f(x))$ is a purely inseparable field extension.
\end{remark}

\begin{lemma}\label{lemma: Bachmann 2.4}
Let $k$ be infinite and let $X$ be an affine finite type $k$-scheme. Let $S \subset X$ be a finite set of closed points. Then there exists a morphism $f\colon X \to \AA^1_k$ such that the composite $S \to X \to \AA^1_k$ is radicial. More generally, suppose that $T \subset X$ is another finite set of closed points disjoint from $S$ and let $\iota \colon X \hookrightarrow \AA^N_k$ be a closed immersion. Let $n \coloneq \sum_{t \in T} [\kappa(t) : k]$. Then there exists a polynomial function $f'\colon X \to \AA^1_k$ (with respect to the embedding $\iota$) of degree $\leq n+1$, which does not vanish on points of $S$, vanishes on points of $T$ and the composite $S \to X \to \AA^1_k$ is radicial.
\end{lemma}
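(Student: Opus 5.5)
We reduce to the case $X = \AA^N_k$ via the closed immersion $\iota$. Polynomial functions on $\AA^N_k$ of degree $\le m$ restrict to functions on $X$. For a closed point $s \in S$, the map $S \to \AA^1_k$ given by $f$ is radicial exactly when two conditions hold. First, $f(s) \neq f(s')$ as points of $\AA^1_k$ for distinct $s, s' \in S$. Second, $k(f(s)) \subseteq \kappa(s)$ is purely inseparable. Here $f(s) \in \kappa(s)$ is the value, and $k[f(s)]$ is its image field. We handle these conditions through the geometric points. Let $\bar k$ be an algebraic closure and let $\bar S \subset \AA^N(\bar k)$ be the finite set of geometric points over $S$. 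A point $s$ has $[\kappa(s):k]_{s}$ geometric points, where the subscript denotes the separable degree. The condition that $f$ be radicial on $S$ is equivalent to the following: for $\bar s \neq \bar s'$ in $\bar S$, lying over the same or different points of $S$, we have $f(\bar s) \neq f(\bar s')$. Indeed, if the values are pairwise distinct, then distinct points of $S$ have disjoint sets of Galois conjugates of values, so their images differ. Moreover, the $k$-embeddings of $\kappa(s)$ give distinct values of $f(s)$, so $k(f(s))$ has full separable degree, and hence $\kappa(s)/k(f(s))$ is purely inseparable.

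For the first statement, we take $f$ linear, $f = \sum c_i x_i$ with $c \in k^N$. For each pair $\bar s \neq \bar s'$, the set of $c$ with $\sum c_i(\bar s_i - \bar s'_i) = 0$ is a proper hyperplane over $\bar k$, since $\bar s - \bar s' \neq 0$. Its intersection with $k^N$ is contained in a proper $k$-linear subspace, because a nonzero $\bar k$-linear form vanishing on all of $k^N$ would have to be zero. Since $k$ is infinite, $k^N$ is not a finite union of proper subspaces. So a suitable $c$ exists.

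For the refined statement, we first choose a polynomial $h$ of degree $\le n$ vanishing on $T$ and nonvanishing on $S$. The ideal of $T$ in $k[x_1,\dots,x_N]$ has codimension $n$ as a $k$-vector space, with the quotient $\prod_{t}\kappa(t)$. The polynomials of degree $\le n$ surject onto this quotient, by a standard interpolation argument, which we would check. Then any linear form $\ell$ that is nonzero on each point of $S \cup T$ and separates $S\cup T$ gives $h = \prod_{t}\mathrm{N}_{\kappa(t)/k}$-type products of $\ell - (\text{conjugates})$. More concretely, let $P_t$ be the minimal polynomial of $\ell(t)$ over $k$, which has degree at most $[\kappa(t):k]$, and set $h = \prod_t P_t(\ell)$. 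This $h$ has degree $\le n$, vanishes on $T$, and is nonzero on $S$ because $\ell$ separates the geometric points of $S$ from those of $T$. Next we set $f' = h\cdot \ell'$ with $\ell'$ linear, $\ell' = \sum c_i x_i + c_0$, which has degree $\le n+1$. We need two things. First, $\ell'(\bar s) \neq 0$ on $\bar S$. Second, $h(\bar s)\ell'(\bar s) \neq h(\bar s')\ell'(\bar s')$ for $\bar s\ne \bar s'$. Each condition is a nontrivial affine-linear condition on $(c_0,c) \in k^{N+1}$ over $\bar k$. The second condition is nontrivial because either $h(\bar s) \neq h(\bar s')$, in which case $c = 0, c_0 = 1$ works, or $h(\bar s) = h(\bar s') \neq 0$, in which case we reduce to separating $\bar s,\bar s'$ linearly. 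The infinite-field avoidance argument again yields a valid choice.

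The main obstacle is the bookkeeping of the degree bound together with the correct translation of "radicial" into pointwise separation of geometric points. The product-of-minimal-polynomials construction of $h$ is the step to verify carefully: $\ell$ must be chosen first to separate all geometric points of $S \cup T$, which is possible by the same avoidance argument.
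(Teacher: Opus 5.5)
Your proof is correct and complete. The paper gives no argument of its own here: it simply refers to Lemma~2.4 of Bachmann's paper. So what you supply is a self-contained elementary proof rather than an alternative to one in the text.

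The route has two parts.
\begin{itemize}
\item You reduce radiciality of $S \to \AA^1_k$ to injectivity of $f$ on the finitely many $\bar k$-points of $S$. Only the ``if'' direction is needed, and your separable-degree argument justifies it correctly.
\item You then obtain every requirement by choosing coefficients in $k^N$ or $k^{N+1}$ outside finitely many proper $k$-subspaces, each cut out by a nonzero $\bar k$-linear form. Such a choice exists because $k$ is infinite.
\end{itemize}

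A useful feature of your construction is that it explains the bound $n+1$:
\begin{itemize}
\item $h=\prod_t P_t(\ell)$ has degree $\sum_t [k(\ell(t)):k]\le n$ and kills $T$.
\item $h$ is nonzero on $\bar S$ because $\ell$ separates $\bar S$ from $\bar T$.
\item The single extra linear factor $\ell'$ gives both nonvanishing and separation on $\bar S$. Your case split, $h(\bar s)\neq h(\bar s')$ versus $h(\bar s)=h(\bar s')\neq 0$, is exactly what shows the relevant forms in $(c_0,c)$ are nonzero.
\end{itemize}

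Two cosmetic remarks:
\begin{itemize}
\item The interpolation aside, that polynomials of degree $\le n$ surject onto $\prod_t\kappa(t)$, is never used and can be deleted.
\item Requiring $\ell$ to be nonzero on the points of $S\cup T$ is superfluous. Only the separation of $\bar S$ from $\bar T$ enters the argument.
\end{itemize}
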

\begin{proof}
See \cite[Lemma 2.4]{Bachmann}.
\end{proof}

\begin{remark} \label{remark: fix equivariant closed immersion}
Let $G$ be a finite group and let $A$ be a finitely generated $k$-algebra with a $G$-action.
Set $X \coloneq \Spec(A)$. Choose $k$-algebra generators $a_1, \ldots, a_N$ of $A$, and let $W \subseteq A$ be the finite-dimensional $G$-stable $k$-subspace spanned by all $g(a_i)$, with $g \in G$ and $1 \leq i \leq N$. The inclusion $W \hookrightarrow A$ induces a surjective $G$-equivariant homomorphism $\textnormal{Sym}(W) \twoheadrightarrow A$. If $V \coloneq W^{\vee}$, then $\textnormal{Sym}(V^{\vee}) = \textnormal{Sym}(W)$, so this surjection defines a $G$-equivariant closed immersion $X \hookrightarrow \AA(V)$.
\end{remark}

\begin{lemma}\label{lemma: generalization of Bachmann 2.5.(1)}
Let	$k$ be infinite and let $G$ be a finite abelian group. Let $X$ be an affine finite type $k$-scheme with a $G$-action. Let $x \in X$ be a closed point and let $S\subset X$ be a finite set of closed points disjoint from $Gx$.
Then there exists a $G$-invariant morphism $F\colon X \rightarrow \mathbb{A}^1_k$	satisfying $F(x)\notin F(S)$.
\end{lemma}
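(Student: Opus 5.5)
The natural approach is to pass to the quotient. Since $G$ is finite and $X=\Spec(A)$ is affine of finite type over $k$, the ring of invariants $A^G$ is a finitely generated $k$-algebra and $A$ is finite over $A^G$ (classical Noether finiteness; no hypothesis on $\textnormal{char}(k)$ is needed). Thus $Y \coloneq X/G = \Spec(A^G)$ is an affine finite type $k$-scheme and $\pi\colon X \to Y$ is finite and surjective. It is also closed, and its fibres are exactly the $G$-orbits. Any morphism $f\colon Y \to \AA^1_k$ gives a $G$-invariant morphism $F \coloneq f\circ\pi\colon X \to \AA^1_k$, so it suffices to produce a suitable $f$ on $Y$.

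The first step is to check that $y \coloneq \pi(x)$ does not lie in the finite set $\pi(S)$ of closed points of $Y$. This holds because the fibres of $\pi$ are precisely the orbits and $S \cap Gx = \emptyset$. The orbit statement is standard: if two primes $\mathfrak{p},\mathfrak{q}$ of $A$ contract to the same prime of $A^G$, then they are $G$-conjugate. (If $\mathfrak{p} \not\subseteq g\mathfrak{q}$ for all $g$, prime avoidance gives $a \in \mathfrak{p}$ outside all $g\mathfrak{q}$, and then $\prod_g g(a) \in \mathfrak{p}\cap A^G \setminus \mathfrak{q}$, a contradiction.) Images of closed points under the finite map $\pi$ are closed.

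Next I would apply Lemma \ref{lemma: Bachmann 2.4} to $Y$, taking $\pi(S)$ as the point set $S$ there and $\{y\}$ as $T$, to get a polynomial function $f\colon Y \to \AA^1_k$ that vanishes at $y$ and does not vanish at any point of $\pi(S)$. Composing with $\pi$, the map $F=f\circ\pi$ satisfies $F(x)=0$ while $F(s)\neq 0$ for every $s\in S$, so $F(x) \notin F(S)$ as points of $\AA^1_k$. Alternatively, one can avoid the general lemma: choose $a \in A^G$ lying in the maximal ideal $\frakm_y$ but outside the finitely many maximal ideals $\frakm_{\pi(s)}$ of $A^G$. Such an $a$ exists by prime avoidance, since $\frakm_y$ is not contained in any of these maximal ideals. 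Then $F$ is given by $a$.

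The only real content is the first step, namely that $\pi$ separates orbits and that $A^G$ is of finite type, and both are classical. I expect no serious obstacle. The infinitude of $k$ is needed only if one routes the argument through Lemma \ref{lemma: Bachmann 2.4}.
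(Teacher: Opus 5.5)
Your proof is correct and follows essentially the same route as the paper: pass to the quotient $\pi\colon X\to X/G=\Spec(A^G)$, use that the fibres of $\pi$ are exactly the $G$-orbits so that $\pi(x)\notin\pi(S)$, and then separate $\pi(x)$ from $\pi(S)$ by a function on $X/G$ obtained from Lemma \ref{lemma: Bachmann 2.4}. The differences are minor. The paper first reduces to the case where the points of $S$ lie in distinct orbits and takes a function injective on $\{\pi(x)\}\cup\pi(S)$, whereas you take one vanishing at $\pi(x)$ and nowhere on $\pi(S)$; your prime-avoidance variant also correctly shows that the hypothesis that $k$ is infinite is not needed for this lemma.
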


\begin{proof}
We may assume that the points of $S$ have pairwise distinct set-theoretic orbits. Indeed, if the set-theoretic orbits of two points $s, s' \in S$ intersect, then $F(s)=F(s')$ and thus the conclusion holds in that case too.
The GIT quotient (\cite[\S1.2]{GIT}) $X/G$ is an affine $k$-scheme of finite type.
Let $\pi :  X\to X/G$ be the quotient morphism and set $S' \coloneqq \{\pi(x)\} \cup \pi(S)$.
We claim that $S'$ is a finite set of distinct closed points of $X/G$. Indeed the geometric fibers of $\pi$ are the orbits of geometric points of $X$. If $\pi(x)=\pi(s)=y \in X/G$ for some $s \in S$, then any geometric fiber over $y$ would contain geometric points over $x$ and $s$, and thus $s \in Gx$, which is absurd. The same holds for all the elements of $S$ because we assumed they have pairwise distinct set-theoretic orbits.
Since $k$ is infinite and $S'$ consists of closed points, by Lemma \ref{lemma: Bachmann 2.4}, there exists a morphism
$\gamma \colon X/G \to \mathbb{A}^1_k$ that is injective on $S'$.
The composite $F \coloneqq \gamma \circ \pi \colon X \rightarrow \mathbb{A}^1_k$ is $G$-invariant and satisfies $F(x)\notin F(S)$ by construction.
\end{proof}

\begin{lemma}\label{lemma: generalization of Bachmann 2.5.(2,3)}
Let $k$ be infinite and let $G$ be a finite abelian group such that $|G|$ is invertible in $k$. Let $X$ be an affine finite type $k$-scheme with a $G$-action.
Let $S\subset X$ be a finite set of closed points and let $V \in \textnormal{Rep}_k(G)$ such that $\dim_k(V)=1$ and such that $\textnormal{Res}^G_{G_s}(V)$ is the trivial one-dimensional $G_s$-representation for all $s \in S$. Then:
\begin{enumerate}
	\item[(1)] there exists a $G$-equivariant morphism $F\colon X \rightarrow \AA(V) \simeq \AA^1_k$ such that $F(s)\neq 0$ for all $s\in S$; 
	
	\item[(2)] if $T\subset X$ is a finite set of closed points disjoint from $\bigcup_{s\in S} Gs$, then $F$ can be chosen so that $F(T)=0$.
\end{enumerate}
\end{lemma}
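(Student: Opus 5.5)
We construct $F$ in two stages: first a possibly non-equivariant function with the required vanishing pattern, then a twisted average of it that projects onto the $\chi$-isotypic part, where $\chi \colon G \to k^{\times}$ is the character of $V$. Fix a basis vector of $V^{\vee}$. A $G$-equivariant morphism $X \to \AA(V)$ is then the same as an element $f \in A = \mathcal{O}(X)$ with $g\cdot f = \chi'(g) f$ for all $g$, where $\chi'$ is the dual character. Since $|G|$ is invertible in $k$, we have the projector
\[
P(f) \coloneq \frac{1}{|G|}\sum_{g \in G} \chi'(g)^{-1}\, g\cdot f ,
\]
which sends $A$ onto the $\chi'$-eigenspace. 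Here $(g\cdot f)(y) = f(g^{-1}y)$, interpreted via residue fields.

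For (1) together with (2), we first replace $S$ and $T$ by the $G$-stable finite sets $GS = \bigcup_{s\in S} Gs$ and $GT$. These remain disjoint, by the hypothesis of (2) and because orbits partition $X$. We may also assume that $S$ contains one representative from each orbit in $GS$. By Lemma \ref{lemma: Bachmann 2.4}, applied to the finite sets of closed points $S$ and $(GS\setminus S)\cup GT$ (disjoint by construction), we choose $f \in A$ with the following properties: $f(s)\neq 0$ for $s\in S$; $f$ vanishes on $GS\setminus S$ and on $GT$; and moreover $f(s) \in \kappa(s)$ is controlled so that it does not vanish after twisting by the stabilizer. Then $F = P(f)$ vanishes on $GT$, because each $g\cdot f$ vanishes on the $G$-stable set $GT$. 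At $s\in S$, the only terms of $P(f)$ that contribute are those with $g^{-1}s = s$, that is $g \in S_s$, since $f$ vanishes on $GS\setminus S$. Hence
\[
F(s) = \frac{1}{|G|}\sum_{g \in S_s}\chi'(g)^{-1}\,(g\cdot f)(s),
\]
where $(g\cdot f)(s) = \sigma_g(f(s))$ for the automorphism $\sigma_g$ of $\kappa(s)$ induced by $g\in S_s$, which is trivial exactly when $g \in G_s$.

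The main obstacle is showing that this sum is nonzero for a suitable choice of $f(s)$. By the hypothesis, $\chi'$ is trivial on $G_s$, so the sum equals $\tfrac{|G_s|}{|G|}\sum_{\bar g\in S_s/G_s}\chi'(g)^{-1}\sigma_g(f(s))$. The map $S_s/G_s \to \mathrm{Aut}_k(\kappa(s))$ is injective, so by Dedekind's independence of characters the $\kappa(s)$-semilinear map $\lambda \mapsto \sum_{\bar g}\chi'(g)^{-1}\sigma_g(\lambda)$ is not identically zero on $\kappa(s)$. Its zero set is therefore a proper $\kappa(s)^{S_s}$-subspace. We thus need to improve Lemma \ref{lemma: Bachmann 2.4} to prescribe the values $f(s) \in \kappa(s)$ outside a proper subspace, and not merely nonzero values. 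We will do this with the Chinese remainder theorem: $A \to \prod_{y\in GS\cup GT}\kappa(y)$ is surjective, since these are finitely many distinct maximal ideals. We then simply take $f$ with $f(s)=\lambda_s$ for a chosen $\lambda_s$ outside the zero set, and $f = 0$ at all other points. Since CRT already gives this, Lemma \ref{lemma: Bachmann 2.4} is not even needed. For (1) alone we take $T = \emptyset$. Finally, $F$ is $G$-equivariant by construction, which concludes the argument.
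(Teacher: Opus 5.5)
Your proof is correct, but it takes a different route from the paper. Once the first appeal to Lemma~\ref{lemma: Bachmann 2.4} (with values ``controlled'' in an unspecified way) is deleted, the argument runs as follows.
\begin{enumerate}
\item Take one representative per orbit.
\item Use the Chinese remainder theorem to prescribe $f(s)=\lambda_s$ at the representatives and $f=0$ on the rest of $GS\cup GT$.
\item Apply the $\chi'$-isotypic projector.
\end{enumerate}
The only real input is Dedekind's independence of the distinct automorphisms $\sigma_{\bar g}$ ($\bar g\in S_s/G_s$) of $\kappa(s)$. It supplies a $\lambda_s$ with $\sum_{\bar g}\chi'(g)^{-1}\sigma_{\bar g}(\lambda_s)\neq 0$. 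You should add the one line, which the paper includes, explaining why nonvanishing at a representative $s$ gives nonvanishing on all of $Gs$: the value at $gs$ is the image of the value at $s$ under the linear automorphism $g$ of $\AA(V)$.

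The paper proceeds differently. It fixes an equivariant embedding $X\hookrightarrow\AA^N_k$ and considers the affine space of bounded-degree polynomials whose symmetrization vanishes on $T$. Inside it, it takes the open subset where the symmetrization is nonzero on $S$. It shows this open set is nonempty over $\overline{k}$: there all points are rational and $S_{\bar s}=G_s$, so the twisted sum collapses to $|G_s|\,\overline{F}(\bar s)$ and Lemma~\ref{lemma: Bachmann 2.4} applies. It then uses that $k$ is infinite to find a $k$-point.

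The geometric route avoids Galois theory and yields a polynomial of bounded degree in a fixed embedding. Yours needs no embedding, no degree bookkeeping and no infinitude of $k$ (only $|G|\in k^{\times}$). It also handles points with $S_s\neq G_s$ head-on. That is exactly where the geometric argument needs care: $S_s$ permutes the geometric points over $s$, so one should use only one geometric point over each $s$. With all of them, as the paper writes it, the sets $W$ and $\overline{S}$ intersect as soon as $S_s\neq G_s$.
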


\begin{proof}
	We just need to prove $(2)$, since $(1)$ can be obtained by setting $T = \emptyset$.
	Without loss of generality, we may assume that distinct points of $S$ lie in distinct $G$-orbits. Indeed, suppose that $s_1, s_2 \in S$ belong to the same $G$-orbit, so that $g(s_1)=s_2$ for some $g \in G$. In that case, we remove $s_2$ from $S$. Repeating this process, we obtain a subset $S' \subseteq S$ containing at most one point from each $G$-orbit. Moreover, since $S' \subseteq S$, we still have that $T$ is disjoint from $\bigcup_{s \in S'} Gs$. Assume that we proved (1),(2) for $S'$ and $T$. Then the function $F \colon X \to \AA(V)$ also satisfies property (1) for the set $S$, because if $s \in S \setminus S'$, then $s = g(s')$ for some $g \in G$ and $s' \in S'$; then we have $F(s) = F(g(s')) = g(F(s')) \neq 0$ because $F(s') \neq 0$ and $g \in \Aut_k(V)$. Thus, replacing $S$ by $S'$, we may assume that the points of $S$ lie in distinct $G$-orbits.
	
	Let $\iota \colon X \hookrightarrow \AA^N_k$ be a $G$-equivariant closed immersion into an $N$-dimensional $G$-representation, which is possible by Remark \ref{remark: fix equivariant closed immersion}. Set $d \coloneq |G| \cdot (\sum_{s \in S}[\kappa(s) : k] + \sum_{t \in T}[\kappa(t) : k])+1$. Let $\cE$ be the affine space over $k$ associated to the $k$-vector space $E$ of polynomial functions $f' \in k[x_1, \ldots, x_N]$ of degree $\leq d$. Any polynomial function $f \in E$ defines a morphism $F \colon \AA^N_k \to \AA^1_k$. From this we can define a new morphism $\textnormal{Sym}(F) \colon \AA^N_k \to \AA^1_k$ by setting $\textnormal{Sym}(F) \coloneq \sum_{g \in G} g^{-1} \circ F \circ g$. Note that $\textnormal{Sym}(F)$ is a $G$-equivariant morphism, and it corresponds to the polynomial $f' = \sum_{g \in G} \chi(g)^{-1} g(f)$. Consider the $k$-vector subspace $J \subseteq E$ of those polynomials $f \in E$ such that the morphism $\textnormal{Sym}(F)$ vanishes on the closed points of $T$ (which we view as closed points of $\AA^N_k$ using the closed immersion $\iota$). Then $J$ corresponds to an affine subspace $\cJ$ of $\cE$. Inside $\cJ$ we can consider the open subspace $U \subseteq \cJ$ whose $k$-rational points corresponds to polynomials $f$ such that $\textnormal{Sym}(F)$ does not vanish on $S$. To prove the claim it is sufficient to show that $U$ has a $k$-rational point $f \in U(k)$, so that the claim is proved by the corresponding morphism $\textnormal{Sym}(F) \circ \iota$. In order to do that, since $k$ is infinite, it is sufficient to show that $U$ has a $\overline{k}$-rational point, where $\overline{k}$ is an algebraic closure of $k$. Let $\overline{X}$ be the base change of $X$ to the algebraic closure $\overline{k}$. Let $\overline{S}$, $\overline{T}$ be the closed points of $\overline{X}$ lying over $S$ and $T$ respectively. We define the following set made of closed points of $\overline{X}$: 
	\[
	W \coloneq \bigcup_{\overline{t} \in \overline{T}} G(\overline{t}) \cup \bigcup_{\overline{s} \in \overline{S}} G(\overline{s}) \setminus \{\overline{s}\}.
	\] 
	By construction, the sets $W$ and $\overline{S}$ are disjoint and $|W| < d$. Then by Lemma \ref{lemma: Bachmann 2.4} applied to $\overline{X}$, $\overline{S}$ and $W$, there exists a polynomial $\overline{f} \in \overline{k}[x_1, \ldots, x_N]$ with $\deg(\overline{f}) \leq |W| + 1 \leq d$ such that $\overline{F}(w) = 0$ for all $w \in W$ and $\overline{F}(\overline{s}) \neq 0$ for all $\overline{s} \in \overline{S}$. We claim that the morphism $\textnormal{Sym}(\overline{F}) = \sum_{g \in G} g^{-1} \circ \overline{F} \circ g$ gives the desired $\overline{k}$-rational point. Indeed, for each $\overline{t} \in \overline{T}$, we have
	\[
	\textnormal{Sym}(\overline{F}) (\overline{t}) = \sum_{g \in G}g^{-1}(\overline{F}(g(\overline{t}))) = 0
	\]
	because $\overline{F}$ vanishes on each $g(\overline{t})$ by construction. Moreover, for each $\overline{s} \in \overline{S}$ we have
	\begin{equation} \label{equation: symmetric polynomial in points of S}
	\textnormal{Sym}(\overline{F}) (\overline{s}) = \sum_{g \in G}g^{-1}(\overline{F}(g(\overline{s}))) = \sum_{g \in S_{\overline{s}}} g^{-1}(\overline{F}(g(\overline{s})))
	\end{equation}
	because $\overline{F}$ vanishes on each $g(\overline{s})$ which is different from $\overline{s}$ by construction. Then note that, if $\overline{s}$ lies over $s \in S$, then $S_{\overline{s}} = G_{\overline{s}} = G_s$ because $\overline{k}$ is algebraically closed. Since by hypothesis $\chi_{|G_{s}}$ is trivial, by Equation \ref{equation: symmetric polynomial in points of S} we get
	\[
	\textnormal{Sym}(\overline{F}) (\overline{s}) = \sum_{g \in G_s} \overline{F}(\overline{s}) = |G_s| \cdot \overline{F}(\overline{s}),
	\]
	which is different from $0$ because $|G_s|$ is invertible in $k$ and $\overline{F}(\overline{s}) \neq 0$ by construction.
\end{proof}

The next lemma will allow us to produce equivariant maps that are quasi-finite at a fixed point in Proposition \ref{proposition: quasi-finite morphisms construction}. Note that, even if the previous lemmas work for any finite abelian group $G$, our proof of Lemma \ref{lemma: decrease dimension by one} and of Proposition \ref{proposition: quasi-finite morphisms construction} holds only for cyclic $p$-groups. This is the reason why Theorem \ref{theorem: gabber} holds only for such groups.

\begin{lemma} \label{lemma: decrease dimension by one}
Let $k$ be infinite and let $p$ be a prime number coprime with $\textnormal{char}(k)$. Let $r \geq 1$ be an integer and set $G=C_{p^r}$. Let $X$ be an affine finite type $k$-scheme with a $G$-action and let $Z \subset X$ be a closed subscheme. Let $x \in Z$ be a closed point. Let $H \subseteq G$ be a subgroup and suppose that $\dim_x (Z \cap X^{H'})=0$ for all $H' \supsetneq H$. (By convention, $\dim_x (Z \cap X^{H'}) = 0$ if $x \notin X^{H'}$). Let $V \in \textnormal{Rep}_k(G)$ be a one-dimensional $G$-representation such that $\textnormal{Res}^G_H(V)$ is the trivial one-dimensional $H$-representation. Then there exists a $G$-equivariant morphism $f \colon X \to \AA(V)$, such that
\begin{equation} \label{equation: dimension counting}
\dim_x(f^{-1}(f(x)) \cap Z \cap X^K) \leq \dim_x (Z \cap X^K) -1
\end{equation}
for all subgroups $K \subseteq G$, where the right-hand side is understood as $\max \{ \dim_x (Z \cap X^K) -1,0 \}$.
\end{lemma}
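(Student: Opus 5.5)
The plan is a standard dimension-cutting argument, carried out separately on each fixed locus. Since $G = C_{p^r}$ is cyclic of prime-power order, its subgroups form a chain $\{e\} = G_0 \subsetneq G_1 \subsetneq \dots \subsetneq G_r = G$. For a subgroup $K$ with $\dim_x(Z \cap X^K) \geq 1$, the hypothesis forces $K \subseteq H$, because any $K \not\subseteq H$ strictly contains $H$ in a chain. Therefore only subgroups $K \subseteq H$ need attention, and for each of them $\textnormal{Res}^G_K(V)$ is trivial.

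For each such $K$, I choose finitely many closed points on the irreducible components of $Z \cap X^K$ passing through $x$ of positive dimension, one point on each component, distinct from the orbit $Gx$. Call the union of these points $S$. Inequality \eqref{equation: dimension counting} then follows if $f(s) \neq f(x)$ for all $s \in S$: each positive-dimensional component through $x$ is then not contained in the fibre $f^{-1}(f(x))$, so its intersection with that fibre has dimension at most one less.

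I split into two cases.

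\textbf{Case 1: $V$ is nontrivial.} Then $H \neq G$, and $x$ is not fixed by $G$ unless $Z \cap X^G$ has dimension $0$ at $x$; in either case I aim for $f(x) = 0$. For $s \in S$, the geometric stabilizer satisfies $G_s \supseteq K$. I need $\textnormal{Res}^G_{G_s}(V)$ to be trivial, that is, $G_s \subseteq \ker(V)$. Since $\ker(V) \supseteq H$ and the subgroups form a chain, this holds whenever $G_s \subseteq H$. I arrange this by choosing $s$ generic on its component: a generic point of a component $C \subseteq X^K$ through $x$ has stabilizer equal to the generic stabilizer of $C$. If that stabilizer $H'$ strictly contained $H$, then $C \subseteq Z \cap X^{H'}$ would have positive dimension at $x$, contradicting the hypothesis. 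So $G_s \subseteq H$ after choosing $s$ outside the proper closed subsets $C \cap X^{H'}$. Now apply Lemma~\ref{lemma: generalization of Bachmann 2.5.(2,3)}(2) with this $S$ and $T = Gx$ (disjoint from $\bigcup_s Gs$ by the choice of $s$). This gives a $G$-equivariant $f$ with $f(x) = 0 \neq f(s)$.

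\textbf{Case 2: $V$ is trivial.} Here I use Lemma~\ref{lemma: generalization of Bachmann 2.5.(1)} directly to get a $G$-invariant $F$ with $F(x) \notin F(S)$.

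The main obstacle is the bookkeeping in Case 1. It requires checking that the choice of generic $s$ is compatible across all the $K$ simultaneously, and that $x$ can be placed in $T$: if $x$ is stabilized by a subgroup acting nontrivially on $V$, then $f(x) = 0$ is forced anyway. The chain structure of the subgroups of $C_{p^r}$ is exactly what makes "$G_s \not\supsetneq H$" equivalent to "$G_s \subseteq H$". This equivalence is the step that fails for noncyclic groups.
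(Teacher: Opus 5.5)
Your proposal is correct and follows the paper's proof essentially step for step. You choose one closed point on each positive-dimensional component of $Z \cap X^K$ through $x$, outside $Gx$ and outside $X^{H'}$ for $H' \supsetneq H$; the chain of subgroups of $C_{p^r}$ gives $G_s \subseteq H \subseteq \ker(V)$; and you apply Lemma~\ref{lemma: generalization of Bachmann 2.5.(1)} when $V$ is trivial and Lemma~\ref{lemma: generalization of Bachmann 2.5.(2,3)} when it is not. The differences are cosmetic: you take $T = Gx$ where the paper takes $T = \{x\}$, and you explicitly restrict to components through $x$, which the paper's set $\cW$ implicitly needs for the choice $s \notin X^{H'}$ to be possible.
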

\begin{proof}
Note that for a subgroup $K \subseteq G$, the dimension $\dim_x (Z \cap X^K)$ can be positive only if $K \subseteq G_x$, otherwise $x \notin X^K$. Let
\[
\cW \coloneq \{ W \mid W \textnormal{ is a positive-dimensional irreducible component of } Z \cap X^{K} \textnormal{ for some } K \subseteq G_x \}.
\]

For every $W \in \cW$, choose one closed point $s \in W$ such that 
\begin{itemize}
	\item $s \notin Gx$ and 
	\item $s \notin X^{H'}$ for all $H' \supsetneq H$.
\end{itemize}
This is possible because $k$ is infinite and because, by hypothesis, $\dim_x (Z \cap X^{H'})=0$ for all $H' \supsetneq H$. Let $S$ be the resulting finite set.

\textbf{Case when $V$ is the trivial $G$-representation}: in that case, Lemma \ref{lemma: generalization of Bachmann 2.5.(1)} yields a $G$-equivariant morphism
$f \colon X\to\AA(V) \simeq \AA^1_k$ such that $f(x)\notin f(S)$.
Hence $f^{-1}(f(x))$ cannot contain any $W \in \cW$. In particular we have that for all $K \subseteq G$ the inequality
\[
\dim_{x}(f^{-1}(f(x)) \cap Z \cap X^K) \leq \dim_{x}(Z \cap X^K) -1
\]
holds, where the right-hand side is understood as $\max \{ \dim_x (Z \cap X^K) -1,0 \}$.

\textbf{Case when $V$ is a nontrivial $G$-representation}: by construction, every $s \in S$ satisfies $G_s \subseteq H$, because $G = C_{p^r}$, and thus there exists a unique chain of subgroups of $G$. Since $\textnormal{Res}^G_H(V)$ is the trivial one-dimensional $H$-representation, then $V$ also restricts to a trivial $G_s$-representation for all $s \in S$. Then Lemma \ref{lemma: generalization of Bachmann 2.5.(2,3)} applied to $S$ and $T = \left\{x\right\}$ gives a $G$-equivariant morphism $f \colon X \to \AA(V)$ with $f(S) \neq 0$ for all $s \in S$ and $f(x)=0$. If any $W \in \cW$ were contained in
$f^{-1}(f(x))$, then the chosen point $s \in S \cap W$ would satisfy
$f(s)=f(x)=0$, contradicting $f(s)\neq 0$. In other words, we have a $G$-equivariant morphism $f \colon X \rightarrow \AA(V)$ such that for all $K \subseteq G$ the inequality
\[
\dim_{x}(f^{-1}(f(x)) \cap Z \cap X^K) \leq \dim_{x}(Z \cap X^K)-1
\]
holds, where the right-hand side is understood as $\max \{ \dim_x(Z \cap X^K) -1,0 \}$.
\end{proof}

\begin{proposition} \label{proposition: quasi-finite morphisms construction}
Let $k$ be infinite. Let $p$ be a prime and $r \geq 1$ be an integer such that $k$ contains a primitive $p^r$th root of unity. Set $G\coloneq C_{p^r}$. Let $(X,Z)$ be a $G$-pair with $X$ affine and let $x \in Z$ be a closed point. Let $V$ be a $G$-representation over $k$ such that there is an isomorphism $\textnormal{Res}^G_{G_x}(V) \otimes_{k} \kappa(x) \simeq T_{x}X$ in $\textnormal{Rep}_{\kappa(x)}(G_x)$. Set $\dim_{x}(X) = n+1$ for some $n \geq 0$. Then
\begin{enumerate}
	\item there exists a decomposition $V = V' \oplus V''$ of $G$-representations and a $G$-equivariant morphism $F \colon X \to \AA(V')$ such that $V'$ is a $G$-subrepresentation of dimension $n$ and the composite $Z \rightarrow X \xrightarrow{F} \AA(V')$ is quasi-finite at $x$.
	\item Moreover, if $T_{x}X$ is a nontrivial $G_x$-representation over $\kappa(x)$, the decomposition can be chosen so that $\textnormal{Res}^G_{G_x}(V'')$ is nontrivial.
\end{enumerate}
\end{proposition}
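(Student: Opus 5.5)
Decompose $V$ into one-dimensional $G$-representations $V = \bigoplus_{i=1}^{n+1} L_i$ (Lemma \ref{lemma: lemma on linear representations}.(1)). Since $\textnormal{Res}^G_{G_x}(V)\otimes_k\kappa(x)\simeq T_xX$, the $L_i$ restricted to $G_x$ realize the characters occurring in $T_xX$. Then build $F=(f_1,\dots,f_n)$ one coordinate at a time by applying Lemma \ref{lemma: decrease dimension by one} $n$ times. Each application lowers by one the local dimension at $x$ of every stratum $Z\cap X^K$ (capped at $0$). The input is $\dim_x Z\le n$, which holds because $Z$ has positive codimension and $\dim_x X=n+1$. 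Hence after $n$ steps all fibres $F^{-1}(F(x))\cap Z\cap X^K$ have dimension $0$ at $x$; in particular $K=\{e\}$ gives quasi-finiteness of $Z\to\AA(V')$ at $x$. Here $V'=\bigoplus_{i\in I}L_i$ for the $n$ chosen indices and $V''$ is the remaining line.

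The delicate point is choosing which $L_i$ to use at each step so that the hypothesis of Lemma \ref{lemma: decrease dimension by one} holds. That hypothesis asks for a subgroup $H$ with $\textnormal{Res}^G_H(L_i)$ trivial and $\dim_x(Z\cap X^{H'})=0$ for all $H'\supsetneq H$. Since $G=C_{p^r}$ has a totally ordered lattice of subgroups, take $H$ to be the largest subgroup such that the current fibre-restricted $Z\cap X^H$ (more precisely $Z_j\coloneq Z\cap F_{j}^{-1}(F_j(x))$ for the partial map $F_j$) has positive dimension at $x$. This uses the version of the lemma applied to the closed subscheme $Z_j$. We then need some unused $L_i$ whose restriction to $H$ is trivial. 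The plan is a tangent-space count: $X^H$ is smooth at $x$ with $T_x(X^H)=(T_xX)^H$. So $\dim_x(Z_j\cap X^H)\le\dim (T_xX)^H - (\text{number of already used lines trivial on } H) - 1$, where the $-1$ comes from $Z$ having positive codimension in the relevant component of $X^H$. Granting this inequality, positivity of the left side forces at least two unused $L_i$ trivial on $H$, so a suitable line exists, with one to spare. Making this count honest is the main obstacle. One must show that each previously used function $f_l$ with $L_l$ trivial on $H$ cuts $X^H$ transversally enough to drop the dimension bound. Alternatively one can track the inequality $\dim_x(Z_j\cap X^K)\le \dim (T_xX)^K-1-\#\{l\le j : L_l|_K \text{ trivial}\}$ inductively using the dimension drop of Lemma \ref{lemma: decrease dimension by one}. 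For the one lines $L_l$ not trivial on $K$, $f_l$ vanishes identically on $X^K$, which is why they do not count. I would try the inductive inequality first, since it uses only Krull-dimension bounds for the stratum $X^K$. Also, since $x$ need only be geometrically fixed by $G_x$, everything is taken for $K\subseteq G_x$.

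For (2), suppose $T_xX$ is a nontrivial $G_x$-representation. Some $L_i$ then restricts nontrivially to $G_x$, and in the inductive choice we always prefer lines trivial on $H$. The count above leaves a spare line among those trivial on the current $H$ when $H\ne \{e\}$. I would arrange the order so that the final unused line $V''$ is one with $\textnormal{Res}^G_{G_x}$ nontrivial. This works as long as there is a line nontrivial on $G_x$ that is never forced. Lines nontrivial on $G_x$ are only forced when the relevant $H$ does not contain the kernel-determining subgroup. A final pass, possibly exchanging the last chosen line with the spare one, should produce this.
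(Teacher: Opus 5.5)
\paragraph{Verdict.} Your overall plan is sound, but the dimension count you rely on is false, and the argument for (2) depends on it.

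\paragraph{What works, and the gap in the count.} The plan is to apply Lemma~\ref{lemma: decrease dimension by one} $n$ times, each time with $H$ the largest subgroup for which the current stratum $Z_j\cap X^H$ is still positive-dimensional at $x$, and with an unused line trivial on $H$. This is a tidy repackaging of the paper's descent along the chain $G_x=H_1\supsetneq H_2\supsetneq\cdots$. The problem is the inequality $\dim_x(Z_j\cap X^K)\le \dim(T_xX)^K-1-\#\{\ldots\}$. It already fails at $j=0$, because $Z\cap X^K$ need not have positive codimension in $X^K$ ($Z$ may contain a fixed-point stratum). For example, let $G=C_p$ act on $\AA^2$ by $\sigma(x_1)=\zeta x_1$, $\sigma(x_2)=x_2$, and take $Z=V(x_1)=X^G$, $x=0$, $K=G$. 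Then the left side is $1$ and the right side is $0$. In this example the first step has $H=G$, and only the trivial line is admissible, so there is no ``spare''.

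\paragraph{The repair for (1).} What is true, and what the paper uses, is $\dim_x(Z\cap X^K)\le \dim_x X^K=\dim(T_xX)^K$ for $K\subseteq G_x$. The lemma's conclusion holds for every $K$, so by induction this gives $\dim_x(Z_j\cap X^K)\le u_j(K)$, where $u_j(K)$ is the number of unused $L_i$ with trivial restriction to $K$. Positivity of the left side at $K=H$ then yields one admissible line. That is enough for (1).

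\paragraph{The gap in (2).} Your argument for (2) rests on the spare line, so it collapses. The rule ``prefer lines trivial on $H$'' is also vacuous, since every admissible line is trivial on $H$. The proposed final swap is not a harmless pass either: exchanging a used line for an unused one changes $F$, hence every later fibre $Z_j$, so the later steps would have to be redone.

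\paragraph{The missing idea.} Use all lines trivial on $G_x$ first. This is always allowed, because every $H$ that occurs satisfies $H\subseteq G_x$: a positive-dimensional stratum through $x$ forces $x\in X^H$. If $T_xX$ is nontrivial, then $m_1=\dim(T_xX)^{G_x}\le n$, so all these lines land in $V'$ and $V''$ is nontrivial on $G_x$. If the fibre becomes zero-dimensional before $n$ steps, pad $V'$ with further lines carrying the zero map; this does not enlarge $F^{-1}(F(x))$. This is exactly the paper's Step~1 (case $m_1<n+1$), followed by its descent through $H_2, H_3,\dots$.
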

\begin{proof}
	Set $H_1 \coloneq G_x$.	Since $X$ is a smooth $k$-scheme and $G$ is a finite abelian group such that $\textnormal{char}(k) \nmid |G|$, for every $K \subseteq H_1$ we have
	\begin{equation} \label{equation: dimension fixed locus}
		\dim_x(X^K) = \dim_{\kappa(x)}(T_xX)^K.
	\end{equation}
	Moreover, since $X$ is smooth over $k$, $\dim_{\kappa(x)}T_x X = \dim_k V = n+1$. Since $Z$ has positive codimension in $X$, we have $\dim_x(Z) \leq n$.
	In order to prove (1) we need to show that there exists a $G$-subrepresentation $V' \subset V$ of dimension $n$ and a $G$-equivariant morphism $F\colon X \to \AA(V')$ such that the fiber $F^{-1}(F(x)) \cap Z$
	is zero-dimensional at $x$, because this means that $x$ is isolated in its fiber \cite[Lemma 12.72]{GortzWedhorn1} and hence that the composite map $Z \rightarrow X \xrightarrow{F} \AA(V')$ is quasi-finite at $x$.
	
	Suppose first that $n=0$. In that case we have $\dim_x(Z) = 0$. We set $V'' \coloneq V$ and $V' \coloneq 0$. Then $\AA(V') \simeq \Spec(k)$ and the map $Z \to X \to \Spec(k)$ is necessarily quasi-finite at $x$ because $\dim_x(Z) = 0$. Moreover, if $T_xX$ is a nontrivial $G_x$-representation over $\kappa(x)$, then $\textnormal{Res}^G_{G_x}(V)$ is a nontrivial $G_x$-representation over $k$ because $\textnormal{Res}^G_{G_x}(V) \otimes_{k} \kappa(x) \simeq T_{x}X$ by hypothesis; thus $\textnormal{Res}^G_{G_x}(V'') = \textnormal{Res}^G_{G_x}(V)$ is nontrivial, giving (2). 
	
	From now on we assume $n > 0$. We will construct the morphism $F$ by providing $n$ morphisms from $X$ to one-dimensional $G$-representations using Lemma \ref{lemma: decrease dimension by one}, with the property that each morphism decreases the dimension of the fiber $F^{-1}(F(x)) \cap Z$ at $x$ by one. 
	
	We first decompose $V$ as a direct sum of one-dimensional $G$-representations as
	\begin{equation}
		V = \bigoplus_{i = 1}^N V_{i}^{n_i}, \quad n_i \geq 0
	\end{equation}
	where the $V_i$'s are pairwise non-isomorphic one-dimensional $G$-representations, each appearing with multiplicity $n_i$ in the decomposition of $V$. Analogously, we decompose the $H_1$-representation $T_xX$ as a direct sum of one-dimensional $H_1$-representations over $\kappa(x)$ as
	\begin{equation}
		T_{x}X = \bigoplus_{j= 1}^M W_j^{m_j}, \quad m_j \geq 0
	\end{equation}
	where the $W_j$'s are pairwise non-isomorphic one-dimensional $H_1$-representations, each appearing with multiplicity $m_j$ in the decomposition of $T_xX$. Without loss of generality, we assume that $V_1$ (resp.\ $W_1$) is the trivial one-dimensional $G$-representation over $k$ (resp.\ $H_1$-representation over $\kappa(x)$).
	Note that
	\begin{equation} \label{equation: index sum}
		m_j = \sum_{(V_i)_{|H_1} \otimes_k \kappa(x) \simeq W_j} n_i
	\end{equation} 
	because $\textnormal{Res}^G_{H_1}(V) \otimes_k \kappa(x) \simeq T_{x}X$ in $\textnormal{Rep}_{\kappa(x)}(H_1)$ by hypothesis.
	
	 \textbf{Step 1}: let $I_1 \subseteq \{1, \ldots , N\}$ be the set of indexes such that $(V_i)_{|H_1}$ is a trivial $H_1$-representation. Without loss of generality we assume $I_1 = \{1, 2, \ldots, N_1 \}$. We have $m_1 = \sum_{i \in I_1} n_i$ by Equation \ref{equation: index sum}.
	 \begin{itemize}
	 	\item Case $m_1 = n+1$: in that case we have $\dim_x(X^{H_1}) = m_1 = n+1 = \dim_x(X)$ (the first equality is by Equation \ref{equation: dimension fixed locus}). It follows that the irreducible component of $X$ containing $x$ has trivial $H_1$-action. Then $\dim_x(Z^{H_1}) = \dim_x Z \leq \dim_x X -1 = \dim_x(X^{H_1}) - 1 = n$. Since $T_xX$ is a trivial $H_1$-representation over $\kappa(x)$, there is nothing to prove in (2). In order to prove (1), we do the following.
	 	If $n_1 > 0$, we apply Lemma \ref{lemma: decrease dimension by one} with respect to the subgroup $H_1$, to the closed subscheme $Z$ and to the trivial one-dimensional $G$-representation $V_1$ to obtain a $G$-equivariant morphism
	 		$f_1 \colon X\to\AA(V_1)$ such that
	 		\[
	 		\dim_{x}(f_1^{-1}(f_1(x)) \cap Z) \leq \dim_{x}Z -1,
	 		\]
	 		where we applied Equation \ref{equation: dimension counting} to the trivial subgroup $K= \{e\}$.
	 		We repeat the same process $d_1 \coloneq \min \{n_1, n\}$ times by applying Lemma \ref{lemma: decrease dimension by one} with respect to the subgroup $H_1$, to the trivial one-dimensional $G$-representation $V_1$ and to the relevant closed subscheme (for example, the second time will be applied to $f_1^{-1}(f_1(x)) \cap Z$) to get a $G$-equivariant morphism $F_1 \coloneq (f_1, \ldots, f_{d_1})\colon X \rightarrow \AA(V_1)^{d_1}$, such that 
	 		\[
	 		\dim_{x}(F_1^{-1}(F_1(x)) \cap Z) \leq \dim_{x}Z -d_1
	 		\]
	 		where the right-hand side is understood as $\max \{ \dim_x Z -d_1 ,0 \}$.
	 		If $d_1 = n$, then $\dim_x (Z) - d_1 \leq 0$ and thus the claim is proved by setting $V' \coloneq V_1^{n}$ and $F \coloneq F_1$.
	 		Suppose now that $d_1 = n_1 < n$. Since $n + 1 = m_1 = \sum_{i \in I_1} n_i$, then there exists some index $i \in I_1, i \neq 1$ such that $n_i > 0$. Without loss of generality we assume that $n_2 > 0$. We apply Lemma \ref{lemma: decrease dimension by one} with respect to the subgroup $H_1$, to the closed subscheme  $F_1^{-1}(F_1(x)) \cap Z$ and to the $G$-representation $V_2$, to obtain a $G$-equivariant morphism $f'_1 \colon X \rightarrow \AA(V_2)$ such that
	 		\[
	 		\dim_{x}((F_1, f_1')^{-1}((F_1, f'_1) (x)) \cap Z) \leq \dim_{x}Z- n_1 -1,
	 		\]
	 		where we applied Equation \ref{equation: dimension counting} to the trivial subgroup $K= \{e\}$ and we used that $(F_1, f_1')^{-1}((F_1, f'_1) (x)) \subseteq (f_1')^{-1}(f_1'(x)) \cap F_1^{-1}(F_1(x))$.
	 		We repeat the same process $d_2 \coloneq \min\{n_2, n - n_1 \}$ times by applying Lemma \ref{lemma: decrease dimension by one} with respect to the subgroup $H_1$, to the one-dimensional $G$-representation $V_2$ and to the relevant closed subscheme to get a $G$-equivariant morphism $(F_1, F_2) \coloneq (f_1, \ldots, f_{n_1}, f_1', \ldots, f_{d_2}') \colon X \rightarrow \AA(V_1)^{n_1} \oplus \AA(V_2)^{d_2}$, such that 
	 		\[
	 		\dim_{x}((F_1,F_2)^{-1}((F_1,F_2)(x)) \cap Z) \leq \dim_{x}Z -n_1 -d_2
	 		\]
	 		where the right-hand side is understood as $\max \{ \dim_x Z - n_1 -d_2 ,0 \}$.
	 		If $d_2 = n - n_1$, then $\dim_x Z - n_1 -d_2 \leq 0$ and thus we are done by setting $V' \coloneq V_1^{n_1} \oplus V_2^{d_2}$ and $F \coloneq (F_1, F_2)$. Otherwise, $d_2 = n_2 < n - n_1$. As before, in that case there exists some index $i \in I_1$, $i \neq 1,2$ such that $n_i > 0$. Repeating the same process for all the indexes of $I_1$ gives the desired conclusion.
	 		
	 \item Case $m_1 < n + 1$: in that case we claim that there exists a $G$-equivariant morphism $\alpha_1 = (F_1, F_2, \ldots, F_{N_1}) \colon X \to \AA(V_1)^{n_1} \oplus \ldots \oplus \AA(V_{N_1})^{n_{N_1}}$ whose restriction to $Z^{H_1}$ is quasi-finite at $x$. If $n_1 > 0$, we first apply Lemma \ref{lemma: decrease dimension by one} with respect to the subgroup $H_1$, to the closed subscheme $Z$ and to the trivial one-dimensional $G$-representation $V_1$ to obtain a $G$-equivariant morphism
	 $f_1 \colon X\to\AA(V_1)$ such that for all $K \subseteq G$ the inequality
	 \[
	 \dim_{x}(f_1^{-1}(f_1(x)) \cap Z \cap X^K) \leq \dim_{x}(Z \cap X^K) -1
	 \]
	 holds. We further apply Lemma \ref{lemma: decrease dimension by one} with respect to the subgroup $H_1$, to the $G$-representation $V_1$ and to the relevant closed subscheme (for example, the second time will be applied to $f_1^{-1}(f_1(x)) \cap Z$) a total of $n_1$ times and we get a $G$-invariant morphism $F_1 \coloneq (f_1, \ldots, f_{n_1})\colon X \rightarrow \AA(V_{1})^{n_1}$ such that 
	 \[
	 \dim_{x}(F_1^{-1}(F_1(x)) \cap Z \cap X^K) \leq \dim_{x}(Z \cap X^K) -n_1
	 \]
	 for all subgroups $K \subseteq G$, where the right-hand side is understood as $\max \{ \dim_x (Z \cap X^K) -n_1 ,0 \}$. In particular, since $Z \cap X^{H_1} = Z^{H_1}$,
	 \[
	 	\dim_{x}(F_1^{-1}(F_1(x)) \cap Z^{H_1}) \leq \dim_x(Z^{H_1}) -n_1 \leq \dim_x(X^{H_1}) -n_1 = m_1 - n_1.
	 \]
	 As before, we continue this process by applying Lemma \ref{lemma: decrease dimension by one} to the relevant closed subscheme, the subgroup $H_1$ and to the one-dimensional $G$-representation $V_i$, where $i \in I_1$ is an index such that $n_i > 0$. After $n_1 + n_2+ \ldots + n_{N_1}$ times we obtain a $G$-equivariant morphism $\alpha_1 = (F_1, F_2, \ldots, F_{N_1}) \colon X \to \AA(V_1)^{n_1} \oplus \ldots \oplus \AA(V_{N})^{n_{N_1}}$ such that
	 \begin{equation} \label{equation: dimension 0 case one}
	 \dim_x(\alpha_1^{-1}(\alpha_1(x)) \cap Z \cap X^K) \leq \dim_x(Z \cap X^K) - \sum_{i \in I_1} n_i = \dim_x(Z \cap X^K) - m_1
	 \end{equation}
	 for all $K \subseteq G$. In particular, for $K=H_1$,
	 \[
	 \dim_x(\alpha_1^{-1}(\alpha_1(x)) \cap Z^{H_1}) \leq \dim_x(Z^{H_1}) - m_1 \leq \dim_x(X^{H_1}) -m_1 = 0,
	 \]
	 as desired.
 
     If $m_1 = n$, then Equation \ref{equation: dimension 0 case one} applied to $K = \{e\}$ shows that $\dim_x(\alpha_1^{-1}(\alpha_1(x)) \cap Z) \leq \dim_x(Z) - n \leq 0$. In that case, we set $V' \coloneq V_{1}^{n_1} \oplus \ldots \oplus V_{N_1}^{n_{N_1}}$ and $F \coloneq \alpha_1$ to get the result. Note indeed that if we decompose $V = V' \oplus V''$, the remaining one-dimensional $G$-representation $V''$ is such that $\textnormal{Res}^G_{G_x}(V'')$ is non-trivial by construction. If $m_1 < n$ we proceed in Step 2.
	 \end{itemize}
	 
	 \textbf{Step 2}: suppose that $m_1 <n$. Let $H_2 \subsetneq H_1$ be the unique maximal proper subgroup of $H_1$. Note that $H_2$ is indeed unique because $G$ is a cyclic $p$-group. Let $I_2 = \{ N_1  +1, \ldots , N_1 + N_2 \}\subseteq \{1, \ldots, N \}$ be the set of indexes $i$ such that $(V_i)_{|H_2}$ is a trivial $H_2$-representation, but $(V_i)_{|H_1}$ is non-trivial. By Equation \ref{equation: dimension 0 case one}, using that $Z \cap X^{H_2} = Z^{H_2}$, we have
	 \[
	 \dim_{x}(\alpha_1^{-1}(\alpha_1(x)) \cap Z^{H_2}) \leq \dim_{x}(Z^{H_2}) - m_1 \leq \dim_x(X^{H_2}) -m_1 = \sum_{i \in I_2} n_i \eqcolon c_2.
	 \]
	 \begin{itemize}
	 	\item Suppose that $c_2 = 0$. In that case the morphism $\alpha_1$ is such that the restriction to $Z^{H_2}$ is quasi-finite at $x$ and we pass to Step 3.
	 	\item Suppose $c_2 \neq 0$. In that case, since $\dim_x(\alpha_1^{-1}(\alpha_1(x)) \cap Z^{H_1}) = 0$, we can apply Lemma \ref{lemma: decrease dimension by one} with respect to the subgroup $H_2$, to the closed subscheme $\alpha_1^{-1}(\alpha_1(x)) \cap Z^{H_2}$ and to a one-dimensional $G$-representation $V_i$ for some $i \in I_2$ with $n_i > 0$, in order to obtain a $G$-equivariant morphism $f_1' \colon X \to \AA(V_i)$ such that \[
	 	\dim_{x}((\alpha_1, f_1')^{-1}((\alpha_1, f'_1) (x)) \cap Z \cap X^K) \leq \dim_{x}(Z \cap X^K)- m_1 -1
	 	\]
	 	for all $K \subseteq G$. We repeat the same argument $d_3 \coloneq \min \{c_2 , n -m_1 \}$ times to obtain a $G$-equivariant morphism $\alpha_2 \coloneq (\alpha_1, f'_1, \ldots, f'_{d_3})\colon  X \rightarrow \bigoplus_{i \in I_1} \AA(V_i)^{n_i} \oplus \AA^{d_3}_k$ (where on each coordinate of the affine space $\AA^{d_3}_k$ we have the linear $G$-action given by some index appearing in $I_2$), such that for all $K \subseteq G$ we have
	 	\begin{equation} \label{equation: dimension 0 second case}
	 	\dim_{x}((\alpha_2)^{-1}(\alpha_2(x)) \cap Z \cap X^K) \leq \dim_{x}(Z \cap X^K) -m_1 - d_3.
	 	\end{equation}
	 
	 	If $d_3 = n -m_1$, then Equation \ref{equation: dimension 0 second case} applied to $K= \{e\}$ reads
	 	\[
	 	\dim_{x}((\alpha_2)^{-1}(\alpha_2(x)) \cap Z) \leq \dim_{x}Z -m_1 - n + m_1 = \dim_{x}Z - n  \leq 0,
	 	\]
	 	so that we are done by setting $F \coloneq \alpha_2$. The $G$-subrepresentation $V'$ is given by $V' \coloneq V_{1}^{n_1} \oplus \ldots \oplus V_{N_1}^{n_{N_1}} \oplus \tilde{V}$ where $\tilde{V}$ is the direct sum of the $d_3 = n-m_1$ representations used to construct the morphism $\alpha_2$. By construction, the remaining one-dimensional sub $G$-representation $V'' \subseteq V$ is such that $\textnormal{Res}^G_{G_x}(V'')$ is nontrivial, giving (2). 
	 	
	 	If $d_3 = c_2 < n -m_1$, then $m_1 + c_2 < n$. By Equation \ref{equation: dimension 0 second case}, we find in particular that
	 	\[
	 	\dim_{x}((\alpha_2)^{-1}(\alpha_2(x)) \cap Z^{H_2}) \leq \dim_x Z^{H_2} -m_1 -c_2  \leq 0,
	 	\]
	 	and thus we constructed a morphism $\alpha_2$ whose restriction to $Z^{H_1}$ and $Z^{H_2}$ is quasi-finite at $x$. In that case we proceed in Step 3.
	 \end{itemize}
	 
	 \textbf{Step 3}: recall that $m_1 + c_2 < n$ and that we have found a morphism $\alpha_2$ whose restriction to $Z^{H_1}$ and $Z^{H_2}$ is quasi-finite at $x$. Let $H_3 \subsetneq H_2$ be the unique maximal proper subgroup of $H_2$. Let $I_3 = \{ N_1 + N_2  +1, \ldots , N_1 + N_2+ N_3 \}\subseteq \{1, \ldots, N \}$ be the set of indexes $i$ such that $(V_i)_{|H_3}$ is a trivial $H_3$-representation, but $(V_i)_{|H_2}$ is non-trivial. By Equation \ref{equation: dimension 0 second case} we see that
	 \[
	 \dim_{x}(\alpha_2^{-1}(\alpha_2(x)) \cap Z^{H_3}) \leq \dim_{x}(Z^{H_3}) - m_1 - c_2 \leq \dim_{x}(X^{H_3}) - m_1 - c_2  = \sum_{i \in I_3} n_i \eqcolon c_3.
	 \]
	 \begin{itemize}
	 	\item Suppose that $c_3 = 0$. In that case the morphism $\alpha_2$ is such that the restriction to $Z^{H_3}$ is quasi-finite at $x$ and we pass to Step 4.
	 	\item Suppose $c_3 \neq 0$. In that case, since $\dim_x(\alpha_2^{-1}(\alpha_2(x)) \cap Z^{H_1}) = \dim_x(\alpha_2^{-1}(\alpha_2(x)) \cap Z^{H_2}) = 0$, we can apply Lemma \ref{lemma: decrease dimension by one} with respect to the subgroup $H_3$, to the closed subscheme $\alpha_2^{-1}(\alpha_2(x)) \cap Z^{H_3}$ and to a one-dimensional $G$-representation $V_i$ for some $i \in I_3$, in order to obtain a $G$-equivariant morphism $f_1'' \colon X \to \AA(V_i)$ such that \[
	 	\dim_{x}((\alpha_2, f_1'')^{-1}((\alpha_2, f''_1) (x)) \cap Z \cap X^K) \leq \dim_{x}(Z \cap X^K)- m_1 - c_2 -1
	 	\]
	 	for all $K \subseteq G$. We repeat the same argument $d_4 \coloneq \min \{c_3 , n -m_1 - c_2 \}$ times to obtain a $G$-equivariant morphism $\alpha_3 \coloneq (\alpha_2, f''_1, \ldots, f''_{d_4}) :  X \rightarrow \bigoplus_{i \in I_1} \AA(V_i)^{n_i} \oplus \AA^{c_2}_k \oplus \AA^{d_4}_k$ (where on each coordinate of the affine space $\AA^{d_4}_k$ we have the linear $G$-action given by some index appearing in $I_3$), such that for all $K \subseteq G$ we have
	 	\begin{equation} \label{equation: dimension 0 third case}
	 		\dim_{x}((\alpha_3)^{-1}(\alpha_3(x)) \cap Z \cap X^K) \leq \dim_{x}(Z \cap X^K) -m_1 - c_2 - d_4.
	 	\end{equation}

	   If $d_4 = n -m_1 - c_2$, then Equation \ref{equation: dimension 0 third case} applied to $K= \{e\}$ reads
	 		\[
	 		\dim_{x}((\alpha_3)^{-1}(\alpha_3(x)) \cap Z) \leq \dim_{x}Z -m_1 - c_2 - n + m_1 + c_2 = \dim_{x}Z - n  \leq 0,
	 		\]
	 		so that we are done by setting $F \coloneq \alpha_3$. The $G$-subrepresentation $V'$ is given by $V' \coloneq V_{1}^{n_1} \oplus \ldots \oplus V_{N_1}^{n_{N_1}} \oplus \tilde{V}$ where $\tilde{V}$ is the direct sum of the $c_2 + d_4 = n -m_1$ representations used to construct the morphisms $\alpha_2$ and $\alpha_3$. By construction, the remaining one-dimensional $G$-subrepresentation $V'' \subseteq V$ is such that $\textnormal{Res}^G_{G_x}(V'')$ is nontrivial, giving (2). 
	 		
	   If $d_4 = c_3 < n -m_1 - c_2$, then $m_1 + c_2 + c_3 < n$. By Equation \ref{equation: dimension 0 third case}, we find in particular that
	 		\[
	 		\dim_{x}((\alpha_3)^{-1}(\alpha_3(x)) \cap Z^{H_3}) \leq \dim_x (Z^{H_3}) -m_1 -c_2 - c_3  \leq 0,
	 		\]
	 		and thus we constructed a morphism $\alpha_3$ whose restriction to $Z^{H_1}$, $Z^{H_2}$ and $Z^{H_3}$ is quasi-finite at $x$. In that case we proceed as in Step 4.
	 \end{itemize}
	 
	 \textbf{Step 4}: recall that $m_1 + c_2 + c_3 < n$ and that we have found a morphism $\alpha_3$ whose restriction to $Z^{H_1}$, $Z^{H_2}$ and $Z^{H_3}$ is quasi-finite at $x$. We can now repeat Step 3 with respect to the unique maximal subgroup $H_4 \subsetneq H_3$. Then we repeat the same process for all the subgroups appearing in the unique chain of subgroups of $H_1$. As noted in Step 2 and Step 3, we just need to use all but one of the character summands $V_i$, counted with multiplicity, to reduce the intersection dimension to $0$ (cases $d_3 = n-m_1$ and $d_4 = n-m_1-c_2$). Indeed, after $n$ steps we will have a $G$-equivariant morphism $F \colon X \rightarrow \AA^{n}_k$
	 such that
	 \[
	 \dim_{x}(F^{-1}(F(x)) \cap Z) \leq \dim_{x} Z - n \leq 0.
	 \]
	 The $n$ one-dimensional $G$-representations $V_i$ (counted with multiplicity) used in the construction of the map $F$ give the required $n$-dimensional $G$-subrepresentation $V'$ of $V$, while the remaining unused one-dimensional $G$-representation gives the one-dimensional $G$-subrepresentation $V''$. By construction, if $T_x X$ is a nontrivial $G_x$-representation over $\kappa(x)$, $\textnormal{Res}^G_{G_x}(V'')$ is nontrivial.
\end{proof}

\begin{remark} \label{remark: why not possible to produce quasifinite equivariant morphisms}
It is important in the proof of Proposition \ref{proposition: quasi-finite morphisms construction} that $G=C_{p^r}$ because we use the fact that there exists a unique chain of subgroups in $G=C_{p^r}$. The failure of this property leads to the failure of the $G$-equivariant Gabber's lemma for finite abelian groups $G$ which are not cyclic $p$-groups. See also Example \ref{example: failure quasi-finiteness} and Theorem \ref{theorem: counterexample}.
\end{remark}

\begin{example}
Let $k = \CC$ and $G = C_4 = \langle \sigma \rangle$. Let $X = \Spec (\CC[x_1, x_2])$ with $G$-action given by $\sigma(x_1) = i x_1$ and $\sigma(x_2) = i x_2$. Let $Z= V(x_1x_2)$ be the closed $G$-invariant subscheme given by the union of the coordinate axes and let $x =(0,0)$ be the origin. Then $G_x = G$. Let $V  = V' \oplus V'' \simeq \CC \oplus \CC$ with $\sigma$ acting as multiplication by $i$ on each coordinate. Let $\AA^1_{\CC}$ be the one-dimensional $G$-representation with linear action given by multiplication by $i$. Then Proposition \ref{proposition: quasi-finite morphisms construction} gives the $G$-equivariant morphism $F \colon X \to \AA^1_{\CC}$ corresponding to $F(x_1, x_2)  = x_1 + x_2$. Indeed, we just need to apply Lemma \ref{lemma: decrease dimension by one} once, and the proof of the latter shows that it is sufficient to symmetrize the choice of a polynomial $f'$ which vanishes on $x= (0,0)$ but does not vanish on $x' = (1,0)$, which is another point of $Z$ different from $x$. Then we may choose $f' = x_1 + x_2$, and the symmetrization of $f'$ is precisely $F(x_1, x_2) = x_1 + x_2$. Then we see that $F^{-1}(F(x))= F^{-1}(0) = V(x_1 + x_2)$ and that $V(x_1+x_2) \cap V(x_1 x_2)$ has dimension zero.
\end{example}

\begin{example} \label{example: failure quasi-finiteness}
In this non-example we show the failure of Proposition \ref{proposition: quasi-finite morphisms construction} in the case $G$ is not a cyclic $p$-group.
Let $k= \CC$ and $G= C_2 \times C_2 = \langle \sigma_1 \rangle \times \langle \sigma_2 \rangle $. Let $(X,Z)$ be the $G$-pair of Proposition \ref{proposition: counterexample} and let $x=(0,0)$ be the origin. In that case, $G_x = G$ and $T_x X \simeq \CC^2$ are isomorphic as $G$-representations. Then $V = \CC^2 = V' \oplus V''$ where $V'$ is the one-dimensional $G$-representation with $\sigma_1$ acting as sign action and $\sigma_2$ acting trivially, while $V''$ is the one-dimensional $G$-representation with $\sigma_2$ acting as sign action and $\sigma_1$ acting trivially. Any $G$-equivariant morphism $F\colon X \to \AA(V')$ must send the whole $V(x_1) \subseteq Z$ into the origin, because $V(x_1)$ is fixed pointwise by $\sigma_1$ and $\AA(V')^{\langle \sigma_1 \rangle} = 0$. It follows that the composite $Z \to X \xrightarrow{F} \AA(V')$ cannot be quasi-finite at $x$. The same argument applies in order to show that there cannot exist a $G$-equivariant morphism $F \colon X \to \AA(V'')$ such that $Z \to X \to \AA(V'')$ is quasi-finite at $x$. Hence a decomposition of $V$ satisfying in Proposition \ref{proposition: quasi-finite morphisms construction}.(1) cannot exist.
\end{example}

If the scheme $X$ carries a free $G$-action, we can choose any $G$-representation over $k$ of the right dimension, as the following proposition shows.

\begin{proposition} \label{proposition: quasi-finite morphisms construction freecase}
Let $k$ be infinite. Let $p$ be a prime and $r \geq 1$ be an integer such that $k$ contains a primitive $p^r$th root of unity. Set $G\coloneq C_{p^r}$. Let $(X,Z)$ be a $G$-pair where $X$ is affine with free $G$-action and let $x \in Z$ be a closed point. Set $\dim_x (X) = n+1$ for some $n \geq 0$. For any $n$-dimensional $G$-representation $V$ over $k$, there exists a $G$-equivariant morphism $F \colon X \to \AA(V)$ whose restriction to $Z$ is quasi-finite at $x$.
\end{proposition}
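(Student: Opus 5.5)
The plan is to run the inductive dimension-cutting argument from the proof of Proposition \ref{proposition: quasi-finite morphisms construction}, using the fact that freeness makes every stabilizer trivial. Since the action is free, $G_s=\{e\}$ for every point $s\in X$. Hence the restriction of any one-dimensional $G$-representation to $G_s$ is trivial, and the hypothesis of Lemma \ref{lemma: generalization of Bachmann 2.5.(2,3)} holds for every finite set of closed points and every character. Likewise, in Lemma \ref{lemma: decrease dimension by one} we may take $H=\{e\}$. The hypothesis $\dim_x(Z\cap X^{H'})=0$ for $H'\supsetneq\{e\}$ holds automatically, because $X^{H'}=\emptyset$ for every nontrivial $H'$ when the action is free. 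Lemma \ref{lemma: decrease dimension by one} then applies to \emph{every} one-dimensional $G$-representation, since restriction to the trivial group is always trivial.

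Decompose $V=\bigoplus_{i=1}^n L_i$ into one-dimensional $G$-representations, which is possible by Lemma \ref{lemma: lemma on linear representations}(1). We have $\dim_x Z\le n$, since $Z$ has positive codimension and $\dim_x X=n+1$. Now proceed inductively. Set $Z_0\coloneq Z$. Given $Z_{j-1}$, a closed subscheme of $X$ containing $x$, apply Lemma \ref{lemma: decrease dimension by one} to $Z_{j-1}$, $H=\{e\}$ and $L_j$, taking $K=\{e\}$ in \eqref{equation: dimension counting}. This gives a $G$-equivariant $f_j\colon X\to \AA(L_j)$ with
\[
\dim_x\bigl(f_j^{-1}(f_j(x))\cap Z_{j-1}\bigr)\le \max\{\dim_x Z_{j-1}-1,0\}.
\]
Set $Z_j\coloneq f_j^{-1}(f_j(x))\cap Z_{j-1}$. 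After $n$ steps, $F\coloneq(f_1,\dots,f_n)\colon X\to \AA(V)$ is $G$-equivariant and
\[
F^{-1}(F(x))\cap Z\subseteq Z_n,
\]
so $\dim_x(F^{-1}(F(x))\cap Z)\le\max\{\dim_x Z-n,0\}=0$. By \cite[Lemma 12.72]{GortzWedhorn1}, $x$ is isolated in its fiber, so $F|_Z$ is quasi-finite at $x$. The case $n=0$ is trivial, since then $\AA(V)=\Spec k$ and $\dim_x Z=0$.

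The one point to check is that Lemma \ref{lemma: decrease dimension by one} really supplies the points it needs in each positive-dimensional component through $x$. In its proof we must choose, in each positive-dimensional irreducible component $W$ of $Z_{j-1}\cap X^K$ through $x$, a closed point $s\notin Gx$. Freeness forces $K=\{e\}$ here, because for nontrivial $K$ we have $x\notin X^K$. Since $Gx$ is finite and $W$ is positive-dimensional over an infinite field, such an $s$ exists. The condition $s\notin X^{H'}$ for $H'\neq\{e\}$ holds automatically. I expect no real obstacle beyond this bookkeeping. If needed, one can bypass Lemma \ref{lemma: decrease dimension by one} and invoke Lemma \ref{lemma: generalization of Bachmann 2.5.(2,3)} directly with $T=\{x\}$ and $S$ the chosen points, for nontrivial $L_j$, or Lemma \ref{lemma: generalization of Bachmann 2.5.(1)} for trivial $L_j$. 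This works because every stabilizer is trivial and $|G|$ is invertible in $k$, which holds since $k$ contains a primitive $p^r$th root of unity.
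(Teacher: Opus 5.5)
Your proposal is correct and follows the paper's proof essentially verbatim. You decompose $V$ into $n$ characters, use freeness to make the hypothesis of Lemma~\ref{lemma: decrease dimension by one} with $H=\{e\}$ automatic, and apply that lemma $n$ times with $K=\{e\}$ to cut the fiber dimension at $x$ down to $0$; your extra bookkeeping (the inclusion $F^{-1}(F(x))\cap Z\subseteq Z_n$ and the existence of the auxiliary points $s\notin Gx$) is accurate and makes explicit what the paper leaves implicit.
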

\begin{proof}
We need to show that there exists a $G$-equivariant morphism $F\colon X \to \AA(V)$ such that the fiber $F^{-1}(F(x)) \cap Z$
is zero-dimensional at $x$, because this means that $x$ is isolated in its fiber \cite[Lemma 12.72]{GortzWedhorn1} and hence that the restriction of $F$ to $Z$ is quasi-finite at $x$.

Note that $\dim_{x}(Z) \leq \dim_x(X) -1 = n= \dim_k V$. If $n=0$ the result is trivial. Assume $n \geq 1$. Write $V= \oplus_{i=1}^n V_i$ as direct sum of one-dimensional $G$-representations. Since $X$ has free $G$-action, $\dim_x(Z \cap X^{H'}) =0$ for all subgroups $\{e\}\subsetneq H'$. Then we can apply Lemma \ref{lemma: decrease dimension by one} with respect to the trivial subgroup $\left\{e\right\}$, the $G$-representation $V_1$ and the closed subscheme $Z$ to obtain a $G$-equivariant morphism $f_1 \colon X \to \AA(V_1)$ such that $\dim_{x}(f_1^{-1}(f_1(x)) \cap Z) \leq \dim_{x}(Z)-1$ (apply Equation \ref{equation: dimension counting} to $K = \{e\}$). We again apply Lemma \ref{lemma: decrease dimension by one} with respect to the trivial subgroup $\left\{e\right\}$, the $G$-representation $V_2$ and the closed subscheme $f_1^{-1}(f_1(x)) \cap Z$ to obtain a $G$-equivariant morphism $f_2\colon X \to \AA(V_2)$ such that $\dim_{x}((f_1,f_2)^{-1}((f_1,f_2)(x)) \cap Z) \leq \dim_{x}(Z)-2$. Repeating the same process for all $i=1, \ldots, n$ we finally get a $G$-equivariant morphism $F \colon X \to \AA(V)$ with the desired property.
\end{proof}

\section{Properties of equivariant morphisms} \label{section: improving morphisms}

In this section we study some properties of equivariant morphisms of schemes. In particular, in $\S$\ref{subsection: improving properties} we refine some properties of equivariant morphisms of schemes by replacing the source scheme with a sufficiently small invariant open subset; we will need these improvements in the proof of Theorem \ref{theorem: gabber}. In $\S$\ref{subsection: openness of properties} we show that the properties of morphisms under consideration define an open condition in the affine space of equivariant morphisms.

\subsection{Improving properties} \label{subsection: improving properties}

We will show how to pass from an equivariant quasi-finite morphism to an equivariant finite morphism; then we show how to pass from an equivariant finite morphism to an equivariant closed immersion; and finally how to pass from an equivariant closed immersion to an isomorphism.

We will use the following notation. Let $G$ be a finite abelian group and let $Y= \Spec(B)$ be an affine scheme with a $G$-action. Let $y \in Y$ be a point. Let $Gy$ be the set-theoretic orbit of $y$ and let $\frakp_1, \ldots,\frakp_r \subseteq B$ be the prime ideals of $B$ corresponding to the points $gy \in Gy$. Set $\cO_{Y,Gy} \coloneq S^{-1}B$, where $S$ denotes the multiplicatively closed subset of $B$ defined by $S = B \setminus (\frakp_1 \cup \ldots \cup \frakp_r)$. Note that $\cO_{Y,Gy}$ is a semi-local ring, whose maximal ideals are $\frakm_i \coloneq \frakp_i\cdot \cO_{Y,Gy}$ for $i=1, \ldots, r$. Then we define $\cO^h_{Gy}$ to be the henselization of the semi-local ring $\cO_{Y,Gy}$ along the ideal $I = \bigcap_{i=1}^r \frakm_i $, i.e.\ the henselization of the pair $(\cO_{Y,Gy}, I)$ as in \cite[\href{https://stacks.math.columbia.edu/tag/0EM7}{Tag 0EM7}]{stacks-project}.

\begin{lemma} \label{lemma: henselization}
Let $Y$ be an affine scheme with a $G$-action and let $y \in Y$ be a point. Then
\[
Y^h_{Gy} \coloneq \Spec(\cO^h_{Gy}) \simeq \coprod_{gy \in Gy} \Spec(\cO^h_{Y,gy}) = \coprod_{gy \in Gy} Y^h_{gy}.
\]
\end{lemma}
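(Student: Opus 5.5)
The plan is to reduce the statement to the standard structure theorem for henselizations of semi-local rings along the Jacobson radical. The reduction goes through the Chinese remainder theorem, which expresses the completion-like behaviour of the henselization.

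\textbf{Step 1: the quotient by $I$.} The ring $A \coloneq \cO_{Y,Gy}$ is semi-local with maximal ideals $\frakm_1, \ldots, \frakm_r$, corresponding to the distinct points of the orbit $Gy$. These are pairwise comaximal, and $I = \bigcap \frakm_i$ is the Jacobson radical. By the Chinese remainder theorem,
\[
A/I \simeq \prod_{i=1}^r \kappa(\frakm_i),
\]
so $\Spec(A/I)$ is a finite discrete set of points, one for each $gy$.

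\textbf{Step 2: splitting the henselization.} The henselization $(A^h, I^h)$ of the pair $(A,I)$ satisfies $A^h/IA^h \simeq A/I$. Since $(A^h, IA^h)$ is henselian, idempotents lift uniquely along $A^h \to A^h/IA^h$ (Stacks, Tag 09XI). The $r$ orthogonal idempotents of $\prod \kappa(\frakm_i)$ therefore lift to orthogonal idempotents of $A^h$ summing to $1$. This gives
\[
A^h \simeq \prod_{i=1}^r A^h_i,
\]
where each $A^h_i$ is local, with maximal ideal lying over $\frakm_i$. It is local because a maximal ideal of $A^h_i$ contains $IA^h_i$, and $A^h_i / IA^h_i = \kappa(\frakm_i)$ is a field.

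\textbf{Step 3: identifying each factor.} Each $A^h_i$ is a henselian local ring, since a factor of a henselian pair is a henselian pair. It remains to show $A^h_i \simeq \cO^h_{Y,gy}$, where $gy$ is the point corresponding to $\frakp_i$. I would argue by universal properties. Both rings are filtered colimits of étale algebras with trivial residue extension at the relevant point: $A^h$ is the colimit of étale $A$-algebras $A \to C$ with $A/I \simeq C/IC$ (Tag 0EM7). Composing $A \to A^h \to A^h_i$, we obtain a local homomorphism $A_{\frakm_i} = \cO_{Y,gy} \to A^h_i$ into a henselian local ring. By the universal property of the henselization, this induces a map $\cO^h_{Y,gy} \to A^h_i$.

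Conversely, the map $A \to \prod_i \cO^h_{Y,gy_i}$ lands in a henselian pair: a finite product of henselian local rings, with the radical ideal, is henselian along the product of the maximal ideals. It also induces an isomorphism modulo $I$. So the universal property of $A^h$ gives a map $A^h \to \prod_i \cO^h_{Y,gy_i}$. The two maps are compatible with the idempotent decomposition, and both compositions are the identity by the uniqueness in the universal properties.

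Alternatively, one can check that $A^h \to \prod_i \cO^h_{Y,gy_i}$ is ind-étale, local on each factor, and an isomorphism on residue fields. A local ind-étale map of henselian local rings inducing an isomorphism on residue fields is an isomorphism. Taking $\Spec$ of the product decomposition then gives the claimed coproduct decomposition.

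\textbf{Main obstacle.} I expect Step 3 to be the delicate point: matching the henselization of the pair with the product of the pointwise henselizations. Here one must check carefully that the universal properties apply, that is, that the relevant maps are local and that $\prod_i \cO^h_{Y,gy_i}$ is henselian along its Jacobson radical. An alternative is to cite Stacks (Tag 0EM7 and the discussion of henselian semi-local rings), which asserts that a semi-local ring henselian along its radical is a finite product of henselian local rings. That reduces Step 3 to the identification of each factor via Tag 04GG.
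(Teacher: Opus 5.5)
Your proposal is correct and follows essentially the same route as the paper. The paper obtains $(A,I)^h \simeq \prod_{i}(A,\frakm_i)^h$ directly from Stacks Tag 0H7Q, and then identifies each $(A,\frakm_i)^h$ with $A^h_{\frakm_i} = \cO^h_{Y,gy_i}$ via Tag 04GV. Your Steps 2--3 reprove those two cited facts by hand: you lift the idempotents of $A/I \simeq \prod_i \kappa(\frakm_i)$ to split $A^h$ into local factors, and then match the factors using the universal properties of both henselizations. The uniqueness arguments you sketch for the two compositions being the identity do go through.
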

\begin{proof}
Write $A = \cO_{Y,Gy}$ for the semi-local ring associated to the $G$-orbit of $y$ and let $\frakm_1, \ldots, \frakm_r$ be its maximal ideals. Set $I \coloneq \frakm_1 \cap \ldots \cap \frakm_r$. By \cite[\href{https://stacks.math.columbia.edu/tag/0H7Q}{Tag 0H7Q}]{stacks-project}, there is an isomorphism $(A,I)^h \simeq \prod_{i=1}^r (A, \frakm_i)^h$. On the other hand, by \cite[\href{https://stacks.math.columbia.edu/tag/04GV}{Tag 04GV}]{stacks-project}, for each $i=1, \ldots, r$ we have that $(A, \frakm_i)^h = A_{\frakm_i}^h$. Then we see that $\cO_{Gy}^h \simeq \prod_{gy \in Gy} \cO_{gy}^h$ and the claim follows.
\end{proof}

We first show how to pass from an equivariant quasi-finite morphism to an equivariant finite morphism.

\begin{lemma} \label{lemma: pass from quasi-finite to finite}
Let $G$ be a finite abelian group and let $X,Y$ be affine $G$-schemes over $k$. Let $Z \subseteq X$ be a closed $G$-invariant subscheme. Let $y \in Y$ and let $f \colon X \to Y^h_{Gy}$ be a $G$-equivariant morphism. Suppose that $x \in Z$ satisfies $f(x)=y$ and that $f$ induces a bijection $f_{|Gx} \colon Gx \xrightarrow{\sim} Gy$ on the $G$-orbits of $x$ and $y$. Suppose further that the restriction $\beta \coloneq f_{|Z} \colon Z \to Y^h_{Gy}$ is quasi-finite. Then there exists an open $G$-invariant subscheme $U \subset X$ containing $Gx$ such that the following hold:

\begin{enumerate}
	\item  the induced morphism $\beta_U \colon Z_U \to U \to Y^{h}_{Gy}$ is finite;
	
	\item the scheme $Z_U$ decomposes as $Z_U = \coprod_{gx \in Gx} \Spec(A_{1,gx})$, where each $A_{1,gx}$ is a local ring whose unique closed point is $gx$, and $\beta_U$ is the disjoint union of $|Gx|$ local morphisms $\Spec(A_{1,gx}) \to Y_{gy}^h$.
	
	\item Suppose that $K \subseteq X$ is a closed subscheme with a decomposition $K = \coprod_{gx \in Gx} \Spec(C_{gx})$, where each $C_{gx}$ is a local ring whose unique closed point is $gx$. Then $K \subseteq U$.
\end{enumerate}
\end{lemma}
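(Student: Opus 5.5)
The plan is to use the structure theorem for quasi-finite morphisms over a henselian base. By Lemma \ref{lemma: henselization}, $Y^h_{Gy} = \coprod_{gy \in Gy} Y^h_{gy}$ is a finite disjoint union of spectra of henselian local rings. Since $\beta \colon Z \to Y^h_{Gy}$ is quasi-finite and separated (everything is affine), we apply \cite[\href{https://stacks.math.columbia.edu/tag/04GJ}{Tag 04GJ}]{stacks-project} (quasi-finite separated schemes over a henselian local ring) to each base change $Z \times_{Y^h_{Gy}} Y^h_{gy}$. It decomposes as a disjoint union $Z_{gy,\mathrm{fin}} \sqcup Z_{gy}'$. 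Here $Z_{gy,\mathrm{fin}}$ is finite over $Y^h_{gy}$ and is a disjoint union of spectra of henselian local rings, one for each point of the closed fiber $\beta^{-1}(gy)$. The fiber of $Z'_{gy}$ over the closed point $gy$ is empty. We then keep only the local factor $\Spec(A_{1,gx})$ whose closed point is $gx$, using the bijection $f_{|Gx}$ so that $gx$ is the unique point of $Gx$ lying over $gy$.

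The resulting $Z_1 \coloneq \coprod_{gx \in Gx} \Spec(A_{1,gx})$ is open and closed in $Z$, and it is finite over $Y^h_{Gy}$. It is also $G$-invariant. For $h \in G$, the translate $h(\Spec(A_{1,gx}))$ is a connected open-and-closed local piece of $Z$, finite over $Y^h_{hgy}$, with closed point $hgx$. By the uniqueness of the decomposition in Tag 04GJ (the finite part is intrinsic: it is the union of the connected components that are finite over the base), it equals $\Spec(A_{1,hgx})$. Its complement $Z \setminus Z_1$ is therefore a closed $G$-invariant subset of $Z$, hence of $X$, and it does not meet $Gx$. We set $U \coloneq X \setminus (Z \setminus Z_1)$. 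This is open, $G$-invariant and contains $Gx$. Moreover $Z_U = Z \cap U = Z_1$, which gives (1) and (2); each morphism $\Spec(A_{1,gx}) \to Y^h_{gy}$ is local because closed points map to closed points.

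For (3), let $K = \coprod \Spec(C_{gx})$. It suffices to show $K \cap (Z \setminus Z_1) = \emptyset$. The hypothesis that each $C_{gx}$ is local with closed point $gx$ means every point of $K$ specializes to some $gx$. Since $Z \setminus Z_1$ is closed, any point of $K$ lying in it would force its specialization $gx$ into $Z \setminus Z_1$, which is impossible. So $K \subseteq U$.

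The main obstacle I expect is the $G$-invariance of $Z_1$: one must check carefully that the decomposition of Tag 04GJ is canonical enough to be preserved by each $h \in G$. The translate is taken over the twisted base $Y^h_{hgy}$, and the $G$-action permutes the factors of $Y^h_{Gy}$. I would handle this by characterizing $Z_1$ intrinsically as the union of those connected components of $Z$ that are finite over $Y^h_{Gy}$ and meet $Gx$; this property is evidently $G$-stable.
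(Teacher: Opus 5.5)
Your proposal is correct and follows the paper's proof. Apply Tag 04GJ to each piece $Z\times_{Y^h_{Gy}}Y^h_{gy}$, keep the local finite factor $\Spec(A_{1,gx})$ through $gx$, and let $U$ be the complement in $X$ of the rest of $Z$. You differ in two small places, and both are valid and slightly cleaner: you get $G$-invariance by identifying $\Spec(A_{1,gx})$ as the connected component of $Z$ through $gx$ (the paper uses that open sets are stable under generalization), and you prove (3) by noting that every point of $K$ specializes to a point of $Gx$, which cannot lie in the closed set $Z\setminus Z_1$ (the paper splits into cases $g'\neq g$ and $g'=g$).
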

\begin{proof}
We first show (1) and (2). Since $X$ is affine, the closed subscheme $Z \subseteq X$ is affine. Write $Z = \Spec (S)$ for some $k$-algebra $S$. By Lemma \ref{lemma: henselization} we can write $Y^h_{Gy} = \coprod_{gy \in Gy} Y^h_{gy}$ .
Let $g \in G$ and let $Z_{gx}$ denote the fiber product		% https://q.uiver.app/#q=WzAsNCxbMSwwLCJaIl0sWzEsMSwiWV5oX3tHeX0iXSxbMCwxLCJZXmhfe2d5fSJdLFswLDAsIlpfZyJdLFszLDIsIlxcYmV0YV9nIiwyXSxbMiwxXSxbMCwxLCJcXGJldGEiXSxbMywwXV0=
\[\begin{tikzcd}
	{Z_{gx}} & Z \\
	{Y^h_{gy}} & {Y^h_{Gy}.}
	\arrow[from=1-1, to=1-2]
	\arrow["{\beta_g}"', from=1-1, to=2-1]
	\arrow["\beta", from=1-2, to=2-2]
	\arrow[from=2-1, to=2-2]
\end{tikzcd}\]
Note indeed that $gx \in Z_{gx}$ and $\beta_g(gx) = gy \in Y^h_{gy}$. Moreover, $gx$ is the only point of the orbit $Gx$ that belongs to $Z_{gx}$, because by hypothesis $f$ induces a bijection on the $G$-orbits of $x$ and $y$. Since $Z_{gx}$ is an affine scheme, we can write $Z_{gx} = \Spec (S_{gx})$ for some $k$-algebra $S_{gx}$. Then the morphism $\beta_g$ corresponds to a finite-type morphism of rings  $\cO_{Y,gy}^h \rightarrow S_{gx}$.
By \cite[\href{https://stacks.math.columbia.edu/tag/04GJ}{Tag 04GJ}]{stacks-project} we can write
\[
S_{gx}= A_{1,gx} \times \ldots \times A_{n_g,gx} \times B_{gx} \qquad n_g \geq 1
\]
with $A_{i,gx}$ local and finite over $\cO_{Y,gy}^h$ and $\cO^h_{Y,gy} \to B_{gx}$ not quasi-finite at any prime of $B_{gx}$ lying over the only maximal ideal of $\cO_{Y,gy}^h$. In particular, $gx \notin \Spec(B_{gx})$, because, by hypothesis, $\beta$ is quasi-finite, and thus $\beta_g$ is quasi-finite at $gx \in Z_{gx}$. Then  we may assume without loss of generality that $gx \in \Spec(A_{1,gx})$. Consider the composite morphism
\[
\Spec(A_{1,gx}) \longrightarrow  Z_{gx} \xlongrightarrow{\beta_g} Y^h_{gy}.
\]
We know that $gx \in \Spec(A_{1,gx})$ maps to $gy$, which is the unique closed point of $Y^h_{gy}$. This forces the unique closed point of $\Spec(A_{1,gx})$ to go to $gy$. Since $\cO^h_{Y,gy} \to A_{1,gx}$ is a finite ring map, it is also integral, and it follows by \cite[\href{https://stacks.math.columbia.edu/tag/00GT}{Tag 00GT}]{stacks-project} that $gx$ is indeed the unique closed point of $\Spec(A_{1,gx})$. For all $g \in G$ and $i \in \{ 1, \ldots, n_g\}$, the schemes $\Spec(A_{i,gx}), \Spec(B_{gx})$ are closed subschemes of $Z_{gx}$, and thus they are also closed subschemes of $X$. Set
\[
U \coloneq X \setminus \bigg (\bigcup_{g} \bigcup_{i > 1} \Spec(A_{i,gx}) \cup \Spec(B_{gx}) \bigg ).
\]
We claim that $U$ is a $G$-invariant open subscheme of $X$ and that $\beta_{U} \colon Z_U \to Y^h_{Gy}$ is finite.
To see that $U$ is a $G$-invariant open subscheme we do the following. Let $g \in G$. Note that $\beta \colon Z \to Y_{Gy}^h$ is $G$-equivariant, and thus for each $\tilde{g} \in G$ we have a commutative square
% https://q.uiver.app/#q=WzAsNCxbMCwwLCJaX3tcXHRpbGRle1h9XzJ9Il0sWzEsMCwiWl97XFx0aWxkZXtYfV8yfSJdLFswLDEsIldfe0d6fV5oIl0sWzEsMSwiV197R3p9XmgiXSxbMCwyXSxbMSwzXSxbMiwzLCJoIiwyXSxbMCwxLCJoIl1d
\[\begin{tikzcd}
	{Z} & {Z} \\
	{Y_{Gy}^h} & {Y_{Gy}^h.}
	\arrow["\tilde{g}", from=1-1, to=1-2]
	\arrow[from=1-1, to=2-1]
	\arrow[from=1-2, to=2-2]
	\arrow["\tilde{g}"', from=2-1, to=2-2]
\end{tikzcd}\]
Write $g' = \tilde{g}g \in G$. The action of $\tilde{g}$ on $Y_{Gy}^h$ maps $Y_{gy}^h$ to $Y_{g'y}^h$. It follows that there is also a commutative square
% https://q.uiver.app/#q=WzAsNCxbMSwwLCJaX3tcXHRpbGRle2d9Z30iXSxbMSwxLCJZXmhfe1xcdGlsZGV7Z31neX0iXSxbMCwxLCJZXmhfe2d5fSJdLFswLDAsIlpfZyJdLFszLDIsIlxcYmV0YV9nIiwyXSxbMiwxLCJcXHRpbGRle2d9IiwyXSxbMCwxLCJcXGJldGFfe1xcdGlsZGV7Z31nfSJdLFszLDAsIlxcdGlsZGV7Z30iXV0=
\[\begin{tikzcd}
	{Z_{gx}} & {Z_{g'x}} \\
	{Y^h_{gy}} & {Y^h_{g'y}.}
	\arrow["{\tilde{g}}", from=1-1, to=1-2]
	\arrow["{\beta_g}"', from=1-1, to=2-1]
	\arrow["{\beta_{g'}}", from=1-2, to=2-2]
	\arrow["{\tilde{g}}"', from=2-1, to=2-2]
\end{tikzcd}\]

The top horizontal arrow maps $gx \mapsto g'x$. Since these are precisely the unique closed points of $\Spec(A_{1,gx})$ and $\Spec(A_{1, g'x})$ respectively, this forces $\tilde{g}(\Spec(A_{1,gx})) = \Spec(A_{1,g'x})$, because open subsets are stable under generalization. Then the closed subscheme $\coprod_{gx \in Gx} \Spec(A_{1,gx})$ of $Z$ is $G$-invariant, and in particular its complement in $Z$ is $G$-invariant. It follows that $U$ is a $G$-invariant open subscheme of $X$. 

It remains to show that $\beta_U \colon Z_U \to U \to Y^h_{Gy}$ is a finite morphism. By construction we have $Z_U = Z \cap U \simeq \coprod_{gx \in Gx} \Spec(A_{1,gx})$, and each $A_{1,gx}$ is finite over $\cO_{Y,gy}^h$.

(3) Suppose now that $K = \coprod_{gx \in Gx} \Spec(C_{gx})$ is a closed subscheme of $X$, where the $C_{gx}$ are local rings as in the hypotheses. For $g' \in G$, consider the intersections of $\Spec(A_{i,gx})$ and $\Spec(B_{gx})$ with $\Spec(C_{g'x})$. If $g' \neq g$, then $\Spec(A_{i,gx})$ and $\Spec(B_{gx})$ lie over $Y^h_{gy}$, whereas the connected local scheme $\Spec(C_{g'x})$ lies over the open-and-closed component $Y^h_{g'y}$; hence both intersections are empty. If $g' = g$, each intersection is a closed subset of the local scheme $\Spec(C_{gx})$. Neither contains its unique closed point $gx$, because $gx \notin \Spec(A_{i,gx})$ for $i > 1$ and $gx \notin \Spec(B_{gx})$. Therefore, both intersections are empty in this case as well.
It follows that $K$ does not intersect the closed subschemes $\Spec(B_{gx})$ and $\Spec(A_{i,gx})$ for $i > 1$, and thus $K \subseteq U$ by definition of the open subscheme $U$.
\end{proof}

We now show how to pass from an equivariant finite morphism to an equivariant closed immersion. We will need the following.

\begin{lemma} \label{lemma: closed fiber of a finite local morphism}
Let $A$ and $B$ be local rings and let $B \to A$ be a finite local homomorphism. Let $\frakm$ and $\frakn$ be the maximal ideals of $A$ and $B$ respectively. Then the closed fiber $\Spec(A/\frakn A)$ is a local Artinian scheme whose underlying topological space consists of the closed point of $\Spec(A)$.
\end{lemma}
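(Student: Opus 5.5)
The plan is to reduce everything to the standard structure theory of finite algebras over a field. The closed fiber $\Spec(A/\frakn A)$ is the scheme-theoretic fiber of $\Spec(A) \to \Spec(B)$ over the closed point of $\Spec(B)$. Since $B \to A$ is finite, $A/\frakn A$ is a finite-dimensional algebra over the field $B/\frakn$. A finite-dimensional algebra over a field is Artinian, so $\Spec(A/\frakn A)$ is a finite discrete space. Moreover $A/\frakn A$ is a finite product of local Artinian rings, one for each of its maximal ideals.

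Next I identify the primes of $A/\frakn A$, which are the primes $\frakq \subseteq A$ with $\frakq \supseteq \frakn A$, equivalently $\frakq \cap B = \frakn$. Since $B \to A$ is finite, hence integral, \cite[\href{https://stacks.math.columbia.edu/tag/00GT}{Tag 00GT}]{stacks-project} (already used in the proof of Lemma \ref{lemma: pass from quasi-finite to finite}) shows that a prime lying over the maximal ideal $\frakn$ is itself maximal in $A$. Since $A$ is local, the only such prime is $\frakm$. Conversely, $\frakm$ does lie over $\frakn$ because the homomorphism is local, i.e.\ $\frakm \cap B = \frakn$. Hence $\frakn A \subseteq \frakm$ and $A/\frakn A \neq 0$.

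It follows that $A/\frakn A$ is a nonzero Artinian ring with exactly one prime ideal, namely $\frakm/\frakn A$. So it is a local Artinian ring, and its spectrum is the single point corresponding to $\frakm$, the closed point of $\Spec(A)$.

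I expect no real obstacle here. The only point needing care is nonemptiness, which uses that the map is local and not merely finite. Without that, $\frakn A$ could be the unit ideal and the fiber would be empty.
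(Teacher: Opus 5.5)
Your proof is correct and follows essentially the same route as the paper: $A/\frakn A$ is finite-dimensional over $B/\frakn$, hence Artinian, and locality of the map gives $\frakn A \subseteq \frakm$, so the fiber is nonempty. The one difference is how you show the fiber has a single point: you use integrality (Tag 00GT) to force every prime over $\frakn$ to be maximal, whereas the paper just notes that $A/\frakn A$ is the quotient of a local ring by a proper ideal, hence local, and an Artinian local ring has only one prime — so your integrality step is valid but redundant.
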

\begin{proof}
The quotient $A/\frakn A$ is finite-dimensional over $B/\frakn$, hence Artinian. It is local, with maximal ideal $\frakm/\frakn A$, because $A$ is local and $\frakn A \subseteq \frakm$. Thus its spectrum has a single point.
\end{proof}

\begin{lemma} \label{lemma: nakayama's lemma for closed immersion}
Let $A$ and $B$ be local $k$-algebras, and let $C$ be a $k$-algebra. Suppose there exists an unramified morphism $f \colon \Spec(A)\rightarrow \Spec(B\otimes_k C)$ such that the composite $\textnormal{pr}_B \circ f \colon \Spec(A)\xrightarrow{f}  \Spec(B \otimes_k C) \xrightarrow{\textnormal{pr}_B} \Spec(B)$ is finite and sends the unique closed point $x\in \Spec(A)$ to the unique closed point $y\in \Spec(B)$.
Let $\Spec(\kappa(y)\otimes_k C)$
be the fiber of $\textnormal{pr}_B \colon \Spec(B\otimes_k C)\to \Spec(B)$ over $y$. Assume that the induced morphism $h \colon \Spec(\kappa(y) \otimes_B A)\to \Spec(\kappa(y)\otimes_k C)$
is radicial. Then $f$ is a closed immersion.
\end{lemma}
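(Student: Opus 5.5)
The plan is to prove that $f$ is finite, and then that the corresponding ring map $B\otimes_k C \to A$ is surjective, via Nakayama's lemma over the local ring $B$.

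\emph{Finiteness.} The composite $\textnormal{pr}_B\circ f$ is finite, and $\textnormal{pr}_B$ is separated (everything is affine). Hence $f$ is finite, since a morphism whose composite with a separated morphism is finite is itself finite. Concretely, $A$ is a finite $B$-module, and therefore also a finite $(B\otimes_k C)$-module. So $M \coloneq \operatorname{coker}(B\otimes_k C \to A)$ is a finitely generated $B$-module, because it is a quotient of $A$. By Nakayama over the local ring $B$, it suffices to show $M\otimes_B \kappa(y)=0$. Since tensoring is right exact, this amounts to showing that $\kappa(y)\otimes_k C \to \kappa(y)\otimes_B A$ is surjective, i.e.\ that $h$ is a closed immersion.

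\emph{The fiber map $h$.} By Lemma \ref{lemma: closed fiber of a finite local morphism}, $\kappa(y)\otimes_B A = A/\frakn A$ is a local Artinian ring with residue field $\kappa(x)$, so its spectrum is the single point $x$. Let $x'$ be the image of $x$ in $\Spec(\kappa(y)\otimes_k C)$, with local ring $R \coloneq (\kappa(y)\otimes_k C)_{x'}$. Then $h$ factors through $\Spec(R)$, and the map $R \to A/\frakn A$ is a finite local homomorphism. Unramifiedness of $f$ is stable under base change, so $h$ is unramified. This gives $\frakm_{x'}\cdot (A/\frakn A) = \frakm_x/\frakn A$ and shows that $\kappa(x)/\kappa(x')$ is finite separable. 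Radiciality of $h$ makes $\kappa(x)/\kappa(x')$ purely inseparable, so $\kappa(x)=\kappa(x')$. Now $A/\frakn A$ is finite over $R$, and $(A/\frakn A)/\frakm_{x'}(A/\frakn A) = \kappa(x) = \kappa(x')$. By Nakayama, $R\to A/\frakn A$ is surjective.

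\emph{Passing from $R$ to the global ring.} It remains to show that $\kappa(y)\otimes_k C \to A/\frakn A$ is surjective, not just its localization at $x'$. Since $A/\frakn A$ is Artinian local, it is killed by $\frakm_{x'}^N$ for some $N$. So $\Spec(A/\frakn A)\to\Spec(\kappa(y)\otimes_k C)$ factors through the closed subscheme defined by $\frakm_{x'}^N$ (taking $x'$ closed, which holds because the map is finite), and $(\kappa(y)\otimes_k C)/\frakm_{x'}^N = R/\frakm_{x'}^N R$. Hence the image of $\kappa(y)\otimes_k C$ equals the image of $R$, which is everything. Therefore $h$ is a closed immersion, Nakayama gives $M=0$, and $f$ is a closed immersion.

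The main obstacle I expect is this last step: one has to check that $x'$ is a closed point (it is the image of a point under a finite morphism over the field $\kappa(y)$) and that the map genuinely factors through $\frakm_{x'}^N$. The other delicate point is extracting $\kappa(x)=\kappa(x')$ from the combination of unramified and radicial.
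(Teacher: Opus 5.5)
Your proof is correct and follows the same structure as the paper's. You show that $f$ is finite, then use Nakayama over $B$ to reduce to surjectivity of $\kappa(y)\otimes_k C \to \kappa(y)\otimes_B A$, i.e.\ to $h$ being a closed immersion. The only difference is that the paper gets this last fact by citing the general criterion that a universally closed, unramified, radicial morphism is a closed immersion (Stacks Tag 04XV). You instead verify it by hand, using that the fiber is local Artinian, applying Nakayama at the closed image point $x'$, and using nilpotence of the maximal ideal to pass from the localization $R$ to the global ring.
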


\begin{proof}
Consider the following cartesian diagram:
% https://q.uiver.app/#q=WzAsNixbMCwwLCJcXFNwZWMoQS9cXGZyYWtuIEEpIl0sWzAsMSwiXFxTcGVjKFxca2FwcGEoeSkgXFxvdGltZXNfayBDKSJdLFswLDIsIlxcU3BlYyhcXGthcHBhKHkpKSJdLFsxLDIsIlxcU3BlYyhCKSJdLFsxLDEsIlxcU3BlYyhCIFxcb3RpbWVzX2sgQykiXSxbMSwwLCJcXFNwZWMoQSkiXSxbMCw1XSxbNSw0LCJmIl0sWzQsM10sWzIsMywieSIsMl0sWzEsMl0sWzAsMSwiaCIsMl0sWzEsNF1d
\[\begin{tikzcd}
	{\Spec(\kappa(y) \otimes_B A)} & {\Spec(A)} \\
	{\Spec(\kappa(y)\otimes_k C)} & {\Spec(B \otimes_k C)} \\
	{\Spec(\kappa(y))} & {\Spec(B).}
	\arrow[from=1-1, to=1-2]
	\arrow["h"', from=1-1, to=2-1]
	\arrow["f", from=1-2, to=2-2]
	\arrow[from=2-1, to=2-2]
	\arrow[from=2-1, to=3-1]
	\arrow["\textnormal{pr}_B", from=2-2, to=3-2]
	\arrow["y"', from=3-1, to=3-2]
\end{tikzcd}\]
The morphism $f$ is unramified by hypotheses. Moreover, by \cite[\href{https://stacks.math.columbia.edu/tag/02JM}{Tag 02JM}]{stacks-project}, the morphism $f$ is also finite, because the post-composition with $\textnormal{pr}_B$ is finite by hypothesis. It follows that the morphism $h$ is unramified and finite, hence universally closed. Moreover, $h$ is also radicial by hypotheses.
By \cite[\href{https://stacks.math.columbia.edu/tag/04XV}{Tag 04XV}]{stacks-project}, it follows that $h$ is a closed immersion.

Consider now the following diagram of pushouts in the category of $k$-algebras:
% https://q.uiver.app/#q=WzAsNCxbMCwwLCJCIFxcb3RpbWVzX2sgQyJdLFsxLDAsIkEiXSxbMCwxLCJcXGthcHBhKHkpIFxcb3RpbWVzX2sgQyJdLFsxLDEsIkEvXFxmcmFrbiBBIl0sWzAsMSwibCJdLFsyLDMsImwnIiwyXSxbMCwyXSxbMSwzXV0=
\[\begin{tikzcd}
	{B \otimes_k C} & A \\
	{\kappa(y) \otimes_k C} & {\kappa(y) \otimes_B A.}
	\arrow["l", from=1-1, to=1-2]
	\arrow[from=1-1, to=2-1]
	\arrow[from=1-2, to=2-2]
	\arrow["{l'}"', from=2-1, to=2-2]
\end{tikzcd}\]
Set $G \coloneq \textnormal{coker}(l)$. Since $B \to A$ is a finite ring map, then $G$ is a finitely generated $B$-module. Then there is an exact sequence of $B$-modules $B \otimes_k C \xrightarrow{l} A \to G \to 0$. We tensor this sequence with the $B$-module $\kappa(y) = B /\frakn B$, where $\frakn$ is the unique maximal ideal of $B$. Since tensor is a right-exact functor, we get the following exact sequence of $B$-modules: $\kappa(y) \otimes_k C \xrightarrow{l'} \kappa(y) \otimes_B A \to \kappa(y) \otimes_B G \to 0$. Since $l'$ is surjective, we have $\kappa(y) \otimes_B G = 0$. Then $0 = B/\frakn B \otimes_B G \simeq G/\frakn G$. It follows that $G= \frakn G$ and $G=0$ by Nakayama's lemma, so that $l$ is surjective.
\end{proof}

Finally we show how to pass from an equivariant closed immersion to an equivariant isomorphism.

\begin{lemma} \label{lemma: exist open with isomorphism in the image}
Let $f \colon X \to Y$ be a $G$-equivariant étale morphism of schemes and let $Z \subseteq X$ be a closed $G$-invariant subscheme. Suppose that $f_{|Z} \colon Z \to Y$ is a closed immersion. Then there exists an open $G$-invariant subscheme $U \subset X$ such that $Z_U \coloneq Z \cap U = Z$ and the composite $f_{|U} \colon U \rightarrow X \rightarrow Y$ induces an isomorphism $f_{|U}^{-1}(f_{|U}(Z)) \xrightarrow{\sim} f_{|U} (Z)$.
\end{lemma}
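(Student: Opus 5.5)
The plan is to follow the standard non-equivariant argument and then check that every construction is $G$-stable. Set $Z' \coloneq f^{-1}(f(Z))$. Because $f_{|Z}$ is a closed immersion, $f(Z)$ is a closed subscheme of $Y$, and $Z'$ is the closed subscheme $X \times_Y f(Z)$. Since $f$ is $G$-equivariant and $Z$ is $G$-invariant, $f(Z)$ is $G$-invariant, and therefore so is $Z'$.

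The projection $Z' \to f(Z)$ is étale, being a base change of $f$. It also has a section $s \colon f(Z) \xrightarrow{\sim} Z \hookrightarrow Z'$, obtained by inverting the closed immersion $Z \to f(Z)$. A section of an étale morphism that is separated over the target is an open and closed immersion. In our setting $X$ is separated, so $Z' \to f(Z)$ is separated; this is the point I would check carefully if $X$ were not assumed separated. It follows that $Z$ is open and closed in $Z'$, and we may write $Z' = Z \sqcup R$ with $R \coloneq Z' \setminus Z$ closed in $Z'$, hence closed in $X$.

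Define $U \coloneq X \setminus R$. This set is open in $X$. It is $G$-invariant because both $Z'$ and $Z$ are $G$-invariant, so their difference $R$ is $G$-invariant. By construction $Z \subseteq U$, so $Z_U = Z$.

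It remains to compute $f_{|U}^{-1}(f_{|U}(Z))$. We have $f_{|U}(Z) = f(Z)$, and $f_{|U}^{-1}(f(Z)) = Z' \cap U = Z$ as schemes, since $Z$ is open in $Z'$ and $U$ removes exactly $R$. The map $Z \to f(Z)$ is an isomorphism because $f_{|Z}$ is a closed immersion with image $f(Z)$. This gives the required isomorphism.

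The main obstacle is the open-and-closed claim, namely that a section of an étale separated morphism is an open immersion. This is standard (e.g.\ Stacks, Tag 024T). Once it is in place, equivariance is only a matter of bookkeeping.
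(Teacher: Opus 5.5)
Your proof is correct and follows the paper's argument essentially step by step. In both, $Z \to f^{-1}(f(Z))$ is a section of the étale base change $f^{-1}(f(Z)) \to f(Z)$ and hence open, so $f^{-1}(f(Z)) = Z \sqcup Z'$, and one removes the $G$-invariant closed piece $Z'$. The only harmless difference is how you get closedness of $Z$ in $f^{-1}(f(Z))$: you use separatedness, whereas the paper gets it directly because $Z$ is already closed in $X$. So no separatedness hypothesis is actually needed at that step.
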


\begin{proof}

We first show that the induced morphism $Z \to f^{-1}(f(Z))$ is an open and closed immersion.
Since $f_{|Z}$ is a closed immersion, there is an isomorphism $Z \simeq f(Z)$, where $f(Z)$ denotes the scheme-theoretic image of $Z$. Consider the cartesian diagram
% https://q.uiver.app/#q=WzAsNCxbMCwwLCJcXGdhbW1hXnstMX0oXFxnYW1tYShaX3tYXzN9KSkiXSxbMCwxLCJcXGdhbW1hKFpfe1hfM30pIl0sWzEsMCwiWF8zIl0sWzEsMSwiV196XmgiXSxbMCwxXSxbMSwzLCIiLDAseyJzdHlsZSI6eyJ0YWlsIjp7Im5hbWUiOiJob29rIiwic2lkZSI6InRvcCJ9fX1dLFsyLDMsIlxcZ2FtbWEiXSxbMCwyLCIiLDAseyJzdHlsZSI6eyJ0YWlsIjp7Im5hbWUiOiJob29rIiwic2lkZSI6InRvcCJ9fX1dLFszLDAsIiIsMSx7InN0eWxlIjp7Im5hbWUiOiJjb3JuZXItaW52ZXJzZSJ9fV1d
\[\begin{tikzcd}
	{f^{-1}(f(Z))} & {X} \\
	{f(Z)} & {Y.}
	\arrow[hook, from=1-1, to=1-2]
	\arrow[from=1-1, to=2-1]
	\arrow["f", from=1-2, to=2-2]
	\arrow[hook, from=2-1, to=2-2]
	%\arrow["\lrcorner"{anchor=center, pos=0.125}, draw=none, from=1-1, to=2-2]
	%\arrow["\lrcorner"{anchor=center, pos=0.125}, draw=none, from=1-1, to=2-2]
\end{tikzcd}\]
The induced morphism $Z \to f^{-1}(f(Z))$ is a closed immersion by \cite[\href{https://stacks.math.columbia.edu/tag/01QP}{Tag 01QP}]{stacks-project}. On the other hand, since $f$ is étale, the map $f^{-1}(f(Z)) \to f(Z) $ is étale. Under the identification $Z \simeq f(Z)$, the induced morphism $Z \to f^{-1}(f(Z))$ is a section of this étale morphism, and therefore it is an open immersion. It follows that $Z \to f^{-1}(f(Z))$ is an open morphism. Thus, $Z \to f^{-1}(f(Z))$ is an open and closed immersion. It follows that we can write $f^{-1}(f(Z)) \simeq Z \coprod Z'$	for some open and closed subscheme $Z'$ of $f^{-1}(f(Z))$. Then the composite $\psi \colon Z' \to f^{-1}(f(Z)) \to X$ is also a closed immersion. Set $U \coloneqq X \setminus Z'$, where we identify $Z'$ with its scheme-theoretic image $\psi(Z')$ in $X$. The open subscheme $U$ of $X$ gives the desired conclusions as follows. First note that $U$ is a $G$-invariant open subscheme of $X$, because $Z$ is $G$-invariant, and thus also $Z'$ is $G$-invariant. Moreover, $Z_U  \simeq Z$ because we only removed $Z'$ from $X$, and $Z'$ does not intersect $Z$. Finally, the restriction map $f_{|U} \colon U \rightarrow X \rightarrow Y$ induces an isomorphism of $Z_U=Z$ with its image, because $f_{|U}^{-1}(f_{|U}(Z)) \simeq Z$ by construction.
\end{proof}	

\subsection{Openness of properties} \label{subsection: openness of properties}

In this section, we prove in Lemma \ref{lemma: openness of properties} that smoothness, quasi-finiteness and non-vanishing at a point define open conditions in the space of equivariant linear maps between two representations; moreover in Lemma \ref{lemma: assume is étale and quasi-finite} we show that if an equivariant morphism is étale (resp.\ smooth) at some point, we can restrict the source scheme to an open neighborhood of the point to get an étale (resp.\ smooth) equivariant morphism. While the results are classical in algebraic geometry, we restate them here for completeness.

Let $G$ be a finite abelian group. Let $V$ and $W$ be $G$-representations over $k$, i.e.\ finite-dimensional $k[G]$-modules. Suppose that $\dim_k(W) =M$ and $\dim_k(V) = N$, with $M \geq N$. 
Let $\cF \simeq \AA^{MN}_k$ denote the affine space parametrizing linear maps $\AA(W) \simeq \AA^{M}_k \to \AA(V) \simeq \AA^N_k$. Let $\cF^{*}$ denote the open subscheme of $\cF$ parametrizing surjective linear maps $\AA(W) \to \AA(V)$; in other words, a $k$-rational point of $\cF^{*}$ determines a morphism $p \colon \AA(W) \to \AA(V)$ which corresponds to $N$ independent linear forms in $\cO(\AA(W))$. We will refer to surjective linear maps as \emph{projections}. Analogously, let $\cE$ denote the affine space over $k$ parametrizing $G$-equivariant linear maps $\AA(W) \to \AA(V)$ and $\cE^{*}$ be the open subscheme of $\cE$ parametrizing $G$-equivariant projections $\AA(W) \to \AA(V)$. Let $\cP$ be a property of a projection. We say that $\cP$ \emph{defines an open condition on $\cF$} (resp.\ $\cE$) if the set of rational points of $\cF$ (resp.\ $\cE$) corresponding to
projections which satisfy the condition $\cP$ is the set of rational points of an open
subscheme of $\cF$ (resp.\ $\cE$). 

\begin{remark} \label{remark: openness is sufficient ignoring equivariance}
To show that a property $\cP$ defines an open condition on $\cE$ it is sufficient to show that $\cP$ defines an open condition on $\cF$. Indeed, the set of $G$-equivariant linear maps $\Hom_{k[G]}(W,V)$ form a $k$-vector subspace of the space of linear maps $\Hom_k(W,V)$, and thus $\cE$ is a closed subscheme of $\cF$. If $\cP$ defines an open condition on $\cF$, then projections satisfying $\cP$ are the $k$-rational points of an open subscheme $U \subseteq \cF$. In particular, equivariant projections satisfying $\cP$ are the $k$-rational points of the open $U' \coloneq U \cap \cE \subseteq \cE$.
\end{remark}

\begin{lemma} \label{lemma: openness of properties}
	Let $k$ be infinite and let $G$ be a finite abelian group. Let $X$ be an affine smooth $k$-scheme with a $G$-action. Let $Z \subseteq X$ be a $G$-invariant closed subscheme of positive codimension and let $x \in Z$ be a closed point. Let $\iota \colon X \to \AA(W)$ be a $G$-equivariant closed immersion of $X$ into a $G$-representation. Let $V$ be a $G$-representation over $k$ of dimension $\leq \dim X$. Let $\cE$ denote the affine space over $k$ parametrizing $G$-equivariant linear maps $\psi \colon \AA(W) \to \AA(V)$. Then the following properties define an open condition on $\cE$:
	\begin{enumerate}
		\item the composite $\psi \circ \iota \colon X \to \AA(V)$ is smooth at $x$;
		\item the composite $\psi \circ \iota \colon X \to \AA(V)$ does not vanish at $x$;
		\item the composite $c_{\psi} \colon Z \to \AA(W) \to \AA(V)$ is quasi-finite at $x$.
	\end{enumerate} 
\end{lemma}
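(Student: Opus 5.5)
By Remark \ref{remark: openness is sufficient ignoring equivariance}, it suffices to show that each of the three properties defines an open condition on the full space $\cF \simeq \AA^{MN}_k$ of linear maps $\AA(W) \to \AA(V)$, where $M = \dim W$ and $N = \dim V$. In each case I will write the condition as the non-vanishing of some polynomial expression in the matrix entries of $\psi$, or as the complement of the image of a closed set under a proper map.

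\textbf{Properties (1) and (2).} Fix $\psi$ with matrix $(a_{ij})$, so $\psi \circ \iota$ is given by the $N$ functions $\sum_j a_{ij}\iota^*(w_j)$, where the $w_j$ are coordinates on $\AA(W)$.

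For (1): since $X$ is smooth at $x$ of dimension $d = \dim_x X \geq N$, and $\AA(V)$ is smooth, the map $\psi\circ\iota$ is smooth at $x$ if and only if the induced map on cotangent spaces $V^\vee \otimes \kappa(x) \to \frakm_x/\frakm_x^2$ is injective. Equivalently, the differentials $d(\psi\circ\iota)_i$ at $x$ must be linearly independent over $\kappa(x)$. These differentials are $\sum_j a_{ij}\, d\iota^*(w_j)(x)$. Choose a $k$-basis of $\kappa(x)$ and a basis of $\Omega_{X,x}\otimes\kappa(x)$. Then injectivity becomes the non-vanishing of some $N\times N$ minor of a matrix whose entries are linear in the $a_{ij}$ with coefficients in $\kappa(x)$. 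Taking norms (or the union over minors after restriction of scalars to $k$), this is the complement of a closed subscheme of $\cF$, hence open.

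For (2): non-vanishing at $x$ means that some coordinate $\sum_j a_{ij}\iota^*(w_j)(x) \in \kappa(x)$ is nonzero. After expanding in a $k$-basis of $\kappa(x)$, this is again the non-vanishing of one of finitely many linear forms in the $a_{ij}$, hence open.

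\textbf{Property (3).} This is the main obstacle. I would consider the universal family $\Phi\colon Z \times \cF \to \AA(V)\times \cF$, $(z,\psi) \mapsto (\psi(\iota z), \psi)$, which is a morphism of finite type $\cF$-schemes. By Chevalley's upper semicontinuity of fiber dimension, the set $\Sigma \subseteq Z\times\cF$ of points where the fiber of $\Phi$ has local dimension $\geq 1$ is closed. The condition on $\psi$ is that the point $(x,\psi)$ does not lie in $\Sigma$, that is, $\psi \notin \Sigma \cap (\{x\}\times\cF)$. Here $\{x\} \times \cF \simeq \cF_{\kappa(x)}$ projects finitely onto $\cF$, so the image of this closed set in $\cF$ is closed, and its complement is open. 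One needs some care that the fiber dimension of $\Phi$ at $(x,\psi)$ equals the local fiber dimension of $c_\psi$ at $x$; this holds because the fibers of $\Phi$ over $\psi$ coincide with those of $c_\psi$ after base change to $\kappa(\psi)$. Quasi-finiteness at $x$ is then equivalent to $x$ being isolated in its fiber. Finally, rational points of the open set are exactly the $\psi$ satisfying the property, as required.
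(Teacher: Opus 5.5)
Your proposal is correct. For (2) and (3) it is essentially the paper's argument. The paper also passes to $\cF$ via the remark, treats (2) as the complement of the closed locus $(\psi\circ\iota)(x)=0$, and for (3) uses the same universal family over the parameter space.

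The only difference in (3) is that you run the argument dually:
\begin{itemize}
\item The paper takes the open quasi-finite locus $\Omega$ of the universal family (Stacks, Tag 01TI), restricts it to $\Spec(\kappa(x))\times_k\cF^{*}$, and pushes it forward along the \emph{open} projection to $\cF^{*}$.
\item You take its closed complement $\Sigma$, via upper semicontinuity of fiber dimension, and push it forward along the \emph{finite}, hence closed, projection.
\end{itemize}
Both versions rest on the same two observations, which you state. First, over a rational $\psi$ the fiber of the projection is the single point $(x,\psi)$. Second, the fiber of the universal family there is the fiber of $c_\psi$; this is the paper's appeal to Tag 01TM. Working on all of $\cF$ rather than on the locus $\cF^{*}$ of surjections is slightly cleaner, since it does not tacitly restrict to surjective $\psi$.

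For (1) the paper simply cites Grayson's Proposition 1.2. You instead give a self-contained Jacobian argument: take $N\times N$ minors with coefficients in $\kappa(x)$, then expand in a $k$-basis of $\kappa(x)$ (or take norms). This produces an open subscheme of $\cF$ whose rational points are exactly the right ones.

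One point in (1) needs correcting. For a morphism between smooth $k$-schemes, smoothness at $x$ is equivalent to injectivity of $V^\vee\otimes_k\kappa(x)\to\Omega_{X/k}\otimes\kappa(x)$. It is \emph{not} equivalent in general to injectivity on Zariski cotangent spaces $\frakm_x/\frakm_x^2$. The two agree only when $\kappa(x)/k$ is separable, and the lemma assumes only that $k$ is infinite. For example, over an imperfect field with $t\notin k^p$, the map $\AA^1_k\to\AA^1_k$, $s\mapsto s^p$, is an isomorphism on Zariski cotangent spaces at the point $s^p=t$, but it is not smooth there. Your actual computation already uses $\Omega_{X/k}\otimes\kappa(x)$, so just drop the ``$\frakm_x/\frakm_x^2$'' formulation and the ``equivalently''.
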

\begin{proof}
	By Remark \ref{remark: openness is sufficient ignoring equivariance} it is sufficient to show that these properties define an open condition ignoring equivariance. Then (1) is \cite[Proposition 1.2]{Grayson}. The openness of the locus where the composite $ \psi \circ \iota$ does not vanish is well known, because the complement is the locus of $\psi \in \cE$ such that $(\psi \circ \iota) (x)=0$, and this defines a closed subspace of $\cE$, giving (2). To show that (3) gives an open condition we do the following. Let $\cF^{*}$ be the open subscheme of $\AA(\Hom_k(W,V))$ parametrizing surjective linear maps $\AA(W) \to \AA(V)$. Consider the universal morphism
	\[
	a \colon Z \times_k \cF^{*} \longrightarrow \AA(V) \times_k \cF^{*}
	\]
	whose fiber over a $k$-rational point $\psi \in \cF^{*}(k)$ is given by $c_{\psi} \colon Z \to \AA(V)$. The locus $\Omega \subset Z \times_k \cF^{*} $ where $a$ is quasi-finite is open by \cite[\href{https://stacks.math.columbia.edu/tag/01TI}{Tag 01TI}]{stacks-project}. Consider the morphism
	\[
	i \colon \Spec(\kappa(x)) \times_k \cF^{*} \longrightarrow Z \times_k \cF^{*}
	\]
	given by the inclusion of $x$ in $Z$. Then $\Omega' = i^{-1}(\Omega)$ is open in $\Spec(\kappa(x)) \times_k \cF^{*}$. The projection $p \colon \Spec(\kappa(x)) \times_k \cF^{*} \to \cF^{*}$ is an open morphism, and thus $U \coloneq p(\Omega')$ is open in $\cF^{*}$. We claim that $U \subseteq \cF^{*} \subseteq \AA(\Hom_k(W,V))$ is the open subscheme whose $k$-rational points are precisely the projections $\psi$ such that $c_{\psi}$ is quasi-finite at $x$. Indeed, if $c_{\psi} \in U(k)$, then $(x,\psi) \in \Omega$, and thus the morphism $a$ is quasi-finite at $(x, \psi)$ by definition of $\Omega$. It follows that the fiber $\psi \colon Z \to \AA(V)$ of $a$ over $\psi \in \cF^{*}(k)$ is quasi-finite at $x$ by \cite[\href{https://stacks.math.columbia.edu/tag/01TM}{Tag 01TM}]{stacks-project}. Conversely, if $\psi \in \cF^{*}(k)$ is such that $c_{\psi} \colon Z \to \AA(V)$ is quasi-finite at $x$, then again by \cite[\href{https://stacks.math.columbia.edu/tag/01TM}{Tag 01TM}]{stacks-project} we see that the morphism $a$ is quasi-finite at $(x,\psi)$. Then $(x, \psi) \in \Omega$ and thus $\psi \in U(k)$. This proves that also (3) defines an open condition.
\end{proof}

\begin{remark}
The Zariski opens for which properties of Lemma \ref{lemma: openness of properties} hold might be empty. In the proof of Theorem \ref{theorem: gabber} we will show they have $k$-rational points.
\end{remark}

\begin{remark} \label{remark: étaleness open condition}
	Suppose in Lemma \ref{lemma: openness of properties} that $\dim_x(X) = \dim(V)$ holds. Then the property that $\psi \circ \iota \colon X \to \AA(V) $ is étale at $x$ defines an open condition on $\cE$. Indeed, if $f = \psi \circ \iota$ is smooth at $x$, then by \cite[\href{https://stacks.math.columbia.edu/tag/01V9}{Tag 01V9}]{stacks-project} the local ring map $\cO_{\AA(V), f(x)} \to \cO_{X, x}$ is flat and the $\cO_{X,x}$-module $\Omega_{X/\AA(V),x}$ can be generated by at most $\dim_{x}(X_{f(x)})$ elements, where $X_{f(x)}$ denotes the fiber of $f$ over $f(x)$. On the other hand, $\dim_{x}(X_{f(x)})= 0$ by \cite[\href{https://stacks.math.columbia.edu/tag/0AFF}{Tag 0AFF}]{stacks-project} because $\dim_{x}(X) = \dim \AA(V)$. Then $f$ is étale at $x$ by \cite[\href{https://stacks.math.columbia.edu/tag/02GU}{Tag 02GU}]{stacks-project}. The openness of the étaleness condition the follows by Lemma \ref{lemma: openness of properties}.(1).
\end{remark}

We will now prove that if a morphism is étale (resp.\ smooth) and quasi-finite at the closed specialization $z'$ of a point $z$, we may pass to a $G$-invariant open neighborhood and assume that the morphism is étale (resp.\ smooth) and quasi-finite.

\begin{lemma} \label{lemma: assume is étale and quasi-finite}
Let $G$ be a finite abelian group. Let $X,Y$ be affine $k$-schemes with a $G$-action. Assume $X$ is noetherian. Let $Z \subseteq X$ be a $G$-invariant closed subscheme, let $z \in Z$ be a point and let $z' \in Z$ be a closed specialization of $z$. Suppose that there exists a $G$-equivariant morphism of schemes $\varphi \colon X \to Y$ which is étale (resp.\ smooth) at $z'$ and such that the restriction $\varphi_{|Z} \colon Z \to Y$ is quasi-finite at $z'$. Then there exists an open $G$-invariant neighborhood $U \subseteq X$ of $z$ such that $\psi_{|U} \colon U \to Y$ is étale (resp.\ smooth) and the restriction $\psi_{|Z \cap U} \colon Z_{U} \to Y$ is quasi-finite. 
\end{lemma}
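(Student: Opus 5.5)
The plan is to use the standard openness results for the étale (resp.\ smooth) locus and the quasi-finite locus, then intersect over the $G$-orbit to obtain invariance. Let $E \subseteq X$ be the set of points at which $\varphi$ is étale (resp.\ smooth). By \cite[\href{https://stacks.math.columbia.edu/tag/02GI}{Tag 02GI}]{stacks-project} (resp.\ the analogous statement for smoothness), $E$ is open. Let $Q \subseteq Z$ be the set of points at which $\varphi_{|Z}$ is quasi-finite; it is open in $Z$ by \cite[\href{https://stacks.math.columbia.edu/tag/01TI}{Tag 01TI}]{stacks-project}, because $X$, and hence $Z$, is noetherian and of finite type over $Y$ in the situations considered. (If finite type is not available, I would use the locus of points isolated in their fiber, which is still open for morphisms locally of finite type.) The complement $Z \setminus Q$ is closed in $Z$, hence closed in $X$, so
\[
U_0 \coloneq E \setminus (Z \setminus Q)
\]
is an open subset of $X$. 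By hypothesis $z' \in U_0$.

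Next, $U_0$ is $G$-stable. Since $\varphi$ is $G$-equivariant and each $g \in G$ acts by automorphisms of $X$ and $Y$, $\varphi$ is étale (resp.\ smooth) at $w$ if and only if it is étale (resp.\ smooth) at $gw$. Likewise, since $Z$ is $G$-invariant, $\varphi_{|Z}$ is quasi-finite at $w$ if and only if it is quasi-finite at $gw$. Hence $gE = E$ and $gQ = Q$, and so $gU_0 = U_0$. Without this, one could still replace $U_0$ by $\bigcap_{g \in G} gU_0$, which is open because $G$ is finite.

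To conclude, set $U \coloneq U_0$. Open subsets are stable under generalization, and $z' \in U$ is a specialization of $z$, so $z \in U$. On $U$ the map $\varphi_{|U}$ is étale (resp.\ smooth) at every point. The map $\varphi_{|Z \cap U}$ is quasi-finite at every point, hence quasi-finite, since it is of finite type; it is locally of finite type as a restriction of $\varphi_{|Z}$, and it is quasi-compact because $X$ is noetherian. (The statement writes $\psi$, which should be read as $\varphi$.)

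The only delicate point is the finiteness hypothesis needed to apply the openness-of-the-quasi-finite-locus result. The statement only assumes that $X$ is noetherian and that $\varphi$ is étale at $z'$. Étaleness at $z'$ gives local finite presentation near $z'$, so I would first shrink to the open locus where $\varphi$ is locally of finite presentation. That locus is $G$-stable by the same equivariance argument, and after this shrinking the openness result applies.
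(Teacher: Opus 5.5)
Your proof is correct and takes essentially the same route as the paper: openness of the étale (resp.\ smooth) locus and of the quasi-finite locus (Tags 02GI and 01TI), $G$-invariance via equivariance, $z \in U$ because opens are stable under generalization, and quasi-finiteness of $Z \cap U \to Y$ from local quasi-finiteness plus quasi-compactness, which comes from $X$ being noetherian. The differences are only cosmetic: you observe directly that these loci are already $G$-stable, whereas the paper takes the union $\bigcup_{g} g(\Omega')$ of translates; and your extra shrinking to a locally-finitely-presented locus is harmless but unnecessary, since being quasi-finite at a point in the Stacks sense already includes being locally of finite type there, so the openness of the quasi-finite locus holds for arbitrary morphisms.
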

\begin{proof}
The locus where a morphism of schemes is étale is open in the source
by \cite[\href{https://stacks.math.columbia.edu/tag/02GI}{Tag 02GI}]{stacks-project}.
Hence, there exists an open
neighborhood $V \subseteq X$ of $z'$ such that the restriction $\varphi_{|V}$ is étale
and the restriction of $\varphi_{|V}$ to $Z_V \coloneqq Z \cap V$ is quasi-finite at $z'$. After shrinking $V$, we may assume it is affine. Moreover, the locus where a morphism of schemes is quasi-finite is open
by \cite[\href{https://stacks.math.columbia.edu/tag/01TI}{Tag 01TI}]{stacks-project}. Let $\Omega \subseteq Z_V$ be this open locus. Note that the restriction $\varphi_{|\Omega} \coloneq \Omega \to Y$ is quasi-finite by \cite[\href{https://stacks.math.columbia.edu/tag/01TJ}{Tag 01TJ}]{stacks-project}, because it is locally quasi-finite by \cite[\href{https://stacks.math.columbia.edu/tag/01TI}{Tag 01TI}]{stacks-project} and it is also quasi-compact.
Since $\Omega$ is open in $Z_V$, we may write $\Omega = Z_V \cap \Omega'$ for some open subset $\Omega' \subseteq V$.	Set $U \coloneq \bigcup_{g \in G} g(\Omega')$. Then $U$ is an open $G$-invariant neighborhood of $z'$ in $X$. Moreover, $\varphi_{|U} \colon U \to Y$ is étale and the restriction to $Z_U$ is quasi-finite by construction, as desired. Finally, note that $z \in U$, because any open neighborhood of $z'$ contains $z$. The smoothness case is identical, replacing the étale locus by the smooth locus, which is open by \cite[\href{https://stacks.math.columbia.edu/tag/01V5}{Tag 01V5}]{stacks-project}.
\end{proof}

\section{Radicial and étale equivariant morphisms over local rings}

Let $R$ be a local ring equipped with an action of a finite abelian group $G$. Set $W\coloneq \Spec(R)$ and let $w \in W$ be the unique closed point. Throughout this section, $\AA^1_k$ denotes the trivial one-dimensional $G$-representation over $k$ (i.e.\ $\AA^1_k = \AA(V)$ where $V$ is the trivial one-dimensional $G$-representation). We show that, under suitable hypotheses, a $G$-equivariant morphism with target $\AA^1_{\kappa(w)}$ can be lifted to one with target $\AA^1_W$ and we verify that certain properties are preserved under this lifting. Then in Lemma \ref{lemma: exists a fiber that is étale and radicial at a point} we show how to produce an equivariant morphism with target $\AA^1_{\kappa(w)}$
enjoying these properties.

\begin{lemma} \label{lemma: lift the morphism}
Let $G$ be a finite abelian group such that $|G|$ is invertible in $k$. Let $l \colon X \to W$ be a $G$-equivariant morphism of affine schemes, where $W$ is the spectrum of a local ring. Let $w \in W$ be the unique closed point and suppose that $w$ is set-theoretically fixed (i.e.\ $Gw=w$). Let $x \in X$ be a closed point such that $l(x) = w$. Let $\AA^1_k$ be the trivial one-dimensional $G$-representation. Let $X_w$ and $\AA^1_{\kappa(w)}$ be the fibers over $w$ of $l \colon X \to W$ and $p \colon \AA^1_W \to W$ respectively. Suppose that we are given a $G$-equivariant morphism of schemes $f_w \colon X_w \to \AA^1_{\kappa(w)}$. Then
\begin{itemize}
	\item[(1)] there exists a (not necessarily $G$-equivariant) lift $f \colon X \to \AA^1_W$ of $f_w$;
	
	\item[(2)] if $f_w$ is radicial at $x$, then the symmetrization $\tilde{f}$ of $f$ is a $G$-equivariant morphism of schemes $\tilde{f}\colon X \to \AA^1_W$ such that the fiber $\tilde{f}_w \colon X_{w} \to \AA^1_{\kappa(w)}$ is radicial at $x$;
	\item[(3)] if $f_w$ is étale at $x$, then the symmetrization $\tilde{f}$ of $f$ is a $G$-equivariant morphism of schemes $\tilde{f}\colon X \to \AA^1_W$ such that the fiber $\tilde{f}_w \colon X_{w} \to \AA^1_{\kappa(w)}$ is étale at $x$.
\end{itemize} 
\end{lemma}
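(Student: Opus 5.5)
For (1), I will use that $X = \Spec(A)$ and $W = \Spec(R)$ are affine, so $X_w = \Spec(A \otimes_R \kappa(w))$. The surjection $A \to A/\frakn A = A \otimes_R \kappa(w)$, with $\frakn$ the maximal ideal of $R$, lets me lift things. The morphism $f_w$ corresponds to an element $\bar a \in A/\frakn A$, the image of the coordinate $T$ of $\AA^1_{\kappa(w)} = \Spec(\kappa(w)[T])$. I choose any preimage $a \in A$. Sending $T \mapsto a$ gives an $R$-algebra map $R[T] \to A$, that is, a morphism $f \colon X \to \AA^1_W$ over $W$ whose fiber over $w$ is $f_w$. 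Nothing more is needed here.

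For (2) and (3), I define the symmetrization as $\tilde a \coloneq \frac{1}{|G|}\sum_{g\in G} g(a)$. This makes sense because $|G|$ is invertible in $k$. Since $\AA^1_k$ carries the trivial action and $T$ is $G$-invariant, $G$-equivariance of $\tilde f \colon T\mapsto \tilde a$ just means $\tilde a$ is $G$-invariant, which holds by construction. The map is also over $W$, because $R \to A$ is equivariant.

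The key point is that $w$ is fixed, so $\frakn$ is $G$-stable and $\frakn A$ is a $G$-stable ideal. Hence $G$ acts on $A/\frakn A$ and the quotient map is equivariant. The equivariance of $f_w$ with trivial action on the target says exactly that $\bar a$ is $G$-invariant in $A/\frakn A$. Therefore the image of $\tilde a$ in $A/\frakn A$ is $\frac{1}{|G|}\sum_g g(\bar a) = \bar a$. So $\tilde f_w = f_w$ on the nose, and both radiciality at $x$ and étaleness at $x$ are inherited directly from $f_w$.

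I do not expect a real obstacle. The only step needing care is checking that $G$ acts on the fiber compatibly: $\frakn$ is $G$-stable because $Gw = w$, and the fiber map commutes with the action. The equality $\tilde f_w = f_w$ then makes (2) and (3) immediate. The closedness of $x$ and the hypothesis $l(x) = w$ are only used to ensure that $x \in X_w$, so the properties "at $x$" make sense for the fiber.
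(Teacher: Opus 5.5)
Your proof is correct and follows the paper's argument: you lift $f'_w$ through the surjection $A \to A/\frakn A$, symmetrize, and use that $\frakn A$ is $G$-stable, so the fiber of the symmetrization is the symmetrization of the already-invariant element $f'_w$. The only difference is cosmetic: the paper uses the unnormalized sum $\sum_{g} g(f')$, whose fiber is $|G|\, f'_w$, and then notes that multiplication by $|G|$ is an automorphism of $\AA^1_{\kappa(w)}$; your factor $\frac{1}{|G|}$ instead gives $\tilde f_w = f_w$ exactly.
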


\begin{proof}
Write $X= \Spec(A)$ for some $k$-algebra $A$ and $W= \Spec(R)$ for some local $k$-algebra $R$. The morphism $f_w$ corresponds to an element $ f'_w \in A \otimes_R \kappa(w)$ which is the image of $t$ under the morphism $\kappa(w)[t] \to A \otimes_R\kappa(w)$ corresponding to $X_w \to \AA^1_{\kappa(w)}$. A lift $f \colon X \to \AA^1_W$ of $f_w$ would correspond to an element $f' \in A$ with specified image $f'_w \in A \otimes_R \kappa(w)$. Since $A \to A \otimes_R \kappa(w)$ is surjective, such a lift $f$ exists, giving (1).

We define the symmetrization $\tilde{f}$ of $f$ as follows.
Note that $A \otimes_R \kappa(w) \simeq A/\frakn A$ where $\frakn$ is the maximal ideal of $R$ corresponding to the unique closed point $w$. We thus have the following commutative diagram:
% https://q.uiver.app/#q=WzAsNCxbMCwwLCJBIC9cXGZyYWtuIEEiXSxbMCwxLCJcXGthcHBhKHcpW3RdIl0sWzEsMSwiUlt0XSJdLFsxLDAsIkEiXSxbMywwXSxbMSwwLCJ0IFxcbWFwc3RvIGZfdyJdLFsyLDFdLFsyLDMsInQgXFxtYXBzdG8gZiIsMl1d
\[\begin{tikzcd}
	{A /\frakn A} & A \\
	{\kappa(w)[t]} & {R[t].}
	\arrow[from=1-2, to=1-1]
	\arrow["{t \mapsto f'_w}", from=2-1, to=1-1]
	\arrow["{t \mapsto f'}"', from=2-2, to=1-2]
	\arrow[from=2-2, to=2-1]
\end{tikzcd}\]
Set $\tilde{f}' \coloneq \sum_{g \in G} g(f')$. Then $\tilde{f}'$ defines a $G$-equivariant morphism $\tilde{f}\colon X \to \AA^1_k \times_k W \simeq \Spec(R[t])$, because $\AA^1_k$ has the trivial $G$-action, and thus the diagram
% https://q.uiver.app/#q=WzAsNCxbMCwwLCJSW3RdIl0sWzAsMSwiUlt0XSJdLFsxLDEsIkEiXSxbMSwwLCJBIl0sWzAsMSwiZyIsMl0sWzEsMiwidCBcXHRvIGYiLDJdLFszLDIsImciXSxbMCwzLCJ0IFxcdG8gZiJdXQ==
\[\begin{tikzcd}
	{R[t]} & A \\
	{R[t]} & A
	\arrow["{t \to \tilde{f}'}", from=1-1, to=1-2]
	\arrow["g"', from=1-1, to=2-1]
	\arrow["g", from=1-2, to=2-2]
	\arrow["{t \to \tilde{f}'}"', from=2-1, to=2-2]
\end{tikzcd}\]
commutes for all $g \in G$. 

We next show the permanence of properties (2) and (3) on the fiber $\tilde{f}_w \colon X_w \to \AA^1_{\kappa(w)}$ of $\tilde{f}$ over $w$. In order to do that, we give an explicit description of the morphism $\tilde{f}_w$. Consider the cartesian square
% https://q.uiver.app/#q=WzAsNCxbMCwwLCJYX3ciXSxbMCwxLCJcXEFBXjFfe1xca2FwcGEodyl9Il0sWzEsMSwiXFxBQV4xX1ciXSxbMSwwLCJYIl0sWzMsMiwiXFx0aWxkZXtmfSJdLFsxLDJdLFswLDEsIlxcdGlsZGV7Zn1fdyIsMl0sWzAsM11d
\[\begin{tikzcd}
	{X_w} & X \\
	{\AA^1_{\kappa(w)}} & {\AA^1_W}
	\arrow[from=1-1, to=1-2]
	\arrow["{\tilde{f}_w}"', from=1-1, to=2-1]
	\arrow["{\tilde{f}}", from=1-2, to=2-2]
	\arrow[from=2-1, to=2-2]
\end{tikzcd}\]
and the corresponding push-out square of algebras
% https://q.uiver.app/#q=WzAsNCxbMSwxLCJSW3RdIl0sWzEsMCwiQSJdLFswLDEsIlxca2FwcGEodylbdF0iXSxbMCwwLCJBIC9cXGZyYWtuIEEiXSxbMiwzXSxbMCwyXSxbMCwxLCJ0IFxcbWFwc3RvXFx0aWxkZXtmfSIsMl0sWzEsM11d
\begin{equation} \label{equation: commutative pushout in the algebras}
	\begin{tikzcd}
		{A /\frakn A} & A \\
		{\kappa(w)[t]} & {R[t].}
		\arrow[from=1-2, to=1-1]
		\arrow["{t \mapsto [\tilde{f}']}", from=2-1, to=1-1]
		\arrow["{t \mapsto\tilde{f}'}"', from=2-2, to=1-2]
		\arrow[ from=2-2, to=2-1]
	\end{tikzcd}
\end{equation}
Note that the fiber morphism in diagram \ref{equation: commutative pushout in the algebras} sends $t$ to the class $[\tilde{f}']$ of $\tilde{f}'$ in $A/\frakn A$. Since $w$ is set-theoretically fixed, the morphism $w \colon \Spec(\kappa(w)) \to W$ is $G$-equivariant. In particular, the base change $X_w \to X$ is $G$-equivariant, so that the corresponding quotient morphism $A \to A/\frakn A$ appearing in diagram \ref{equation: commutative pushout in the algebras} is $G$-equivariant. It follows that
\[
[\tilde{f}'] = [\sum_{g \in G} g(f')] = \sum_{g \in G} [g(f')] = \sum_{g \in G} g([f']) = \sum_{g \in G} g(f'_w).
\]
On the other hand, $\AA^1_k$ has trivial $G$-action, and thus the $G$-equivariance of the map $f_w \colon X_w \to \AA^1_{\kappa(w)}$ gives $g(f'_w) = f'_w$ for all $g \in G$. It follows that $[\tilde{f}'] = |G|f'_w \in A/ \frakn A$. In other words, the morphism $\tilde{f}_w \colon X_w \to \AA^1_{\kappa(w)}$ corresponds to the morphism $\kappa(w)[t] \to A/\frakn A$ mapping $t \mapsto |G|f'_w$. 

The multiplication by $|G|$ is an automorphism of $\AA^1_{\kappa(w)}$ because $|G|$ is invertible in $k$ by hypotheses. 

The field extension $\kappa(x)/ \kappa(\tilde{f}_w(x))$ is the same as $\kappa(x)/ \kappa(f_w(x))$, because it is not affected by the multiplication by $|G|$. It follows that if $f_{w}$ is radicial at $x$, also $\tilde{f}_w$ is radicial at $x$, giving (2). 

If $f_w$ is étale at $x$, then also $\tilde{f}_w$ is étale at $x$, because the multiplication by $|G|$ is an automorphism of $\AA^1_{\kappa(w)}$, giving (3).
\end{proof}

Recall that any $G$-equivariant morphism of schemes $X \to Y$ induces a morphism $X/G \to Y/G$, whenever the quotients exist as schemes. Conversely, if $X \to X/G$ and $Y \to Y/G$ are $G$-torsors and a morphism $X/G \to Y/G$ identifies $X$ with the pullback $(X/G) \times_{Y/G} Y$, then the base-change projection gives the corresponding $G$-equivariant morphism $X \to Y$.

In order to guarantee the existence of radicial and étale morphisms on the fiber as in the hypotheses of Lemma \ref{lemma: lift the morphism}, we will use the following lemma. 

\begin{lemma} \label{lemma: exists a fiber that is étale and radicial at a point}
Let $k$ be infinite. Let $G$ be a finite abelian group. Let $L/k$ be a field extension. Suppose that $G$ acts faithfully on $L$ as $k$-automorphisms (i.e.\ $\textnormal{ker}(G \to \Aut_k(L)) = \left\{ e\right\}$). Let $X$ be an affine $k$-scheme of pure dimension one with free $G$-action and let $x \in X$ be a closed point. Let $f \colon X \to \Spec(L)$ be a $G$-equivariant smooth morphism of schemes over $k$ and let $\AA^1_k$ be the trivial one-dimensional $G$-representation. Then there exists a $G$-equivariant morphism $X \to \AA^1_{L}= \AA^1_k \times_k \Spec(L)$ which is étale at $x$ and radicial at $x$.
\end{lemma}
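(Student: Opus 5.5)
Because $G$ acts freely on $X$ and faithfully on $L$, I will pass to quotients and build a non-equivariant map there; the hint just before the statement (quotient morphisms and torsors) points in this direction. Set $K \coloneq L^G$. By Artin's theorem $L/K$ is Galois with group $G$, so $\Spec(L) \to \Spec(K)$ is a $G$-torsor. Since $X$ is affine with free $G$-action, the GIT quotient $\pi \colon X \to X/G$ exists and is a $G$-torsor, with $X/G$ affine of finite type and pure dimension one. The equivariant map $f$ descends to $\bar f \colon X/G \to \Spec(K)$. I want to check that $X \simeq (X/G) \times_{\Spec K} \Spec L$. The natural map $X \to (X/G)\times_K L$ is a $G$-equivariant morphism of $G$-torsors over $X/G$, hence an isomorphism. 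Smoothness of $\bar f$ then follows from smoothness of $f$ by fpqc descent along $\Spec L \to \Spec K$.

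Next I construct a map on the quotient. Let $\bar x = \pi(x)$, a closed point of the smooth curve $X/G$ over $K$. I need $h \colon X/G \to \AA^1_K$ that is étale at $\bar x$ and radicial at $\bar x$, i.e.\ with $\kappa(\bar x)/\kappa(h(\bar x))$ purely inseparable. My plan is to take $h = h_1 + h_2$, where $h_1$ comes from Lemma \ref{lemma: Bachmann 2.4} (over the infinite field $K$) and is injective-radicial on $\{\bar x\}$, and $h_2 \in \frakm_{\bar x}^2$ is chosen so that $dh \neq 0$ in the cotangent line $\frakm_{\bar x}/\frakm_{\bar x}^2$. Concretely, if $dh_1 = 0$, I replace it by $h_1 + u$ with $u \in \frakm_{\bar x}\setminus \frakm_{\bar x}^2$, a uniformizer. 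This does not change the value at $\bar x$, so the residue field condition is unaffected, since $h(\bar x)$ depends only on $h \bmod \frakm_{\bar x}$. Once the differential is nonzero, $h$ is unramified at $\bar x$, and since both curves are smooth of dimension one it is étale there. A subtle point is that Lemma \ref{lemma: Bachmann 2.4} is stated over $k$, while I apply it over $K$. $K$ is infinite because it contains $k$, and $X/G$ is of finite type over $K$ because it is of finite type over $k$ and $K/k$ is finite. Here I use that $L$ is finite over $k$, which holds since $X \to \Spec L$ with $X$ of finite type over $k$.

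Finally I pull back. Define $\varphi \coloneq h \times_{K} L \colon X \simeq (X/G)\times_K L \to \AA^1_K \times_K L = \AA^1_L$. This is $G$-equivariant with trivial action on the $\AA^1$ factor. Étaleness at $x$ follows by base change. For radiciality, the image $\varphi(x)$ lies over $h(\bar x)$. The extension $\kappa(x)/\kappa(\bar x)$ and the extension $\kappa(\varphi(x))/\kappa(h(\bar x))$ are compared via the torsor structure: because $G$ acts freely, $S_x$ acts faithfully on $\kappa(x)$ with fixed field $\kappa(\bar x)$, and similarly on the target. I expect this bookkeeping to be the main obstacle. I would handle it by noting that $X_{\bar x} = \Spec(\kappa(\bar x)\otimes_K L)$ maps to $\Spec(\kappa(h(\bar x))\otimes_K L)$, and that this map is a universal homeomorphism because it is the base change of the radicial map $\Spec\kappa(\bar x) \to \Spec\kappa(h(\bar x))$ (radicial maps are stable under base change). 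Hence $\varphi$ restricted to the fiber through $x$ is radicial, which gives radiciality at $x$.
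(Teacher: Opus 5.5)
Your overall framework is the paper's. You quotient by the free action, identify $X\simeq (X/G)\times_K\Spec(L)$ as $G$-torsors, descend smoothness, build a non-equivariant $h\colon X/G\to\AA^1_K$, and pull it back. Your fiberwise base-change argument for radiciality at $x$ is fine. The paper obtains $h$ by citing \cite[Theorem 3.2.2]{Colliot-Thelene}. Your hand-made construction of $h$ is where the gap lies: it breaks down when $\kappa(\bar x)/K$ is inseparable.

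This case is not excluded. Here $k$ is only infinite, and in Claim \ref{claim: X_2} the lemma is applied with $L=\kappa(w)$ for a possibly non-closed point $w$, so $K$ is in general imperfect. Étaleness at $\bar x$ forces $\kappa(\bar x)/\kappa(h(\bar x))$ to be separable, so together with radiciality you need $\kappa(h(\bar x))=\kappa(\bar x)$. Lemma \ref{lemma: Bachmann 2.4} only gives a purely inseparable extension, and adding $u\in\frakm_{\bar x}$ does not change $h(\bar x)$.

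Concretely, take characteristic $p$, the curve $\AA^1_K=\Spec K[s]$, and $\bar x=V(s^p-t)$ with $t\in K\setminus K^p$. Then $h_1=s^p$ is radicial at $\bar x$, but no $h_1+u$ is unramified there.

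Relatedly, ``$dh\neq 0$ in $\frakm_{\bar x}/\frakm_{\bar x}^2$'' is not the right criterion. Unramifiedness means that $dh$ generates the line $\Omega_{(X/G)/K}\otimes\kappa(\bar x)$. The map $\frakm_{\bar x}/\frakm_{\bar x}^2\to\Omega_{(X/G)/K}\otimes\kappa(\bar x)$ is an isomorphism only when $\kappa(\bar x)/K$ is separable. Otherwise this line surjects onto $\Omega_{\kappa(\bar x)/K}\neq 0$, so the map is zero and $du=0$ for every uniformizer $u$. Your correction then changes nothing.

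The repair is as follows. Since $\Omega_{\kappa(\bar x)/K}$ is a quotient of a line, $\kappa(\bar x)/K$ is simple, because a finite extension $E/K$ with $\dim_E\Omega_{E/K}\le 1$ is simple. Choose $h_1$ whose value at $\bar x$ is a primitive element $\alpha$, which is possible since $\cO(X/G)\to\kappa(\bar x)$ is surjective. Then $\kappa(h_1(\bar x))=\kappa(\bar x)$.
\begin{itemize}
\item If $\kappa(\bar x)/K$ is separable, your uniformizer correction works. The correct condition is that $q(h)$ is a uniformizer at $\bar x$, where $q$ is the minimal polynomial of $\alpha$.
\item If $\kappa(\bar x)/K$ is inseparable, then $q'=0$, so $d\alpha\neq 0$ in $\Omega_{\kappa(\bar x)/K}$. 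Hence $dh_1\neq 0$, and $h_1$ is already unramified, hence étale, at $\bar x$.
\end{itemize}
Alternatively, cite \cite{Colliot-Thelene} as the paper does.

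A smaller point: $L$ need not be finite over $k$. The statement does not assume $X$ is of finite type over $k$, and $L=\kappa(w)$ in the application, so the claim that $K/k$ is finite is unjustified. Finite type of $X/G$ over $K$ should instead come from $X$ being affine and smooth, hence of finite type, over $L$, together with fpqc descent along $\Spec(L)\to\Spec(K)$.
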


\begin{proof}
Write $X= \Spec(A)$. The quotients $X/G$ and $\Spec(L)/G$ exist as schemes; more precisely, they are $X/G = \Spec(A^G)$ and $\Spec(L)/G = \Spec(F)$ where $F \coloneq L^G$ is the subfield of $L$ of fixed elements. Note that $\AA^1_L /G \simeq \AA^1_F$. A $G$-equivariant morphism $X \to \AA^1_L$ over $L$ induces a morphism $X/G \to \AA^1_F$ over $F$. Conversely, we claim that any morphism of schemes $X/G \to \AA^1_F$ over $F$ defines a $G$-equivariant morphism $X \to \AA^1_L$. Indeed, $X \to X/G$ and $\AA^1_L \to \AA^1_F$ are $G$-torsors and there is a cartesian diagram of $G$-torsors as follows:
% https://q.uiver.app/#q=WzAsNixbMiwxLCJcXFNwZWMoRikiXSxbMiwwLCJcXFNwZWMoTCkiXSxbMSwxLCJcXEFBXjFfRiJdLFsxLDAsIlxcQUFeMV9MIl0sWzAsMSwiWC9HIl0sWzAsMCwiWC9HIFxcdGltZXNfRiBcXFNwZWMoTCkiXSxbNSw0XSxbNCwyXSxbMSwwXSxbMiwwXSxbMywyXSxbNSwzXSxbMywxXV0=
\[\begin{tikzcd}
	{X/G \times_F \Spec(L)} & {\AA^1_L} & {\Spec(L)} \\
	{X/G} & {\AA^1_F} & {\Spec(F).}
	\arrow[from=1-1, to=1-2]
	\arrow[from=1-1, to=2-1]
	\arrow[from=1-2, to=1-3]
	\arrow[from=1-2, to=2-2]
	\arrow[from=1-3, to=2-3]
	\arrow[from=2-1, to=2-2]
	\arrow[from=2-2, to=2-3]
\end{tikzcd}\]
Then the $G$-equivariant morphism $f \colon X \to \Spec(L)$ and the projection morphism $X \to X/G$ define a morphism $X \to X/G \times_F \Spec(L)$ of $G$-torsors over $X/G$, which is necessarily an isomorphism. Then the base change $X \to \AA^1_L$ of $X/G \to \AA^1_F$ is a $G$-equivariant morphism as desired.
Let $\pi \colon X \to X/G$ be the projection to the quotient. We claim that it is sufficient to obtain the same non-equivariant conclusions over the quotient schemes $X/G$ and $\Spec(L)/G$.   Suppose that we have constructed a morphism $h \colon X/G \to \AA^1_F$ over $F$ which is radicial at $\pi(x)$ (note that this is indeed a closed point of $X/G$ because $\pi$ is integral, and thus is a closed morphism) and étale at $\pi(x)$. Consider the following cartesian diagram of $G$-torsors:
% https://q.uiver.app/#q=WzAsOCxbMywwLCJcXFNwZWMoTCkiXSxbMywxLCJcXFNwZWMoTCkvRyBcXHNpbWVxIFxcU3BlYyhGKSJdLFsyLDEsIlxcQUFeMV9GIl0sWzIsMCwiXFxBQV4xX0wiXSxbMSwxLCJYL0ciXSxbMSwwLCJYIl0sWzAsMSwiXFxwaSh4KSJdLFswLDAsIlxccGleey0xfShcXHBpKHgpKSJdLFswLDFdLFs3LDZdLFs2LDQsIiIsMCx7InN0eWxlIjp7InRhaWwiOnsibmFtZSI6Imhvb2siLCJzaWRlIjoidG9wIn19fV0sWzcsNSwiIiwyLHsic3R5bGUiOnsidGFpbCI6eyJuYW1lIjoiaG9vayIsInNpZGUiOiJ0b3AifX19XSxbNSw0LCJcXHBpIiwyXSxbNCwyXSxbNSwzXSxbMiwxXSxbMywwXSxbMywyXV0=
\[\begin{tikzcd}
	{\pi^{-1}(\pi(x))} & X & {\AA^1_L} & {\Spec(L)} \\
	{\pi(x)} & {X/G} & {\AA^1_F} & {\Spec(F).}
	\arrow[hook, from=1-1, to=1-2]
	\arrow[from=1-1, to=2-1]
	\arrow[from=1-2, to=1-3]
	\arrow["\pi"', from=1-2, to=2-2]
	\arrow[from=1-3, to=1-4]
	\arrow[from=1-3, to=2-3]
	\arrow[from=1-4, to=2-4]
	\arrow[hook, from=2-1, to=2-2]
	\arrow[from=2-2, to=2-3]
	\arrow[from=2-3, to=2-4]
\end{tikzcd}\]
Then we see that $\pi^{-1}(\pi(x)) \to \AA^1_L$ is radicial, and thus in particular $X \to \AA^1_L$ is a $G$-equivariant morphism which is radicial at $x$. 
Moreover $X/G \to \AA^1_F$ is étale at $\pi(x)$. By definition, this means that there exists an affine open $\pi(x) \in U \subseteq X/G$ and an affine open $V \subset \AA^1_F$ such that $h(U) \subset V$ and $h_{|U} \colon U \to V$ is étale. In particular, $h_{|U} \colon U \to \AA^1_F$ is étale. Then the base change $\pi^{-1}(U) \to  X \to \AA^1_L$ is étale. Since $\pi^{-1}(U)$ is an open neighborhood of $x$, this means that $X \to \AA^1_L$ is étale at $x$.

It remains to show that there exists a morphism of schemes $h \colon X/G \to \AA^1_F$ which is radicial at $\pi(x)$ and étale at $\pi(x)$. In order to find such a morphism, we will use \cite[Theorem 3.2.2]{Colliot-Thelene}. Indeed, the scheme $X/G$ is an affine smooth $F$-scheme because $X$ is a smooth $L$-scheme and smoothness is an fpqc local property on the target. Moreover, $X/G$ has pure dimension one because $X$ has pure dimension one and $\pi \colon X \to X/G$ is integral. Let $(X/G)'$ be the irreducible component of $X/G$ containing $\pi(x)$. If we find a morphism $h' \colon (X/G)' \to \AA^1_F$ which is radicial at $\pi(x)$ and étale at $\pi(x)$, then any morphism $h \colon X/G \to \AA^1_F$ whose restriction to $(X/G)'$ is $h'$ will be radicial at $\pi(x)$ and étale at $\pi(x)$. We may thus assume that $X/G$ is also irreducible.

To summarize, $X/G$ is an affine irreducible smooth $F$-scheme of dimension one and $\pi(x) \in X/G$ is a closed point. Since $k$ is infinite, then $F$ is also an infinite field. By \cite[Theorem 3.2.2]{Colliot-Thelene}, there exists a morphism $h \colon X/G \to \AA^1_F$ over $F$ which is radicial at $\pi(x)$ and étale at $\pi(x)$, as desired.
\end{proof}

\begin{remark}
Note that we could not apply Proposition \ref{proposition: étale morphism as in 8.10 of equivariant cycles} to conclude the existence of such an étale morphism, because even if we assumed $k$ perfect, it might be that the field $L$ is not perfect. Instead, we pass to the quotient and conclude by non-equivariant case, where we do not need the base field to be perfect.
\end{remark}

\section{An equivariant version of Gabber's lemma for cyclic $p$-groups}\label{section: proof of theorem}

In view of Section \ref{section: counterexamples}, it remains to determine whether the equivariant version of Gabber's lemma proposed by Bachmann in \cite{Bachmann} holds for cyclic $p$-groups. In this section we prove that under suitable hypotheses on the ground field $k$, the equivariant Gabber's lemma generalizes to such groups. We will use the terminology introduced in Definition \ref{definition: gabber presentation}.

\begin{theorem} \label{theorem: gabber}
Let $k$ be an infinite perfect field. Let $p$ be a prime and let $r \geq 1$ be an integer such that $k$ contains a primitive $p^r$th root of unity. Let $(X,Z)$ be a $C_{p^r}$-pair and let $z \in Z$ be a point. Then there exists a $C_{p^r}$-equivariant Gabber's presentation $(X', W, V, \varphi = (\psi, \nu))$ of $(X,Z)$ with respect to the point $z$ (see Definition \ref{definition: gabber presentation}). Moreover, if $V$ is a nontrivial $C_{p^r}$-representation, then $\psi \colon X' \to W$ admits a $C_{p^r}$-equivariant section.
\end{theorem}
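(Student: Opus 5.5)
We will follow the strategy of Bachmann's $C_2$-case, shrinking $X$ step by step through equivariant Nisnevich neighborhoods of $Gz$, with $G \coloneq C_{p^r}$. First we reduce to $z$ closed and $X$ affine. Replacing $z$ by a closed specialization $z'$ in $\overline{\{z\}}\cap Z$ is harmless, because Lemma \ref{lemma: assume is étale and quasi-finite} transports étaleness and quasi-finiteness from $z'$ back to an invariant neighbourhood of $z$. For affineness, $X$ is quasi-projective locally around the finite orbit $Gz$, so we take an invariant affine open containing $Gz$ (intersect the translates $g(U)$ of an affine open $U \supseteq Gz$).

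Next we construct the map to affine space. By Lemma \ref{lemma: exists representation 1}(1) there is $V_{\rm tot}\in\textnormal{Rep}_k(G)$ with $\textnormal{Res}^G_{S_z}(V_{\rm tot})\otimes_k\kappa(z)\simeq T_zX$, and $\dim V_{\rm tot}=n+1=\dim_z X$. Proposition \ref{proposition: quasi-finite morphisms construction} splits $V_{\rm tot}=V'\oplus V''$ and gives an equivariant $F\colon X\to\AA(V')$ whose restriction to $Z$ is quasi-finite at $z$. Proposition \ref{proposition: étale morphism as in 8.10 of equivariant cycles} gives an equivariant $\Phi\colon X\to\AA(V_{\rm tot})$ étale at $z$.

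We want a single equivariant $\varphi=(\psi,\nu)\colon X\to\AA(V')\times\AA(V'')$ that is both étale at $z$ and has $\psi|_Z$ quasi-finite at $z$. To get it, embed $X\hookrightarrow\AA(W)$ equivariantly (Remark \ref{remark: fix equivariant closed immersion}), with $W$ large enough to contain the coordinates of $F$ and $\Phi$. By Lemma \ref{lemma: openness of properties} and Remark \ref{remark: étaleness open condition}, these two conditions are open in the space $\cE$ of equivariant linear maps. Both open sets are nonempty, and $\cE$ is irreducible, so their intersection is a nonempty open subset; since $k$ is infinite it has a $k$-point. Lemma \ref{lemma: assume is étale and quasi-finite} then shrinks $X$ so that $\varphi$ is étale and $\psi|_Z$ is quasi-finite. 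We set $V\coloneq V''$.

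Then we pass to the henselization. Put $W_0=\AA(V')$ and $w=\psi(z)$, and base change along $(W_0)^h_{Gw}\to W_0$. This is an equivariant Nisnevich neighbourhood, provided the orbit map $Gz\to Gw$ is bijective and the residue fields agree. We arrange this by choosing $\psi$ so that $S_z$ and the residue fields are preserved, using Lemma \ref{lemma: generalization of Bachmann 2.5.(1)} to separate orbits. Lemma \ref{lemma: pass from quasi-finite to finite} then produces an invariant open $U$ on which $Z_U\to (W_0)^h_{Gw}$ is finite and splits into local pieces, one per point of $Gz$.

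Now we correct the last coordinate $\nu$. On the closed fibre over $w$ we want $\nu$ to be radicial and unramified at $z$, so that Lemma \ref{lemma: nakayama's lemma for closed immersion} makes $\varphi|_{Z_U}$ a closed immersion. We distinguish two cases. If $V''$ is trivial, $S_w$ acts on the local piece through the residue field, and we use Lemma \ref{lemma: exists a fiber that is étale and radicial at a point} together with the lifting Lemma \ref{lemma: lift the morphism}; here the orbit is handled by an induction $G\times^{S_z}$. If $V''$ is nontrivial, the zero section of $\AA(V'')$ is fixed, which gives the equivariant section of $\psi$. Then Lemma \ref{lemma: exist open with isomorphism in the image} yields (P4). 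Finally, a standard limit argument descends from $(W_0)^h_{Gw}$ to an étale equivariant neighbourhood $W$ of $w$, which is smooth.

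The main obstacle is this last correction of $\nu$ to obtain (P2) equivariantly while keeping étaleness. It requires the generic-choice arguments in $\cE$ and the semilinear bookkeeping when $S_z\neq G_z$.
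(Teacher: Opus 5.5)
There is a genuine gap in the henselization step when $G_z\neq\{e\}$. You take the base to be a neighbourhood of $w=\psi(z)$ in $\AA(V')$, and you propose to ``choose $\psi$ so that $S_z$ and the residue fields are preserved''. This is impossible in general, because every coordinate of $\psi$ or $\nu$ whose character is nontrivial on $G_z$ must vanish at $z$.

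Here is a concrete example. Let $L/k$ be a nontrivial finite separable extension and let $X=\AA^2_L$, viewed as a $k$-scheme, with $C_p$ scaling both coordinates by $\zeta_p$. Take $Z=V(xy)$ and let $z$ be the origin. Then $\kappa(z)=L$, and $T_zX$ is two copies of a single nontrivial character $\chi$. Étaleness at $z$ therefore forces $V'\simeq V''\simeq\chi$, and every equivariant $(\psi,\nu)$ sends $z$ to the $k$-rational point $(0,0)$.

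Now let $W$ be any Nisnevich neighbourhood of $w$ in $\AA(V')$. Its distinguished point has residue field $k$, so $\varphi|_{Z_{X'}}$ maps a point with residue field $L$ to a $k$-point and cannot be a closed immersion. Thus \Pnum{2} fails, whatever correction of $\nu$ or generic choice in $\cE$ you make. For the same reason $\nu^{-1}(0)\to W$ has no section through $z$, so ``the zero section of $\AA(V'')$'' does not lift to a section of $\psi$. Moreover, when $S_z\neq G$ the whole orbit $Gz$ can collapse to the single point $\psi(z)=0$. That violates the bijectivity hypothesis of Lemma \ref{lemma: pass from quasi-finite to finite}.

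The missing idea is to change the base. The paper sets $\tilde W=\nu^{-1}(0)\subset X_0$. This is étale over $\AA(V')$, hence smooth, and it contains $X_0^{G_z}\supseteq Gz$ because $\textnormal{Res}^G_{G_z}(V)$ is nontrivial. The paper then replaces $X_0$ by $X_1=X_0\times_{\AA(V')}\tilde W$, where the diagonal gives the equivariant section of $X_1\to\tilde W$, and henselizes $\tilde W$ along $Gz$. Because the base now contains the orbit $Gz$ with its own residue fields, three things follow:
\begin{enumerate}
\item the orbits correspond bijectively;
\item the closed-fibre map in Lemma \ref{lemma: nakayama's lemma for closed immersion} is the identity on residue fields, which gives \Pnum{2};
\item the section is built into the construction.
\end{enumerate}

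Your outline also skips several cases the paper must treat separately.
\begin{enumerate}
\item If $G_z\neq\{e\}$ but $T_{z'}X$ is a trivial $G_z$-representation, Proposition \ref{proposition: quasi-finite morphisms construction} gives no nontrivial $V''$. The lemma producing étale and radicial fibres then does not apply, since it needs a free action. One must instead use Lemma \ref{lemma: trivial action on tangent space} to reduce to $G/G_z$.
\item In the free case, that same lemma needs $\kappa(w)$ to carry a faithful action. The paper arranges this by taking $V'$ faithful and $\psi(z)\neq 0$.
\item When $\dim X=1$ we have $V'=0$, and this needs the base change to $\kappa(z)$ of Claim \ref{claim: one-dimensional case}.
\item Replacing $z$ by a closed specialization $z'$ is not harmless. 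A Nisnevich neighbourhood of $Gz'$ need not be one of $Gz$, because the points over $z$ can acquire residue field extensions. The paper uses $z'$ only to build maps that are étale or quasi-finite there, and keeps $z$ itself for the henselization.
\end{enumerate}
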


Combining Theorem \ref{theorem: counterexample} and Theorem \ref{theorem: gabber} we get the following.

\begin{corollary}
Let $k$ be an infinite perfect field. Let $G$ be a finite abelian group such that $k$ contains a primitive $p^r$th root of unity for each cyclic summand $C_{p^r}$
appearing in the decomposition of $G$. Then every $G$-pair  admits a $G$-equivariant Gabber's presentation over $k$ if and only if $G$ is a cyclic $p$-group.
\end{corollary}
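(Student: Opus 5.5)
The corollary is an equivalence, and I will prove the two directions separately by combining Theorem \ref{theorem: gabber} with Theorem \ref{theorem: counterexample}. The only real work is a group-theoretic reduction: checking that the hypotheses of each theorem hold in the relevant case, and that "not a cyclic $p$-group" places us in the situation of Theorem \ref{theorem: counterexample}.

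\emph{The "if" direction.} Suppose $G \simeq C_{p^r}$ for a prime $p$ and some $r \geq 0$.
\begin{itemize}
\item If $r=0$, then $G$ is trivial. A $G$-pair is then just a pair $(X,Z)$, and Theorem \ref{theorem: gabber} already covers this case by taking any $r \geq 1$ with trivial action. Alternatively, one can apply Gabber's Theorem \ref{theorem: gabber original} on an affine irreducible open neighborhood of $z$, as explained after Definition \ref{definition: gabber presentation}.
\item If $r \geq 1$, then $C_{p^r}$ is itself the unique cyclic summand in the decomposition of $G$. The hypothesis therefore says exactly that $k$ contains a primitive $p^r$th root of unity. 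Since $k$ is infinite and perfect, Theorem \ref{theorem: gabber} gives a $C_{p^r}$-equivariant Gabber's presentation of every $C_{p^r}$-pair $(X,Z)$ with respect to every point $z \in Z$.
\end{itemize}
This is precisely the statement that every $G$-pair admits a $G$-equivariant Gabber's presentation.

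\emph{The "only if" direction.} Suppose $G$ is not a cyclic $p$-group. By the structure theorem, write $G \simeq \prod_{j} C_{q_j^{s_j}}$ with $q_j$ primes and $s_j \geq 1$. Since $G$ is not a cyclic $p$-group, it is nontrivial and at least two factors occur. (If all $q_j$ are equal, a single factor would make $G$ a cyclic $p$-group.) Pick two of these factors, $C_{p_1^{r_1}}$ and $C_{p_2^{r_2}}$, with $p_1, p_2$ not necessarily distinct, and let $H$ be the product of the remaining factors. This gives $G \simeq C_{p_1^{r_1}} \times C_{p_2^{r_2}} \times H$.

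By hypothesis, $k$ contains a primitive $p_i^{r_i}$th root of unity $\zeta$. Then $\zeta^{p_i^{r_i-1}}$ is a primitive $p_i$th root of unity. So the hypotheses of Theorem \ref{theorem: counterexample} hold, and it produces a $G$-pair $(X,Z)$ with no $G$-equivariant Gabber's presentation over $k$. Hence not every $G$-pair admits one.

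The only delicate point is bookkeeping. One must ensure that the decomposition used is the primary (elementary divisor) decomposition, so that "more than one summand" is exactly the failure of being a cyclic $p$-group. One must also handle the trivial group consistently with the conventions. I expect no genuine obstacle.
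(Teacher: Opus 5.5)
Your proposal is correct and follows the paper, which obtains the corollary directly by combining Theorem~\ref{theorem: gabber} for the cyclic $p$-group direction with Theorem~\ref{theorem: counterexample} for the converse; your splitting off of two primary summands and your passage from a primitive $p_i^{r_i}$th root of unity to a primitive $p_i$th one match what that converse needs. One small caveat: for trivial $G$, your first route (Theorem~\ref{theorem: gabber} applied with trivial action) needs $k$ to contain a primitive $p$th root of unity for some prime $p$, which can fail in characteristic $2$, so there you should rely on your second route via Theorem~\ref{theorem: gabber original} on an affine irreducible open neighborhood of $z$.
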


\begin{remark}
By Theorem \ref{theorem: counterexample}, if $G$ decomposes into more than one factor under the structure theorem for finite abelian groups, it is enough that the decomposition contain cyclic summands $C_{p_1^{r_1}}$ and $C_{p_2^{r_2}}$ for which $k$ contains  primitive $p_i$th roots of unity, $i=1,2$, to produce a counterexample.
\end{remark}

The proof of Theorem \ref{theorem: gabber} proceeds through a sequence of claims, each of which establishes an additional property of the morphism $\varphi$, until all required conditions are satisfied. We follow the same line of reasoning as \cite{Bachmann}. In particular, the argument is divided into two cases: first, we treat the case where $z$ is a point with free $G$-action; then we treat the case where $z \in X^H$ for some nontrivial subgroup $H \subseteq G$.

We will first obtain the conclusions (i.e.\ a Gabber's presentation) over the henselization $W^h_{Gw}$ of a scheme $W$ at the orbit of a point $w \in W$. Since the scheme $W^h_{Gw}$ is the limit of $G$-equivariant Nisnevich neighborhoods of $Gw$ in $W$, we will then apply the following limit-descending argument in order to have the same conclusions over an actual $G$-equivariant Nisnevich neighborhood of $Gw$ in $W$.

\begin{lemma} \label{lemma: limit descending}
Let $G$ be a finite group, viewed as a constant group scheme over $k$. Let $S = \lim_{i \in I} S_i$ be the limit of an inverse system over $I$ of affine $G$-schemes over $k$ such that the transition morphisms are $G$-equivariant. Let
% https://q.uiver.app/#q=WzAsMyxbMCwwLCJYIl0sWzIsMCwiWSJdLFsxLDEsIlMiXSxbMCwyLCJmX1giLDJdLFswLDEsImYiXSxbMSwyLCJmX1kiXV0=
\[\begin{tikzcd}
	X && Y \\
	& S
	\arrow["f", from=1-1, to=1-3]
	\arrow["{f_X}"', from=1-1, to=2-2]
	\arrow["{f_Y}", from=1-3, to=2-2]
\end{tikzcd}\]
be a diagram of $G$-equivariant maps, where	both $X$ and $Y$ are of finite presentation over $S$. Then there exists an index $i \in I$ and $G$-schemes $X_i, Y_i$ with a $G$-equivariant morphism $f_i \colon X_i \to Y_i$ over $S_i$ fitting into a cartesian diagram
% https://q.uiver.app/#q=WzAsNCxbMCwwLCJYIl0sWzAsMSwiWSJdLFsxLDAsIlhfaiJdLFsxLDEsIllfaiJdLFsxLDNdLFsyLDMsImZfaiJdLFswLDEsImYiLDJdLFswLDJdXQ==
\[\begin{tikzcd}
	X & {X_i} \\
	Y & {Y_i.}
	\arrow[from=1-1, to=1-2]
	\arrow["f"', from=1-1, to=2-1]
	\arrow["{f_i}", from=1-2, to=2-2]
	\arrow[from=2-1, to=2-2]
\end{tikzcd}\]
Suppose further that $f$ has one of the following properties:
étale, open immersion, closed immersion, isomorphism, finite. Then we may assume that $f_i$ has the same property.
\end{lemma}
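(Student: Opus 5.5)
The plan is to reduce everything to the standard non-equivariant limit results of EGA IV\,\S8 (Stacks Project, Tags 01ZM, 07RP, 081F) and to handle the $G$-action separately, using that $G$ is a finite constant group.

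\textbf{Step 1: descend the underlying morphism.} Since $S = \lim S_i$ is a limit of affine schemes and $X, Y$ are of finite presentation over $S$, Tag 01ZM provides an index $i_0$, finitely presented $S_{i_0}$-schemes $X_{i_0}, Y_{i_0}$, and an $S_{i_0}$-morphism $f_{i_0}$. These base change to $X, Y, f$. For $i \geq i_0$ we set $X_i = X_{i_0} \times_{S_{i_0}} S_i$, and similarly for $Y_i$ and $f_i$.

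\textbf{Step 2: descend the action.} Each $g \in G$ acts on $X$ by an $S$-semilinear automorphism, that is, an isomorphism $X \to g^{*}X$ over $S$, where $g^{*}$ denotes pullback along $g \colon S \to S$. Because the transition maps are $G$-equivariant, $g^{*}X_i$ is again a finitely presented $S_i$-model of $g^{*}X$. Tag 01ZM(1) says that $\Hom_S(X, g^{*}Y) = \operatorname{colim}_i \Hom_{S_i}(X_i, g^{*}Y_i)$. Since $G$ is finite, we can therefore:
\begin{itemize}
\item descend every action map $a_g$ on $X$ and $Y$ to some stage;
\item after increasing $i$, impose the finitely many identities $a_{gh} = a_g \circ g^{*}a_h$ and $a_e = \mathrm{id}$;
\item after increasing $i$ again, impose the equivariance identities $f_i \circ a_g = g^{*}f_i \circ a_g$.
\end{itemize}
Each of these holds in the limit, so by the injectivity part of the colimit statement it already holds at some finite stage. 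This gives $G$-actions on $X_i$ and $Y_i$ for which $f_i$ is $G$-equivariant and the square in the statement is cartesian. The square is cartesian because $X = X_i \times_{S_i} S$, and hence $X = X_i \times_{Y_i} Y$ once $Y = Y_i \times_{S_i} S$.

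\textbf{Step 3: descend the properties.} For the property list we invoke Tag 081F, together with Tags 07RP and 01ZO. If $f$ is étale, an open immersion, a closed immersion, an isomorphism or finite, then $f_i$ has the same property for all sufficiently large $i$. Increasing $i$ preserves everything already arranged, so a single common index works.

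\textbf{Expected obstacle.} The only delicate point is Step 2: the action maps on $S$ are nontrivial, so the automorphisms must be treated as $S$-morphisms $X \to g^{*}X$. We also need to check that $g^{*}X_i$ is compatible with the inverse system, which uses the equivariance of the transition maps $S_j \to S_i$. Once this is set up correctly, Steps 1 and 3 are direct citations.
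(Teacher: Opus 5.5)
Your proposal is correct and follows the same route as the paper, whose proof is just a citation of Stacks Tags 01ZM (descending finitely presented schemes and morphisms, including the $\Hom$-colimit statement) and 081C (descending properties of morphisms); you additionally spell out how the $G$-actions descend via the semilinear maps $a_g\colon X\to g^{*}X$, which the paper leaves implicit. Only cosmetic fixes are needed there: the cocycle condition should read $g^{*}a_h\circ a_g=a_{hg}$ (using $g^{*}h^{*}\cong(hg)^{*}$), the equivariance condition should read $a^{Y}_g\circ f_i=g^{*}f_i\circ a^{X}_g$, and the $\Hom$-colimit statement is parts (2)--(3) of Tag 01ZM rather than part (1).
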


\begin{proof}
This is an easy consequence of \cite[\href{https://stacks.math.columbia.edu/tag/01ZM}{Tag 01ZM}]{stacks-project} and
\cite[\href{https://stacks.math.columbia.edu/tag/081C}{Tag 081C}]{stacks-project}.
\end{proof}

We will also use the following lemma, that allows us to restrict the situation to an affine Nisnevich neighborhood every time we need. 

\begin{lemma}  \label{lemma: assume is affine}
Let $X$ be a $G$-scheme separated over $k$. Then for any point $x \in X$ there exists an affine $G$-equivariant Nisnevich neighborhood of $Gx$.
\end{lemma}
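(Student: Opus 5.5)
The natural approach is to take a non-equivariant affine open neighborhood of $x$ and make it $G$-stable. Intersecting its translates does not work directly, since they need not all contain $x$. Instead I would cover the finite orbit $Gx$ by a single affine open, and for that I need $X$ to be quasi-projective, or at least to have the property that every finite set of points lies in an affine open. Since $X$ is only assumed separated, I would avoid this route and build an equivariant étale neighborhood instead, using the freedom allowed by Nisnevich neighborhoods.

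Concretely, the first step is to choose an affine open $U \ni x$ in $X$. Then I would form $Y \coloneq \prod_{g\in G/S_x}$ over $X$. A cleaner version is this: let $S_x$ be the set-theoretic stabilizer and let $U' \coloneq \bigcap_{h\in S_x} h(U)$. This is an open neighborhood of $x$, since every $h\in S_x$ fixes $x$. It is $S_x$-stable, and it is affine because $X$ is separated, so a finite intersection of affine opens is affine. Then I would set $Y \coloneq G\times^{S_x} U' = \coprod_{gS_x\in G/S_x} gU'$ as an abstract disjoint union, with the induced $G$-action. The map $Y\to X$, $(g,u)\mapsto gu$, is $G$-equivariant and étale, being a disjoint union of open immersions, and $Y$ is affine as a finite disjoint union of affines.

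The next step is to produce the section $s\colon G\times^{S_x}\Spec\kappa(x)\to Y$. I would send the class of $(g,x)$ to the point $x$ in the copy $gU'$. This is well defined because $U'$ is $S_x$-stable. It is $G$-equivariant, and it lies over the orbit map to $X$. Since $S_x$ acts on $\kappa(x)$ through the residue maps, the residue field of the image point in $gU'$ is $\kappa(gx)$, so the isomorphism on residue fields needed for a Nisnevich neighborhood is automatic from the open immersions.

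The main point to check carefully is that the $G$-action on $G\times^{S_x}U'$ is well defined, namely through the quotient of $G\times U'$ by $S_x$ acting by $(g,u)s=(gs,s^{-1}u)$, and that the triangle in the definition commutes scheme-theoretically. Both reduce to the $S_x$-invariance of $U'$.
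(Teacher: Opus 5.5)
Your construction (shrink an affine open $U\ni x$ to the $S_x$-stable affine $U'=\bigcap_{h\in S_x}h(U)$, which is affine because $X$ is separated, then take $G\times^{S_x}U'\to X$, $[(g,u)]\mapsto gu$, with the section induced by $\Spec\kappa(x)\to U'$) is correct and is exactly the paper's argument. The only fix needed is cosmetic: delete the unfinished sentence defining $Y$ as a product over $G/S_x$ that precedes your ``cleaner version''.
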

\begin{proof}
Let $x \in X$. We first show that $x$ is contained in an affine $S_x$-invariant open neighborhood $U$. Indeed, let $V$ be any affine open neighborhood of $x$ in $X$. Set $U \coloneq \bigcap_{s \in S_x} s(V)$. Then $U$ is affine by \cite[\href{https://stacks.math.columbia.edu/tag/01KP}{Tag 01KP}]{stacks-project}, because $X \to \Spec(k)$ is a separated morphism. Moreover $U$ is non-empty, because $x \in U$. Finally, $U$ is clearly $S_x$-invariant as desired. 

The scheme $G \times^{S_x} U$ is affine, and the map $\varphi \colon G \times^{S_x} U \to X$, $[(g,u)] \mapsto gu$, together with the $G$-equivariant section $s \colon G \times^{S_x} \Spec(\kappa(x)) \to G \times^{S_x} U$ mapping $[(g,x)] \mapsto [(g,x)]$, form a $G$-equivariant Nisnevich neighborhood of $Gx$.	
\end{proof}

\begin{remark} \label{rem: open neighborhoods are Nisnevich neighborhoods}
Any open $G$-invariant subscheme $U \subset X$ containing $x$ is a $G$-equivariant Nisnevich neighborhood of $Gx$. Indeed the open immersion $U \to X$ is étale, and $Gx \in U$.
\end{remark}

The following useful fact will allow us to reduce the case in which the tangent space at our point of interest is a non-trivial representation.

\begin{lemma} \label{lemma: trivial action on tangent space}
	Let $k$ be a perfect field. Let $G$ be a finite abelian group such that $\textnormal{char}(k) \nmid |G|$ and let $X$ be a smooth $k$-scheme of finite type with a $G$-action. Let $x \in X$ be a closed point and suppose that the $G_x$-action on $T_xX$ is trivial. Then there exists a $G$-invariant open neighborhood of $x$ in $X$ over which the $G_x$-action is trivial. 
\end{lemma}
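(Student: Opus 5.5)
We will show that the closed subscheme $X^{G_x}$ of geometrically $G_x$-fixed points contains an open neighborhood of $x$ in $X$. Then we make this neighborhood $G$-invariant.

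The key input is that, since $\textnormal{char}(k) \nmid |G|$ and $X$ is smooth, the fixed locus $X^{G_x}$ is smooth at $x$ with tangent space $T_x(X^{G_x}) = (T_xX)^{G_x}$. This is the identity used as Equation \ref{equation: dimension fixed locus} in the proof of Proposition \ref{proposition: quasi-finite morphisms construction}. If one wants it proved directly, pass to $\overline{k}$; this is harmless since $k$ is perfect, and a point $\bar x$ over $x$ has stabilizer $G_x$. Then use linear reductivity of $G_x$: the surjection $\frakm_{\bar x} \to \frakm_{\bar x}/\frakm_{\bar x}^2$ has a $G_x$-equivariant section, which gives formal coordinates on $\comp_{X,\bar x}$ on which $G_x$ acts linearly. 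The fixed locus is then cut out formally by the coordinates spanning the nontrivial isotypic components.

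By hypothesis $(T_xX)^{G_x} = T_xX$, so $\dim_x(X^{G_x}) = \dim_{\kappa(x)} T_xX = \dim_x X$. Now $X$ is smooth, hence normal, so the local ring $\cO_{X,x}$ is a domain and $x$ lies on a unique irreducible component $X_0$ of $X$. The closed subscheme $X^{G_x}$ has the same dimension at $x$, so it contains $X_0$ set-theoretically. Since $X$ is reduced and $X_0$ is open in $X$ (the components of a smooth scheme are disjoint), each $g \in G_x$ restricts to the identity on $X_0$ as a morphism. Here we use that a reduced scheme mapping to a separated scheme is determined by agreement on a scheme-theoretic dense closed subscheme; $X^{G_x}$ is the equalizer, so its containing $X_0$ set-theoretically with $X_0$ reduced forces $g|_{X_0} = \textnormal{id}$. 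Thus $G_x$ acts trivially on the open set $U_0 \coloneq X_0$.

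The remaining issue is $G$-invariance. Set $U \coloneq \bigcup_{g \in G} g(U_0)$, which is open and $G$-invariant. For $u \in g(U_0)$ and $h \in G_x$, we have $h = g h g^{-1}$ because $G$ is abelian, and this element acts trivially on $g(U_0)$. Hence $G_x$ acts trivially on all of $U$.

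The main obstacle is the identification $\dim_x X^{G_x} = \dim (T_xX)^{G_x}$ at a non-rational point whose set-theoretic stabilizer $S_x$ may be larger than $G_x$ and act semilinearly. We handle it by base change to $\overline{k}$. There, $X^{G_x}$ commutes with base change, and geometric points over $x$ have stabilizer exactly $G_x$, acting $\overline{k}$-linearly on $T_xX \otimes_{\kappa(x)} \overline{k}$. The formal linearization argument sketched above then applies.
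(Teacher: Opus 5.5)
Your proof is correct. It shares the key input with the paper, namely linearization of the $G_x$-action on the completed local ring at a point over $x$ after base change to $\overline{k}$. Where it differs is the globalization step.

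The paper cites Boratyński's theorem to get that $H = G_x$ acts trivially on $\widehat{\cO}_{\overline{X},\bar x}$, hence on $\cO_{\overline{X},\bar x}$. It then spreads each equality $h = \mathrm{id}$ to an open neighborhood of $\bar x$ using finite type, intersects over $h \in H$, takes $G$-translates, and pushes the resulting open set down to $X$.

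You use linearization only to establish $\dim_x X^{G_x} = \dim_{\kappa(x)} (T_xX)^{G_x}$. This is the identity the paper uses without proof in Proposition~\ref{proposition: quasi-finite morphisms construction}; your averaged equivariant section of $\frakm_{\bar x} \to \frakm_{\bar x}/\frakm_{\bar x}^2$ gives a self-contained proof of it. You then let the geometry of a smooth scheme finish the job. The component $X_0$ through $x$ is open and reduced. The closed subscheme $X^{G_x}$ contains $X_0$ set-theoretically, hence scheme-theoretically.

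Your route buys two things.
\begin{itemize}
\item A stronger, explicit conclusion: $G_x$ acts trivially on the whole component through $x$ and on its $G$-translates.
\item The final step takes place on $X$ itself. You therefore avoid descending both the open set and the triviality of the action from $\overline{X}$ to $X$, which the paper treats briefly (it needs a Galois-stable open set or descent of the equalizer).
\end{itemize}
The paper's route is purely local and needs no dimension count or component structure.

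Two small remarks. First, your dimension detour can be shortcut. Once the formal coordinates span an $H$-stable subspace on which $H$ acts trivially, the ideal of $X^{G_x}$ vanishes in $\widehat{\cO}_{X,x}$, so $X^{G_x} = X$ near $x$ by faithful flatness of completion; this is essentially the paper's argument. Second, your sentence about ``agreement on a scheme-theoretic dense closed subscheme'' is garbled. What you actually use, correctly, is that a closed subscheme of the reduced scheme $X_0$ with full underlying set equals $X_0$.
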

\begin{proof}
	Set $H \coloneq G_x$. Let $\overline{k}$ be an algebraic closure of $k$ and set $\overline{X} \coloneq X \times_k \Spec(\overline{k})$. The $G$-action on $X$ over $k$ induces a $G$-action on $\overline{X}$ over $\overline{k}$. Let $\overline{x} \in \overline{X}$ be a point over $x$. Since $H$ is the geometric stabilizer of $x$, we have $\overline{x} \in \overline{X}^H$. Since $T_{x}X$ is a trivial $H$-representation over $\kappa(x)$ and $T_{\overline{x}}\overline{X}= T_{x}X \otimes_{\kappa(x)}\overline{k}$, then $T_{\overline{x}}\overline{X}$ is a trivial $H$-representation over $\overline{k}$. By \cite[Theorem 9]{Boratynski}, the $H$-action on $\widehat{\cO}_{\overline{x}, \overline{X}}$ is linearizable, and hence trivial, because is trivial on the tangent space. Then the $H$-action on $\cO_{\overline{x}, \overline{X}}$ is also trivial, because the completion morphism $\cO_{\overline{x}, \overline{X}} \to \widehat{\cO}_{\overline{x}, \overline{X}}$ is injective and $H$-equivariant. Because $\overline{X}$ is of finite type over $\overline{k}$, after shrinking around $\overline{x}$, the corresponding automorphisms agree with the identity. Intersecting the finitely many such neighborhoods and then taking their $G$-translates gives a $G$-invariant open neighborhood $U$ of $\overline{x}$ on which $H$ acts trivially. The image of $U$ in $X$ is an open neighborhood of $x$ in $X$ which is $G$-invariant and has trivial $H$-action. 
\end{proof}

We are now ready to prove Theorem \ref{theorem: gabber}.

\begin{proof}[Proof of Theorem \ref{theorem: gabber}]
Set $G = C_{p^r}$. We divide the proof into three cases as follows:
\begin{itemize}
	\item Case \ref{subsection: case A}: $G_z = \left\{e\right\}$ and $S_z = G$;
	\item Case \ref{subsection: case B}: $G_z = \left\{e\right\}$ and $S_z \subsetneq G$;
	\item Case \ref{subsection: case C}: $G_z \neq \left\{e\right\}$.
\end{itemize}

\subsection{The case when $G_z = \left\{e\right\}$ and $S_z = G$}\label{subsection: case A}

We first treat the case $G_z = \left\{e\right\}$ and $S_z= G$. After replacing $X$ by the open $G$-invariant subscheme $X \setminus \bigcup_{H \neq \left\{e\right\}} X^H$ we may assume that the $G$-action on $X$ is free and, after replacing $X$ again by an affine equivariant Nisnevich neighborhood of $Gz=z$ as in Lemma \ref{lemma: assume is affine}, we may also assume that $X$ is affine. Finally, since the point $z$ is set-theoretically fixed, the unique irreducible component of $X$ containing $z$ is $G$-invariant. After replacing $X$ with the irreducible component containing $z$ we may assume that $X$ is also irreducible. Let $z \colon \Spec(\kappa(z)) \to X$ be the $G$-equivariant morphism corresponding to the point $z$. The $G$-action on the field $\kappa(z)$ is faithful by hypothesis of Case \ref{subsection: case A}. The following claim will be used to treat the one-dimensional case in Claim \ref{claim: one-dimensional case}.

\begin{claim} \label{claim: trivial subcase}
	Suppose that the $G$-action on $X$ is free. Let $k \subset K$ be a finitely generated separable field extension and suppose that $G$ acts faithfully on $K$ by $k$-automorphisms. If there exist an equivariant Nisnevich neighborhood $f \colon \tilde{X} \to X$ of $Gz=z$ with $f$ separated and a $G$-equivariant smooth morphism $\tilde{X} \to \Spec(K)$, then Theorem \ref{theorem: gabber} holds.
\end{claim}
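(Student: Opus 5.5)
We would reduce Claim \ref{claim: trivial subcase} to the non-equivariant Gabber lemma over the quotient, in the same spirit as Lemma \ref{lemma: exists a fiber that is étale and radicial at a point}. First, replace $X$ by $\tilde{X}$: equivariant Nisnevich neighborhoods compose, so a presentation for $(\tilde{X}, Z_{\tilde X})$ at the distinguished point over $z$ gives one for $(X,Z)$. By Lemma \ref{lemma: assume is affine} and Remark \ref{rem: open neighborhoods are Nisnevich neighborhoods} we may also shrink to an affine, irreducible, $G$-invariant $X$ with free action and a $G$-equivariant smooth map $X \to \Spec(K)$.

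Set $F \coloneq K^G$. Since $G$ acts faithfully on $K$, the extension $K/F$ is Galois with group $G$, and $\Spec(K) \to \Spec(F)$ is a $G$-torsor. As in the proof of Lemma \ref{lemma: exists a fiber that is étale and radicial at a point}, the map $X \to X/G \times_F \Spec(K)$ is a morphism of $G$-torsors over $X/G$, hence an isomorphism. Thus $X/G$ is a smooth affine irreducible $F$-scheme, $Z/G \subset X/G$ is closed of positive codimension, and $F$ is infinite. Moreover $F$ is finitely generated over the perfect field $k$, so Theorem \ref{theorem: gabber original}, as extended in \cite{Hogadi2} if needed, applies over $F$. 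It gives an open $U \ni \pi(z)$, an open $W_0 \subseteq \AA^{d-1}_F$ and an étale $\varphi_0 = (\psi_0,\nu_0) \colon U \to \AA^1_{W_0}$ satisfying (P1)--(P4) for $(U, Z/G \cap U)$. If $\dim X/G = 1$, we use instead the curve statement \cite[Theorem 3.2.2]{Colliot-Thelene}, taking $W_0 = \Spec(F)$.

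Now base change everything along the $G$-torsor $\Spec(K) \to \Spec(F)$. Take $X' \coloneq \pi^{-1}(U) = U \times_F \Spec(K)$, which is a $G$-invariant open of $X$ containing $z$, hence an equivariant Nisnevich neighborhood. Take $W \coloneq W_0 \times_F \Spec(K)$, a smooth $k$-scheme with $G$-action through $K$ because $K/k$ is separable and finitely generated. Take $V$ to be the trivial one-dimensional representation, so that $\AA^1_{W} = \AA^1_{W_0} \times_F \Spec(K)$ with the induced action, and set $\varphi \coloneq \varphi_0 \times_F \Spec(K)$. The map $\varphi$ is $G$-equivariant by construction. Étaleness, closed immersion, finiteness and isomorphism are all stable under base change, and $Z_{X'} = (Z/G \cap U) \times_F \Spec(K)$ because $Z$ is $G$-invariant and $\pi$ is a torsor. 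So (P1)--(P4) hold, and since $V$ is trivial no section is required.

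The main obstacle is the regularity bookkeeping. We must check that $X/G$ and $Z/G$ (the scheme-theoretic quotient of $Z$) really base change back to $X$ and $Z$, which holds because the action is free. We must also justify applying Gabber over the possibly non-perfect field $F$; the one-dimensional case via \cite{Colliot-Thelene} needs only that $F$ be infinite. Finally, $W$ is smooth over $k$ rather than just over $K$, which follows from the separability of $K/k$.
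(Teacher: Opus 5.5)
Your proposal is correct and follows essentially the same route as the paper. Both shrink $\tilde X$ to an affine, irreducible, $G$-invariant neighborhood of $z$, use the torsor identification $\tilde X\simeq \tilde X/G\times_F\Spec(K)$ with $F=K^G$ to get a pair $(\tilde X/G,\tilde Z/G)$ over $F$, apply Theorem \ref{theorem: gabber original} there, and base change back along $\Spec(K)\to\Spec(F)$ with $V$ trivial. Your hedges are unnecessary, though: Theorem \ref{theorem: gabber original} holds over any infinite field, so over $F\supseteq k$, and it already covers $d=1$ with $W=\Spec(F)$, so you need neither a further extension of Gabber's lemma nor a separate curve statement.
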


\begin{proof}
	Without loss of generality, we may assume that $\tilde{X}$ is affine by Lemma \ref{lemma: assume is affine}, because $f$ is a separated morphism, and thus $\tilde{X}$ is separated over $k$. We claim that we may also assume $\tilde{X}$ to be irreducible. Indeed, $\tilde{X}$ is smooth over $k$, thus it is the disjoint union of its irreducible components. Let $X_1$ be the unique irreducible component of $\tilde{X}$ containing $z$. Since $g(z)=z$ for all $g \in G$ and $g(X_1)$ is again an irreducible component of $\tilde{X}$, we must have $g(X_1)=X_1$ for all $g \in G$, because the intersection between different irreducible components is empty. Thus, $X_1$ is an open $G$-invariant irreducible subscheme of $\tilde{X}$ containing $z$. After replacing $\tilde{X}$ with $X_1$, we may assume that $\tilde{X}$ is irreducible, as desired.
	
	The group $G$ acts freely on $\tilde{X}$, because the morphism  $f \colon \tilde{X} \to X$ is $G$-equivariant and the $G$-action on $X$ is free by hypothesis. Set $\tilde{Z} \coloneq Z \times_X \tilde{X}$. We will show that the ordered pair $(\tilde{X}/G, \tilde{Z}/G)$ forms a pair in the sense of Definition \ref{definition: gabber presentation} with $\tilde{X}/G$ affine and irreducible, and then we apply non-equivariant Gabber's lemma to conclude.

	Note that $\Spec(K)/G \simeq \Spec(K^G)$ and by \cite[\href{https://stacks.math.columbia.edu/tag/09I3}{Tag 09I3}]{stacks-project}, since the $G$-action on $K$ is faithful, $K/K^G$ is Galois with Galois group $G$. We can consider the following cartesian diagram of $G$-torsors:
	% https://q.uiver.app/#q=WzAsNixbMSwwLCJYJyJdLFsxLDEsIlgnL0ciXSxbMiwxLCJcXFNwZWMoS15HKSJdLFsyLDAsIlxcU3BlYyhLKSJdLFswLDEsIlonL0ciXSxbMCwwLCJaJyJdLFszLDJdLFsxLDJdLFswLDFdLFswLDNdLFs1LDRdLFs0LDFdLFs1LDBdXQ==
	\begin{equation} \label{equation: diagram of torsors}\begin{tikzcd}
			{\tilde{Z}} & {\tilde{X}} & {\Spec(K)} \\
			{\tilde{Z}/G} & {\tilde{X}/G} & {\Spec(K^G).}
			\arrow[from=1-1, to=1-2]
			\arrow[from=1-1, to=2-1]
			\arrow[from=1-2, to=1-3]
			\arrow["\pi", from=1-2, to=2-2]
			\arrow[from=1-3, to=2-3]
			\arrow[from=2-1, to=2-2]
			\arrow[from=2-2, to=2-3]
		\end{tikzcd}
	\end{equation}
	Set $d \coloneq \dim_x(X)$ and $F \coloneq K^G$. The morphism $\tilde{Z}/G \to \tilde{X}/G$ in diagram \ref{equation: diagram of torsors} is a closed immersion, because the property of being a closed immersion is fpqc local on the base and $\pi$ is a faithfully flat quasi-compact morphism.
	We have $\dim(\tilde{X}) = d$ because $f \colon \tilde{X} \to X$ is étale. Moreover, the induced morphism $\tilde{Z} \to Z$ is étale, and thus $\dim(\tilde{Z}) \leq \dim(Z)$. Since the codimension of $Z$ in $X$ is positive, then also the codimension of $\tilde{Z}$ in $\tilde{X}$ is positive. Since $\pi$ is integral, we also have $\dim(\tilde{X}/G) = d$ and the closed subscheme $\tilde{Z}/G$ has positive codimension in $\tilde{X}/G$. Finally, note that $\tilde{X}/G$ is irreducible and it is smooth over $\Spec(F)$ because $\tilde{X} \to \Spec(K)$ is smooth and smoothness is an fpqc (even étale) local property on the base. It follows that $(\tilde{X}/G,\tilde{Z}/G)$ is a pair over $F$ with $\tilde{X}/G$ affine and irreducible, as desired. Set $\tilde{z} \coloneq \pi(z) \in \tilde{X}/G$.
	
	By Gabber's lemma (Theorem \ref{theorem: gabber original}), there exists a Gabber's presentation with respect to the point $\tilde{z}$. More precisely, the following data exist:
	\begin{itemize}
		\item an open neighborhood $U \subset \tilde{X}/G$ of $\tilde{z}$;
		\item an open subset $\tilde{W} \subset \AA^{d-1}_{F}$;
		\item an étale morphism $\varphi  = (\psi, \nu) \colon U \to \AA^1_{\tilde{W}} = \tilde{W} \times_F \AA^1_F$ such that $\varphi_{|Z_U}$ is a closed immersion (where $Z_U \coloneq \tilde{Z}/G \cap U$), the composite $ 	\psi_{|Z_U} \colon Z_U \rightarrow \AA^1_{\tilde{W}} \to \tilde{W}$ is finite and $\varphi$ restricts to an isomorphism $\varphi^{-1}(\varphi(Z_U)) \xrightarrow{\sim} \varphi(Z_U)$.
	\end{itemize}
	We will now show that these data provide a $G$-equivariant Gabber's presentation of $(X,Z)$ with respect to the point $z$, as desired. We use notation of Definition \ref{definition: gabber presentation}. 
	Set $X' \coloneq U \times_F \Spec(K) =  \pi^{-1}(U)$, which is a $G$-equivariant open subscheme of $\tilde{X}$ containing $z$. In particular, $X' \to \tilde{X} \to X$ is a G-equivariant Nisnevich neighborhood of $Gz=z$, giving $\Gnum{1}$. Let $Z' \coloneq \tilde{Z} \times_{\tilde{X}} X' = \tilde{Z}\cap X'$. Set $W \coloneq \tilde{W} \times_F \Spec(K)$ so that $\AA^1_{\tilde{W}} \times_{F} \Spec(K) \simeq \AA^1_W$.

	Note that the scheme $W$ has a $G$-action which is given by the action of $G$ on $\Spec(K)$. Since $W$ is a smooth $K$-scheme, then it is also a smooth $k$-scheme, giving $\Gnum{2}$. We consider $V$ to be the trivial one-dimensional $G$-representation over $k$, giving $\Gnum{3}$. By construction, we have the following cartesian diagram of $G$-torsors:
	% https://q.uiver.app/#q=WzAsMTAsWzQsMCwiXFxTcGVjKEspIl0sWzQsMSwiXFxTcGVjKEYpIl0sWzMsMSwiXFx0aWxkZXtXfSJdLFszLDAsIlciXSxbMiwxLCJcXHRpbGRle1d9IFxcdGltZXNfayBcXEFBXjFfayJdLFsyLDAsIlcgXFx0aW1lc19rIFxcQUEoVikgIl0sWzEsMSwiVSJdLFsxLDAsIlgnIl0sWzAsMSwiWl9VIl0sWzAsMCwiWiciXSxbNyw1LCJcXHRpbGRle1xcdmFycGhpfSJdLFs2LDQsIlxcdmFycGhpIl0sWzcsNl0sWzUsNCwiXFx0ZXh0bm9ybWFse3ByfSBcXHRpbWVzIFxcdGV4dG5vcm1hbHtpZH0iXSxbMywyLCJcXHRleHRub3JtYWx7cHJ9Il0sWzQsMl0sWzUsM10sWzMsMF0sWzIsMV0sWzAsMV0sWzksNywiIiwwLHsic3R5bGUiOnsidGFpbCI6eyJuYW1lIjoiaG9vayIsInNpZGUiOiJ0b3AifX19XSxbOCw2LCIiLDAseyJzdHlsZSI6eyJ0YWlsIjp7Im5hbWUiOiJob29rIiwic2lkZSI6InRvcCJ9fX1dLFs5LDhdXQ==
	\begin{equation}\label{equation: diagram of torsors 2}\begin{tikzcd}
			{Z'} & {X'} & {W \times_k \AA(V) } & W & {\Spec(K)} \\
			{Z_U} & U & {\tilde{W} \times_k \AA^1_k} & {\tilde{W}} & {\Spec(F).}
			\arrow[hook, from=1-1, to=1-2]
			\arrow[from=1-1, to=2-1]
			\arrow["{\tilde{\varphi}}", from=1-2, to=1-3]
			\arrow[from=1-2, to=2-2]
			\arrow[from=1-3, to=1-4]
			\arrow["{\textnormal{pr} \times \textnormal{id}}", from=1-3, to=2-3]
			\arrow[from=1-4, to=1-5]
			\arrow["{\textnormal{pr}}", from=1-4, to=2-4]
			\arrow[from=1-5, to=2-5]
			\arrow[hook, from=2-1, to=2-2]
			\arrow["\varphi", from=2-2, to=2-3]
			\arrow[from=2-3, to=2-4]
			\arrow[from=2-4, to=2-5]
		\end{tikzcd}
	\end{equation}
	
	The morphism $\tilde{\varphi}$ gives $\Gnum{4}$, and we need to show that it satisfies properties $\Pnum{1}, \Pnum{2}, \Pnum{3}, \Pnum{4}$. Since $\varphi$ is étale, then also the base change $\tilde{\varphi}$ is étale, giving $\Pnum{1}$. Then observe that there exists a diagram of cartesian squares as follows:
	% https://q.uiver.app/#q=WzAsOCxbMCwxLCJaJyJdLFswLDMsIlpfVSJdLFsxLDIsIlgnIl0sWzEsNCwiVSJdLFszLDEsIlxcdGlsZGV7WH0iXSxbMiwwLCJcXHRpbGRle1p9Il0sWzMsMywiXFx0aWxkZXtYfS9HIl0sWzIsMiwiXFx0aWxkZXtafS9HIl0sWzAsMV0sWzIsM10sWzEsMywiIiwwLHsic3R5bGUiOnsidGFpbCI6eyJuYW1lIjoiaG9vayIsInNpZGUiOiJ0b3AifSwiYm9keSI6eyJuYW1lIjoiYmFycmVkIn19fV0sWzAsMiwiIiwwLHsic3R5bGUiOnsidGFpbCI6eyJuYW1lIjoiaG9vayIsInNpZGUiOiJ0b3AifSwiYm9keSI6eyJuYW1lIjoiYmFycmVkIn19fV0sWzIsNCwiIiwwLHsic3R5bGUiOnsidGFpbCI6eyJuYW1lIjoiaG9vayIsInNpZGUiOiJ0b3AifSwiYm9keSI6eyJuYW1lIjoiYnVsbGV0IGhvbGxvdyJ9fX1dLFswLDUsIiIsMCx7InN0eWxlIjp7InRhaWwiOnsibmFtZSI6Imhvb2siLCJzaWRlIjoidG9wIn0sImJvZHkiOnsibmFtZSI6ImJ1bGxldCBob2xsb3cifX19XSxbNSw3XSxbNSw0LCIiLDAseyJzdHlsZSI6eyJ0YWlsIjp7Im5hbWUiOiJob29rIiwic2lkZSI6InRvcCJ9LCJib2R5Ijp7Im5hbWUiOiJiYXJyZWQifX19XSxbNCw2XSxbMyw2LCIiLDAseyJzdHlsZSI6eyJ0YWlsIjp7Im5hbWUiOiJob29rIiwic2lkZSI6InRvcCJ9LCJib2R5Ijp7Im5hbWUiOiJidWxsZXQgaG9sbG93In19fV0sWzEsNywiIiwwLHsic3R5bGUiOnsidGFpbCI6eyJuYW1lIjoiaG9vayIsInNpZGUiOiJ0b3AifSwiYm9keSI6eyJuYW1lIjoiYnVsbGV0IGhvbGxvdyJ9fX1dLFs3LDYsIiIsMCx7InN0eWxlIjp7InRhaWwiOnsibmFtZSI6Imhvb2siLCJzaWRlIjoidG9wIn0sImJvZHkiOnsibmFtZSI6ImJhcnJlZCJ9fX1dXQ==
	\[\begin{tikzcd}
		&& {\tilde{Z}} & \\
		{Z'} &&& {\tilde{X}} \\
		& {X'} & {\tilde{Z}/G} \\
		{Z_U} &&& {\tilde{X}/G} \\
		& U
		\arrow["\shortmid"{marking}, hook, from=1-3, to=2-4]
		\arrow[from=1-3, to=3-3]
		\arrow["\bullet"{marking, text=\pgfkeysvalueof{/tikz/commutative diagrams/background color}}, "\circ"{marking}, hook, from=2-1, to=1-3]
		\arrow["\shortmid"{marking}, hook, from=2-1, to=3-2]
		\arrow[from=2-1, to=4-1]
		\arrow[from=2-4, to=4-4]
		\arrow["\bullet"{marking, text=\pgfkeysvalueof{/tikz/commutative diagrams/background color}}, "\circ"{marking}, hook, from=3-2, to=2-4]
		\arrow[from=3-2, to=5-2]
		\arrow["\shortmid"{marking}, hook, from=3-3, to=4-4]
		\arrow["\bullet"{marking, text=\pgfkeysvalueof{/tikz/commutative diagrams/background color}}, "\circ"{marking}, hook, from=4-1, to=3-3]
		\arrow["\shortmid"{marking}, hook, from=4-1, to=5-2]
		\arrow["\bullet"{marking, text=\pgfkeysvalueof{/tikz/commutative diagrams/background color}}, "\circ"{marking}, hook, from=5-2, to=4-4]
	\end{tikzcd}\]
	
	where all the vertical arrows are $G$-torsors. Since $\varphi_{|Z_U}$ is a closed immersion, then also the base change $\tilde{\varphi}_{|Z'}$ in diagram \ref{equation: diagram of torsors 2} is a closed immersion, giving $\Pnum{2}$. Since $\psi_{|Z_U} \colon Z_U \to \tilde{W}$ is finite, then also the base change $Z' \to  W$ is finite, giving $\Pnum{3}$. Finally, since $\varphi$ restricts to an isomorphism $\varphi^{-1}(\varphi(Z_U)) \xlongrightarrow{\sim} \varphi(Z_U)$, then we also have an isomorphism $\tilde{\varphi}^{-1}(\tilde{\varphi}(Z')) \simeq \tilde{\varphi}(Z')$, giving $\Pnum{4}$. Note that we do not need to provide a $G$-equivariant section of the morphism $X' \to W$ because $V$ is the trivial one-dimensional $G$-representation.
\end{proof}

\begin{claim} \label{claim: one-dimensional case}
	Suppose that $\dim(X) = 1$. Then Theorem \ref{theorem: gabber} holds under the assumption of case \ref{subsection: case A}.
\end{claim}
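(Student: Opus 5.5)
The plan is to reduce to Claim \ref{claim: trivial subcase}: I will find an equivariant Nisnevich neighborhood $\tilde{X}\to X$ of $z$ together with a $G$-equivariant smooth morphism $\tilde{X}\to\Spec(K)$, where $K$ is a finitely generated separable extension of $k$ on which $G$ acts faithfully. In Case \ref{subsection: case A}, $X$ is affine, irreducible, of dimension one, and has free $G$-action. The set $Z$ has positive codimension, so it is a finite set of closed points. Consequently $z$ is a closed point with $S_z=G$, and $G$ acts faithfully on $\kappa(z)$. Since $k$ is perfect, $\kappa(z)/k$ is finite separable, so the natural candidate is $K=\kappa(z)$. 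It remains to produce a $G$-equivariant retraction from a neighborhood of $z$ onto $\Spec(\kappa(z))$ that is smooth.

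To build this retraction I pass to the henselization. Let $X^h_z=\Spec(\cO^h_{X,z})$. Since $z$ is set-theoretically fixed, $G$ acts on $\cO^h_{X,z}$ through local automorphisms. Because $\kappa(z)/k$ is separable and $\cO^h_{X,z}$ is henselian, the residue field lifts uniquely: there is a unique $k$-embedding $\kappa(z)\hookrightarrow\cO^h_{X,z}$ reducing to the identity. This holds because $\kappa(z)=k[t]/(P)$ with $P$ separable, and Hensel's lemma gives a unique root of $P$ lifting the class of $t$. Uniqueness forces the embedding to be $G$-equivariant: for $g\in G$, the map $g\circ s\circ g^{-1}$ is another such lift, so it coincides with $s$. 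This yields a $G$-equivariant morphism $X^h_z\to\Spec(\kappa(z))$.

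The morphism $X^h_z\to\Spec(\kappa(z))$ is a limit of essentially smooth morphisms. Indeed, $X_{\kappa(z)}\to\Spec(\kappa(z))$ is smooth, $X^h_z$ is the henselization of $X_{\kappa(z)}$ at the rational point lying over $z$ (using $\kappa(z)\otimes_k\kappa(z)\simeq\prod$ of fields, by separability), and that henselization is a limit of étale neighborhoods. I then apply Lemma \ref{lemma: limit descending} to descend the equivariant morphism to an equivariant étale neighborhood $\tilde{X}\to X$ of $z$, which is affine by Lemma \ref{lemma: assume is affine}. Here $X^h_z=\lim\tilde X_i$ over equivariant Nisnevich neighborhoods, and these form a cofinal system: given any neighborhood, intersect its $G$-translates to get a $G$-stable one, which is possible because $z$ is fixed. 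The descended morphism $\tilde{X}_i\to\Spec(\kappa(z))$ is smooth after shrinking, since smoothness at $z$ spreads out and the smooth locus is open and $G$-stable. I then restrict to an open $G$-invariant neighborhood of $z$, using Remark \ref{rem: open neighborhoods are Nisnevich neighborhoods}, and apply Claim \ref{claim: trivial subcase}.

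An alternative that avoids the limit argument is the following. Pick a primitive element $a$ of $\kappa(z)/k$ with minimal polynomial $P$. Lift $a$ to a function $\alpha\in\cO(X)$ and consider the finite étale $G$-cover $\{P(T)=0\}$ over a neighborhood of $z$. However, equivariance of the chosen root is exactly the henselian uniqueness used above, so I would present the henselian argument.

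The main obstacle is making the descent step rigorous equivariantly. Specifically, one must show that the $G$-equivariant Nisnevich neighborhoods of $z$ are cofinal among all étale neighborhoods, so that $\cO^h_{X,z}$ is their colimit as a $G$-ring. One must also show that the $G$-compatibility of $\kappa(z)\to\cO^h_{X,z}$ descends to a finite level. Because $\kappa(z)$ is finitely presented over $k$, the embedding factors through some $\cO(\tilde{X}_i)$, and equality of the finitely many maps $g\circ s$ and $s\circ g$ also holds at a finite stage. So I expect the obstacle to be bookkeeping rather than genuinely difficult.
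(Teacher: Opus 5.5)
Your argument is correct, but it is a detour compared with the paper, which never passes to the henselization. The paper's proof is a direct base change.

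\begin{itemize}
\item It takes $\tilde{X} = X_{\kappa(z)} = X \times_k \Spec(\kappa(z))$ with the diagonal $G$-action.
\item The projection $X_{\kappa(z)} \to X$ is finite étale, since $\kappa(z)/k$ is finite separable, and it is $G$-equivariant.
\item Because $S_z = G$, the morphism $z \colon \Spec(\kappa(z)) \to X$ is $G$-equivariant, so $(z,\mathrm{id})$ is the required equivariant section.
\item The second projection $X_{\kappa(z)} \to \Spec(\kappa(z))$ is the equivariant smooth morphism, so Claim \ref{claim: trivial subcase} applies with $K=\kappa(z)$.
\end{itemize}

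The cover $\{P(T)=0\}$ you considered and set aside is exactly this $X_{\kappa(z)}$. There is no root to choose: with the diagonal action the distinguished point $(z,\mathrm{id})$ is canonical.

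Your construction in fact factors through the paper's. The maps $\tilde{X}\to X$ and $\tilde{X}\to\Spec(\kappa(z))$ you obtain by descent define an étale $G$-map $\tilde{X}\to X_{\kappa(z)}$. Your unique Hensel lift $\kappa(z)\to\cO^h_{X,z}$ is just the germ at $(z,\mathrm{id})$ of the tautological projection.

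Both arguments rest on the same input, namely that $\kappa(z)/k$ is finite separable. Yours makes the equivariance of the retraction explicit through uniqueness of Hensel lifts. It pays for this with the bookkeeping you flag:
\begin{itemize}
\item Cofinality of $G$-equivariant neighborhoods requires fibre products $\prod_{g}{}^{g}U$ over $X$, not intersections of translates, since these are étale neighborhoods rather than open subsets.
\item Descending a $k$-morphism to $\Spec(\kappa(z))$ uses the standard finite-presentation limit statement, not Lemma \ref{lemma: limit descending} as stated.
\end{itemize}

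Finally, your ``smooth after shrinking'' step is superfluous. Any $k$-morphism from a smooth $k$-scheme to $\Spec(\kappa(z))$, with $\kappa(z)/k$ finite separable, is automatically smooth.
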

\begin{proof}
	Since $Z$ has positive codimension in $X$, we have $\dim(Z)=0$ and thus $z$ is a closed point. Then the field extension $\kappa(z)/k$ is separable because $k$ is a perfect field. By hypothesis of Case \ref{subsection: case A}, the $G$-action on $\kappa(z)$ is faithful, i.e.\ $G$ acts freely on $\Spec(\kappa(z))$. Consider the base change of $X$ to $\kappa(z)$:
	% https://q.uiver.app/#q=WzAsNCxbMCwwLCJYX3tcXGthcHBhKHopfSJdLFswLDEsIlxcU3BlYyhcXGthcHBhKHopKSJdLFsxLDAsIlgiXSxbMSwxLCJcXFNwZWMoaykiXSxbMSwzXSxbMCwxXSxbMCwyXSxbMiwzXV0=
	\[\begin{tikzcd}
		{X_{\kappa(z)}} & X \\
		{\Spec(\kappa(z))} & {\Spec(k).}
		\arrow["f", from=1-1, to=1-2]
		\arrow[from=1-1, to=2-1]
		\arrow[from=1-2, to=2-2]
		\arrow[from=2-1, to=2-2]
	\end{tikzcd}\]
	
	We claim that the morphism $f \colon X_{\kappa(z)} \to X$ is a $G$-equivariant Nisnevich neighborhood of $Gz=z$, so that the conclusion follows by Claim \ref{claim: trivial subcase}. Indeed, $f$ is a $G$-equivariant étale morphism, because $\kappa(z)/k$ is a finite separable field extension. Moreover, since $S_z = G$, we clearly have a section $s \colon G \times^{S_z} \Spec(\kappa(z)) \simeq \Spec(\kappa(z)) \to X_{\kappa(z)}$ of the point $z \colon \Spec(\kappa(z)) \to X$, as desired. Finally, note that $X_{\kappa(z)}$ is a smooth $\kappa(z)$-scheme and the morphism $f$ is separated, because is the base change of an affine morphism. Then the conclusion follows by Claim \ref{claim: trivial subcase}.
\end{proof}

\begin{remark}
	If $k$ is imperfect, the proof of Claim \ref{claim: one-dimensional case} does not go through.
\end{remark}

Recall that we assumed $X$ to be irreducible. By Claim \ref{claim: one-dimensional case} we may assume that $\dim X > 1$. Set $\dim X = n+1$ with $n \geq 1$. Let $z'$ be a closed specialization of $z$. In what follows, every time that we construct a scheme $X_i$ with a morphism $X_i \to X$ such that there exists a distinguished point $z_i \in X_i$ mapping to $z \in X$ and inducing an isomorphism on the residue fields, we will still denote by $z$ the point $z_i$. Moreover, we will denote by $Z_i $ the fiber product $Z_i \coloneq X_i \times_X Z$.

Let $V'$ be an $n$-dimensional faithful $G$-representation over $k$, which exists because $G$ is cyclic (of order $p^r$) and $k$ contains a primitive root of unity of order $p^r$ by hypotheses (we may, for example, take $n$ copies of a one-dimensional faithful representation).

\begin{claim}
	There exist an affine irreducible $G$-equivariant Nisnevich neighborhood $X_0 \to X$ of $Gz =z$ and a $G$-equivariant morphism $\varphi_0 \colon X_0 \to \AA(V')$ which is smooth of relative dimension one, the restriction $\varphi_{0_{|Z_0}} \colon Z_0 \to \AA(V')$ is quasi-finite and $\varphi_0(z) \neq 0$.
\end{claim}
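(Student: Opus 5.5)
The plan is to work at the closed specialization $z'$ of $z$ and then spread out. We are in Case \ref{subsection: case A}, so after the reductions already made $X$ is affine and irreducible with free $G$-action. In particular $G_{z'}=\{e\}$, since freeness of the action forces trivial geometric stabilizers everywhere. We fix a $G$-equivariant closed immersion $\iota\colon X\hookrightarrow \AA(W)$ as in Remark \ref{remark: fix equivariant closed immersion}. Let $\cE$ be the affine space of $G$-equivariant linear maps $\AA(W)\to\AA(V')$. By Lemma \ref{lemma: openness of properties}, three properties of $\psi\circ\iota$ each define an open subset of $\cE$: being smooth at $z'$, not vanishing at $z'$, and being quasi-finite at $z'$ on $Z$. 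Since $k$ is infinite, a nonempty open subset of $\cE$ has a $k$-rational point. So it suffices to show that each of these three open subsets is nonempty, because then their intersection is nonempty as $\cE$ is irreducible.

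\emph{Nonemptiness of each locus.}
\begin{enumerate}
\item Quasi-finiteness. Proposition \ref{proposition: quasi-finite morphisms construction freecase}, applied to the free $G$-pair $(X,Z)$, the closed point $z'$ and the $n$-dimensional representation $V'$, gives a $G$-equivariant $F\colon X\to\AA(V')$ whose restriction to $Z$ is quasi-finite at $z'$. The components of $F$ are equivariant regular functions on $X$. After enlarging $W$ so that these functions appear as linear coordinates, $F$ becomes of the form $\psi\circ\iota$ with $\psi\in\cE$. Enlarging $W$ is harmless because one can take $W\oplus W''$ with $W''$ spanned by the needed functions.
\item Smoothness. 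By Lemma \ref{lemma: exists representation 1}.(2), since $G_{z'}$ is trivial, we have $\mathrm{Res}^G_{S_{z'}}(V'\oplus\mathbf 1)\otimes_k\kappa(z')\simeq T_{z'}X$, where $\mathbf 1$ is a trivial line, so that the dimension is $n+1$. Proposition \ref{proposition: étale morphism as in 8.10 of equivariant cycles} (this uses that $k$ is perfect) then gives an equivariant $X\to\AA(V')\times\AA^1$ that is étale at $z'$. Composing with the projection gives a map to $\AA(V')$ that is smooth at $z'$, of relative dimension one.
\item Non-vanishing. Lemma \ref{lemma: generalization of Bachmann 2.5.(2,3)} applies with $S=\{z'\}$ for each character summand of $V'$, since the triviality condition on the stabilizer holds because $G_{z'}=\{e\}$. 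Taking these functions componentwise gives an equivariant map to $\AA(V')$ with $F(z')\neq 0$.
\end{enumerate}
As in the first item, each of these maps is realized inside $\cE$ after enlarging $\iota$ once to accommodate all of them simultaneously.

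\emph{Generic combination.} Pick $\psi\in\cE(k)$ in the intersection of the three opens and set $\varphi=\psi\circ\iota$. Then $\varphi$ is smooth at $z'$ with relative dimension one, because $\dim X=n+1$. Its restriction to $Z$ is quasi-finite at $z'$, and $\varphi(z')\neq 0$. Lemma \ref{lemma: assume is étale and quasi-finite}, in its smooth version, yields a $G$-invariant open $U\ni z$ on which $\varphi$ is smooth and on which $\varphi|_{Z_U}$ is quasi-finite. Intersecting with the $G$-invariant open set $\bigcap_g g(\varphi^{-1}(\AA(V')\setminus 0))$ keeps $z'$, hence keeps $z$, and gives $\varphi_0(z)\neq0$. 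Indeed, $z$ generalizes $z'$, and the nonvanishing locus is open and contains $z'$. Next we take an affine $G$-invariant open inside this set containing $z$, using Lemma \ref{lemma: assume is affine}, which needs $S_z=G$. Finally we take the irreducible component through $z$, which is $G$-invariant since $z$ is fixed. The result is $X_0$. By Remark \ref{rem: open neighborhoods are Nisnevich neighborhoods}, $X_0\to X$ is an equivariant Nisnevich neighborhood of $z$.

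\emph{Main obstacle.} The delicate point is the first step. Lemma \ref{lemma: openness of properties} is formulated for linear maps out of a fixed embedding, so the maps produced by Propositions \ref{proposition: quasi-finite morphisms construction freecase} and \ref{proposition: étale morphism as in 8.10 of equivariant cycles} must all be realized as linear projections of a single equivariant embedding. This is why the embedding has to be enlarged, and the enlargement must keep it $G$-equivariant with a common target $W$.
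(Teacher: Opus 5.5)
Your proposal is correct and follows the paper's proof essentially step for step. You build the same three auxiliary equivariant maps: Lemma \ref{lemma: exists representation 1}.(2) with Proposition \ref{proposition: étale morphism as in 8.10 of equivariant cycles} for smoothness, Proposition \ref{proposition: quasi-finite morphisms construction freecase} for quasi-finiteness, and Lemma \ref{lemma: generalization of Bachmann 2.5.(2,3)} for non-vanishing. You absorb them into a single enlarged equivariant embedding, combine them generically via Lemma \ref{lemma: openness of properties}, and spread out with Lemma \ref{lemma: assume is étale and quasi-finite} and Lemma \ref{lemma: assume is affine}, exactly as the paper does.

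The only difference is cosmetic: you shrink explicitly to the non-vanishing locus. The paper instead just observes that $f(z')\neq 0$ already forces $f(z)\neq 0$, since $z'$ specializes $z$ and the origin is a closed point.
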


\begin{proof}
	We show that the properties of smoothness, quasi-finiteness and non-vanishing on $z$ can be established independently, and we then conclude by invoking the openness of these properties.
	
	Let $V_e$ be the trivial one-dimensional $G$-representation over $k$. Set $\tilde{V} \coloneq V' \oplus V_e$. Since $G_{z'} = \left\{e\right\}$ and $\dim_{\kappa(z')}T_{z'}X = \dim_k(\tilde{V}) = n+1$, by Lemma \ref{lemma: exists representation 1}.(2) there exists an isomorphism $\textnormal{Res}^G_{S_{z'}}(\tilde{V}) \otimes_k \kappa(z') \simeq T_{z'}X$ in $\textnormal{Rep}^{\#}_{\kappa(z')}(S_{z'})$. Then by Proposition \ref{proposition: étale morphism as in 8.10 of equivariant cycles} there exists a $G$-equivariant morphism $f'_1 \colon X \to \AA(\tilde{V})$ which is étale at $z'$. In particular, the composite $f_1 \colon X \xrightarrow{f_1'} \AA(\tilde{V}) \xrightarrow{\textnormal{pr}} \AA(V')$ is smooth at $z'$. By Proposition \ref{proposition: quasi-finite morphisms construction freecase} there exists a morphism $f_2 \colon X \to \AA(V')$ whose restriction $f_{2_{|Z}} \colon Z \hookrightarrow X \rightarrow \AA(V')$ is quasi-finite at $z'$.
	We may apply Lemma \ref{lemma: generalization of Bachmann 2.5.(2,3)} to the one-dimensional $G$-subrepresentations of $V'$ in order to get a $G$-equivariant morphism $f_3 \colon X \to \AA(V')$ such that $f_3(z') \neq 0$, giving the third property. 
	
	We thus have a morphism $f_1 \colon X \to \AA(V')$ which is smooth at $z'$, a morphism $f_2 \colon X \to \AA(V')$ whose restriction to $Z$ is quasi-finite at $z'$ and a morphism $f_3 \colon X \to \AA(V')$ such that $f_3(z') \neq 0$.
	
	Let $\iota \colon X \hookrightarrow \AA(W)$ be a $G$-equivariant closed immersion of $X$ into a $G$-representation as in Remark \ref{remark: fix equivariant closed immersion}. Up to adding the equivariant elements defining the morphisms $f_1, f_2,f_3$ to the generators of the closed immersion $\iota$, we may assume that they are among them.
	
	By Lemma \ref{lemma: openness of properties}.(1),(2),(3), all of the three required properties defines a non-empty open subset of the affine space parametrizing $G$-equivariant linear maps $\AA(W) \to \AA(V')$: the maps $f_1, f_3, f_2$ respectively show non-emptiness. This parameter space is irreducible, so the intersection $U$ of these three non-empty open subsets is non-empty. Since $k$ is infinite, the non-empty open subset $U$ has a $k$-rational point. We conclude that there exists a morphism $f \colon X \to \AA(V')$ which is smooth at $z'$, $f(z') \neq 0$ and the restriction of $f$ to $Z$ is quasi-finite at $z'$. Since $\dim(X) = n+1$ and $\dim(\AA(V')) = n$, the morphism $f$ is smooth of relative dimension one. Moreover, by Lemma \ref{lemma: assume is étale and quasi-finite} we conclude that there exists an open $G$-equivariant neighborhood $U \to X$ of $Gz=z$ such that $f_{|U} \colon U \to \AA(V')$ is smooth and its restriction to $Z \cap U$ is quasi-finite. By Lemma \ref{lemma: assume is affine} we know that there exists an affine $G$-equivariant Nisnevich neighborhood $X_0 \to U$ of $Gz$. The composite $\varphi_0 \colon X_0 \to U \to \AA(V')$ is again smooth, being a composition of smooth morphisms and  $Z_{X_0} \to \AA(V')$ is again quasi-finite (indeed is quasi-compact and locally quasi-finite, because the property of being locally quasi-finite is étale local on the source-and-target, and $Z \cap U \to \AA(V')$ is quasi-finite), as desired.
	Since $f(z') \neq 0$, then also $f(z) \neq 0$, and in particular $\varphi_0(z) \neq 0$. It remains to show that we may assume $X_0$ to be also irreducible. Indeed,  $X_0$ is smooth over $k$, and thus it is the disjoint union of its irreducible components. Then there exists exactly one irreducible component $X_0'$ of $X_0$ containing $z$. Since $z$ is set-theoretically fixed, $X_0'$ is a $G$-invariant open subscheme of $X_0$. Since $X_0 \to X$ is étale and $X$ is irreducible of dimension $n+1$, then $\dim(X_0')= n+1$. After restricting the morphism $\varphi_0$ to $X_0'$ and replacing $X_0$ by $X_0'$, we may thus assume that $X_0$ is irreducible. In particular, $\varphi_0 \colon X_0 \to \AA(V')$ is a smooth morphism of relative dimension one, the restriction $\varphi_{0_{|Z_0}} \colon Z_0 \to \AA(V')$ is quasi-finite and $\varphi_0(z) \neq 0$.
\end{proof}

Set $w \coloneq \varphi_0(z) \neq 0$ and let $w \colon \Spec(\kappa(w)) \to \AA(V')$ be the associated $G$-equivariant morphism. Note that $w$ is set-theoretically fixed, but $\Spec(\kappa(w))$ has free $G$-action, because $V'$ is a direct sum of copies of a faithful one-dimensional character; hence every nonzero geometric point of $\AA(V')$ has trivial stabilizer. Set $W^h_w \coloneq \AA(V')^h_w$, and note that we have a lift $w \colon \Spec(\kappa(w)) \to W^h_w$ of the point $w$. Set $X_1 \coloneq X_0 \times_{\AA(V')} W^h_w$. Then the point $z \colon \Spec(\kappa(z)) \to X_0$ lifts to a point $z \colon \Spec(\kappa(z)) \to X_1$.
We will obtain conclusions of Theorem \ref{theorem: gabber} over the henselian scheme $W^h_w$ and then use a limit-descending argument to find the same conclusions on an actual model. The quasi-finite morphism $\varphi_{0_{|Z_0}} \colon Z_0 \to \AA(V')$ induces a quasi-finite morphism $Z_1 \to W^h_w$. Let $V_e$ be the trivial one-dimensional $G$-representation. For the remaining part of Case \ref{subsection: case A}, we write $\AA^1_k = \AA(V_e)$ to denote the trivial one-dimensional $G$-representation over $k$.

\begin{claim} \label{claim: X_2}
	There exists an affine open $G$-invariant subscheme $X_2$ of $X_1$ containing $z$ and a $G$-equivariant morphism $\varphi_2 \colon X_2 \to W^h_w \times_k \AA^1_k$ such that
	\begin{enumerate}
		\item the composite $Z_2 \hookrightarrow X_2 \to W^h_w \times_k \AA^1_k \to  W^h_w$ is finite;
		\item $Z_2$ is the spectrum of a local ring and $z \in Z_2$ is its unique closed point;
		\item the fiber $(\varphi_2)_{\kappa(w)} \colon (X_2)_{\kappa(w)} \to \AA^1_{\kappa(w)}$ of $\varphi_2$ over $w$ is étale at $z$ and radicial at $z$.
	\end{enumerate}
\end{claim}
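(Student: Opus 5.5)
The plan is to first make $Z_1 \to W^h_w$ finite and local using Lemma \ref{lemma: pass from quasi-finite to finite}, and only then construct the morphism to $\AA^1_k$ on the closed fiber and lift it.

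\textbf{Step 1: finiteness and locality.} Apply Lemma \ref{lemma: pass from quasi-finite to finite} to the $G$-equivariant morphism $X_1 \to W^h_w$, with $y = w$ and $x = z$. The hypotheses hold: $Gz = \{z\}$ and $Gw = \{w\}$, so the orbit map is a bijection, and $Z_1 \to W^h_w$ is quasi-finite by the discussion preceding the claim. The lemma gives a $G$-invariant open $U \subseteq X_1$ containing $z$ such that $Z_U \to W^h_w$ is finite. Since the orbit is a single point, it also gives $Z_U = \Spec(A_{1,z})$, a local scheme with unique closed point $z$. Shrinking further to an affine $G$-invariant open containing $z$ (for instance $\bigcap_g g(V)$ for an affine $V$, using separatedness as in Lemma \ref{lemma: assume is affine}) preserves finiteness. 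It also keeps $Z$ local: $Z_U$ is local, and any open containing its closed point contains all of it. This gives (1) and (2) for any $X_2 \subseteq U$ that contains $Z_U$.

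\textbf{Step 2: the map on the closed fiber.} Consider the fiber $(X_1)_{\kappa(w)} = (X_0)_w$. Since $\varphi_0$ is smooth of relative dimension one, this fiber is a smooth scheme of pure dimension one over $\kappa(w)$, and it maps $G$-equivariantly to $\Spec(\kappa(w))$. The $G$-action on $\kappa(w)$ is faithful, because $\Spec(\kappa(w))$ has free $G$-action ($w \neq 0$ and $V'$ is a sum of faithful characters). Consequently the action on the fiber is free as well. Take an affine $G$-invariant open of $(X_U)_{\kappa(w)}$ containing $z$; by the previous remark it carries a free $G$-action and is smooth of relative dimension one over $\kappa(w)$. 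We may then apply Lemma \ref{lemma: exists a fiber that is étale and radicial at a point} with $L = \kappa(w)$. This yields a $G$-equivariant morphism to $\AA^1_{\kappa(w)}$, with trivial action on the coordinate, that is étale and radicial at $z$. Since $U$ is affine, we can extend this function from the open piece to the whole closed fiber $(X_U)_{\kappa(w)}$: choose a $G$-invariant function on the fiber that is invertible on the open piece and nonvanishing at $z$, clear denominators, and rescale. The result is a $G$-equivariant morphism $f_w\colon (X_U)_{\kappa(w)}\to \AA^1_{\kappa(w)}$ that agrees with the previous one near $z$, so it is still étale and radicial at $z$.

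\textbf{Step 3: lifting.} Apply Lemma \ref{lemma: lift the morphism} to $l\colon X_U \to W^h_w$, which is local with fixed closed point $w$; the needed hypothesis holds since $|G|$ is invertible in $k$. Symmetrizing a lift of $f_w$ gives a $G$-equivariant $\nu\colon X_U \to \AA^1_{W^h_w}$ whose fiber over $w$ is étale and radicial at $z$. Set $\varphi_2 = (l,\nu)$ and $X_2 = U$, which gives (3).

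\textbf{Main obstacle.} The delicate point is Step 2: extending the equivariant étale and radicial function from an open neighborhood of $z$ in the fiber to the full fiber while keeping equivariance. The tool I would use is $G$-invariant denominators, obtained as norms $\prod_g g(h)$. An alternative is to shrink $X_2$ to an open avoiding the bad locus, which is legitimate because only a neighborhood of $Z_2$ matters and $Z_2$ is local.
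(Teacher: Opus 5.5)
Your Steps 1 and 3 are exactly the paper's argument. You apply Lemma \ref{lemma: pass from quasi-finite to finite} to $X_1 \to W^h_w$, shrink to an affine $\bigcap_{g} g(V)$ that still contains the local scheme $Z_U$, and then symmetrize a lift using Lemma \ref{lemma: lift the morphism}.

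The problem is Step 2, where you create an obstacle that does not exist and then resolve it incorrectly. After Step 1 the open $U$ is affine, as you note yourself. Hence the fiber $(X_U)_{\kappa(w)} = U \times_{W^h_w} \Spec(\kappa(w))$ is a closed subscheme of an affine scheme, so it is affine. It is smooth of pure dimension one over $\kappa(w)$ and has free $G$-action. So Lemma \ref{lemma: exists a fiber that is étale and radicial at a point} applies directly to the whole fiber, which is what the paper does. No passage to a smaller open and no extension of functions is needed, and the ``main obstacle'' you identify is not there.

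The extension you describe would also not work as stated. Suppose $f$ is regular on an invariant principal open $D(h)$ and you replace it by $a = h^N f$, possibly divided by a constant of $\kappa(w)$. Then $a$ does not agree with $f$ near $z$. Its differential is $da_z = h(z)^N\, df_z + f(z)\, d(h^N)_z$, which can vanish even though $df_z \neq 0$ as soon as $f(z) \neq 0$; this would destroy étaleness at $z$. Moreover, $a(z) = h(z)^N f(z)$ may generate a strictly smaller subfield of $\kappa(z)$ over $\kappa(w)$ than $f(z)$ does; this would destroy radiciality at $z$. Rescaling by a constant fixes neither problem.

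Either of the following repairs would work, though neither is needed:
\begin{itemize}
\item choose $h \in 1 + \frakm_z^2$;
\item use your alternative: shrink $X_U$ by removing the invariant closed set where the function is undefined, which cannot meet the local scheme $Z_U$ because it misses $z$.
\end{itemize}
With Step 2 replaced by a direct application of the lemma to $(X_U)_{\kappa(w)}$, your proof coincides with the paper's.
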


\begin{proof}
	We first show that there exists an affine open $G$-invariant subscheme $X_2$ of $X_1$ containing $z$ such that $Z_2$ satisfies (2) and such that the composite $Z_2 \to X_2 \to X_1 \to  W^h_w$ is finite. In order to do that, we use Lemma \ref{lemma: pass from quasi-finite to finite}. Indeed, we can apply Lemma \ref{lemma: pass from quasi-finite to finite} to the schemes $X_1$ and $W^h_w$ to obtain an open $G$-invariant subscheme $X_2 \subset X_1$ containing $z$ such that the composite $Z_2 \hookrightarrow X_2 \to W^h_w$ is finite and such that $Z_{2}$ is local with unique closed point $z$. In order to obtain an affine $G$-invariant neighborhood, we do the following. Let $U \subseteq X_2$ be any open affine neighborhood of $z$ in $X_2$. Set $\tilde{U} \coloneq \bigcap_{g \in G} g(U)$. We claim that $\tilde{U}$ is an open non-empty affine $G$-invariant subscheme of $X_2$. First note that it is non-empty because $z$ is set-theoretically fixed by the $G$-action. Then it is sufficient to show that $X_2$ is separated, because then the intersection of a finite number of affine open subsets in $X_2$ is affine. To see this, note that $X_1$ is separated because it is affine, and $X_2$ is open in $X_1$, hence is separated. Since $Z_{2}$ is local, we have $Z_{2} \subset \tilde{U}$, and thus $Z_{\tilde{U}} = Z_2 \to W^h_w$ is still finite. After replacing $X_2$ by $\tilde{U}$ we may thus assume that $X_2$ is affine.
	
	Next we produce a morphism $\varphi_2 \colon X_2 \to W^h_w \times_k \AA^1_k$ over $W^h_w$ satisfying property (3). In order to do that, we verify that the hypotheses of Lemma \ref{lemma: exists a fiber that is étale and radicial at a point} are satisfied by the field extension $\kappa(w)/k$ and the affine scheme $(X_2)_{\kappa(w)} = X_2 \times_{W^h_w}\Spec(\kappa(w))$. The $G$-action on $\kappa(w)$ is faithful because $\Spec(\kappa(w))$ has free $G$-action. Since $\varphi_0 \colon X_0 \to \AA(V')$ is smooth of relative dimension one, then also $X_1 \to W^h_w$ is smooth of relative dimension one. Since $X_2 \subset X_1$ is an open subscheme of $X_1$, then also $X_2 \to W^h_w$ is smooth of relative dimension $1$ by \cite[\href{https://stacks.math.columbia.edu/tag/02NL}{Tag 02NL}]{stacks-project}. In particular, also the fiber morphism $(X_2)_{\kappa(w)} \to \Spec(\kappa(w))$ is smooth of relative dimension $1$. It follows that $(X_2)_{\kappa(w)}$ is an affine $\kappa(w)$-scheme of pure dimension one. Since $X_0$ has free $G$-action, then also $(X_2)_{\kappa(w)}$ has free $G$-action. Finally, since $Z_{2}$ has unique closed point $z$, it follows that $z$ is a closed point of $X_2$.
	
	We may thus apply Lemma \ref{lemma: exists a fiber that is étale and radicial at a point} to conclude that there exists a $G$-equivariant morphism $f_w \colon (X_2)_{\kappa(w)} \to \AA^1_{\kappa(w)}$ over $\kappa(w)$ which is étale at $z$ and radicial at $z$. Then we can apply Lemma \ref{lemma: lift the morphism} to conclude that there exists a $G$-equivariant morphism $\varphi_2 \colon X_2 \to W^h_w \times_k \AA^1_k$ such that the fiber $(\varphi_2)_{\kappa(w)} \colon (X_2)_{\kappa(w)} \to \AA^1_{\kappa(w)}$ over $w \colon \Spec(\kappa(w)) \to W^h_w$ is étale at $z$ and radicial at $z$.
\end{proof}

Recall that the morphism $(\varphi_2)_{\kappa(w)}$ is given by the following cartesian diagram
% https://q.uiver.app/#q=WzAsNixbMSwwLCJYIl0sWzEsMSwiXFxBQV4xX1ciXSxbMCwxLCJcXEFBXjFfdyJdLFsxLDIsIlciXSxbMCwyLCJcXFNwZWMoXFxrYXBwYSh3KSkiXSxbMCwwLCJYX3ciXSxbMCwxLCJcXHZhcnBoaSJdLFsxLDNdLFs0LDMsInciLDJdLFsyLDRdLFsyLDFdLFs1LDIsIlxcdmFycGhpX3ciLDJdLFs1LDBdXQ==
\[\begin{tikzcd}
	{(X_2)_{\kappa(w)}} & X_2 \\
	{\AA^1_{\kappa(w)}} & {\AA^1_{W^h_w}} \\
	{\Spec(\kappa(w))} & {W^h_w.}
	\arrow[from=1-1, to=1-2]
	\arrow["{(\varphi_2)_{\kappa(w)}}"', from=1-1, to=2-1]
	\arrow["\varphi_2", from=1-2, to=2-2]
	\arrow[from=2-1, to=2-2]
	\arrow[from=2-1, to=3-1]
	\arrow[from=2-2, to=3-2]
	\arrow["w"', from=3-1, to=3-2]
\end{tikzcd}\]

\begin{claim} \label{claim: X_2 étale}
	The morphism $\varphi_2 \colon X_2 \to \AA^1_{W^h_w}$ is étale at $z$.
\end{claim}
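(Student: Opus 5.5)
The plan is to use the fibrewise criterion for étaleness (critère de platitude par fibres). Write $\AA^1_{W^h_w} = W^h_w \times_k \AA^1_k$ and view $\varphi_2 \colon X_2 \to \AA^1_{W^h_w}$ as a morphism of schemes over $W^h_w$. Both $X_2$ and $\AA^1_{W^h_w}$ are locally of finite presentation over $W^h_w$. The structure map $X_2 \to W^h_w$ is smooth of relative dimension one, hence flat; this was noted in the proof of Claim \ref{claim: X_2}. The map $\AA^1_{W^h_w} \to W^h_w$ is also smooth. By Claim \ref{claim: X_2}.(3), the fibre $(\varphi_2)_{\kappa(w)} \colon (X_2)_{\kappa(w)} \to \AA^1_{\kappa(w)}$ over $w$ is étale at $z$, so in particular it is flat at $z$.

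The first step is flatness of $\varphi_2$ at $z$. Since $z$ lies over the closed point $w$ of the local scheme $W^h_w$, the fibrewise flatness criterion (\cite[\href{https://stacks.math.columbia.edu/tag/039A}{Tag 039A}]{stacks-project}, applied to the local rings at $z$ and at $\varphi_2(z)$ over $\cO_{W^h_w,w}$) gives the following: $X_2$ flat over $W^h_w$ at $z$, together with the fibre map flat at $z$, implies $\varphi_2$ flat at $z$. All rings involved are noetherian, because $W^h_w$ is the henselization of a local ring of finite type, so the criterion applies.

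The second step is unramifiedness at $z$. Formation of $\Omega_{X_2/\AA^1_{W^h_w}}$ commutes with base change along $\Spec(\kappa(w)) \to W^h_w$. Therefore
\[
\Omega_{X_2/\AA^1_{W^h_w}} \otimes \kappa(z) \simeq \Omega_{(X_2)_{\kappa(w)}/\AA^1_{\kappa(w)}} \otimes \kappa(z) = 0,
\]
because the fibre map is étale, hence unramified, at $z$. By Nakayama's lemma, $\Omega_{X_2/\AA^1_{W^h_w}}$ vanishes in a neighbourhood of $z$, so $\varphi_2$ is unramified at $z$. Flatness and unramifiedness at $z$, together with $\varphi_2$ being locally of finite presentation, give that $\varphi_2$ is étale at $z$. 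An alternative for the whole claim is the single statement ``a morphism between schemes flat and locally of finite presentation over a base, whose fibre is étale at a point, is étale at that point'' (\cite[\href{https://stacks.math.columbia.edu/tag/05VJ}{Tag 05VJ}]{stacks-project} style).

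The main technical point is verifying the hypotheses of the fibrewise criterion. One needs $\varphi_2$ to be locally of finite presentation, and one needs the local rings to be noetherian, since $W^h_w$ is not of finite type over $k$. Local finite presentation holds because $X_2$ is of finite presentation over $W^h_w$, being an open of a base change of $X_0 \to \AA(V')$, and $\AA^1_{W^h_w}$ is of finite presentation over $W^h_w$. Noetherianity holds because the henselization of a noetherian local ring is noetherian.
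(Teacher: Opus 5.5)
Your proof is correct and follows essentially the paper's route. Like the paper, you establish local finite presentation, get flatness at $z$ from the fibrewise flatness criterion over the noetherian local ring $\cO_{W^h_w,w}$ (using smoothness of $X_2$ over $W^h_w$ and étaleness of the fibre map at $z$), and conclude étaleness from the standard characterization in Tag 02GU. The only difference is cosmetic: the paper passes from the fibre over $w$ to the fibre over $u=\varphi_2(z)$ and applies Tag 02GU again, whereas you check unramifiedness directly by base change of $\Omega_{X_2/\AA^1_{W^h_w}}$ and Nakayama.
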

\begin{proof}
	We will use the criterion \cite[\href{https://stacks.math.columbia.edu/tag/02GU}{Tag 02GU}]{stacks-project}. First note that $\varphi_2$ is locally of finite presentation. Indeed, $X_2 \to W^h_w$ is locally of finite presentation, being the composite of the open immersion $X_2 \to X_1$ and of the smooth morphism $X_1 \to W^h_w$. Also the morphism $\AA^1_{W^h_w} \to W^h_w$ is of finite presentation. Thus $\varphi_2 \colon X_2 \to \AA^1_{W^h_w}$ is locally of finite presentation by \cite[\href{https://stacks.math.columbia.edu/tag/02FV}{Tag 02FV}]{stacks-project}. Set $u = \varphi_2(z) \in \AA^1_{W^h_w}$. The fiber of $\varphi_2$ over $u$ is given by the following cartesian diagram:
	% https://q.uiver.app/#q=WzAsNixbMCwwLCJmXnstMX0oeikiXSxbMiwwLCJYXzIiXSxbMiwxLCJBQV4xX1ciXSxbMSwxLCJcXEFBXjFfayBcXHRpbWVzX2sgXFxTcGVjKFxca2FwcGEodykpIl0sWzEsMCwiKFhfMilfdyJdLFswLDEsIlxcU3BlYyhcXGthcHBhKHMpKSJdLFswLDVdLFs1LDNdLFs0LDMsImZfdyJdLFsxLDIsImYiXSxbMywyXSxbNCwxXSxbMCw0XV0=
	\[\begin{tikzcd}
		{(X_2)_{\kappa(u)}} & {(X_2)_{\kappa(w)}} & {X_2} \\
		{\Spec(\kappa(u))} & {\mathbb{A}^1_{\kappa(w)}} & {\AA^1_{W^h_w}}.
		\arrow[ from=1-1, to=1-2]
		\arrow["(\varphi_2)_{\kappa(u)}", from=1-1, to=2-1]
		\arrow[from=1-2, to=1-3]
		\arrow["{(\varphi_2)_{\kappa(w)}}", from=1-2, to=2-2]
		\arrow["\varphi_2", from=1-3, to=2-3]
		\arrow[from=2-1, to=2-2]
		\arrow[from=2-2, to=2-3]
	\end{tikzcd}\] 
	Since by Claim \ref{claim: X_2} $(\varphi_2)_{\kappa(w)}$ is étale at $z$, then by \cite[\href{https://stacks.math.columbia.edu/tag/02GU}{Tag 02GU, (1)$\iff$(2)}]{stacks-project} the morphism $(\varphi_2)_{\kappa(u)}$ is étale at $z$. Then, again by \cite[\href{https://stacks.math.columbia.edu/tag/02GU}{Tag 02GU, (1)$\iff$(2)}]{stacks-project}, we see that it is sufficient to show that the ring homomorphism $\cO_{\AA^1_{W^h_w}, u} \to \cO_{X_2, z}$ is flat. Since $X_2$ is smooth over $W^h_w$, $\cO_{X_2, z}$ is flat over $\cO_{W^h_w,w}$. Also, $\cO_{(X_2)_{\kappa(w)}, z}$ is flat over $\cO_{\AA^1_{\kappa(w)}, u}$ because $(\varphi_2)_{\kappa(w)}$ is étale at $z$. Therefore, once we know that $X_2$ and $W^h_w$ are noetherian, we may apply the fiberwise criterion for flatness \cite[\href{https://stacks.math.columbia.edu/tag/039B}{Tag 039B, (1) $\Rightarrow$ (2)}]{stacks-project} to conclude that $\cO_{X_2, z}$ is flat over $\cO_{\AA^1_{W^h_w},u}$. We argue that $X_2$ and $W^h_w$ are noetherian as follows. Since $\AA(V')$ is noetherian, by \cite[Théorème 18.6.6]{EGAcapitoloIVparteIV} the scheme $W^h_w = \Spec(\cO_{\AA(V'), w}^h)$ is noetherian. Moreover, $X$ is noetherian, and thus $X_0$ is noetherian. Since $\varphi_0 \colon X_0 \to \AA(V')$ is smooth, the base change $X_1 \to W^h_w$ is a smooth morphism and since $X_1$ is affine, in particular is noetherian. Finally, $X_2$ is open in $X_1$, and hence is noetherian. Also $\AA^1_{W^h_w}$ is noetherian by the Hilbert basis theorem. 
\end{proof}

Note that Claim \ref{claim: X_2} and Claim \ref{claim: X_2 étale} give properties $\Pnum{1}$ and $\Pnum{3}$ of a Gabber's presentation.

Let $X_3 \subset X_2$ be an open $G$-invariant subscheme such that the restriction $\varphi_3 \colon X_3 \to X_2 \xrightarrow{\varphi_2} \AA^1_{W^h_w}$ is étale. Since $Z_2$ is local, we have $Z_2 \subset X_3$. Then $Z_3 = Z_2$ and thus the composite $ Z_3 \xrightarrow{\varphi_{3_{|Z_3}}} \AA^1_{W^h_w} \to W^h_w$ is still finite.

\begin{claim} \label{claim: closed immersion}
	The restriction $\varphi_{3_{|Z_3}} \colon Z_3 \to \AA^1_{W^h_w}$ is a closed immersion.
\end{claim}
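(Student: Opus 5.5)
The plan is to apply Lemma \ref{lemma: nakayama's lemma for closed immersion} with $A = \cO(Z_3)$, $B = \cO^h_{\AA(V'),w}$ and $C = k[t]$, so that $\AA^1_{W^h_w} = \Spec(B \otimes_k C)$. By Claim \ref{claim: X_2}.(2), and since $Z_3 = Z_2$, the scheme $Z_3$ is the spectrum of a local ring with unique closed point $z$. Hence $A$ is local, and $B$ is local because it is a henselization of a local ring. It then remains to check the remaining hypotheses of that lemma.

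First, I will check unramifiedness. The map $\varphi_3 \colon X_3 \to \AA^1_{W^h_w}$ is étale, hence unramified, and $Z_3 \hookrightarrow X_3$ is a closed immersion, hence unramified. So the composite $f = \varphi_{3_{|Z_3}}$ is unramified. Second, I will check finiteness. The composite $\textnormal{pr}_B \circ f \colon Z_3 \to W^h_w$ is finite, by Claim \ref{claim: X_2}.(1) together with $Z_3 = Z_2$. It sends $z$ to $w = \varphi_0(z)$, which is the unique closed point of $W^h_w$. (Alternatively, this follows because a finite map is closed and $Z_3$ is local, so its closed point must go to the closed point.)

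The main step is the radicial hypothesis on the closed fiber $h \colon \Spec(\kappa(w) \otimes_B A) \to \AA^1_{\kappa(w)}$. Here I will use Lemma \ref{lemma: closed fiber of a finite local morphism}. Since $B \to A$ is finite and local, $\Spec(\kappa(w)\otimes_B A)$ is a local Artinian scheme whose only point is $z$. So $h$ is injective on points, and it suffices to show that $\kappa(z)/\kappa(h(z))$ is purely inseparable. Now $h$ is the restriction of $(\varphi_2)_{\kappa(w)} \colon (X_2)_{\kappa(w)} \to \AA^1_{\kappa(w)}$ to the closed subscheme $(Z_3)_{\kappa(w)}$. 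Its residue field extension at $z$ is the same as that of $(\varphi_2)_{\kappa(w)}$ at $z$. By Claim \ref{claim: X_2}.(3) that extension is purely inseparable, because the map is radicial at $z$. Therefore $h$ is radicial. A single-point source makes this radicial as a morphism, not merely at $z$, which is exactly what the lemma needs.

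Having verified all hypotheses, Lemma \ref{lemma: nakayama's lemma for closed immersion} yields that $f$ is a closed immersion. I expect the main obstacle to be this bookkeeping step: identifying the fiber of $Z_3$ over $w$ with a one-point scheme, and making sure that radicial at the point (as in Claim \ref{claim: X_2}) really gives a radicial morphism $h$. Both points are handled by Lemma \ref{lemma: closed fiber of a finite local morphism} and by the remark following the definition of being radicial at a point.
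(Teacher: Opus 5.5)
Your proposal is correct and follows the paper's proof essentially step for step. You apply Lemma~\ref{lemma: nakayama's lemma for closed immersion} with $A=\cO(Z_3)$, $B=\cO^h_{\AA(V'),w}$ and $C=k[t]$, and you check its hypotheses exactly as the paper does: unramifiedness from the closed immersion $Z_3\subseteq X_3$ composed with the étale $\varphi_3$; locality and finiteness from Claim~\ref{claim: X_2}; and the radicial closed fiber from Lemma~\ref{lemma: closed fiber of a finite local morphism} together with Claim~\ref{claim: X_2}.(3). Your remark that the one-point fiber upgrades ``radicial at $z$'' to a radicial morphism $h$ only makes explicit a step the paper leaves implicit.
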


\begin{proof}
	It is sufficient to show that hypotheses of Lemma \ref{lemma: nakayama's lemma for closed immersion} are satisfied. Indeed, both $Z_3$ and $W_w^h$ are spectra of local $k$-algebras; the morphism $\varphi_{3_{|Z_3}}$ is unramified, because it is the composite of a closed immersion $Z_3 \subseteq X_3$ and an étale morphism $\varphi_3 \colon X_3 \to \AA^1_{W^h_w}$. The composite $Z_3 \to W^h_w$ is finite and the unique closed point $z \in Z_3$ maps to the unique closed point $w \in W^h_w$. The morphism $Z_3 \to W^h_w$ is finite, and thus the underlying topological space of the fiber $F$ over $w$ consists of a single point by Lemma \ref{lemma: closed fiber of a finite local morphism}. Moreover, the induced morphism $F \to \AA^1_{\kappa(w)}$ is radicial by Claim \ref{claim: X_2}. Then $\varphi_{3_{|Z_3}}$ is a closed immersion by Lemma \ref{lemma: nakayama's lemma for closed immersion}.
\end{proof}

Note that Claim \ref{claim: closed immersion} gives property $\Pnum{2}$ of a Gabber's presentation.

\begin{claim}
	There exists an open $G$-invariant subscheme $X_4 \subset X_3$ and an étale morphism $\varphi_4 \colon X_4 \to \AA^1_{W^h_w}$ such that $\varphi_{4_{|Z_4}}$ is a closed immersion, $Z_4$ is finite over $W^h_w$ and $\varphi_4$ induces an isomorphism of $Z_4$ with its image.
\end{claim}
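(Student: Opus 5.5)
This claim should follow almost directly from Lemma \ref{lemma: exist open with isomorphism in the image}, applied to $\varphi_3 \colon X_3 \to \AA^1_{W^h_w}$ and the closed $G$-invariant subscheme $Z_3 \subseteq X_3$. The hypotheses are all in place. $\varphi_3$ is $G$-equivariant and étale by construction of $X_3$. $Z_3 = Z_2$ is $G$-invariant, being the base change of the $G$-invariant $Z$ along an equivariant map. Its restriction $\varphi_{3|Z_3}$ is a closed immersion by Claim \ref{claim: closed immersion}. The lemma then yields an open $G$-invariant subscheme $U \subseteq X_3$ with $Z_3 \cap U = Z_3$ and such that $\varphi_{3|U}$ induces an isomorphism $\varphi_{3|U}^{-1}(\varphi_{3|U}(Z_3)) \xrightarrow{\sim} \varphi_{3|U}(Z_3)$. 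I would set $X_4 \coloneq U$ and $\varphi_4 \coloneq \varphi_{3|X_4}$.

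Next I check that the earlier properties survive. $\varphi_4$ is the restriction of an étale morphism to an open subscheme, so it is étale. Since $Z_4 = Z_3 \cap X_4 = Z_3$, the restriction $\varphi_{4|Z_4}$ is the same closed immersion as before. For the same reason, the composite $Z_4 \to \AA^1_{W^h_w} \to W^h_w$ coincides with the finite map $Z_3 \to W^h_w$ from Claim \ref{claim: X_2}. The point $z$ lies in $Z_3 \subseteq X_4$, so $X_4$ is still a $G$-invariant open neighbourhood of $Gz = z$. By Remark \ref{rem: open neighborhoods are Nisnevich neighborhoods}, it is therefore still an equivariant Nisnevich neighbourhood.

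The only subtle step is the construction inside Lemma \ref{lemma: exist open with isomorphism in the image}. We need the complementary clopen piece $Z'$ of $\varphi_3^{-1}(\varphi_3(Z_3))$ to be $G$-invariant, so that removing it from $X_3$ leaves a $G$-invariant open set. This holds because $\varphi_3^{-1}(\varphi_3(Z_3))$ is $G$-invariant, as $\varphi_3$ is equivariant and $Z_3$ is invariant, and $Z_3$ is a $G$-invariant clopen subscheme of it, so its complement is invariant as well. I would also like $X_4$ to be affine for the later limit argument. If that is needed, I would replace $X_4$ by $\bigcap_{g} g(\tilde U)$ for an affine open $\tilde U \ni z$, as in the proof of Claim \ref{claim: X_2}. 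This intersection is affine because $X_4$ is separated, and it still contains the local scheme $Z_4$. All the properties above pass to this smaller open, since $Z_4$ is unchanged.
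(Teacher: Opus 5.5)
Your proposal is correct and matches the paper's proof: it applies Lemma \ref{lemma: exist open with isomorphism in the image} to $\varphi_3$ and $Z_3$, and uses $Z_4 = Z_3$ to carry over the closed-immersion and finiteness properties. The extra affine shrinking is harmless but unnecessary, since the later limit argument only needs $X_4$ to be of finite presentation over $W^h_w$ (Remark \ref{remark: schemes of finite presentation}).
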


\begin{proof}
	By Lemma \ref{lemma: exist open with isomorphism in the image} there exists an open $G$-invariant subscheme $X_4 \subset X_3$ such that the restriction $\varphi_4 \colon X_4 \to X_3 \xrightarrow{\varphi_3} \AA^1_{W^h_w}$
	induces an isomorphism $\varphi_4^{-1}(\varphi_4(Z_4)) \xrightarrow{\sim} \varphi_4(Z_4)$, where $Z_4=Z_3$. By construction, $\varphi_4 \colon X_4 \to \AA^1_{W^h_w}$ is a $G$-equivariant étale morphism with all the required properties.
\end{proof}

Recall that $\AA^1_k$ denotes the trivial one-dimensional $G$-representation.
The scheme $X_4$ is by construction an open $G$-invariant subscheme of $X_1 = X_0 \times_{\AA(V')} W^h_w$.

To summarize, we have obtained the conclusions $\Pnum{1}, \Pnum{2}, \Pnum{3}$ and $\Pnum{4}$ of Definition \ref{definition: gabber presentation} over the henselization $W^h_w$. In Claim \ref{claim: produce actual model} we will find an actual model with a limit-descending argument.

\begin{remark} \label{remark: schemes of finite presentation}
	The schemes $\AA^1_{W^h_w}, X_1, X_4, Z_{4}, \varphi_4(Z_{4}), \varphi_4^{-1}(\varphi_4(Z_{4}))$ are of finite presentation over $W^h_w$. Indeed $\AA^1_{W^h_w} \to W^h_w$ is the base change of $\AA^1_k \to \Spec(k)$ which is a morphism of finite presentation. The morphism $\varphi_4 \colon X_4 \to \AA^1_{W^h_w}$ is the composite of an open immersion $X_4 \subset X_2$ with target a noetherian affine scheme, and of an étale morphism $ \varphi_2 \colon X_2 \to \AA^1_{W^h_w}$ with affine source. It follows that $X_4$ is finitely presented over $W^h_w$. The scheme $X_1$ is by definition the fiber product $X_1 = X_0 \times_{\AA(V')} W^h_w$; hence $X_1$ is an affine smooth scheme over the noetherian scheme $W^h_w$. In particular, the morphism $X_1 \to W^h_w$ is of finite presentation. Then $Z_{4} \to X_4$ is a closed immersion into a noetherian scheme, and thus it is a morphism of finite presentation and the composite $Z_{4} \to X_4 \to W^h_w$ is too. Also $\varphi_4(Z_{4})$ is closed in $\AA^1_{W^h_w}$, which is a noetherian scheme. Hence $\varphi_4(Z_{4})$ is also finitely presented over $W^h_w$. Finally, $\varphi_4^{-1}(\varphi_4(Z_{4}))$ is isomorphic to $\varphi_4(Z_{4})$ by construction.
\end{remark}

\begin{claim} \label{claim: produce actual model}
	There exists a $G$-equivariant Gabber's presentation $(X', W, V , \varphi)$ of the $G$-pair $(X,Z)$ with respect to the point $z$ such that the one-dimensional $G$-representation $V$ in $\Gnum{3}$ is the trivial one-dimensional $G$-representation.
\end{claim}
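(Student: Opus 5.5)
The plan is a standard limit argument applied to the data already built over $W^h_w$. By construction, $W^h_w=\AA(V')^h_w$ is the cofiltered limit of affine $G$-equivariant Nisnevich neighborhoods $(W_i,w_i)\to\AA(V')$ of $Gw=w$. Because $w$ is set-theoretically fixed, each neighborhood can be taken affine and $G$-invariant: apply Lemma \ref{lemma: assume is affine} and restrict to $G$-stable neighborhoods. The transition maps are then $G$-equivariant. By Remark \ref{remark: schemes of finite presentation}, the objects $\AA^1_{W^h_w}$, $X_4$, $Z_4$, $\varphi_4(Z_4)$ and $\varphi_4^{-1}(\varphi_4(Z_4))$ are all of finite presentation over $W^h_w$. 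Lemma \ref{lemma: limit descending} therefore gives an index $i$ and $G$-equivariant models over $W_i$. These are $X_4^{(i)}\to\AA^1_{W_i}$, which is étale; $Z^{(i)}\to X_4^{(i)}$, which is a closed immersion; the composite $Z^{(i)}\to\AA^1_{W_i}$, which is a closed immersion; the map $Z^{(i)}\to W_i$, which is finite; and $\varphi^{-1}(\varphi(Z^{(i)}))\to\varphi(Z^{(i)})$, which is an isomorphism. Each model base-changes to the corresponding object over $W^h_w$. Enlarging $i$ as needed, one index works for all of these finitely many objects and properties simultaneously.

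Next I must identify the models with the data coming from $X$. The scheme $X_1$ is $X_0\times_{\AA(V')}W^h_w$, and $X_4\subset X_1$ is a $G$-invariant open. So for large $i$ I can take $X_4^{(i)}$ to be a $G$-invariant open of $X_0\times_{\AA(V')}W_i$ that pulls back to $X_4$; this is the open immersion case of Lemma \ref{lemma: limit descending}. Similarly, $Z^{(i)}$ should be the pullback of $Z$, namely $Z^{(i)}=Z\times_X X_4^{(i)}$. Equalities such as $Z_{X_4}=Z^{(i)}\times_{W_i}W^h_w$ descend because maps and isomorphisms between finitely presented schemes descend (\cite[\href{https://stacks.math.columbia.edu/tag/01ZM}{Tag 01ZM}]{stacks-project}). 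With these identifications I set the presentation data as follows:
\begin{itemize}
	\item $X' \coloneq X_4^{(i)}$, with $X'\to X_0\times_{\AA(V')}W_i\to X_0\to X$;
	\item $W\coloneq W_i$, which is smooth over $k$ because it is étale over $\AA(V')$;
	\item $V\coloneq V_e$, the trivial representation;
	\item $\varphi\coloneq\varphi_4^{(i)}$.
\end{itemize}

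It remains to check that $X'\to X$ is a $G$-equivariant Nisnevich neighborhood of $Gz=z$. It is étale because it is an open immersion composed with a base change of the étale map $W_i\to\AA(V')$, followed by $X_0\to X$. For the section, the distinguished point $w_i\in W_i$ lies over $w$ with $\kappa(w_i)=\kappa(w)$. The point $z$ of $X_1$ maps to $X_0\times_{\AA(V')}W_i$, giving a point over $z\in X_0$ with the same residue field. Since $S_z=G$, the section $s\colon\Spec\kappa(z)\to X'$ is automatically $G$-equivariant. It lands in $X'$ because its image in $X_1$ lies in $X_4$.

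Properties $\Pnum{1}$–$\Pnum{4}$ then follow from the descended properties. No section of $\psi$ is required, since $V$ is trivial. The main obstacle, which I would treat carefully, is making sure the descended subscheme $Z^{(i)}$ really equals $Z_{X'}$. Lemma \ref{lemma: limit descending} descends abstract objects, so I would instead descend the statement "the closed immersion $Z_{X'}\to\AA^1_{W_i}$ is such", applying it to the honest pullback $Z_{X'}$. This pullback is finitely presented over $W_i$, and its base change to $W^h_w$ is $Z_4$. Uniqueness of models up to enlarging $i$ (\cite[\href{https://stacks.math.columbia.edu/tag/01ZM}{Tag 01ZM}]{stacks-project}) then identifies it with the descended object, and the properties transfer.
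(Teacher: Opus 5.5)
Your proposal is correct and follows essentially the same route as the paper: you descend the open immersion $X_4\subset X_0\times_{\AA(V')}W^h_w$ to a $G$-invariant open $X'\subset X_0\times_{\AA(V')}W_i$, descend $\varphi_4$ as an étale map, take $Z_{X'}=X'\times_X Z$ as the honest pullback, descend the closed-immersion, finiteness and isomorphism properties via Lemma \ref{lemma: limit descending}, and build the section through the fibre product. One small correction: the section $\Spec\kappa(z)\to X'$ is $G$-equivariant because it is the image of the $G$-equivariant distinguished point of $X_1$ (equivalently, it comes from the universal property of the fibre product applied to equivariant maps), not automatically because $S_z=G$, since an étale neighbourhood can have several $\kappa(z)$-rational points over $z$ that are permuted by $G$.
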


\begin{proof}
	We will use a limit-descending argument to obtain conclusions $\Pnum{1}, \Pnum{2}, \Pnum{3}$ and $\Pnum{4}$ of Definition \ref{definition: gabber presentation} over an actual model.
	Write $W^h_w = \lim_{ U \in N_{G}(Gw)}U$ as the limit over the filtering category of affine equivariant Nisnevich neighborhoods of $Gw$ as in \cite[\S 2]{MotivicHomotopyTheoryOfGroupSchemesActions}. Since $\AA(V')$ is affine, by \cite[\S 2]{MotivicHomotopyTheoryOfGroupSchemesActions} there is an isomorphism $W^h_w \simeq \Spec(\cO_{\AA(V'),w}^h)$.
	Since $\textnormal{Sch}^G_k$ is essentially small, the category
	$N_G(Gw)$ of equivariant Nisnevich neighborhoods is essentially small as well. Replacing it by a small skeleton, we may apply \cite[Theorem 1.5]{PresentableCategories} and write $W^h_w= \lim_{i \in I } U_i$ as the limit of an inverse system of schemes over a directed set $I$. All the required properties descend  through limits of morphisms of finite presentation. More precisely we do the following.
	
	Since $X_4 \to X_1 = X_0 \times_{\AA(V')} W^h_w$ is an open immersion, by Lemma \ref{lemma: limit descending}, there exists an index $i \in I$, a scheme $S_i$ with a $G$-action and a $G$-equivariant morphism $a_i \colon S_i \to X_0 \times_{\AA(V')} U_i$ which is an open immersion, fitting into a cartesian diagram
	% https://q.uiver.app/#q=WzAsNCxbMCwwLCJYXzQiXSxbMSwwLCJYX2oiXSxbMSwxLCJYIFxcdGltZXNfe1xcQUFebl9rfVVfaiJdLFswLDEsIlggXFx0aW1lc197XFxBQV5uX2t9IFdeaF93Il0sWzMsMl0sWzEsMiwiYV9qIl0sWzAsM10sWzAsMV1d
	\begin{equation} \label{equation: first cartesian diagram}
	\begin{tikzcd}
		{X_4} & {S_i} \\
		{X_0 \times_{\AA(V')} W^h_w} & {X_0 \times_{\AA(V')} U_i.}
		\arrow[from=1-1, to=1-2]
		\arrow[from=1-1, to=2-1]
		\arrow["{a_i}", from=1-2, to=2-2]
		\arrow[from=2-1, to=2-2]
	\end{tikzcd}
	\end{equation}
	
	We first show how to obtain $\Pnum{1}$. Since $\varphi_4 \colon X_4 \to \AA^1_{W^h_w}$ is étale, by Lemma \ref{lemma: limit descending}, there exists an index $j \in I$, a scheme $S_j$ with a $G$-action and a $G$-equivariant morphism $b_j \colon S_j \to \AA^1_{U_j}$ which is étale, fitting into a cartesian diagram
	% https://q.uiver.app/#q=WzAsNCxbMCwwLCJYXzQiXSxbMSwwLCJYX2kiXSxbMSwxLCJcXEFBXjFfe1VfaX0iXSxbMCwxLCJcXEFBXjFfe1deaF93fSJdLFszLDJdLFsxLDIsImEiXSxbMCwzLCJcXHZhcnBoaSIsMl0sWzAsMV1d
	\begin{equation} \label{equation: second cartesian diagram}
	\begin{tikzcd}
		{X_4} & {S_j} \\
		{\AA^1_{W^h_w}} & {\AA^1_{U_j}.}
		\arrow[from=1-1, to=1-2]
		\arrow["\varphi_4"', from=1-1, to=2-1]
		\arrow["b_j", from=1-2, to=2-2]
		\arrow[from=2-1, to=2-2]
	\end{tikzcd}
	\end{equation}
	After increasing the index, we may assume $j \geq i$. We rename $j = i$ and thus we have a scheme $S_i$ and morphisms $a_i, b_i$ as in diagram \ref{equation: first cartesian diagram} and diagram \ref{equation: second cartesian diagram}.
	
	We now show how to obtain property $\Pnum{2}$. Set $V_i \coloneq S_i \times_X Z$, so that we have a cartesian diagram as follows:
	% https://q.uiver.app/#q=WzAsMTQsWzEsMCwiVl9pIl0sWzEsMSwiU19pIl0sWzIsMSwiWF8wIFxcdGltZXNfe1xcQUEoVicpfVVfaSJdLFsyLDAsIlpfMCBcXHRpbWVzX3tcXEFBKFYnKX1VX2kiXSxbMywwLCJaXzAiXSxbMywxLCJYXzAiXSxbNCwxLCJYIl0sWzQsMCwiWiJdLFswLDEsIlhfNCJdLFswLDAsIlpfNCJdLFsxLDIsIlxcQUFeMV97VV9pfSJdLFswLDIsIlxcQUFeMV97V157aH1fd30iXSxbMSwzLCJVX2kiXSxbMCwzLCJXXntofV93Il0sWzcsNl0sWzUsNl0sWzIsNV0sWzQsN10sWzMsMl0sWzAsMSwiY19pIiwyXSxbMSwyLCJhX2kiLDJdLFswLDNdLFszLDRdLFs0LDVdLFs5LDhdLFs4LDFdLFs5LDBdLFs4LDExLCJcXHZhcnBoaV80IiwyXSxbMTEsMTBdLFsxLDEwLCJiX2kiLDJdLFsxMywxMl0sWzEwLDEyXSxbMTEsMTNdXQ==
	\[\begin{tikzcd}
		{Z_4} & {V_i} & {Z_0 \times_{\AA(V')}U_i} & {Z_0} & Z \\
		{X_4} & {S_i} & {X_0 \times_{\AA(V')}U_i} & {X_0} & X \\
		{\AA^1_{W^{h}_w}} & {\AA^1_{U_i}} \\
		{W^{h}_w} & {U_i.}
		\arrow[from=1-1, to=1-2]
		\arrow[from=1-1, to=2-1]
		\arrow[from=1-2, to=1-3]
		\arrow["{c_i}"', from=1-2, to=2-2]
		\arrow[from=1-3, to=1-4]
		\arrow[from=1-3, to=2-3]
		\arrow[from=1-4, to=1-5]
		\arrow[from=1-4, to=2-4]
		\arrow[from=1-5, to=2-5]
		\arrow[from=2-1, to=2-2]
		\arrow["{\varphi_4}"', from=2-1, to=3-1]
		\arrow["{a_i}"', from=2-2, to=2-3]
		\arrow["{b_i}"', from=2-2, to=3-2]
		\arrow[from=2-3, to=2-4]
		\arrow[from=2-4, to=2-5]
		\arrow[from=3-1, to=3-2]
		\arrow[from=3-1, to=4-1]
		\arrow[from=3-2, to=4-2]
		\arrow[from=4-1, to=4-2]
	\end{tikzcd}\]
	 Since $\varphi_{4_{|Z_4}} \colon Z_4 \to \AA^1_{W^h_w}$ is a closed immersion, again by Lemma \ref{lemma: limit descending}, after increasing the index, we may assume that $b_i \circ c_i \colon V_i \to \AA^1_{U_i}$ is a closed immersion.
	Next we show how to obtain property $\Pnum{3}$. Since $Z_4 \to W^h_w$ is finite, again by Lemma \ref{lemma: limit descending}, after increasing the index, we may assume that $V_i \to U_i$ is finite.
	
	Finally, we obtain property $\Pnum{4}$ as follows. The morphism $b_{i} \colon S_i \to \AA^1_{U_i}$ induces $b_i^{-1}(b_i(V_i)) \to b_i(V_i)$ which is an isomorphism on the base change over $W^h_w$, because $\varphi_4^{-1}(\varphi_4(Z_4)) \simeq \varphi_4(Z_4)$. Then, after increasing the index $i$ again, we may assume by Lemma \ref{lemma: limit descending} that $b_i^{-1}(b_i(V_i)) \xrightarrow{\sim} b_i(V_i)$ is an isomorphism.

	Set $X' \coloneq S_i$, $W \coloneq U_i$ and $\varphi \coloneq b_i \colon X' \to \AA^1_{W}$. Then $Z_{X'} = X' \times_X Z = V_i$. Note that $z \in X'$ because $z \in X_4$. Let $V$ be the trivial one-dimensional $G$-representation. We claim that $(X',W, V, \varphi)$ forms a $G$-equivariant Gabber's presentation with respect to the point $z$.
	
	In the following diagram
	% https://q.uiver.app/#q=WzAsNSxbMSwwLCJYIFxcdGltZXNfe1xcQUFebl9rfVciXSxbMSwxLCJYIl0sWzIsMSwiXFxBQV5uX2siXSxbMiwwLCJXIl0sWzAsMCwiWCciXSxbMCwxLCJjIl0sWzEsMl0sWzMsMiwiYiJdLFswLDNdLFs0LDAsIiIsMCx7InN0eWxlIjp7InRhaWwiOnsibmFtZSI6Imhvb2siLCJzaWRlIjoidG9wIn0sImJvZHkiOnsibmFtZSI6ImJ1bGxldCBob2xsb3cifX19XV0=
	\[\begin{tikzcd}
		{X'} & {X_0 \times_{\AA(V')} W} & W \\
		& X_0 & {\AA(V')}
		\arrow["\bullet"{marking, text=\pgfkeysvalueof{/tikz/commutative diagrams/background color}}, "\circ"{marking}, hook, from=1-1, to=1-2]
		\arrow[from=1-2, to=1-3]
		\arrow["\alpha", from=1-2, to=2-2]
		\arrow["\beta", from=1-3, to=2-3]
		\arrow[from=2-2, to=2-3]
	\end{tikzcd}\]
	the morphism $\beta$ is, by definition of $W$, an equivariant Nisnevich neighborhood of $Gw=w$; hence $\alpha$ is an equivariant Nisnevich neighborhood of $Gz=z$. Indeed the morphisms
	\[
	\Spec(\kappa(z)) \xlongrightarrow{z} X_0 \qquad \textnormal{and} \qquad \Spec(\kappa(z)) \xlongrightarrow{w}  \Spec(\kappa(w)) \longrightarrow W
	\]
	give rise to the section $\Spec(\kappa(z)) \to X_0 \times_{\AA(V')} W$, because they form a commutative diagram over $\AA(V')$.
	In particular, also $X' \to X_0$ is a $G$-equivariant Nisnevich neighborhood of $z$, because $X'$ is open in $X_0 \times_{\AA(V')} W$ and it contains $z$. Since $X_0 \to X$ is a $G$-equivariant Nisnevich neighborhood of $Gz=z$, then also the composite $X' \to X_0 \to X$ is a $G$-equivariant Nisnevich neighborhood of $Gz=z$, giving $\Gnum{1}$. The scheme $W$ is a smooth $k$-scheme endowed with a $G$-action, giving $\Gnum{2}$. The datum $\Gnum{3}$ is given by the trivial one-dimensional $G$-representation $V$. The datum $\Gnum{4}$ is given by the $G$-equivariant morphism $\varphi \colon X' \to \AA^1_W \simeq W \times_k \AA(V)$. By construction, $\varphi$ is étale, giving $\Pnum{1}$; the restriction to $Z_{X'}$ is a closed immersion, giving $\Pnum{2}$; $Z_{X'}$ is finite over $W$, giving $\Pnum{3}$; and $\varphi$ restricts to an isomorphism $\varphi^{-1}(\varphi(Z_{X'})) \xrightarrow{\sim} \varphi(Z_{X'})$, giving $\Pnum{4}$.
\end{proof}

This concludes the proof of the case $G_z=\left\{e\right\}$ and $S_z = G$.

\subsection{The case when $G_z = \left\{e\right\}$ and $S_z \subsetneq G$} \label{subsection: case B}

We now prove the case in which $G_z = \left\{e\right\}$ and the set-theoretic stabilizer $S_z \subsetneq G$ is a strictly smaller subgroup of $G$.

First note that the morphism $e \colon G \times^{S_z} X \to X$, mapping $[(g,x)] \mapsto gx$ is a $G$-equivariant Nisnevich neighborhood of $Gz$, where the scheme $X$ on the left is endowed with the $S_z$-action coming from the restriction of the $G$-action. Indeed the $S_z$-equivariant morphism $\Spec(\kappa(z)) \to X$ induces a $G$-equivariant morphism $s \colon G \times^{S_z} \Spec(\kappa(z)) \to G \times^{S_z} X$ which fits into a $G$-equivariant commutative diagram as follows:

% https://q.uiver.app/#q=WzAsMyxbMCwxLCJHIFxcdGltZXNee1Nfen1cXFNwZWMoXFxrYXBwYSh6KSkiXSxbMSwxLCJYIl0sWzEsMCwiRyBcXHRpbWVzXntTX3p9IFgiXSxbMiwxLCJlIl0sWzAsMV0sWzAsMiwicyJdXQ==
\[\begin{tikzcd}
	& {G \times^{S_z} X} \\
	{G \times^{S_z}\Spec(\kappa(z))} & X.
	\arrow["e", from=1-2, to=2-2]
	\arrow["s", from=2-1, to=1-2]
	\arrow[from=2-1, to=2-2]
\end{tikzcd}\]

Considering the $S_z$-equivariant setting, the point $z \in X$ has again free action but is set-theoretically fixed (with respect to $S_z$). In that case, we proved in the previous section that Theorem \ref{theorem: gabber} holds in the $S_z$-equivariant setting. It follows that there exists an $S_z$-equivariant Gabber's presentation $(\tilde{X}, \tilde{W}, V, \tilde{\varphi})$ with respect to the point $z$ as in Definition \ref{definition: gabber presentation}. More precisely there exist:
\begin{itemize}
	\item a $S_z$-equivariant Nisnevich neighborhood $l \colon  \tilde{X} \to X$ of $S_zz=z$;
	\item a smooth $k$-scheme $\tilde{W}$ endowed with an $S_z$-action;
	\item a one-dimensional $S_z$-representation $V$ which is the trivial one-dimensional $S_z$-representation by Claim \ref{claim: produce actual model};
	\item a $S_z$-equivariant morphism $\tilde{\varphi} \colon \tilde{X} \to \tilde{W} \times_k \AA(V)$ satisfying $\Pnum{1}, \Pnum{2}, \Pnum{3}$ and $\Pnum{4}$ of Definition \ref{definition: gabber presentation}.
\end{itemize}
Recall that $S_z z = S_z \times^{S_z} \Spec(\kappa(z)) \simeq \Spec (\kappa(z))$ endowed with its original free $S_z$-action. The $S_z$-equivariant morphism $l \colon \tilde{X} \to X$ induces a $G$-equivariant morphism $l' \colon G \times^{S_z} \tilde{X} \to G \times^{S_z} X$ mapping $[g, \tilde{x}]$ to $[g, l(\tilde{x})]$.

\begin{claim}
	The composite $G \times^{S_z} \tilde{X} \xrightarrow{l'} G \times^{S_z} X \xrightarrow{e} X$ is a $G$-equivariant Nisnevich neighborhood of $Gz$.
\end{claim}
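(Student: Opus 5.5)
The plan is to verify the three ingredients of a $G$-equivariant Nisnevich neighborhood for the composite $e \circ l'$: $G$-equivariance, étaleness, and the existence of a compatible section through the orbit $Gz$.

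Equivariance is immediate. The map $l'$ is $G$-equivariant by construction, since it is induced from the $S_z$-equivariant map $l$. The map $e$ is $G$-equivariant, as already observed.

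For étaleness, I would argue as follows.
\begin{itemize}
\item $G \times^{S_z} \tilde{X}$ is the disjoint union of $|G/S_z|$ copies of $\tilde{X}$, indexed by coset representatives $\alpha_i$. The same description holds for $G \times^{S_z} X$.
\item Under these identifications, $l'$ is $l$ on each copy, so it is étale because $l$ is.
\item Likewise $e$ is, on the copy indexed by $\alpha_i$, the automorphism $\alpha_i \colon X \to X$, so $e$ is étale.
\item Hence the composite is étale.
\end{itemize}

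For the section, I would use the $S_z$-equivariant section $\tilde{s} \colon \Spec(\kappa(z)) \to \tilde{X}$ over $z$ given by the $S_z$-Nisnevich neighborhood $l$. It induces a $G$-equivariant map
\[
G \times^{S_z} \tilde{s} \colon G \times^{S_z} \Spec(\kappa(z)) \to G \times^{S_z} \tilde{X}.
\]
Composing with $l'$ gives $G \times^{S_z}(l \circ \tilde{s})$, which is the section $s$ into $G \times^{S_z} X$ already considered. Composing further with $e$ gives the canonical map $G \times^{S_z} \Spec(\kappa(z)) \to X$ used in the definition, by the commutative triangle displayed before the statement. So the required triangle commutes.

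The only real subtlety, and the main point to check carefully, is functoriality of $G \times^{S_z}(-)$. It must preserve étale morphisms and commute with composition, so that the section triangle for $l$ transports correctly. I would handle this via the explicit coset decomposition above: $G \times^{S_z} Y \simeq \coprod_i Y$, with the $G$-action permuting the components and twisting by elements of $S_z$. This reduces every check to the corresponding statement for $l$ and for $\tilde{s}$ on a single component.
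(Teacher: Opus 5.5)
Your proof is correct. The section step is the same as the paper's: the $S_z$-equivariant section $\tilde{s}$ of $l$ is transported by the functor $G \times^{S_z}(-)$.

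The difference is in the étaleness step.

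\emph{The paper's route.} It first notes that $e$ is already a $G$-equivariant Nisnevich neighborhood of $Gz$, so it only has to treat $l'$. It proves $l'$ étale by descent. The map $\textnormal{id}_G \times l \colon G \times_k \tilde{X} \to G \times_k X$ is étale. Its square with the quotient maps $G \times_k (-) \to G \times^{S_z}(-)$ is a cartesian square of $S_z$-torsors, and étaleness is fpqc local on the base.

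\emph{Your route.} You use the explicit splitting $G \times^{S_z} Y \simeq \coprod_{G/S_z} Y$, given by $[(\alpha_i t, y)] \mapsto ty$ for $t \in S_z$. Under this identification, $l'$ is $l$ on each component (this uses the $S_z$-equivariance of $l$), and $e$ is the automorphism $\alpha_i$ on the $i$-th component.

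\emph{Comparison.} Your argument is more elementary. It also checks that $e$ is étale, which the paper takes for granted when it introduces $e$. It depends on $G$ being a finite constant group, so that the contracted product splits into copies of $Y$. The paper's descent argument avoids choosing coset representatives. It is the same pattern the paper reuses in the next claim to transport \Pnum{1}--\Pnum{4} from $\tilde{\varphi}$ to $\varphi$. The paper also uses your coset splitting in that claim, to identify $Z_{X'} = G \times^{S_z} \tilde{Z}$.
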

\begin{proof}
	We already know that $e \colon G \times^{S_z} X \to X$ is a $G$-equivariant Nisnevich neighborhood of $Gz$. We thus only need to show that $l' \colon G \times^{S_z} \tilde{X} \to G \times^{S_z} X$ is a $G$-equivariant Nisnevich neighborhood of $Gz$. We already know that $l'$ is a $G$-equivariant morphism. We show that $l'$ is étale as follows. The morphism $l \colon \tilde{X} \to X$ is étale, and thus also the pullback morphism $\textnormal{id}_G \times l \colon G \times_k \tilde{X} \to G \times_k X$ is étale. The $S_z$-action on the schemes $G \times_k \tilde{X}$ and $G \times_k X$ is free. Thus  we have a cartesian square of $S_z$-torsors
	% https://q.uiver.app/#q=WzAsNCxbMCwwLCJHIFxcdGltZXNfayBcXHRpbGRle1h9Il0sWzEsMCwiRyBcXHRpbWVzX2sgWCJdLFswLDEsIkcgXFx0aW1lc157U196fVxcdGlsZGV7WH0iXSxbMSwxLCJHIFxcdGltZXNee1Nfen1YIl0sWzEsM10sWzIsM10sWzAsMl0sWzAsMV1d
	\[\begin{tikzcd}
		{G \times_k \tilde{X}} & {G \times_k X} \\
		{G \times^{S_z}\tilde{X}} & {G \times^{S_z}X,}
		\arrow["\textnormal{id}_G \times l", from=1-1, to=1-2]
		\arrow[from=1-1, to=2-1]
		\arrow[from=1-2, to=2-2]
		\arrow["l'", from=2-1, to=2-2]
	\end{tikzcd}\]
	where the vertical arrows are faithfully flat and quasi-compact (even étale), because they are the quotient morphisms. Then the morphism $l'$ is étale, because étaleness is an fpqc local property on the base.
	
	It remains to show that there exists a $G$-equivariant section from $G \times^{S_z} \Spec(\kappa(z))$ to $G \times^{S_z} \tilde{X}$. By construction, there exists an $S_z$-equivariant section $\tilde{s}$ fitting into a commutative triangle as follows: 
	% https://q.uiver.app/#q=WzAsMyxbMCwxLCJcXFNwZWMoXFxrYXBwYSh6KSkiXSxbMSwwLCJcXHRpbGRle1h9Il0sWzEsMSwiWCJdLFswLDJdLFswLDEsIlxcdGlsZGV7c30iXSxbMSwyLCJsIl1d
	\[\begin{tikzcd}
		& {\tilde{X}} \\
		{\Spec(\kappa(z))} & X.
		\arrow["l", from=1-2, to=2-2]
		\arrow["{\tilde{s}}", from=2-1, to=1-2]
		\arrow[from=2-1, to=2-2]
	\end{tikzcd}\]
	Then we obtain a $G$-equivariant section on contracted products
	% https://q.uiver.app/#q=WzAsMyxbMCwxLCJHIFxcdGltZXNee1Nfen1cXFNwZWMoXFxrYXBwYSh6KSkiXSxbMSwwLCJHIFxcdGltZXNee1Nfen1cXHRpbGRle1h9Il0sWzEsMSwiRyBcXHRpbWVzXntTX3p9WCJdLFswLDJdLFswLDEsInMiXSxbMSwyLCJsJyJdXQ==
	\[\begin{tikzcd}
		& {G \times^{S_z}\tilde{X}} \\
		{G \times^{S_z}\Spec(\kappa(z))} & {G \times^{S_z}X,}
		\arrow["{l'}", from=1-2, to=2-2]
		\arrow["s", from=2-1, to=1-2]
		\arrow[from=2-1, to=2-2]
	\end{tikzcd}\]
	as desired.
\end{proof}

Set $X' \coloneq G \times^{S_z} \tilde{X}$ and $W \coloneq G \times^{S_z}\tilde{W}$. Then $X'$ and $W$ provide data $\Gnum{1}$ and $\Gnum{2}$. The next claim shows that we can also obtain data $\Gnum{3}$ and $\Gnum{4}$.

\begin{claim} \label{claim: conclusion case B}
	Let $V$ be the trivial one-dimensional $G$-representation over $k$. Then there exists a $G$-equivariant morphism $\varphi \colon X' \to W \times_k \AA(V)$ such that the quadruple $(X', W, V, \varphi)$ forms a $G$-equivariant Gabber's presentation of $(X,Z)$ with respect to the point $z$.
\end{claim}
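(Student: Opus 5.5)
The plan is to induce the $S_z$-equivariant presentation up to $G$. Since $V$ is the trivial one-dimensional representation, we have a $G$-equivariant identification
\[
G \times^{S_z} (\tilde{W} \times_k \AA(V)) \simeq (G \times^{S_z} \tilde{W}) \times_k \AA(V) = W \times_k \AA(V).
\]
This holds because $G$ acts trivially on the $\AA(V)$ factor: the map $[g,(w,t)] \mapsto ([g,w],t)$ is well defined and $G$-equivariant, and it is an isomorphism. One checks this on the cover $G \times_k \tilde{W} \times_k \AA(V)$, where both sides are quotients by the free $S_z$-action.

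We then define
\[
\varphi \coloneq G \times^{S_z} \tilde{\varphi} \colon X' = G \times^{S_z} \tilde{X} \longrightarrow W \times_k \AA(V),
\]
composed with this identification. It is $G$-equivariant by construction, and the scheme $W$ is smooth over $k$, being a finite disjoint union of copies of $\tilde{W}$.

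The first key observation is that $G \times^{S_z} Y$ is just the disjoint union $\coprod_{gS_z \in G/S_z} g\cdot Y$ of $[G:S_z]$ copies of $Y$, one for each chosen coset representative. Under this decomposition, $\varphi$ is the disjoint union of copies of $\tilde{\varphi}$, each mapping the component indexed by $gS_z$ into the component $g\cdot\tilde{W}\times\AA(V)$. Properties $\Pnum{1}$ (étale) and $\Pnum{3}$ (finiteness of $Z_{X'} \to W$) can be checked componentwise, so they follow at once from those of $\tilde{\varphi}$.

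The second key point is to identify $Z_{X'}$. The composite $X' \to G \times^{S_z} X \xrightarrow{e} X$ sends $[g,\tilde x] \mapsto g\,l(\tilde x)$. Since $Z$ is $G$-invariant, we have $e^{-1}(Z) = G \times^{S_z} Z$, and therefore
\[
Z_{X'} = G \times^{S_z} Z_{\tilde{X}} = \coprod_{g} g\cdot Z_{\tilde{X}}.
\]
On the component indexed by $g$ this means that $x'\in Z_{X'}$ if and only if $g\,l(\tilde x)\in Z$, which is equivalent to $l(\tilde x) \in Z$.

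Given this description, the closed immersion property $\Pnum{2}$ follows. The components of $Z_{X'}$ map into pairwise disjoint open and closed pieces of $W\times\AA(V)$, so a disjoint union of closed immersions into disjoint clopen targets is a closed immersion. The same reasoning gives $\Pnum{4}$: we have $\varphi(Z_{X'}) = \coprod_g g\cdot\tilde\varphi(Z_{\tilde X})$, and its preimage is computed componentwise as $\coprod_g g\cdot \tilde\varphi^{-1}(\tilde\varphi(Z_{\tilde X}))$, which is isomorphic to its image by $\Pnum{4}$ for $\tilde\varphi$.

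The step I expect to be the main obstacle is this bookkeeping of the contracted product. The delicate points are, first, the compatibility of fibre products and scheme-theoretic images with $G\times^{S_z}(-)$, which I would verify by pulling back along the $S_z$-torsor $G\times_k(-)\to G\times^{S_z}(-)$ and using fpqc descent of closed immersions and isomorphisms; and second, making sure that no cross terms arise between different components. Such cross terms cannot occur because $\varphi$ covers the projection $G\times^{S_z}\tilde W\to G/S_z$.

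Since $V$ is trivial, no equivariant section of $\psi$ is required. Together with $\Gnum{1}$ and $\Gnum{2}$ from the preceding claim, this completes the proof.
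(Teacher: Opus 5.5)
Your proposal is correct and follows essentially the same route as the paper. Both define $\varphi$ by $[g,x]\mapsto([g,\tilde\psi(x)],\tilde\nu(x))$, which is well defined because $V$ is trivial, identify $Z_{X'}=G\times^{S_z}\tilde Z$ through the coset decomposition, and transfer $\Pnum{1}$--$\Pnum{4}$ from $\tilde\varphi$. The only difference is cosmetic: the paper transfers the properties by fpqc descent along the $S_z$-torsor $G\times_k\tilde X\to X'$, whereas you check them directly on the $[G:S_z]$ disjoint components. Your way is equally valid, since $\varphi$ sends each component of $X'$ into the matching clopen piece $g\cdot\tilde W\times_k\AA(V)$.
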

\begin{proof}
	The $S_z$-equivariant morphism $\tilde{\varphi} = (\tilde{\psi}, \tilde{\nu}) \colon \tilde{X} \to \tilde{W} \times_k \AA(V)$ gives a $G$-equivariant morphism
	\[
	\varphi \colon X' = G \times^{S_z} \tilde{X} \to (G \times^{S_z} \tilde{W}) \times_k \AA(V) = W \times_k \AA(V)
	\]
	mapping $[g,x]$ to $([g,\tilde{\psi}(x)], \tilde{\nu}(x))$, which is well defined because $V$ has trivial $G$-action. The trivial $G$-representation $V$ provides $\Gnum{3}$ and the morphism $\varphi$ provides $\Gnum{4}$. We need to show that $\varphi$ satisfies the properties $\Pnum{1}, \Pnum{2}, \Pnum{3}$ and $\Pnum{4}$ of Definition \ref{definition: gabber presentation}. Since these properties are fpqc (even étale) local on the base, we can argue as follows. To simplify the notation, we write $\AA^1_k$ for $\AA(V)$. The map $\textnormal{id}_G \times \tilde{\varphi} \colon G \times_k \tilde{X} \to G \times_k \AA^1_{\tilde{W}}$ is étale, because $\tilde{\varphi}$ is étale. Passing to the quotient with respect to the $S_z$-action, the map remains étale because we have a cartesian diagram of $S_z$-torsors
	% https://q.uiver.app/#q=WzAsNCxbMCwwLCJHIFxcdGltZXNfayBcXHRpbGRle1h9Il0sWzEsMCwiRyBcXHRpbWVzX2sgXFxBQV4xX2siXSxbMCwxLCJHIFxcdGltZXNee1Nfen0gXFx0aWxkZXtYfSJdLFsxLDEsIiBHXFx0aW1lc157U196fSBcXEFBXjFfayJdLFsyLDNdLFsxLDNdLFswLDJdLFswLDFdXQ==
	\[\begin{tikzcd}
		{G \times_k \tilde{X}} & {G \times_k \AA^1_{\tilde{W}}} \\
		{X' =G \times^{S_z} \tilde{X}} & { G\times^{S_z} \AA^1_{\tilde{W}}= \AA^1_W}
		\arrow[from=1-1, to=1-2]
		\arrow[from=1-1, to=2-1]
		\arrow["\pi", from=1-2, to=2-2]
		\arrow["\varphi", from=2-1, to=2-2]
	\end{tikzcd}\]
	and étaleness is an fpqc local property on the base, giving $\Pnum{1}$. Let $\{g_i \}$ be a complete set of left coset representatives for the quotient $G/S_z$. Let $\tilde{Z} \coloneq \tilde{X} \times_X Z$. There is a cartesian diagram as follows:
	% https://q.uiver.app/#q=WzAsNixbMiwwLCJaIl0sWzIsMSwiWCJdLFsxLDEsIkcgXFx0aW1lc157U196fVggPVxcY29wcm9kX3tnX2l9WCJdLFsxLDAsIkcgXFx0aW1lc157U196fVogPVxcY29wcm9kX3tnX2l9WiJdLFswLDEsIkcgXFx0aW1lc157U196fSBcXHRpbGRle1h9PVxcY29wcm9kX3tnX2l9XFx0aWxkZXtYfSJdLFswLDAsIkcgXFx0aW1lc157U196fVxcdGlsZGV7Wn0gPSBcXGNvcHJvZF97Z19pfVxcdGlsZGV7Wn0iXSxbNCwyXSxbMiwxXSxbMywwXSxbNSw0XSxbNSwzXSxbMCwxXSxbMywyXV0=
	\[\begin{tikzcd}
		{G \times^{S_z}\tilde{Z} = \coprod_{g_i}\tilde{Z}} & {G \times^{S_z}Z =\coprod_{g_i}Z} & Z \\
		{G \times^{S_z} \tilde{X}=\coprod_{g_i}\tilde{X}} & {G \times^{S_z}X =\coprod_{g_i}X} & X,
		\arrow[from=1-1, to=1-2]
		\arrow[from=1-1, to=2-1]
		\arrow[from=1-2, to=1-3]
		\arrow[from=1-2, to=2-2]
		\arrow[from=1-3, to=2-3]
		\arrow[from=2-1, to=2-2]
		\arrow[from=2-2, to=2-3]
	\end{tikzcd}\]
	so that $Z_{X'} = G\times^{S_z} \tilde{Z}$.

	Next note that we also have a cartesian diagram of $S_z$-torsors as follows:
	
	% https://q.uiver.app/#q=WzAsNixbMCwwLCIgRyBcXHRpbWVzX2tcXHRpbGRle1p9Il0sWzEsMCwiRyBcXHRpbWVzX2tcXHRpbGRle1h9Il0sWzIsMCwiRyBcXHRpbWVzX2tcXEFBXjFfe1xcdGlsZGV7V319Il0sWzIsMSwiXFxBQV4xX3tXfSJdLFsxLDEsIlgnIl0sWzAsMSwiWiciXSxbMCw1XSxbNSw0XSxbNCwzLCJcXHZhcnBoaSJdLFsyLDNdLFsxLDRdLFsxLDIsIlxcdGV4dG5vcm1hbHtpZH1cXHRpbWVzIFxcdGlsZGV7XFx2YXJwaGl9Il0sWzAsMV1d
	\[\begin{tikzcd}
		{ G \times_k\tilde{Z}} & {G \times_k\tilde{X}} & {G \times_k\AA^1_{\tilde{W}}} \\
		{Z_{X'} = G \times^{S_z} \tilde{Z}} & {X'} & {\AA^1_{W}.}
		\arrow[from=1-1, to=1-2]
		\arrow[from=1-1, to=2-1]
		\arrow["{\textnormal{id}\times \tilde{\varphi}}", from=1-2, to=1-3]
		\arrow[from=1-2, to=2-2]
		\arrow[from=1-3, to=2-3]
		\arrow[from=2-1, to=2-2]
		\arrow["\varphi", from=2-2, to=2-3]
	\end{tikzcd}\]
	
	The properties $\Pnum{2}, \Pnum{3}$ and $\Pnum{4}$ are all fpqc local properties on the base. Since $\tilde{\varphi} \colon \tilde{X} \to \AA^1_{\tilde{W}}$ satisfies those properties, then also $\varphi \colon X' \to \AA^1_W$ satisfies the same properties, arguing the same way as we did for étaleness.
\end{proof}

\begin{remark} \label{remark: Case A and B give trivial one-dimensional representation}
	By Claim \ref{claim: produce actual model} and Claim \ref{claim: conclusion case B}, we see that both Case \ref{subsection: case A} and Case \ref{subsection: case B} produce a $G$-equivariant Gabber's presentation in which $\Gnum{3}$ is given by the trivial one-dimensional $G$-representation $V$ over $k$.
\end{remark}

\subsection{The case when $G_z \neq \left\{e\right\}$} \label{subsection: case C}

Suppose that the $G$-action on $z$ is not free. Set $H \coloneq G_z$ to be the geometric stabilizer of $z$. This is the largest subgroup of $G$ for which $z \in X^{H}$ holds. 
After replacing $X$ by the open $G$-invariant subscheme $X \setminus(\bigcup_{K \nsubseteq H} X^K)$ we may assume that $G_x \subseteq H$ for all $x \in X$. Let $z'$ be a closed specialization of $z$. Note that $z' \in X^{H}$. 

\begin{claim} \label{claim: if trivial on open neighborhood, conclusions hold}
	Suppose that there exists a $G$-invariant open neighborhood $U$ of $z$ in $X$ such that $U= U^H$ holds (i.e.\ the action of $H$ on $U$ given by the restriction of the $G$-action is trivial). Then there exists a $G$-equivariant Gabber's presentation $(X', W, V_e, \varphi)$ of $(X,Z)$ with respect to the point $z$, where $V_e$ is the trivial one-dimensional $G$-representation over $k$.
\end{claim}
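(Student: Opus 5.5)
The idea is to reduce to the free cases already treated, using the quotient group $\bar G \coloneq G/H$. The subgroup $H$ acts trivially on $U$, so the $G$-action on $U$ factors through $\bar G$, which is again cyclic of $p$-power order, namely $C_{p^{r'}}$ with $r' \leq r$. The field $k$ still contains a primitive $p^{r'}$th root of unity. Since $H = G_z$, the geometric stabilizer of $z$ for the $\bar G$-action on $U$ is trivial. Also $(U, Z \cap U)$ is a $\bar G$-pair: $U$ is open, hence smooth and separated, and $Z\cap U$ is $\bar G$-invariant of positive codimension.

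So the plan is as follows. First, replace $X$ by $U$ (Remark \ref{rem: open neighborhoods are Nisnevich neighborhoods}). Second, apply Case \ref{subsection: case A} or Case \ref{subsection: case B} to the $\bar G$-pair $(U, Z\cap U)$ and the point $z$; which one applies depends on whether the set-theoretic stabilizer of $z$ in $\bar G$ is all of $\bar G$. By Remark \ref{remark: Case A and B give trivial one-dimensional representation}, this yields a $\bar G$-equivariant Gabber's presentation $(X', W, V_e, \varphi)$ with $V_e$ the trivial one-dimensional representation. Third, inflate everything along $G \to \bar G$. The schemes $X'$ and $W$ become $G$-schemes on which $H$ acts trivially, $V_e$ is the trivial $G$-representation, and $\varphi$ is $G$-equivariant. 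Properties $\Pnum{1}$--$\Pnum{4}$ do not involve the group, so they are unchanged.

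The main obstacle is checking that a $\bar G$-equivariant Nisnevich neighborhood $X' \to U$ of $\bar G z$ is also a $G$-equivariant Nisnevich neighborhood of $Gz$ in the sense of the definition, that is, that the source of the section matches. Let $\bar S_z$ be the set-theoretic stabilizer of $z$ in $\bar G$. Then $S_z$ is the preimage of $\bar S_z$ in $G$, since $H\subseteq S_z$ acts trivially on $U$. Hence $G/S_z \simeq \bar G/\bar S_z$. Because $H$ acts trivially on $\kappa(z)$, we get a $G$-equivariant identification $G\times^{S_z}\Spec(\kappa(z)) \simeq \bar G \times^{\bar S_z}\Spec(\kappa(z))$, and the section $s$ transports across it. Étaleness is unaffected. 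The composite $X'\to U\to X$ is then a $G$-equivariant Nisnevich neighborhood, which gives $\Gnum{1}$–$\Gnum{4}$.

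Finally, I would check that the hypotheses of Cases \ref{subsection: case A} and \ref{subsection: case B} were only used with respect to the acting group and the field's roots of unity. Those arguments used that the group is cyclic of prime-power order, that $k$ is infinite and perfect, and that $k$ contains the needed roots of unity. All of these hold for $\bar G$, so that step is a bookkeeping check rather than a real difficulty.
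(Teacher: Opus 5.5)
Your proposal is correct and matches the paper's proof step for step: pass to $G/H$, apply Case \ref{subsection: case A} or Case \ref{subsection: case B} to the pair $(U, Z\cap U)$, use Remark \ref{remark: Case A and B give trivial one-dimensional representation} to get the trivial $V_e$, and inflate along $G \to G/H$, checking the section condition via the map $G\times^{S_z}\Spec(\kappa(z)) \to (G/H)\times^{S_z/H}\Spec(\kappa(z))$. The only cosmetic difference is that you note this map is an isomorphism (because $S_z$ is the preimage of $S_z/H$ and $H$ acts trivially on $\kappa(z)$), whereas the paper only needs to compose it with the given section $s'$.
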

\begin{proof}
	Let $G' \coloneq G/H$ be the quotient group.
	We claim that $(U,Z_U)$ is a $G'$-pair as in Definition \ref{definition: gabber presentation}. Since $U=U^H$, the quotient group $G' = G/H$ acts on $U$ as $\overline{g}.u \coloneq gu$. Indeed, the $G$-action on $U$ is given by a group homomorphism $\rho \colon G \to \Aut_{k}(U)$, and $\rho$ factors through the quotient group $G' = G/H$, because $U$ is fixed by $H$, giving as a result a group homomorphism $G/H \to \Aut_k(U)$, representing the $G'$-action on $U$. It follows that the scheme $U$ is a smooth $k$-scheme of positive dimension endowed with a $G'$-action, and $Z_U = Z \cap U$ is a closed $G'$-invariant subscheme of positive codimension, as desired.
	
	Under the $G'$-action on $U$, the set-theoretic stabilizer of $z$ is $S_z' = S_z/H$, while the geometric stabilizer becomes trivial.
	Moreover, $U \subset X$ is a $G$-equivariant Nisnevich neighborhood of $Gz$ by Remark \ref{rem: open neighborhoods are Nisnevich neighborhoods}.
	
	By Case \ref{subsection: case A} or Case \ref{subsection: case B}, depending on whether $S'_z = G'$ or $S_{z'}' \subsetneq G'$, there exists a $G'$-equivariant Gabber's presentation of the pair $(U, Z_U)$. More precisely the following data exist:
	\begin{enumerate}
		\item[$\Gnum{1}$] a $G'$-equivariant Nisnevich neighborhood $\alpha \colon X' \to U$ of $G'z$;
		\item[$\Gnum{2}$] a smooth $k$-scheme $W$ with a $G'$-action;
		\item[$\Gnum{3}$] a one-dimensional $G'$-representation $V_e$ over $k$ which is the trivial one-dimensional $G'$-representation by Remark \ref{remark: Case A and B give trivial one-dimensional representation};
		\item[$\Gnum{4}$] a $G'$-equivariant morphism $\varphi \colon X' \to W \times_k \AA(V_e)$ satisfying properties $\Pnum{1}, \Pnum{2}, \Pnum{3}$ and $\Pnum{4}$.
	\end{enumerate} 
	We will now show that these data determine a $G$-equivariant Gabber's presentation of the pair $(X, Z)$.
	We endow $X'$ with a $G$-action given by $gx \coloneqq \overline{g}x$ and we claim that $\alpha \colon X' \to U$ is also a $G$-equivariant Nisnevich neighborhood of $Gz$. First note that $\alpha$ is $G$-equivariant, because $\alpha(g x') = \alpha(\overline{g}x') = \overline{g}\alpha(x') = g \alpha(x')$ for all $g \in G$, $x' \in X'$. By definition of $G'$-equivariant Nisnevich neighborhood, there exists a $G'$-equivariant morphism $s'\colon G' \times^{S'_z} \Spec(\kappa(z)) \to X'$ fitting into a commutative diagram
	% https://q.uiver.app/#q=WzAsMyxbMCwxLCJHJyBcXHRpbWVzXntTX3p9XFxTcGVjKFxca2FwcGEoeikpIl0sWzEsMCwiWCciXSxbMSwxLCJYIl0sWzEsMiwiXFxhbHBoYSJdLFswLDJdLFswLDEsInMnIl1d
	\[\begin{tikzcd}
		& {X'} \\
		{G' \times^{S'_z}\Spec(\kappa(z))} & U.
		\arrow["\alpha", from=1-2, to=2-2]
		\arrow["{s'}", from=2-1, to=1-2]
		\arrow[from=2-1, to=2-2]
	\end{tikzcd}\]
	
	Note that there exists a well-defined morphism $r \colon G \times^{S_z} \Spec(\kappa(z)) \rightarrow G' \times^{S_z'} \Spec(\kappa(z))$
	mapping $[g,z] \mapsto [\overline{g}, z]$. Post-composing the morphism $r$ with $s'$ gives a morphism $s = s' \circ r \colon G \times^{S_z} \Spec(\kappa(z)) \to X'$ mapping $[g,z]$ to $s'([\overline{g}, z])$. 
	Note that $s$ is $G$-equivariant, because
	\[
	s(g_1[g_2,z]) = s ([g_1^{-1}g_2, g_1z]) = s'([\overline{g_1^{-1}g_2}, g_1z]) = s'(\overline{g_1}[\overline{g_2},z])= \overline{g_1}s'([\overline{g_2},z]) = g_1s([g_2, z]),
	\]
	where we used the fact that $s'$ is $G'$-equivariant and that $G$ is abelian.
	It follows that $\alpha \colon X' \to U$ is a $G$-equivariant Nisnevich neighborhood of $Gz$, as desired. In particular, $X' \to U \to X$ is a $G$-equivariant Nisnevich neighborhood of $Gz$, giving $\Gnum{1}$. We endow $W$ with a $G$-action by setting $gw \coloneq \overline{g}w$, giving $\Gnum{2}$ and we still denote by $V_e$ the trivial one-dimensional $G$-representation, giving $\Gnum{3}$. Then the $G'$-equivariant morphism $\varphi \colon X' \to W \times_k \AA(V_e)$ is also $G$-equivariant because
	\[
	\varphi(gx') = \varphi(\overline{g}x') = \overline{g}\varphi(x') =  g\varphi(x'),
	\]
	where we used the $G'$-equivariance of $\varphi$, giving $\Gnum{4}$. The morphism $\varphi$ satisfies properties $\Pnum{1}, \Pnum{2}, \Pnum{3}$ and $\Pnum{4}$ of Definition \ref{definition: gabber presentation} by construction.
\end{proof}

Note that $H$ is the geometric stabilizer of $z'$: indeed $H \subseteq G_{z'}$ because $z' \in X^H$, while $G_{z'} \subseteq H$ because, at the beginning of Case \ref{subsection: case C}, we removed the larger fixed-point loci. In particular, the $\kappa(z')$-vector space $T_{z'}X$ is an $H$-representation. If $T_{z'}X$ is the trivial $H$-representation, then by Lemma \ref{lemma: trivial action on tangent space} there exists a $G$-invariant open neighborhood $U$ of $z'$ with trivial $H$-action. Since $U$ would be also an open neighborhood of $z$, in that case we may apply Claim \ref{claim: if trivial on open neighborhood, conclusions hold} to conclude. Thus, from now on, we assume that $T_{z'}X$ is a nontrivial $H$-representation. After replacing $X$ by the union of the $G$-translates of an irreducible component containing $z$, we may assume that $X$ is equidimensional. Set $\dim X = n+1$ for some $n \geq 0$ and $X= \Spec(A)$ for a finitely generated $k$-algebra $A$.

\begin{claim}
	There exist an affine $G$-equivariant Nisnevich neighborhood $X_0 \to X$ of $Gz$, an $(n+1)$-dimensional $G$-representation $V''$ over $k$ and a $G$-equivariant morphism $\varphi_0 \colon X_0 \to \AA(V'')$ such that
	\begin{itemize}
		\item $\varphi_0$ is étale;
		\item there exists a decomposition of $G$-representations $V''= V' \oplus V$ with $\dim_k(V')=n$ and such that $\textnormal{Res}^G_H(V)$ is a one-dimensional nontrivial $H$-representation;
		\item the composite $Z_0 \to X_0 \xrightarrow{\varphi_0} \AA(V'') \to \AA(V')$ is quasi-finite.
	\end{itemize}
\end{claim}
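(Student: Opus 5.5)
Let $z'$ be the closed specialization of $z$ fixed above, so that $G_{z'} = H$ and $T_{z'}X$ is a nontrivial $H$-representation. The plan has three steps: choose the representation $V''$ using Lemma \ref{lemma: exists representation 1}, build an étale map and a quasi-finite map separately, and combine them by the openness argument used in Case \ref{subsection: case A}. After that we shrink to a Nisnevich neighborhood.

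First, we may assume $X$ is affine by Lemma \ref{lemma: assume is affine}. By Lemma \ref{lemma: exists representation 1}.(1) there is $V'' \in \textnormal{Rep}_k(G)$ with $\textnormal{Res}^G_{S_{z'}}(V'') \otimes_k \kappa(z') \simeq T_{z'}X$ in $\textnormal{Rep}^{\#}_{\kappa(z')}(S_{z'})$, and necessarily $\dim_k V'' = n+1$. Restricting to $G_{z'} = H$, which acts trivially on $\kappa(z')$, gives $\textnormal{Res}^G_H(V'') \otimes_k \kappa(z') \simeq T_{z'}X$ in $\textnormal{Rep}_{\kappa(z')}(H)$. This is exactly the hypothesis of Proposition \ref{proposition: quasi-finite morphisms construction} at the closed point $z' \in Z$. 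That proposition gives a decomposition $V'' = V' \oplus V$ with $\dim V' = n$ and a $G$-equivariant $f_2 \colon X \to \AA(V')$ such that $Z \to \AA(V')$ is quasi-finite at $z'$. Since $T_{z'}X$ is a nontrivial $H$-representation, part (2) of that proposition lets us choose $V$ so that $\textnormal{Res}^G_H(V)$ is nontrivial; it is one-dimensional, so the second bullet holds.

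Independently, Proposition \ref{proposition: étale morphism as in 8.10 of equivariant cycles} applies to $V''$, because $k$ is perfect and the tangent-space isomorphism holds over $S_{z'}$. It yields a $G$-equivariant $f_1 \colon X \to \AA(V'')$ that is étale at $z'$. Fix a $G$-equivariant closed immersion $\iota \colon X \hookrightarrow \AA(W)$ as in Remark \ref{remark: fix equivariant closed immersion}, enlarged so that the coordinates of $f_1$ and of $f_2$ (pulled back along $V'' \to V'$) are among the equivariant linear forms. Let $\cE$ be the affine space of $G$-equivariant linear maps $\psi \colon \AA(W) \to \AA(V'')$. Since $\dim_{z'} X = \dim V''$, Remark \ref{remark: étaleness open condition} shows that étaleness of $\psi \circ \iota$ at $z'$ is an open condition on $\cE$. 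By Lemma \ref{lemma: openness of properties}.(3) applied to the composite with the projection onto $\AA(V')$, quasi-finiteness of $Z \to \AA(V')$ at $z'$ is also an open condition. The map $f_1$ shows the first open set is non-empty. For the second, note that the maps with $V'$-component $f_2$ and arbitrary $V$-component form a non-empty subset contained in it. Since $\cE$ is irreducible and $k$ is infinite, the intersection has a $k$-point $\psi$, and $\varphi = \psi \circ \iota$ satisfies both properties at $z'$.

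Finally, Lemma \ref{lemma: assume is étale and quasi-finite} gives a $G$-invariant open $U \ni z$ on which $\varphi$ is étale and $Z_U \to \AA(V')$ is quasi-finite; here we use that quasi-finiteness of $Z \to \AA(V')$ still comes from a morphism on $X$. By Lemma \ref{lemma: assume is affine}, replace $U$ by an affine $G$-equivariant Nisnevich neighborhood $X_0$ of $Gz$. Étaleness is preserved under étale base change, and quasi-finiteness of $Z_0 \to \AA(V')$ follows as in Case \ref{subsection: case A} because the property is local and $X_0 \to U$ is étale. I expect the main obstacle to be the compatibility in the first step. The proposition producing quasi-finite maps only requires an $H$-isomorphism, whereas the étale construction requires an $S_{z'}$-semilinear isomorphism, so we must check that one $V''$ serves both purposes. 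Choosing $V''$ at the semilinear level first, as above, and then restricting resolves this.
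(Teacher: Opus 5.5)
Your proposal is correct and follows essentially the same route as the paper. You choose $V''$ at the semilinear $S_{z'}$-level via Lemma \ref{lemma: exists representation 1} and get a map étale at $z'$ from Proposition \ref{proposition: étale morphism as in 8.10 of equivariant cycles}; restricting to $H=G_{z'}$ then gives the decomposition $V''=V'\oplus V$ and the quasi-finite map from Proposition \ref{proposition: quasi-finite morphisms construction}, and you combine the two by openness and irreducibility of the parameter space before shrinking via Lemmas \ref{lemma: assume is étale and quasi-finite} and \ref{lemma: assume is affine}. The differences are cosmetic: you witness non-emptiness of the quasi-finite locus using arbitrary $V$-component rather than the paper's equivariant splitting $V'\to V''$, and your reduction to $X$ affine is already part of the paper's setup.
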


\begin{proof}
	We show that the required properties can be achieved separately and then conclude by the openness of the properties. By Lemma \ref{lemma: exists representation 1} there exists a $G$-representation $V''$ over $k$ such that $\textnormal{Res}^{G}_{S_{z'}}(V'') \otimes_k \kappa(z') \simeq T_{z'}X$ in $\textnormal{Rep}^{\#}_{\kappa(z')}(S_{z'})$. Since $X$ is smooth over $k$, the dimension of $X$ equals $\dim_{\kappa(z')}T_{z'}X$, so that $\dim_k(V'')= n+1$ necessarily. By Proposition \ref{proposition: étale morphism as in 8.10 of equivariant cycles} there exists a $G$-equivariant morphism $f_1 \colon X \to \AA(V'')$ which is étale at $z'$. Moreover, the isomorphism $\textnormal{Res}^{G}_{S_{z'}}(V'') \otimes_k \kappa(z') \simeq T_{z'}X$ in $\textnormal{Rep}^{\#}_{\kappa(z')}(S_{z'})$ restricts to an isomorphism $\textnormal{Res}^{G}_{H}(V'') \otimes_k \kappa(z') \simeq T_{z'}X$ in $\textnormal{Rep}_{\kappa(z')}(H)$. Then by Proposition \ref{proposition: quasi-finite morphisms construction}, there exists a decomposition $V'' = V' \oplus V$ of $G$-representations over $k$ and a $G$-equivariant morphism $\tilde{f}_2 \colon X \to \AA(V')$ such that
	\begin{itemize}
		\item $\dim_k V' = n$;
		\item the composite $Z \to X \xrightarrow{\tilde{f}_2} \AA(V')$ is quasi-finite at $z'$;
		\item $\textnormal{Res}^G_H (V)$ is a one-dimensional nontrivial $H$-representation.
	\end{itemize}
	Let $V' \to V''$ be a $G$-equivariant linear splitting of the projection $V'' \to V'$. Then the morphism $f_2 \colon X \xrightarrow{\tilde{f}_2} \AA(V') \to \AA(V'')$ is $G$-equivariant and the composite $Z \to X \to \AA(V'') \to \AA(V')$ is quasi-finite at $z'$. 
	
	Let $\iota \colon X \hookrightarrow \AA(W)$ be a $G$-equivariant closed immersion of $X$ into a $G$-representation as in Remark \ref{remark: fix equivariant closed immersion}. Up to adding the equivariant elements defining the morphisms $f_1$ and $f_2$ to the generators of the closed immersion $\iota$, we may assume that they are among them.
	
	By Lemma \ref{lemma: openness of properties} and Remark \ref{remark: étaleness open condition}, the required étaleness and the required quasi-finiteness define two non-empty open subsets of the affine space parametrizing $G$-equivariant linear maps $\AA(W) \to \AA(V'')$, as witnessed by $f_1$ and $f_2$. Their intersection is non-empty because the parameter space is irreducible, and it has a $k$-rational point because $k$ is infinite. Hence there exists a morphism $\varphi \colon X \to \AA(V'')$ which is étale at $z'$, and such that the composite $Z \to X \xrightarrow{\varphi} \AA(V'') \to \AA(V')$ is quasi-finite at $z'$.

	By Lemma \ref{lemma: assume is étale and quasi-finite} we conclude that there exists an open $G$-invariant neighborhood $U \to X$ of $Gz=z$ such that $f_{|U} \colon U \to \AA(V'')$ is étale and $Z \cap U \to \AA(V'') \to \AA(V')$ is quasi-finite. Finally, by Lemma \ref{lemma: assume is affine} we know that there exists an affine $G$-equivariant Nisnevich neighborhood $X_0 \to U \subseteq X$ of $Gz$. The composite $\varphi_0 \colon X_0 \to U \to \AA(V'')$ is again étale, being a composition of étale morphisms and  $Z_{X_0} \to Z \cap U \to \AA(V')$ is again quasi-finite (indeed is quasi-compact and locally quasi-finite, because the property of being locally quasi-finite is étale local on the source-and-target, and $Z \cap U \to \AA(V')$ is quasi-finite), as desired.
\end{proof}

Let $\tilde{W} \subset X_0$ be the vanishing locus of the last coordinate of the map $\varphi_0 \colon X_0 \to \AA(V') \times_k \AA(V)$. In other words, $\tilde{W}$ is given by the cartesian product 
% https://q.uiver.app/#q=WzAsNCxbMCwwLCJXIl0sWzEsMCwiWCJdLFsxLDEsIlxcQUEoVl8xKSBcXHRpbWVzX2sgXFxBQShWXzIpIl0sWzAsMSwiXFxBQShWXzEpIl0sWzEsMiwiXFx2YXJwaGkiXSxbMCwxLCIiLDIseyJzdHlsZSI6eyJ0YWlsIjp7Im5hbWUiOiJob29rIiwic2lkZSI6InRvcCJ9fX1dLFszLDIsIiIsMCx7InN0eWxlIjp7InRhaWwiOnsibmFtZSI6Imhvb2siLCJzaWRlIjoidG9wIn19fV0sWzAsM11d
\[\begin{tikzcd}
	\tilde{W} & X_0 \\
	{\AA(V')} & {\AA(V'')}
	\arrow[hook, from=1-1, to=1-2]
	\arrow[from=1-1, to=2-1]
	\arrow["\varphi_0", from=1-2, to=2-2]
	\arrow[hook, from=2-1, to=2-2]
\end{tikzcd}\]
where the bottom horizontal map is the inclusion in all the coordinates but the last one. It follows that $\tilde{W}$ is étale over
$\AA(V')$, and hence $\tilde{W}$ is a smooth
$k$-scheme.
Moreover $X_0^{H} \subseteq \tilde{W}$, because $\textnormal{Res}^G_H(V)$ is a nontrivial $H$-representation, so that the missing coordinate carries a nontrivial $H$-action.
In particular, we have $Gz \in \tilde{W}$.

\begin{remark} \label{remark: why does not exists a section}
	Note that this definition of $\tilde{W}$ only works in the setting of $z$ with non-free action (Case \ref{subsection: case C}). Indeed, we need a coordinate with nontrivial $G_z$-action in order to be sure that $z$ belongs to the vanishing locus of that coordinate. This is the reason why we are able to produce a section only in the non-free action case.
\end{remark}

\begin{claim}
	There exist a $G$-equivariant Nisnevich neighborhood $X_1 \to X_0$ of $Gz$ and a $G$-equivariant morphism $a \colon X_1 \to \tilde{W}$ admitting a $G$-equivariant section $s \colon \tilde{W} \to X_1$.
\end{claim}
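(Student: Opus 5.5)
The plan is to imitate Bachmann's construction of the section in the $C_2$ case: base change $X_0$ along a henselization of $\tilde{W}$ inside $X_0$, which is possible because $\tilde{W}$ is a closed $G$-invariant subscheme of $X_0$ containing $Gz$. Concretely, consider the composite $q\colon X_0 \xrightarrow{\varphi_0} \AA(V'') = \AA(V')\times_k\AA(V) \to \AA(V')$. Its restriction to $\tilde{W}$ is the étale morphism $\tilde{W}\to\AA(V')$ from the cartesian square defining $\tilde{W}$. Set $Y \coloneq X_0 \times_{\AA(V')} \tilde{W}$, with the diagonal $G$-action, and let $a\colon Y\to\tilde{W}$ be the second projection. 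Since $q$ is smooth of relative dimension one, $a$ is smooth of relative dimension one. The diagonal $\tilde{W}\to Y$, $w\mapsto(w,w)$, is a $G$-equivariant section $s$ of $a$, because the inclusion $\tilde{W}\hookrightarrow X_0$ and the identity of $\tilde{W}$ agree over $\AA(V')$ and are both $G$-equivariant.

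Next, I would verify that the first projection $Y\to X_0$ is étale, being the base change of the étale morphism $\tilde{W}\to\AA(V')$. It is $G$-equivariant. The orbit $Gz$ lies in $\tilde{W}$, because $X_0^H\subseteq\tilde{W}$ and $z$ is $H$-fixed, so the equivariant section $G\times^{S_z}\Spec(\kappa(z))\to X_0$ lifts through $s$ to $Y$. Since $s(Gz)$ consists of points mapping isomorphically onto $Gz$ on residue fields, $Y\to X_0$ is a $G$-equivariant Nisnevich neighborhood of $Gz$. Taking $X_1\coloneq Y$ then already gives the claim. If affineness is wanted, $Y$ is affine because $X_0$ and $\tilde{W}$ are affine over the affine scheme $\AA(V')$.

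The main obstacle is making sure that the distinguished points and the section are compatible, i.e. that the lift of $Gz$ to $X_1$ is exactly $s(Gz)$ with the correct residue fields, rather than some other point of the fiber. I expect this to follow directly from the diagonal description: both coordinates of the lift equal $z$. One should also check that passing to $X_1$ preserves the hypotheses used later (étaleness of $\varphi_0$ pulled back, quasi-finiteness of $Z$ over $\AA(V')$). These are stable under étale base change and composition, so I would record them when the claim is used. If an irreducible or affine neighborhood is needed later, Lemma \ref{lemma: assume is affine} can be applied afterwards, taking care to keep $s$ by restricting $\tilde{W}$ to the preimage under $s$ of the chosen $G$-invariant open.
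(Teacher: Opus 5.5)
Your construction is the same as the paper's: $X_1 = X_0 \times_{\AA(V')} \tilde{W}$. Here $X_1 \to X_0$ is étale as a base change of the étale map $\tilde{W} \to \AA(V')$, the distinguished points lift because $Gz \subseteq \tilde{W}$, and the section is the diagonal $\tilde{W} \to \tilde{W}\times_{\AA(V')}\tilde{W} \hookrightarrow X_1$. Your opening mention of a henselization of $\tilde{W}$ is inaccurate, since none is involved here (it enters only in the next claim), but your concrete argument correctly does not use one.
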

\begin{proof}
	Set $X_1 \coloneq X_0 \times_{\AA(V')} \tilde{W}$. We have the following cartesian diagram:

	% https://q.uiver.app/#q=WzAsOCxbMywwLCJXIl0sWzMsMSwiQSJdLFsyLDAsIlcgXFx0aW1lc19rIFxcQUFee1xcc2lnbWFfan1fayJdLFsyLDEsIkEgXFx0aW1lc19rIFxcQUFfa157XFxzaWdtYV9qfSJdLFsxLDAsIlhfMSJdLFsxLDEsIlgiXSxbMCwwLCJXIFxcdGltZXNfQSBXIl0sWzAsMSwiVyJdLFswLDFdLFsyLDNdLFszLDFdLFsyLDBdLFs1LDMsIlxcYWxwaGEiLDJdLFs0LDJdLFs0LDVdLFszLDQsIiIsMSx7InN0eWxlIjp7Im5hbWUiOiJjb3JuZXItaW52ZXJzZSJ9fV0sWzEsMiwiIiwxLHsic3R5bGUiOnsibmFtZSI6ImNvcm5lci1pbnZlcnNlIn19XSxbNyw1LCIiLDIseyJzdHlsZSI6eyJ0YWlsIjp7Im5hbWUiOiJob29rIiwic2lkZSI6InRvcCJ9fX1dLFs2LDddLFs2LDQsIiIsMix7InN0eWxlIjp7InRhaWwiOnsibmFtZSI6Imhvb2siLCJzaWRlIjoidG9wIn19fV0sWzUsNiwiIiwwLHsic3R5bGUiOnsibmFtZSI6ImNvcm5lci1pbnZlcnNlIn19XV0=
	\[\begin{tikzcd}
		{\tilde{W} \times_{\AA(V')} \tilde{W}} & {X_1} & {\tilde{W} \times_k \AA(V)} & \tilde{W} \\
		\tilde{W} & X_0 & {\AA(V') \times_k \AA(V)} & \AA(V').
		\arrow[hook, from=1-1, to=1-2]
		\arrow[from=1-1, to=2-1]
		\arrow[from=1-2, to=1-3]
		\arrow[from=1-2, to=2-2]
		\arrow[from=1-3, to=1-4]
		\arrow[from=1-3, to=2-3]
		\arrow[from=1-4, to=2-4]
		\arrow[hook, from=2-1, to=2-2]
		\arrow["\varphi_0", from=2-2, to=2-3]
		\arrow[from=2-3, to=2-4]
	\end{tikzcd}\]

	Each distinguished point $gz \colon \Spec(\kappa (z)) \to X_0$ of $Gz \subset X_0$ lifts to $X_1$, because $gz \in \tilde{W}$. It follows that the morphism $X_1 \to X_0$ is a $G$-equivariant Nisnevich neighborhood of $Gz$. More precisely, by universal property of the fiber product, there exists a unique morphism $G \times^{S_z} \Spec(\kappa(z)) \to X_1$ factoring $G \times^{S_z} \Spec(\kappa(z)) \to X_0$ and $G \times^{S_z} \Spec(\kappa(z)) \to \tilde{W}$. The composite $s \colon \tilde{W} \xrightarrow{\Delta_{\tilde{W}}} \tilde{W} \times_{\AA(V')} \tilde{W} \to X_1$ is a $G$-equivariant section of $a \colon X_1 \to \tilde{W}$. 
\end{proof}

We will now introduce a henselian scheme over which we will find the conclusions of Theorem  \ref{theorem: gabber}. Consider the semi-local ring $\cO_{\tilde{W},Gz}$, and its henselization at the ideal defining the points of the orbit $Gz$. By Lemma \ref{lemma: henselization}, we can write $\cO_{\tilde{W},Gz}^h = \prod_{gz \in Gz} \cO^h_{\tilde{W},gz}$.
Set $\tilde{W}_{Gz}^h \coloneq \Spec (\cO_{\tilde{W},Gz}^h) \simeq \coprod_{gz \in Gz} \Spec(\cO^h_{\tilde{W},gz})$. Note that $\tilde{W}_{Gz}^h$ is a $G$-scheme because morphisms between local rings lift to the henselization by \cite[\href{https://stacks.math.columbia.edu/tag/04GS}{Tag 04GS}]{stacks-project}. Moreover, $\tilde{W}_{Gz}^h$ coincides with the limit over Nisnevich étale neighborhoods of $Gz$ in $\tilde{W}$ by \cite[(2.21)]{MotivicHomotopyTheoryOfGroupSchemesActions}.

\begin{remark}
	As in Case \ref{subsection: case A} of $z$ with free action, starting from $X_1$ we define a sequence of schemes $X_2, X_3, X_4$, each obtained from the previous one by shrinking $X$ Nisnevich locally. In particular, every time that we construct a scheme $X_i$ with a morphism $X_i \to X$ such that there exist distinguished points $gz_i \in X_i$ mapping to $gz \in X$ and inducing an isomorphism on the residue fields, we will still denote by $gz$ the points $gz_i$. Moreover, we will denote by $Z_i $ the fiber product $Z_i \coloneq X_i \times_X Z$. At each step we will achieve one of the properties $\Pnum{1}, \Pnum{2}, \Pnum{3}, \Pnum{4}$ of Definition \ref{definition: gabber presentation} required in the conclusion of Theorem \ref{theorem: gabber}. Once all such properties have been established, we will have the conclusions of Theorem \ref{theorem: gabber} over the henselian scheme $\tilde{W}^{h}_{Gz}$ and then we apply a limit-descending argument to find conclusions over an actual Nisnevich neighborhood of $Gz$.
\end{remark}

\begin{claim} \label{claim: X_2 construction}
	There exist a $G$-scheme $X_2$ over $X_1$, a nontrivial one-dimensional $G$-representation $V$ and a $G$-equivariant étale morphism $\varphi_2 = (\psi_2, \nu_2) \colon X_2 \to \tilde{W}^h_{Gz} \times_k \AA(V)$ such that $\psi_2 \colon X_2 \to \tilde{W}^h_{Gz}$ admits a $G$-equivariant section $\tilde{W}_{Gz}^h \hookrightarrow X_2$ which is a closed immersion and the restriction $\psi_{2_{|Z_2}} \colon Z_2 \to \tilde{W}^h_{Gz}$ is quasi-finite.
\end{claim}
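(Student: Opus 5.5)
The plan is to obtain $X_2$ by base change of $X_1$ along the henselization $\tilde{W}^h_{Gz} \to \tilde{W}$. Set
\[
X_2 \coloneq X_1 \times_{\tilde{W}} \tilde{W}^h_{Gz},
\]
where $X_1 \to \tilde{W}$ is the map $a$ of the previous claim. Equivalently, $X_2 = X_0 \times_{\AA(V')} \tilde{W}^h_{Gz}$, the fiber product over $\AA(V')$ via $\tilde{W}^h_{Gz} \to \tilde{W} \to \AA(V')$. Since $\tilde{W}^h_{Gz} \to \tilde{W}$ is $G$-equivariant, $X_2$ is a $G$-scheme over $X_1$. Let $\psi_2 \colon X_2 \to \tilde{W}^h_{Gz}$ be the projection. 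Let $\nu_2$ be the composite $X_2 \to X_0 \xrightarrow{\varphi_0} \AA(V')\times_k \AA(V) \to \AA(V)$; this is the last coordinate of $\varphi_0$, and $V$ is the nontrivial one-dimensional representation produced by the previous claims.

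\textbf{Étaleness.} Write $\varphi_0 = (\varphi_0', \nu_0)$ with $\varphi_0'$ the projection to $\AA(V')$. Then $X_2 \to \tilde{W}^h_{Gz}\times_k \AA(V)$ is the base change of
\[
\varphi_0 \colon X_0 \to \AA(V')\times_k\AA(V)
\]
along the map $\tilde{W}^h_{Gz}\times_k \AA(V) \to \AA(V')\times_k \AA(V)$. Indeed,
\[
X_0 \times_{\AA(V')\times\AA(V)} \bigl(\tilde{W}^h_{Gz}\times \AA(V)\bigr) = X_0 \times_{\AA(V')} \tilde{W}^h_{Gz}.
\]
So $\varphi_2 = (\psi_2,\nu_2)$ is étale and $G$-equivariant, because it is a base change of the étale map $\varphi_0$.

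\textbf{Section.} The section $s \colon \tilde{W} \to X_1$ of the previous claim, built from the diagonal, base changes along $\tilde{W}^h_{Gz} \to \tilde{W}$ to a $G$-equivariant section $s_2 \colon \tilde{W}^h_{Gz} \to X_2$ of $\psi_2$. Alternatively, take the graph of $\tilde{W}^h_{Gz} \to \tilde{W} \hookrightarrow X_0$ over $\AA(V')$. I need this section to be a closed immersion. A section of a separated morphism is a closed immersion, and $\psi_2$ is separated because $X_0$ is affine, hence separated over $\AA(V')$. The section also passes through the distinguished points $gz$, since $Gz \subset \tilde{W}$.

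\textbf{Quasi-finiteness.} The composite $Z_0 \to X_0 \to \AA(V')$ is quasi-finite by the earlier claim. Now $Z_2 = Z_0\times_{\AA(V')} \tilde{W}^h_{Gz}$, so $\psi_{2|Z_2}$ is a base change of a quasi-finite morphism and hence quasi-finite.

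The main obstacle I expect is bookkeeping rather than substance. First, I need to check that the description of $X_2$ as a base change over $\AA(V')$ is compatible with the map $a$, which the diagram in the previous claim should give. Second, I need to check that the $G$-action on $\tilde{W}^h_{Gz}$, which permutes the local factors $\Spec(\cO^h_{\tilde{W},gz})$, makes all maps equivariant; this uses the functoriality of henselization, \cite[\href{https://stacks.math.columbia.edu/tag/04GS}{Tag 04GS}]{stacks-project}. Finally, the nontriviality of $V$ (as a $G$-representation) comes from the earlier claim, which makes $\textnormal{Res}^G_H(V)$ nontrivial.
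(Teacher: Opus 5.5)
Your proposal is correct and follows essentially the same route as the paper. Both take $X_2 = X_1 \times_{\tilde{W}} \tilde{W}^h_{Gz}$, obtain étaleness of $\varphi_2$ and quasi-finiteness of $\psi_{2|Z_2}$ by base change from $\varphi_0$ and from $Z_0 \to \AA(V')$, and get the closed-immersion section by base-changing $s$. The differences are cosmetic: you base-change $\varphi_0$ in one step and get the closed immersion from separatedness of $\psi_2$, while the paper goes through the intermediate étale map $\varphi_1 \colon X_1 \to \tilde{W}\times_k \AA(V)$ and uses that $s \colon \tilde{W} \hookrightarrow X_1$ is already a closed immersion.
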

\begin{proof}
	The morphism $\varphi_1 \colon X_1 \to \tilde{W} \times_k \AA(V)$ induced by $\varphi_0$ and the fiber-product description of $X_1$ is étale, since its square with $\varphi_0 \colon X_0 \to \AA(V') \times_k \AA(V)$ is cartesian. 
	Set $X_2 \coloneq X_1 \times_{\tilde{W}} \tilde{W}^h_{Gz}$ and let $\varphi_2$ be its base change. The smooth $k$-scheme $\tilde{W}$ is a closed subscheme of $X_1$ and the composite $\tilde{W} \xrightarrow{s} X_1 \xrightarrow{a} \tilde{W}$ is the identity by construction. Moreover, the composite of $s$ with $X_1 \to \tilde{W} \times_k \AA(V)$ is the zero section. It follows that we have the following cartesian diagram:

	% https://q.uiver.app/#q=WzAsOCxbMiwwLCJXX3peaCBcXHRpbWVzX2sgXFxBQV57XFxzaWdtYV9qfSJdLFszLDAsIldfel5oIl0sWzMsMSwiVyJdLFsyLDEsIlcgXFx0aW1lc19rIFxcQUFee1xcc2lnbWFfan0iXSxbMSwxLCJYXzEnIl0sWzEsMCwiWF8yIl0sWzAsMCwiV196XmgiXSxbMCwxLCJXIl0sWzUsNF0sWzQsM10sWzMsMl0sWzEsMl0sWzAsMV0sWzAsM10sWzUsMF0sWzMsNSwiIiwyLHsic3R5bGUiOnsibmFtZSI6ImNvcm5lci1pbnZlcnNlIn19XSxbMiwwLCIiLDIseyJzdHlsZSI6eyJuYW1lIjoiY29ybmVyLWludmVyc2UifX1dLFs3LDQsIiIsMCx7InN0eWxlIjp7InRhaWwiOnsibmFtZSI6Imhvb2siLCJzaWRlIjoidG9wIn19fV0sWzYsN10sWzYsNSwiIiwwLHsic3R5bGUiOnsidGFpbCI6eyJuYW1lIjoiaG9vayIsInNpZGUiOiJ0b3AifX19XSxbNCw2LCIiLDAseyJzdHlsZSI6eyJuYW1lIjoiY29ybmVyLWludmVyc2UifX1dXQ==
	\[\begin{tikzcd}
		{\tilde{W}_{Gz}^h} & {X_2} & {\tilde{W}_{Gz}^h \times_k \AA(V)} & {\tilde{W}_{Gz}^h} \\
		\tilde{W} & {X_1} & {\tilde{W} \times_k \AA(V)} & \tilde{W},
		\arrow[hook, from=1-1, to=1-2]
		\arrow[from=1-1, to=2-1]
		\arrow["{\varphi_2}",from=1-2, to=1-3]
		\arrow[from=1-2, to=2-2]
		\arrow[from=1-3, to=1-4]
		\arrow[from=1-3, to=2-3]
		\arrow[from=1-4, to=2-4]
		\arrow[hook, from=2-1, to=2-2]
		%\arrow["\lrcorner"{anchor=center, pos=0.125}, draw=none, from=1-1, to=2-2]
		\arrow["{\varphi_1}",from=2-2, to=2-3]
		%\arrow["\lrcorner"{anchor=center, pos=0.125}, draw=none, from=1-2, to=2-3]
		\arrow[from=2-3, to=2-4]
		%\arrow["\lrcorner"{anchor=center, pos=0.125}, draw=none, from=1-3, to=2-4]
	\end{tikzcd}\]
	from which we see that $\tilde{W}^h_{Gz} \to X_2$ is a closed immersion and provides a $G$-equivariant section to the morphism $\psi_2 \colon X_2 \to \tilde{W}^h_{Gz}$. The morphism $\varphi_2 \colon X_2 \to \tilde{W}^h_{Gz} \times_k \AA(V)$ is étale because it is the base change of the étale morphism $\varphi_1$ just described; its section maps to the zero section. Note that $\dim_k(V) = 1$ and that it is a nontrivial $G$-representation by construction. The morphism $\psi_{2_{|Z_2}} \colon Z_{2} \to \tilde{W}^h_{Gz}$ is quasi-finite, because it is the base change of $Z_0 \to \AA(V')$. 
\end{proof}

By Claim \ref{claim: X_2 construction} we get property $\Pnum{1}$ of Definition \ref{definition: gabber presentation} over $\tilde{W}^h_{Gz}$. The distinguished points $Gz \in X_1$ lift to $X_2$.

\begin{claim} \label{claim: X_3 construction}
	There exists an open $G$-invariant subscheme $X_3 \subset X_2$ containing $\tilde{W}^h_{Gz}$ such that $\psi_{3_{|Z_3}} \colon Z_3 \to X_3 \xrightarrow{\psi_3} \tilde{W}^h_{Gz}$ is finite. Moreover, $Z_3$ decomposes as $Z_3 = \coprod_{gz \in Gz} \Spec(A_{1,gz})$, where each $A_{1,gz}$ is local and $\psi_{3_{|Z_3}}$ is the disjoint union of $|Gz|$ finite morphisms $\beta_{gz} \colon \Spec(A_{1,gz}) \to \tilde{W}^h_{gz}$.
\end{claim}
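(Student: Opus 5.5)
The plan is to apply Lemma \ref{lemma: pass from quasi-finite to finite} to $X_2$, the base $Y = \tilde{W}$, the point $y = z \in \tilde{W}$ and the morphism $f = \psi_2 \colon X_2 \to \tilde{W}^h_{Gz}$. First I check the hypotheses. $X_2$ is affine, being the base change of the affine $X_1 = X_0 \times_{\AA(V')} \tilde{W}$ along the affine morphism $\tilde{W}^h_{Gz} \to \tilde{W}$. $Z_2 \subseteq X_2$ is closed and $G$-invariant. By Claim \ref{claim: X_2 construction}, $\psi_2$ is $G$-equivariant and $\psi_{2|Z_2}$ is quasi-finite. The distinguished points $gz \in X_2$ lie on the section $\tilde{W}^h_{Gz} \hookrightarrow X_2$, so $\psi_2(gz) = gz$, the closed point of the component $\tilde{W}^h_{gz}$. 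Since $\psi_2$ composed with the section is the identity, $\psi_2$ restricts to a bijection from the orbit $Gz$ in $X_2$ onto the orbit $Gz$ in $\tilde{W}^h_{Gz}$. The lemma then produces an open $G$-invariant $U \subseteq X_2$ containing $Gz$ such that $Z_U \to \tilde{W}^h_{Gz}$ is finite, and such that $Z_U = \coprod_{gz} \Spec(A_{1,gz})$ with each $A_{1,gz}$ local, its closed point being $gz$, and mapping by a local finite morphism to $\tilde{W}^h_{gz}$. I set $X_3 := U$, so that $Z_3 = Z_U$ and all but the first assertion are immediate.

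What remains is that $X_3$ contains the section $\tilde{W}^h_{Gz}$. For this I use part (3) of Lemma \ref{lemma: pass from quasi-finite to finite} with $K$ the image of the section. The section is a closed immersion by Claim \ref{claim: X_2 construction}, so $K$ is a closed subscheme of $X_2$. By Lemma \ref{lemma: henselization}, $K \simeq \coprod_{gz \in Gz} \Spec(\cO^h_{\tilde{W},gz})$. Each factor is local, and its closed point maps to the distinguished point $gz \in X_2$, because the section carries the closed point of $\tilde{W}^h_{gz}$ (the lift of $gz$) to the lift of $gz$. So $K$ has exactly the decomposition required in part (3), and therefore $K \subseteq U = X_3$.

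I expect the main obstacle to be the bookkeeping of distinguished points. Concretely, I must ensure that the image in $X_2$ of the closed point of $\tilde{W}^h_{gz}$ under the section is exactly the lifted point $gz$, and not some other preimage of $gz \in X$. This holds because $Gz \subseteq \tilde{W} \subseteq X_0$ and the lifts to $X_1$ and $X_2$ were defined through the diagonal section $s$. If needed, I would also note that the points $gz$ of $X_2$ are closed, since they are the closed points of the section components. This closedness is what the proof of Lemma \ref{lemma: pass from quasi-finite to finite} implicitly uses when it identifies $gz$ as the unique closed point of $\Spec(A_{1,gz})$.
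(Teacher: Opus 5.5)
Your proposal is correct and matches the paper's proof: you apply Lemma~\ref{lemma: pass from quasi-finite to finite}(1),(2) to $\psi_2 \colon X_2 \to \tilde{W}^h_{Gz}$ with $Z_2$, then Lemma~\ref{lemma: pass from quasi-finite to finite}(3) with $K$ the image of the closed-immersion section, which gives $\tilde{W}^h_{Gz} \subset X_3$. Your additional checks are details the paper leaves implicit: affineness of $X_2$, the orbit bijection via $\psi_2 \circ s = \textnormal{id}$, and the fact that the distinguished points are the closed points of the section's components.
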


\begin{proof}
	It is sufficient to notice that Claim \ref{claim: X_2 construction} guarantees that hypotheses of Lemma \ref{lemma: pass from quasi-finite to finite} are satisfied for the schemes $X_2, \tilde{W}^h_{Gz}, Z_2$. Then by Lemma \ref{lemma: pass from quasi-finite to finite}.(1) we conclude that there exists an open $G$-invariant subscheme $X_3 \subset X_2$ such that $\psi_{3_{|Z_3}} \colon Z_3 \to \tilde{W}^h_{Gz}$ is finite and by Lemma \ref{lemma: pass from quasi-finite to finite}.(2) we have that $Z_3$ and $\psi_{3_{|Z_3}}$ decompose as desired. Finally, by Lemma \ref{lemma: pass from quasi-finite to finite}.(3) applied to the closed subscheme $K = \tilde{W}^h_{Gz}$ of $X_2$, we see that $\tilde{W}^h_{Gz} \subset X_3$.
\end{proof}

By Claim \ref{claim: X_3 construction} we get property $\Pnum{3}$ of Definition \ref{definition: gabber presentation} over $\tilde{W}^h_{Gz}$.

\begin{claim} \label{claim: obtain closed immersion}
	The morphism $\varphi_{3_{|Z_3}} \colon Z_3 \to \tilde{W}^h_{Gz} \times_k \AA(V)$ is a closed immersion.
\end{claim}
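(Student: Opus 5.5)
Since $Z_3 = \coprod_{gz \in Gz} \Spec(A_{1,gz})$ and $\tilde{W}^h_{Gz} = \coprod_{gz \in Gz} \tilde{W}^h_{gz}$ compatibly (Claim \ref{claim: X_3 construction}), and being a closed immersion is local on the target, it suffices to show that each component
\[
\Spec(A_{1,gz}) \longrightarrow \tilde{W}^h_{gz} \times_k \AA(V)
\]
is a closed immersion. Here we use that the open and closed pieces $\tilde{W}^h_{gz} \times_k \AA(V)$ are pairwise disjoint, and each $\Spec(A_{1,gz})$ maps into its own piece. The aim is to apply Lemma \ref{lemma: nakayama's lemma for closed immersion} with $A = A_{1,gz}$, $B = \cO^h_{\tilde{W},gz}$ and $C = \Sym(V^\vee) \simeq k[t]$, exactly as in Claim \ref{claim: closed immersion}.

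The hypotheses of that lemma are checked as follows.
\begin{itemize}
\item \emph{Unramified.} The map is unramified, being the composite of the closed immersion $Z_3 \hookrightarrow X_3$ with the restriction of the étale morphism $\varphi_2$ (Claim \ref{claim: X_2 construction}).
\item \emph{Finite and local.} By Claim \ref{claim: X_3 construction}, the composite with the projection to $\tilde{W}^h_{gz}$ is the finite morphism $\beta_{gz}$. It sends the closed point $gz$ to the closed point, which we also write $gz$, of $\tilde{W}^h_{gz}$.
\item \emph{Radicial on the closed fibre.} By Lemma \ref{lemma: closed fiber of a finite local morphism}, the closed fibre $F = \Spec(\kappa(gz) \otimes_B A_{1,gz})$ is a one-point local Artinian scheme supported at $gz$. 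A morphism out of a single point is injective. It remains to see that the residue field extension at $gz$ is purely inseparable over the residue field of its image in $\AA^1_{\kappa(gz)}$.
\end{itemize}

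For the residue field extension, use the $G$-equivariant section $\sigma \colon \tilde{W}^h_{Gz} \hookrightarrow X_3$ (contained in $X_3$ by Claim \ref{claim: X_3 construction}). The point $gz$ of $X_3$ lies on $\sigma(\tilde{W}^h_{Gz})$, since $Gz \subset \tilde{W}$. Because $\psi_3 \circ \sigma = \textnormal{id}$, the residue field $\kappa(gz)$ of $X_3$ agrees with that of $\tilde{W}^h_{gz}$; both equal $\kappa(z)$, as henselization does not change residue fields. The image of $gz$ in $\AA^1_{\kappa(gz)}$ is the origin, because $\sigma$ maps to the zero section. It is therefore a $\kappa(gz)$-rational point, and the residue extension is trivial. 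So $F \to \AA^1_{\kappa(gz)}$ is radicial, and Lemma \ref{lemma: nakayama's lemma for closed immersion} gives that each component is a closed immersion.

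The step needing most care is the identification of residue fields in the radicial check: that the distinguished point of $Z_3$ coincides with the point of the section, and that the semilinear $S_z$-action does not interfere. I expect the section to handle this cleanly, so the radicial property comes for free here. This contrasts with Case \ref{subsection: case A}, where Lemma \ref{lemma: exists a fiber that is étale and radicial at a point} was needed to obtain it.
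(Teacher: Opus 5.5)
Your proof is correct and follows the paper's argument. You decompose $\varphi_{3_{|Z_3}}$ along the orbit into the maps $\Spec(A_{1,gz}) \to \tilde{W}^h_{gz} \times_k \AA(V)$, and then apply Lemma \ref{lemma: nakayama's lemma for closed immersion} to each piece with $A=A_{1,gz}$, $B=\cO^h_{\tilde{W},gz}$ and $C=k[t]$. Your use of the section, with $\psi_3\circ\sigma=\textnormal{id}$ and $\varphi_3\circ\sigma$ equal to the zero section, shows the residue extension at $gz$ is trivial, which makes the closed-fibre map radicial. This is a helpful expansion of the paper's brief parenthetical that $h$ induces the identity on residue fields.
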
	
\begin{proof}
	By Claim \ref{claim: X_3 construction}, the morphism $\varphi_{3_{|Z_3}}$ also decomposes as $|Gz|$ disjoint morphisms 
	\[
	\gamma_{gz} \colon \Spec(A_{1,gz}) \longrightarrow \tilde{W}^h_{gz} \times_k \AA(V).
	\]
	Hence it is sufficient to show that $\gamma_{gz}$ is a closed immersion for all $g \in G$. Fix $g \in G$. To show that $\gamma_{gz}$ is a closed immersion, we verify that the hypotheses of Lemma \ref{lemma: nakayama's lemma for closed immersion} are satisfied for $A=A_{1,gz}$, $B = \cO_{\tilde{W},gz}^h$ and $C = k[t]$. Indeed $\gamma_{gz}$ is an unramified morphism, because it is the composite of a closed immersion $\Spec(A_{1,gz}) \to X_3$ and an étale morphism $\varphi_3 \colon X_3 \subseteq X_2  \xrightarrow{\varphi_2} \tilde{W}^h_{Gz} \times_k \AA(V)$. The morphism $\beta_{gz} \colon \Spec(A_{1,gz}) \to \tilde{W}^h_{gz}$ is finite by Claim \ref{claim: X_3 construction}. Let $F$ be the fiber of $\beta_{gz}$ over the unique closed point $gz$. By Lemma \ref{lemma: closed fiber of a finite local morphism} we know that the underlying topological space of $F$ is made of the only closed point of $\Spec(A_{1,gz})$. Then the fiber morphism $h$ given by the cartesian square
	% https://q.uiver.app/#q=WzAsNCxbMCwwLCJcXFNwZWMoXFxrYXBwYSh6KSkiXSxbMCwxLCJcXEFBXjFfe1xca2FwcGEoeil9Il0sWzEsMSwiXFxBQV4xX3tcXHRpbGRle1d9Xmhfe2d6fX0iXSxbMSwwLCJcXFNwZWMoQV97MSxnen0pIl0sWzMsMiwiXFxnYW1tYV97Z3p9Il0sWzEsMl0sWzAsMSwiaCIsMl0sWzAsM11d
	\[\begin{tikzcd}
		{F} & {\Spec(A_{1,gz})} \\
		{\Spec(\kappa(gz)) \times_k \AA(V)} & {\tilde{W}^h_{gz} \times_k \AA(V)}
		\arrow[from=1-1, to=1-2]
		\arrow["h"', from=1-1, to=2-1]
		\arrow["{\gamma_{gz}}", from=1-2, to=2-2]
		\arrow[from=2-1, to=2-2]
	\end{tikzcd}\]
	 is radicial (indeed, it induces the identity on the residue field). Then by Lemma \ref{lemma: nakayama's lemma for closed immersion}  it follows that $\gamma_{gz}$ is a closed immersion as desired.
\end{proof}

By Claim \ref{claim: obtain closed immersion} we get property $\Pnum{2}$ of Definition \ref{definition: gabber presentation} over $\tilde{W}^h_{Gz}$.

\begin{claim} \label{claim: X_4}
	There exists an open $G$-invariant subscheme $X_4 \subset X_3$ containing $\tilde{W}^h_{Gz}$ such that the composite $\varphi_4 \colon X_4 \subseteq X_{3} \xrightarrow{\varphi_3} \tilde{W}^h_{Gz} \times_k \AA(V)$ induces an isomorphism $\varphi_4^{-1}(\varphi_4(Z_4)) \xrightarrow{\sim} \varphi_4(Z_4)$.
\end{claim}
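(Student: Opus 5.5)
The plan is to apply Lemma \ref{lemma: exist open with isomorphism in the image} directly to the $G$-equivariant étale morphism $\varphi_3 \colon X_3 \to \tilde{W}^h_{Gz} \times_k \AA(V)$ and the closed $G$-invariant subscheme $Z_3 \subseteq X_3$. The hypotheses are already in place. By Claim \ref{claim: X_2 construction}, $\varphi_3$ is étale, being the restriction of $\varphi_2$ to the open subscheme $X_3$. It is $G$-equivariant because $X_3$ is $G$-invariant. By Claim \ref{claim: obtain closed immersion}, $\varphi_{3_{|Z_3}}$ is a closed immersion. The lemma then produces an open $G$-invariant subscheme $U \subseteq X_3$ with $Z_3 \cap U = Z_3$ such that $\varphi_3|_U$ induces an isomorphism $(\varphi_3|_U)^{-1}(\varphi_3|_U(Z_3)) \xrightarrow{\sim} \varphi_3|_U(Z_3)$. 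We set $X_4 \coloneq U$ and $\varphi_4 \coloneq \varphi_3|_{X_4}$. Then $Z_4 = Z_3$, so the finiteness from Claim \ref{claim: X_3 construction} and the closed immersion property from Claim \ref{claim: obtain closed immersion} are preserved.

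The remaining point, and the only one needing care, is that $X_4$ still contains the section $\tilde{W}^h_{Gz}$. This is needed for the section property asserted in Theorem \ref{theorem: gabber}. Recall from the proof of Lemma \ref{lemma: exist open with isomorphism in the image} that $U = X_3 \setminus Z'$, where $\varphi_3^{-1}(\varphi_3(Z_3)) = Z_3 \amalg Z'$ with $Z'$ open and closed in $\varphi_3^{-1}(\varphi_3(Z_3))$. So it suffices to show that $\tilde{W}^h_{Gz} \cap Z' = \emptyset$.

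Here we use the decomposition $\tilde{W}^h_{Gz} = \coprod_{gz} \Spec(\cO^h_{\tilde{W},gz})$ from Lemma \ref{lemma: henselization}. Each component is local with closed point $gz$, and $gz \in Z_3$. The intersection $\tilde{W}^h_{Gz} \cap Z'$ is closed in $\tilde{W}^h_{Gz}$, since $Z'$ is closed in $X_3$, being closed in the closed subscheme $\varphi_3^{-1}(\varphi_3(Z_3))$. A nonempty closed subset of a local scheme contains its closed point. So if this intersection met the component through $gz$, it would contain $gz$. But $gz \in Z_3$ and $Z_3 \cap Z' = \emptyset$, a contradiction. Hence $\tilde{W}^h_{Gz} \subseteq X_4$.

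This is the analogue of the argument in Lemma \ref{lemma: pass from quasi-finite to finite}.(3), and it is the main obstacle. Everything else is a direct citation, but here we must reopen the proof of Lemma \ref{lemma: exist open with isomorphism in the image} rather than only use its statement.
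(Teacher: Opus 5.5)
Your proposal is correct and follows the paper's proof: apply Lemma \ref{lemma: exist open with isomorphism in the image} to $\varphi_3$ and $Z_3$, then note that a closed subset of each local component $\tilde{W}^h_{gz}$ which misses its closed point $gz$ must be empty. You do not actually need to reopen the lemma's proof to identify the complement with $Z'$. The paper runs the same argument on the closed set $X_3 \setminus X_4$, which misses $Gz$ because the lemma's statement already gives $Gz \subseteq Z_3 = Z_4 \subseteq X_4$.
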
	

\begin{proof}
	By Lemma \ref{lemma: exist open with isomorphism in the image}, there exists an open $G$-invariant subscheme $X_4 \subset X_3$ such that $Z_4 = Z_3$ and the map $\varphi_4 \colon X_4 \to \tilde{W}^h_{Gz} \times_k \AA(V)$ induces an isomorphism $\varphi_4^{-1}(\varphi_4(Z_4)) \xrightarrow{\sim} \varphi_4(Z_4)$. Finally, for each point $gz$ of the orbit $Gz$ in $Z_3 = Z_4 \subset X_4$ we see that $\tilde{W}^h_{gz} \cap (X_3 \setminus X_4)$ is closed in $\tilde{W}^h_{gz}$ but it does not contain $gz$, hence it must be empty. Consequently we have $\tilde{W}^h_{Gz} \subseteq X_4$ necessarily.
\end{proof}

To summarize, we have obtained the properties $\Pnum{1}, \Pnum{2}, \Pnum{3}, \Pnum{4}$ over the henselian scheme $\tilde{W}^h_{Gz}$. Note that the schemes $\tilde{W}^h_{Gz} \times_k \AA(V), X_2, X_4, Z_{4}, \varphi_4(Z_{4}), \varphi_4^{-1}(\varphi_4(Z_{4}))$ are of finite presentation over $\tilde{W}^h_{Gz}$ (compare with Remark \ref{remark: schemes of finite presentation}). 

\begin{claim}
	There exists a $G$-equivariant Gabber's presentation $(X', W, V, \varphi)$ of the $G$-pair $(X,Z)$ with respect to the point $z$. Moreover, $V$ is a nontrivial $G$-representation, and $\psi \colon X' \to W$ admits a $G$-equivariant section.
\end{claim}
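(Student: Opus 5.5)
The plan is to mirror the limit-descending argument of Claim \ref{claim: produce actual model}, now over the henselization $\tilde{W}^h_{Gz}$ of $\tilde{W}$ along the orbit $Gz$, and additionally descend the section. First, by \cite[(2.21)]{MotivicHomotopyTheoryOfGroupSchemesActions} and \cite[Theorem 1.5]{PresentableCategories}, write $\tilde{W}^h_{Gz} = \lim_{i \in I} U_i$ as a directed limit of affine $G$-equivariant Nisnevich neighborhoods $U_i \to \tilde{W}$ of $Gz$. All the relevant schemes ($X_4$, $Z_4$, $\varphi_4(Z_4)$, $\varphi_4^{-1}(\varphi_4(Z_4))$, $\tilde{W}^h_{Gz}\times_k \AA(V)$) are of finite presentation over $\tilde{W}^h_{Gz}$, as noted after Claim \ref{claim: X_4}. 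Since $X_4$ is open in $X_2 = X_1 \times_{\tilde{W}} \tilde{W}^h_{Gz}$, Lemma \ref{lemma: limit descending} gives an index $i$ and a $G$-equivariant open immersion $S_i \hookrightarrow X_1 \times_{\tilde{W}} U_i$ whose base change is $X_4 \hookrightarrow X_2$.

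Next, increasing $i$ repeatedly, Lemma \ref{lemma: limit descending} lets me descend in turn: étaleness of $\varphi_4$ to an étale $G$-equivariant $b_i \colon S_i \to U_i \times_k \AA(V)$ (P1); closed immersion of $Z_4 \to \tilde{W}^h_{Gz}\times_k\AA(V)$ to $V_i \coloneq S_i\times_X Z \to U_i \times_k \AA(V)$ (P2); finiteness of $Z_4 \to \tilde{W}^h_{Gz}$ to $V_i \to U_i$ (P3); and the isomorphism $\varphi_4^{-1}(\varphi_4(Z_4)) \simeq \varphi_4(Z_4)$ to $b_i^{-1}(b_i(V_i)) \simeq b_i(V_i)$ (P4). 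For the section: the closed immersion $\tilde{W}^h_{Gz} \hookrightarrow X_4$ is the base change of $s \colon \tilde{W} \to X_1$ along $U_i$-limits; concretely $s$ pulls back to a $G$-equivariant section $U_i \to X_1\times_{\tilde{W}} U_i$, and I must check it lands in the open $S_i$. The preimage of the closed complement of $S_i$ under this section is a closed subset of $U_i$ whose limit-pullback is empty (since $\tilde{W}^h_{Gz}\subseteq X_4$); by quasi-compactness and \cite[Tag 01ZM]{stacks-project} it is empty for $i$ large, or else I shrink $U_i$ to the $G$-invariant open complement of that closed set, which still contains $Gz$ and so is still an equivariant Nisnevich neighborhood.

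Finally set $X' \coloneq S_i$, $W \coloneq U_i$, $\varphi \coloneq b_i = (\psi,\nu)$. Then $W$ is étale over the smooth $\tilde{W}$, hence smooth (G2); $V$ is the nontrivial one-dimensional representation from Claim \ref{claim: X_2 construction} (G3); and $X' \to X_1\times_{\tilde{W}} W \to X_1 \to X_0 \to X$ is a composite of equivariant Nisnevich neighborhoods of $Gz$, the distinguished points lifting because $Gz \subset \tilde{W}$ lifts to $U_i$ and into $X_1$ via $s$ (G1). The main obstacle is the bookkeeping ensuring the section and distinguished points survive the descent into the open $S_i$ simultaneously with P1--P4; I expect the shrinking of $U_i$ argument above to handle it, mirroring the final paragraph of Claim \ref{claim: X_4}.
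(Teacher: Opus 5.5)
Your proposal is correct and follows the paper's argument essentially step by step. You write $\tilde{W}^h_{Gz}=\lim U_i$, descend the open $X_4\subseteq X_2$ to $S_i\subseteq X_1\times_{\tilde{W}}U_i$, descend \Pnum{1}--\Pnum{4} via Lemma~\ref{lemma: limit descending}, and then descend the section. The only difference is that you make this last step explicit: you pull back $s$ to a section $U_i\to X_1\times_{\tilde{W}}U_i$ and show it eventually lands in $S_i$, because a closed $G$-invariant subset whose pullback to the limit is empty is already empty at some finite stage. The paper instead simply asserts that the section, together with the identity $\tilde{W}^h_{Gz}\to X_4\to\tilde{W}^h_{Gz}$, descends after increasing the index.
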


\begin{proof}
	We will use a limit-descending argument to obtain conclusions $\Pnum{1}, \Pnum{2}, \Pnum{3}, \Pnum{4}$ of Definition \ref{definition: gabber presentation}, that hold for $\varphi_4 \colon X_4 \to \tilde{W}^h_{Gz} \times_k \AA(V)$, over an actual model.
	Write $\tilde{W}^h_{Gz} = \lim_{ U \in N_{G}(Gz)}U$ as the limit over the filtering category of affine equivariant Nisnevich neighborhoods of $Gz$ as in \cite[\S 2]{MotivicHomotopyTheoryOfGroupSchemesActions}. Since $\tilde{W}$ is affine, there is an isomorphism $\tilde{W}^h_{Gz} \simeq \Spec(\cO_{\tilde{W},Gz}^h)$ by \cite[\S 2]{MotivicHomotopyTheoryOfGroupSchemesActions}. Since $\textnormal{Sch}^G_k$ is essentially small, the category
	$N_G(Gz)$ of equivariant Nisnevich neighborhoods is essentially small as well. Replacing it by a small skeleton, we may apply \cite[Theorem 1.5]{PresentableCategories} and write $\tilde{W}^h_{Gz}= \lim_{i \in I } U_i$ as the limit of an inverse system of schemes over a directed set $I$. All the required properties descend  through limits of morphisms of finite presentation. More precisely we do the following.
	
	Since $X_4$ is open in $X_2 = X_1 \times_{\tilde{W}} \tilde{W}^h_{Gz}$, by Lemma \ref{lemma: limit descending}, there exists an index $i \in I$, a scheme $S_i$ with a $G$-action and a $G$-equivariant morphism $a_i \colon S_i \to X_1 \times_{\tilde{W}} U_i$ which is an open immersion, fitting into a cartesian diagram
	% https://q.uiver.app/#q=WzAsNCxbMCwwLCJYXzQiXSxbMSwwLCJYX2oiXSxbMSwxLCJYIFxcdGltZXNfe1xcQUFebl9rfVVfaiJdLFswLDEsIlggXFx0aW1lc197XFxBQV5uX2t9IFdeaF93Il0sWzMsMl0sWzEsMiwiYV9qIl0sWzAsM10sWzAsMV1d
	\begin{equation}\label{diagram: first cartesian diagram}
	\begin{tikzcd}
		{X_4} & {S_i} \\
		{X_2} & {X_1 \times_{\tilde{W}} U_i.}
		\arrow[from=1-1, to=1-2]
		\arrow[from=1-1, to=2-1]
		\arrow["{a_i}", from=1-2, to=2-2]
		\arrow[from=2-1, to=2-2]
	\end{tikzcd}
	\end{equation}

	We first show how to obtain $\Pnum{1}$. Since $\varphi_4$ is étale, by Lemma \ref{lemma: limit descending}, there exists an index $j \in I$, a scheme $S_j$ with a $G$-action and a $G$-equivariant morphism $b_j \colon S_j \to U_j \times_k \AA(V)$ which is étale, fitting into a cartesian diagram
	% https://q.uiver.app/#q=WzAsNCxbMCwwLCJYXzQiXSxbMSwwLCJYX2kiXSxbMSwxLCJcXEFBXjFfe1VfaX0iXSxbMCwxLCJcXEFBXjFfe1deaF93fSJdLFszLDJdLFsxLDIsImEiXSxbMCwzLCJcXHZhcnBoaSIsMl0sWzAsMV1d
	\begin{equation} \label{diagram: second cartesian diagram}
	\begin{tikzcd}
		{X_4} & {S_j} \\
		{\tilde{W}^h_{Gz} \times_k \AA(V)} & {U_j \times_k \AA(V).}
		\arrow[from=1-1, to=1-2]
		\arrow["\varphi_4"', from=1-1, to=2-1]
		\arrow["b_j", from=1-2, to=2-2]
		\arrow[from=2-1, to=2-2]
	\end{tikzcd}
	\end{equation}
	
	After increasing the index, we may assume $j \geq i$. We rename $j = i$ and thus we have a scheme $S_i$ and morphisms $a_i, b_i$ as in diagram \ref{diagram: first cartesian diagram} and diagram \ref{diagram: second cartesian diagram}.
	
	We now show how to obtain property $\Pnum{2}$. Set $V_i \coloneq S_i \times_X Z$, so that we have a cartesian diagram as follows:
	% https://q.uiver.app/#q=WzAsMTQsWzAsMCwiWl80Il0sWzAsMSwiWF80Il0sWzEsMCwiVl9pIl0sWzEsMSwiU19pIl0sWzIsMCwiWl8xIFxcdGltZXNfe1xcdGlsZGV7V319VV9pIl0sWzIsMSwiWF8xIFxcdGltZXNfe1xcdGlsZGV7V319VWkiXSxbMywwLCJaXzEiXSxbNCwwLCJaIl0sWzQsMSwiWCJdLFszLDEsIlhfMSJdLFsxLDIsIlVfaSBcXHRpbWVzX2sgXFxBQShWKSJdLFswLDIsIlxcdGlsZGV7V31eaF97R3p9IFxcdGltZXNfayBcXEFBKFYpIl0sWzEsMywiVV9pIl0sWzAsMywiXFx0aWxkZXtXfV5oX3tHen0iXSxbMSwxMV0sWzExLDEwXSxbMywxMCwiYl9pIiwyXSxbMSwzXSxbMyw1LCJhX2kiLDJdLFsyLDMsImNfaSIsMl0sWzAsMV0sWzAsMl0sWzIsNF0sWzQsNV0sWzUsOV0sWzQsNl0sWzYsOV0sWzExLDEzXSxbMTMsMTJdLFsxMCwxMl0sWzksOF0sWzYsN10sWzcsOF1d
	\[\begin{tikzcd}
		{Z_4} & {V_i} & {Z_1 \times_{\tilde{W}}U_i} & {Z_1} & Z \\
		{X_4} & {S_i} & {X_1 \times_{\tilde{W}}U_i} & {X_1} & X \\
		{\tilde{W}^h_{Gz} \times_k \AA(V)} & {U_i \times_k \AA(V)} \\
		{\tilde{W}^h_{Gz}} & {U_i.}
		\arrow[from=1-1, to=1-2]
		\arrow[from=1-1, to=2-1]
		\arrow[from=1-2, to=1-3]
		\arrow["{c_i}"', from=1-2, to=2-2]
		\arrow[from=1-3, to=1-4]
		\arrow[from=1-3, to=2-3]
		\arrow[from=1-4, to=1-5]
		\arrow[from=1-4, to=2-4]
		\arrow[from=1-5, to=2-5]
		\arrow[from=2-1, to=2-2]
		\arrow[from=2-1, to=3-1]
		\arrow["{a_i}"', from=2-2, to=2-3]
		\arrow["{b_i}"', from=2-2, to=3-2]
		\arrow[from=2-3, to=2-4]
		\arrow[from=2-4, to=2-5]
		\arrow[from=3-1, to=3-2]
		\arrow[from=3-1, to=4-1]
		\arrow[from=3-2, to=4-2]
		\arrow[from=4-1, to=4-2]
	\end{tikzcd}\]
    Since $\varphi_{4_{|Z_4}}$ is a closed immersion, again by Lemma \ref{lemma: limit descending}, after increasing the index, we may assume that $b_i \circ c_i \colon V_i \to U_i \times_k \AA(V)$ is a closed immersion. We next show how to obtain property $\Pnum{3}$. Since $\psi_{4_{|Z_4}} \colon Z_4 \to \tilde{W}^h_{Gz}$ is finite, again by Lemma \ref{lemma: limit descending}, after increasing the index, we may assume that $V_i \to U_i$ is finite.
	
	We obtain property $\Pnum{4}$ as follows. The morphism $b_i \coloneq S_i \to U_i \times_k \AA(V)$ induces $b_i^{-1}(b_i(V_i)) \to b_i(V_i)$ which is an isomorphism on the base change over $\tilde{W}^h_{Gz}$ by Claim \ref{claim: X_4}. Then, after increasing the index $i$ again, we may assume by Lemma \ref{lemma: limit descending} that $b_i^{-1}(b_i(V_i)) \rightarrow b_i(V_i)$ is an isomorphism.
	
	Finally, since $\tilde{W}^h_{Gz} \to X_4 \to \tilde{W}^h_{Gz}$ is the identity, after increasing the index $i$ again, we may assume that the $G$-equivariant section $\tilde{W}^h_{Gz} \to X_4$ descends to an actual model. In other words, we may assume that there exists a $G$-equivariant morphism $U_i \to S_i$ such that the composite $U_i \to S_i \to U_i$ is the identity.
	
	Set $X' \coloneq S_i$, $W \coloneq U_i$ and $\varphi \coloneq b_i \colon X' \to W \times_k \AA(V)$. Then $Z_{X'} = X' \times_X Z= V_i$. We claim that $(X', W, V, \varphi)$ forms a $G$-equivariant Gabber's presentation of $(X,Z)$ with respect to the point $z$. We first show that $X' \to X$ is a $G$-equivariant Nisnevich neighborhood of $Gz$. In the following diagram
	% https://q.uiver.app/#q=WzAsNSxbMiwwLCJcXHRpbGRle1d9Il0sWzIsMSwiVyJdLFsxLDEsIlhfMSJdLFsxLDAsIlhfMSBcXHRpbWVzX1cgXFx0aWxkZXtXfSJdLFswLDAsIlhfNCJdLFsyLDFdLFswLDEsImIiXSxbMywyLCJjIiwyXSxbMywwXSxbNCwzXV0=
	\[\begin{tikzcd}
		X' & {X_1 \times_{\tilde{W}} W} & {W} \\
		& {X_1} & \tilde{W}
		\arrow[from=1-1, to=1-2]
		\arrow[from=1-2, to=1-3]
		\arrow["\alpha"', from=1-2, to=2-2]
		\arrow["\beta", from=1-3, to=2-3]
		\arrow[from=2-2, to=2-3]
	\end{tikzcd}\]
	the morphism $\beta$ is, by definition, a $G$-equivariant Nisnevich neighborhood of $Gz$; hence $\alpha$ is as well. Indeed the morphisms
	\[
	G \times^{S_z} \Spec(\kappa(z)) \longrightarrow X_1 \qquad \textnormal{and} \qquad  G \times^{S_z} \Spec(\kappa(z)) \longrightarrow  W
	\]
	give rise to the section $G \times^{S_z}\Spec(\kappa(z)) \to X_1 \times_{\tilde{W}} W$, because they form a commutative diagram over $\tilde{W}$.
	Since by construction $X_1 \to X$ is a $G$-equivariant Nisnevich neighborhood of $Gz$, it follows that $X_1 \times_{\tilde{W}} W \to X$ is also a $G$-equivariant Nisnevich neighborhood of $Gz$. Finally, $X' \to X$ is a $G$-equivariant Nisnevich neighborhood of $Gz$ because $X'$ is open in $X_1 \times_{\tilde{W}} W$ and it contains the orbit $Gz$. Then $X' \to X$ gives $\Gnum{1}$, the scheme $W$ gives $\Gnum{2}$, the $G$-representation $V$ gives $\Gnum{3}$ and $\varphi$ gives $\Gnum{4}$. The properties $\Pnum{1}, \Pnum{2}, \Pnum{3}, \Pnum{4}$ hold on $\varphi$ by construction, as well as the existence of the $G$-equivariant section $W \to X'$.
\end{proof}

This concludes the proof of Case \ref{subsection: case C} and thus also of Theorem \ref{theorem: gabber}.
\end{proof}

\vspace{0.5cm}
\subsection*{Acknowledgements} 
I thank Federico Scavia for his constant help, support, and insightful advice throughout the development and the writing of this work. I thank Jean Fasel for several helpful conversations and for insightful remarks that contributed to this work.

This project is co-funded by the European Union. Views and opinions expressed are however those of the author only and do not necessarily reflect those of the European Union. Neither the European Union nor the granting authority can be held responsible for them.

\vspace{0.5cm}

\begin{center}
\includegraphics[width=0.4\linewidth]{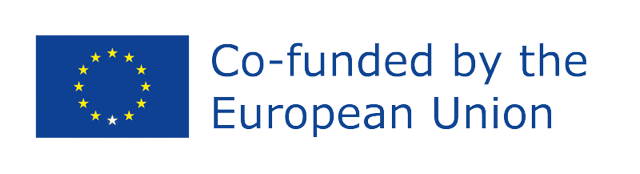}
\end{center}

\providecommand{\bysame}{\leavevmode\hbox to3em{\hrulefill}\thinspace}
\providecommand{\MR}{\relax\ifhmode\unskip\space\fi MR }
% \MRhref is called by the amsart/book/proc definition of \MR.
\providecommand{\MRhref}[2]{%
	\href{http://www.ams.org/mathscinet-getitem?mr=#1}{#2}
}
\providecommand{\href}[2]{#2}

\end{document}